\newtheorem{theorem}{Theorem}[section]
\newtheorem{corollary}[theorem]{Corollary}
\newtheorem{claim}[]{Claim}
\newtheorem{lemma}[theorem]{Lemma}
\newtheorem{proposition}[theorem]{Proposition}
\theoremstyle{definition}
\newtheorem{definition}[theorem]{Definition}
\theoremstyle{remark}
\newtheorem{remark}[theorem]{Remark}
\numberwithin{equation}{section}
\newcommand{\an}{\textnormal{An}^G}
\newcommand{\ann}{\mathcal{AN}^G}
\newcommand{\al}{\alpha}
\newcommand{\V}{\mathcal{V}}
\newcommand{\IV}{\mathcal{IV}}
\newcommand{\R}{\mathbb{R}}
\newcommand{\N}{\mathbb{N}}
\newcommand{\mH}{\mathcal{H}}
\newcommand{\F}{\mathcal{F}}
\newcommand{\si}{\sigma}
\newcommand{\mZ}{\mathbb{Z}}
\newcommand{\Z}{\mathcal{Z}}
\newcommand{\f}{\mathbf{f}}
\newcommand{\M}{\mathbf{M}}
\newcommand{\bL}{\mathbf{L}}
\newcommand{\mf}{\mathbf{f}}
\newcommand{\mF}{\mathbf{F}}
\newcommand{\mI}{\mathbf{I}}
\newcommand{\bmu}{\boldsymbol \mu}
\newcommand{\btau}{\boldsymbol \tau}
\newcommand{\bleta}{\boldsymbol \eta}
\newcommand{\spt}{\operatorname{spt}}
\newcommand{\dist}{\operatorname{dist}}
\newcommand{\Clos}{\operatorname{Clos}}
\newcommand{\rom}[1]{\expandafter\romannumeral #1}
\title[Min-max Theory for G-Invariant Minimal Hypersurfaces]{Min-max Theory for G-Invariant Minimal Hypersurfaces}
\author{Tongrui Wang}
\address{Department of Mathematics, Nanjing University, Gulou, Nanjing}
\email{DZ1721003@smail.nju.edu.cn}
\begin{document}
\maketitle
\begin{abstract}
In this paper, we consider a closed Riemannian manifold $M^{n+1}$ with dimension $3\leq n+1\leq 7$, and a compact Lie group $G$ acting as isometries on $M$ with cohomogeneity at least $3$. 
After adapting the Almgren-Pitts min-max theory to a $G$-equivariant version, we show the existence of a nontrivial closed smooth embedded $G$-invariant minimal hypersurface $\Sigma\subset M$ provided that the union of non-principal orbits 
forms a smooth embedded submanifold of $M$ with dimension at most $n-2$. 
Moreover, we also build upper bounds as well as lower bounds of $(G, p)$-widths, which are analogs of the classical conclusions derived by Gromov and Guth. 
An application of our results combined with the work of Marques-Neves shows the existence of infinitely many $G$-invariant minimal hypersurfaces when ${\rm Ric}_M>0$ and orbits satisfy the same assumption above. 
\end{abstract}

\section{Introduction}
Min-max theory was proposed by Almgren \cite{almgren1962homotopy}\cite{almgren1965theory} in the 1960s to find minimal hypersurfaces in compact manifolds. 
Subsequently, Pitts \cite{pitts2014existence} and Schoen-Simon \cite{schoen1981regularity} advanced the theory in the 1980s. 
The main idea of the Almgren-Pitts min-max theory is to consider the min-max width defined with all non-trivial continuous closed curves, namely sweepouts, in the $n$-cycle space $\mathcal{Z}_n(M^{n+1}; E)$ where $E$ is $\mathbb{Z}$ or $\mathbb{Z}_2$. 
After constructing stationary varifolds (i.e. weak solutions) to realize the min-max width, they further built the regularity result in closed manifolds with dimension $3\leq n+1\leq 7$. 
Thanks to the famous Almgren's isomorphism theorem in \cite{almgren1962homotopy}: $\pi_1(\mathcal{Z}_n(M;E)) \cong  H_{n+1}(M;E),~E=\mathbb{Z} {\rm ~or~}\mathbb{Z}_2$, 
one can consider maps from a higher parameter space into the $n$-cycle space. 
In recent years, plenty of excellent works were based on these multi-parameter sweepouts. 
For example, Marques-Neves have used this technique to prove the Willmore Conjecture \cite{marques2014min}, and constructed infinitely many minimal hypersurfaces in closed manifolds with positive Ricci curvature \cite{marques2017existence}. 

Meanwhile, there are also other settings of min-max. 
In the min-max setting of C. De~Lellis and D. Tasnady \cite{de2013existence}, a sweepout is a family of {\it generalized hypersurfaces}, which are smooth except for a finite number of points (\cite[Definition 0.2]{de2013existence}). 
Despite the lack of multi-parameter sweepouts to the author's knowledge, some topological properties can be controlled under this setting (for example \cite{ketover2019genus}). 

When a Lie group $G$ acts by isometries on $M$, can min-max theory generate a minimal hypersurface invariant under the action of $G$?
J. Pitts and J. H. Rubinstein firstly announced a version of the equivariant min-max for finite groups acting on $3$-dimensional manifolds in \cite{pitts1987applications} and \cite{pitts1988equivariant} without a full proof. 
It was Daniel Ketover\cite{ketover2016equivariant} who finished the construction of $G$-equivariant min-max for finite $G$ and $n+1=3$. 
Inspired by Ketover's work, a more general version of $G$-equivariant min-max was built by Z. Liu in \cite{liu2021existence} considering a compact connected Lie group $G$ acting as isometries with ${\rm Cohom}(G)\neq 0,2$, on manifolds with dimension $3\leq n+1\leq 7$. 
Liu's paper mainly focused on the existence and regularity results of $G$-invariant minimal hypersurfaces, and the case of ${\rm Cohom}(G)=1$ was treated exceptionally by the classification of the orbit space. 
Meanwhile, both Ketover and Liu have some additional restrictions on the compact Lie group $G$. 
Specifically, Liu needs $G$ to be connected, while Ketover needs the actions of $G$ to be orientation preserving and $M/G$ is an orientable orbifold without boundary. 
We also mention that neither of their results is built under the Almgren-Pitts setting, and thus lacks multi-parameter sweepout analogs.

To go further than the existence and regularity results, we set out the $G$-equivariant min-max theory under the Almgren-Pitts setting in this paper, which allows us to build the infinitely existence result by multi-parameter $G$-sweepouts.

Since the key ingredient of the Almgren-Pitts min-max theory is the Almgren's Isomorphism, one difficulty in building $G$-equivariant min-max theory is to characterize $\pi_1(\mathcal{Z}_n^G(M;\mathbb{Z}_2))$ the fundamental group of the $G$-invariant $n$-cycles space. 
In \cite{marques2021morse}, Marques-Neves used a double cover argument to show the space of $n$-cycles $\mathcal{Z}_n(M;\mathbb{Z}_2)$ is weakly homotopically equivalent to $\mathbb{RP}^\infty$. 
By a $G$-isoperimetric lemma (Lemma \ref{Lem:isoperimetric}), the weak homotopical equivalence also holds for the space of $G$-invariant $n$-cycles $\mathcal{Z}_n^G(M;\mathbb{Z}_2)$. 

Additionally, since discrete mappings are used in the Almgren-Pitts setting, the discretization and interpolation results (\cite[Section 3]{marques2017existence}) are needed to build a bridge between discrete and continuous maps. 
To obtain these results under $G$-invariant restrictions, the main difficulty is to construct a $G$-equivariant triangulation of $M$ together with $G$-equivariant deforming maps (Theorem \ref{Thm:interpolation}). 
The existence of equivariant triangulation is guaranteed by Illman's works \cite{illman1978smooth}\cite{illman1983equivariant}. 
However, since the injectivity radii of orbits generally admit no positive lower bound, the star neighborhood in the triangulation may contain cut points of its center orbit, which leads to more detailed work in constructing $G$-equivariant deforming maps. 

The classification of $\pi_1(\mathcal{Z}_n(M;\mathbb{Z}_2))$ ($\pi_1(\mathcal{Z}_n^G(M;\mathbb{Z}_2))$) implies the existence of a nontrivial ($G$-invariant) stationary varifold as a weak solution for minimal ($G$-invariant) hypersurfaces. 
To get a smooth minimal hypersurface, Pitts \cite{pitts2014existence} proposed the concept of {\em almost minimizing varifolds} and showed the existence, as well as the regularity, of varifolds that are almost minimizing in annuli. 
In the $G$-equivariant setting, the definition of almost minimizing varifolds can be naturally extended by considering the $G$-invariant comparison cycles (Definition \ref{am.varifolds}). 
However, the difference in the metrics used for the definitions leads to a technical obstacle on the existence of almost minimizing varifolds. 
Specifically, the almost minimizing varifolds are defined with the flat metric, while the discrete sweepouts are defined with the mass norm. 
To handle this difficulty, a technical result \cite[Theorem 3.9]{pitts2014existence} was proved by Pitts with lengthy work in \cite[Section 3.4-3.10]{pitts2014existence}. 
But under $G$-invariant restrictions, this technical obstacle is especially problematic since the injectivity radii of orbits generally do not admit a uniformly positive lower bound on $M$. 
Nevertheless, we notice that if $W$ is an open $G$-subset of $M$ so that every orbit in $\Clos(W)$ is principal, 
then there exists a constant $r_0>0$ with $\inf_{p\in \Clos(W)}{\rm Inj}(G\cdot p)\geq r_0$. 
Since the goal is to find varifolds that are $G$-almost minimizing in $G$-annuli, it will be convenient if the $G$-annuli are all contained in $M^{reg}$, where $M^{reg}$ is the union of all principal orbits in $M$. 
Thus we assume that the union of non-principal orbits $M\setminus M^{reg}$ 
is a smooth submanifold of $M$ with dimension at most $n-2$. 
With this assumption, we can regard $M\setminus M^{reg}$ as a whole and consider annuli around $M\setminus M^{reg}$. 
The smoothness of $M\setminus M^{reg}$ is to ensure the smoothness of the tubular neighborhood around it. 
The assumptions on ${\rm dim}(M\setminus M^{reg})\leq n-2$ and ${\rm Cohom}(G)\geq 3$ are used to guarantee the singularity on $M\setminus M^{reg}$ and $G\cdot p$ can be removed by the regularity result in \cite{wickramasekera2014general}. 
Now we present our main results. 
\begin{theorem}\label{main.thm}
	Let $M^{n+1}$ be a closed Riemannian manifold with a compact Lie group $G$ acting as isometries of cohomogeneity ${\rm Cohom}(G)\geq 3$.
	Suppose $2\leq n\leq 6$ and the union of non-principal orbits $M\setminus M^{reg}$ is a smoothly embedded submanifold of $M$ with dimension at most $n-2$. 
	Then there exists a closed smooth embedded $G$-invariant minimal hypersurface $\Sigma^n\subset M^{n+1}$, i.e. $g\cdot x\in \Sigma, \forall x\in\Sigma,g\in G$. 
\end{theorem}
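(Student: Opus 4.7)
The plan is to adapt the Almgren--Pitts min-max program to the $G$-equivariant setting and run it on the space of $G$-invariant mod $2$ cycles $\Z_n^G(M;\mZ_2)$. First I would identify $\Z_n^G(M;\mZ_2)$ with the fixed-point subspace of the induced $G$-action on $\Z_n(M;\mZ_2)$ and combine the $G$-isoperimetric lemma (Lemma~\ref{Lem:isoperimetric}) with the double-cover argument of Marques--Neves to conclude that $\Z_n^G(M;\mZ_2)$ is weakly homotopy equivalent to $\mathbb{RP}^\infty$. In particular, $\pi_1(\Z_n^G(M;\mZ_2))\cong\mZ_2$, which gives a meaningful one-parameter $G$-width
\[
    \bL^G(\Pi)=\inf_{\Phi\in\Pi}\sup_{t\in [0,1]} \M(\Phi(t))>0,
\]
where $\Pi$ is the homotopy class of continuous $G$-sweepouts detecting the generator.

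Second, to make the discrete Almgren--Pitts machinery work equivariantly I would invoke Illman's equivariant triangulation together with $G$-equivariant deforming maps supported in a neighborhood of each equivariant star, and package them into a discretization/interpolation theorem (Theorem~\ref{Thm:interpolation}) converting continuous $G$-sweepouts into fine discrete ones in $\Z_n^G(M;\mZ_2)$ and back. The subtlety is that the injectivity radii of orbits near $M\setminus M^{reg}$ need not have a positive lower bound, so the local deformations have to be built so that star neighborhoods meeting $M\setminus M^{reg}$ are treated as a single chart around the whole non-principal stratum, rather than around individual orbits.

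Third, from these tools I would run the equivariant pull-tight and combinatorial argument of Pitts to extract a nontrivial stationary $G$-invariant integral varifold $V$ with $\|V\|(M)=\bL^G(\Pi)$ and the additional property of being $G$-almost minimizing in $G$-annuli (Definition~\ref{am.varifolds}). I expect this to be the main obstacle, because the $G$-almost minimizing condition is phrased in the flat metric while the sweepouts live in the mass norm; the classical bridge \cite[Thm.~3.9]{pitts2014existence} relies on uniform control on injectivity radii of the comparison cycles. I would circumvent this by only asking for the almost minimizing property in $G$-annuli of the form $A_G(p,r,s)$ with $\Clos(A_G(p,r,s))\subset M^{reg}$, or in $G$-annuli around the whole submanifold $M\setminus M^{reg}$: on any $G$-invariant open set $W$ with $\Clos(W)\subset M^{reg}$ one has $\inf_{p\in\Clos(W)}{\rm Inj}(G\cdot p)\geq r_0>0$, which is exactly what is needed to push the Pitts argument through equivariantly.

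Finally, the regularity of $V$ follows by combining the equivariant Pitts--Schoen--Simon theory in $M^{reg}$ with a removable-singularities step on $M\setminus M^{reg}$. On $M^{reg}$ the $G$-almost minimizing property in $G$-annuli yields local stability of the regular part of $V$, so $\spt\|V\|\cap M^{reg}$ is a smooth embedded stable minimal hypersurface. Since ${\rm Cohom}(G)\geq 3$ and $\dim(M\setminus M^{reg})\leq n-2$, the singular set of $\spt\|V\|$ in $M$ has Hausdorff dimension at most $n-2$, so Wickramasekera's general regularity theorem \cite{wickramasekera2014general} upgrades $V$ to a smooth closed embedded $G$-invariant minimal hypersurface $\Sigma^n\subset M^{n+1}$, completing the proof.
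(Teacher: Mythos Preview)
Your outline is correct and tracks the paper's approach essentially step for step: the weak homotopy type of $\Z_n^G(M;\mZ_2)$ via the $G$-isoperimetric lemma and the double-cover argument, the equivariant discretization/interpolation using Illman's triangulation, the restriction of the almost-minimizing property to ``regular'' $G$-annuli contained in $M^{reg}$ (or around the whole stratum $M\setminus M^{reg}$) to handle the flat-versus-mass obstacle, and the Pitts--Schoen--Simon replacement argument on $M^{reg}$ capped by Wickramasekera's removable-singularity theorem across $M\setminus M^{reg}$. The one point you leave implicit is how $\bL^G(\Pi)>0$ is obtained: the paper does not argue this directly from $\pi_1\cong\mZ_2$ but instead observes that any $\M$-continuous $(G,1)$-sweepout is in particular a (non-equivariant) $1$-sweepout, so $\omega_1^G(M)\geq\omega_1(M)>0$ by the classical theory.
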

In the work of Marques and Neves \cite{marques2017existence}, a dichotomy theorem was shown using upper bounds of the $p$-width derived by Gromov\cite{gromov1988dimension} and Guth\cite{guth2009minimax}. 
As a corollary, they found infinitely many minimal hypersurfaces in closed manifolds with positive Ricci curvature. 
Combining their procedure with the co-area formula, we can build upper bounds and lower bounds of $(G,p)$-width:

\noindent {\it {\bf (Theorem \ref{Thm-upper-bound}, \ref{Thm-lower-bound-of-width})}
	Let $G$ be a compact Lie group acting as isometries on a closed manifold $M^{n+1}$ with ${\rm Cohom}(G)=l\geq 3$. 
	Then there exist constants $C_1, C_2>0$ depending only on $M,G$, such that
	$$ C_1p^{\frac{1}{l}} \leq  \omega_p^G(M)\leq C_2 p^{\frac{1}{l}},$$
	for all $p\in \mathbb{N}$.   
}

When $G$ is trivial, the cohomogeneity $l$ of $G$ is ${\rm dim}(M)=n+1$. Hence, our results are coincide with the classical ones \cite[Theorem 5.1, 8.1]{marques2017existence}. 
As an application, the dichotomy theorem for $G$-invariant minimal hypersurfaces is still valid:

\noindent {\it {\bf (Theorem \ref{Thm:dichotomy})}
	Under the same assumption of Theorem \ref{main.thm}. 
	Then: 
	\begin{itemize}
		\item[(i)]  either there exists a disjoint collection $\{\Sigma_1,\dots,\Sigma_l \}$ of $l$ closed smooth embedded $G$-connected $G$-invariant minimal hypersurfaces;
		\item[(ii)] or there exist infinitely many closed smooth embedded $G$-connected $G$-invariant minimal hypersurfaces.
		\end{itemize}
}

 \paragraph{Outline}
In Section \ref{preliminary}, we offer some notations and useful lemmas of $G$-invariant cycles. 
In Section \ref{Amgren-Pitt-min-max}, we define the $G$-invariant min-max under the Almgren-Pitts setting. 
The discretization and interpolation results under $G$-invariant restrictions are shown in Section \ref{almgren.homo}. 
After providing the definition and existence of $G$-almost minimizing varifolds in Section \ref{G-a.m.v}, we show the regularity of $G$-almost minimizing varifolds in Section \ref{regul-G-a.m.v}.
In Section \ref{minmax.families}, we generalize the min-max widths and sweepouts into the $G$-invariant setting. 
In Section \ref{Sec-upper-bound}, we build upper bounds of $(G,p)$-widths and show the dichotomy theorem for $G$-invariant minimal hypersurfaces. 
Finally, lower bounds of $(G,p)$-width are shown in Section \ref{Sec-lower-bound-witdh}.

\vspace{0.25em}
{\bf Acknowledgement.} The author would like to thank Prof. Gang Tian for his constant encouragement. 
He also thanks Xin Zhou, Zhiang Wu, Yangyang Li, and Zhenhua Liu for helpful discussions.

\section{Preliminary}\label{preliminary}

In this paper, we always assume $(M,g_{_M})$ to be an orientable connected closed Riemannian $(n+1)$-dimensional manifold, and assume $G$ to be a compact Lie group acting as isometries on $M$ of cohomogeneity ${\rm Cohom}(G)=l\geq 3$. 
Let $\mu$ be a bi-invariant Haar measure on $G$ which has been normalized to $\mu(G)=1$. 
A preliminary introduction to the isometric actions of Lie groups can be found in Appendix \ref{Appendix-good-partition}.

 \subsection{Basic notations}\label{notation} 
We borrow the following notations from \cite{liu2021existence} and \cite{marques2017existence}.
Sometimes, we add $G$- in front of objects meaning they are $G$-invariant:
  \begin{itemize}
    \item[$\bullet$] a $G$-varifold $V$ satisfies $g_{\#} V=V$ for all $g\in G$; 
    \item[$\bullet$] a $G$-current $T$ satisfies $g_{\#} T=T$ for all $g\in G$; 
    \item[$\bullet$] a $G$-vector field $X$ satisfies $g_{*} X=X$ for all $g\in G$; 
    \item[$\bullet$] a $G$-map $F$ satisfies $g^{-1}\circ F\circ g=F,\forall g\in G$, (i.e. $F$ is $G$-equivariant); 
    \item[$\bullet$] a $G$-set ($G$-neighborhood) is an (open) set which is a union of orbits. 
  \end{itemize}
  We will also sometimes add a subscript or superscript `$G$' to signify $G$-invariance: 
	\begin{itemize}
		\item[$\bullet$] $\pi$: the projection $\pi:M\mapsto M/G$ defined by $p \mapsto [p]$;
		\item[$\bullet$] $B_\rho^G(p),~\overline{B}_\rho^G(p)$:
		open and closed geodesic tubes with radius $\rho$ around $G\cdot p$;
		\item[$\bullet$] $\mathfrak{X}^G(M)$: the space of $G$-vector fields on $M$; 
		\item[$\bullet$] $\an(p,s,t)$: the open tube $B_t^G(p)\setminus \overline{B}^G_s(p)$; 
		\item[$\bullet$] $\ann_r(p)$: the set $\{\an(p,s,t) : 0<s<t<r\}$; 
		\item[$\bullet$] $T_qG\cdot p$: the tangent space of the orbit $G\cdot p$ at some point $q\in G\cdot p$; 
		\item[$\bullet$] ${\bf N}_qG\cdot p$: the normal vector space of the orbit $G\cdot p$ in $M$ at some point $q\in G\cdot p$; 
		\item[$\bullet$] ${\bf N}(G\cdot p)$: the normal vector bundle of the orbit $G\cdot p$ in $M$; 
		\item[$\bullet$] $M^{reg}$: the union of orbits with principal orbit type. 
	\end{itemize}

For $p\in M$, denote $G_p:=\{g\in G: g\cdot p=p\}$ as the isotropy group of $p$, and $(G_p)$ as the conjugacy class of $G_p$ in $G$. 
Then the orbit type of $G\cdot p$ is determined by $(G_p)$, which gives a stratification structure on $M$ as well as $M/G$. 
The orbit space $M/G$ is a Hausdorff metric space with the induced metric $d_{M/G}$. 
Although $M/G$ may not be a smooth manifold, it can be locally modeled by slices (see \cite{verona1979triangulation}). 
Additionally, $M/G$ is triangulable which also provides an equivariant triangulation of $M$ \cite{illman1978smooth}\cite{illman1983equivariant}. 
More notations on Lie group actions are collected in Appendix \ref{Appendix-good-partition}. 



Just like in \cite{marques2017existence}, the spaces we will work with in this paper are:
\begin{itemize}
\item[$\bullet$] the space ${\bf I}_k(M;\mathbb{Z}_2)$ (${\bf I}_k^G(M;\mathbb{Z}_2)$) of $k$-dimensional ($G$-invariant) mod $2$ flat chains in $\mathbb{R}^L$ with support contained in $M$ (see \cite[4.2.26]{federer2014geometric} for more details);
\item[$\bullet$] the space ${\mathcal Z}_k(M;\mathbb{Z}_2)$ (${\mathcal Z}_k^G(M;\mathbb{Z}_2)$) of ($G$-invariant) mod 2 flat chains  $T \in {\bf I}_k(M;\mathbb{Z}_2)$ ($T \in {\bf I}_k^G(M;\mathbb{Z}_2)$) with $\partial T=0$;
\item[$\bullet$] the closure $\mathcal{V}_k(M)$ ($\mathcal{V}_k^G(M)$), in the weak topology, of the space of $k$-dimensional ($G$-invariant) rectifiable varifolds in $\mathbb{R}^L$ with support contained in $M$. The space of integral rectifiable $k$-dimensional ($G$-invariant) varifolds with support contained in $M$ is denoted by $\mathcal{IV}_k(M)$ ($\mathcal{IV}_k^G(M)$).
\end{itemize}

For any $T\in {\bf I}_k(M;\mathbb{Z}_2)$,  we denote $|T|$ and $\|T\|$ to be the integral varifold and the Radon measure induced by $T$. 
Given $V\in \mathcal{V}_k(M)$, we use $\|V\|$ to denote the Radon measure induced by $V$.

As for metrics of these spaces, the  {\it flat semi-norm} $\mathcal{F}$ and the {\it mass} ${\bf M}$ for chains in ${\bf I}_k(M;\mathbb{Z}_2)$ are defined in \cite[Page 423]{federer2014geometric} and  \cite[Page 426]{federer2014geometric},  respectively.
The  ${\bf F}$-{\it metric} on $\mathcal{V}_k(M)$ is defined in  {Pitts's book} \cite[Page 66]{pitts2014existence}, which induces the varifold weak topology on bounded subsets of $\mathcal{V}_k(M)$, and satisfies
$${\bf F}(|S|,|T|)\leq{\bf M}(S,T),$$
for all $S,T\in {\bf I}_k(M;\mathbb{Z}_2)$.
For $\mathcal{A,B}\subset \mathcal{V}_k(M)$, we also define
  ${\bf F}(\mathcal{A},\mathcal{B}):=\inf\{{\bf F}(V,W):V\in \mathcal{A}, ~ W\in \mathcal{B}\}.$
Finally,  the ${\bf F}$-{\it metric} on ${\bf I}_k(M;\mathbb{Z}_2)$ is defined by
$$ {\bf F}(S,T):=\mathcal{F}(S-T)+{\bf F}(|S|,|T|).$$

We assume ${\bf I}_k(M;{\bf v};\mathbb{Z}_2)$, ${\mathcal Z}_k(M;{\bf v};\mathbb{Z}_2)$ have the topology induced by the metric ${\bf v}$, where ${\bf v}$ could be ${\bf M}$, ${\bf F}$, or omitted for $\mathcal{F}$. 
 For $G$-invariant chains and varifolds, we can similarly define those metrics as above. Indeed, ${\bf I}_k^G(M;{\bf v};\mathbb{Z}_2) $ is a closed subspace of ${\bf I}_k(M;{\bf v};\mathbb{Z}_2)$.

\subsection{$G$-invariant currents and varifolds}\label{Sec-G-current}
In this subsection, we show some useful lemmas for $G$-currents and $G$-varifolds.

\begin{lemma}[Compactness theorem for ${\bf I}_k^G(M;\mathbb{Z}_2)$]\label{Lem:compactness for G-current}
	For any $C>0$, the set
$$\{ T\in {\bf I}_k^G(M;\mathbb{Z}_2) : {\bf M}(T)+{\bf M}(\partial T) \leq C\} $$ is compact under the flat semi-norm $\mathcal{F}$.
\end{lemma}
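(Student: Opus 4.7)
The plan is to deduce this from the classical Federer–Fleming compactness theorem for integral flat chains, together with the observation that $\mathbf{I}_k^G(M;\mathbb{Z}_2)$ is a closed subspace of $\mathbf{I}_k(M;\mathbb{Z}_2)$ under the flat semi-norm $\mathcal{F}$. Concretely, the classical theorem (see \cite[4.2.17]{federer2014geometric}, adapted to $\mathbb{Z}_2$-coefficients as in \cite[4.2.26]{federer2014geometric}) gives that
\[
\mathcal{K}_C := \{T \in \mathbf{I}_k(M;\mathbb{Z}_2) : \mathbf{M}(T) + \mathbf{M}(\partial T) \leq C\}
\]
is $\mathcal{F}$-compact. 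Therefore it suffices to prove that $\mathcal{K}_C \cap \mathbf{I}_k^G(M;\mathbb{Z}_2)$ is a closed subset of $\mathcal{K}_C$.

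For the closedness, fix $g \in G$ and let $T_j \to T$ in the flat semi-norm with $T_j \in \mathbf{I}_k^G(M;\mathbb{Z}_2)$. Since $g$ acts as a diffeomorphism (indeed an isometry) on $M$, the push-forward $g_\#$ is a continuous operator on $(\mathbf{I}_k(M;\mathbb{Z}_2),\mathcal{F})$: one has $\mathcal{F}(g_\# S) \leq \operatorname{Lip}(g)^{k+1} \, \mathcal{F}(S)$, and with equality traits since $g$ is an isometry. Hence
\[
g_\# T = \lim_{j\to\infty} g_\# T_j = \lim_{j\to\infty} T_j = T
\]
in $\mathcal{F}$, so $g_\# T = T$. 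Taking the intersection over all $g \in G$ shows that $\mathbf{I}_k^G(M;\mathbb{Z}_2)$ is $\mathcal{F}$-closed in $\mathbf{I}_k(M;\mathbb{Z}_2)$.

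Combining the two steps: $\mathcal{K}_C \cap \mathbf{I}_k^G(M;\mathbb{Z}_2)$ is a closed subset of the $\mathcal{F}$-compact set $\mathcal{K}_C$, hence compact.

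I do not anticipate any real obstacle here; this is essentially a one-line corollary of classical compactness once one observes that $G$-invariance is the intersection of closed conditions $\{g_\# T = T\}$ under the flat topology. The only point that deserves a check is the flat continuity of push-forward by an isometry, which is standard. (The same reasoning also shows, a posteriori, that $\mathcal{Z}_k^G(M;\mathbb{Z}_2)$ is closed in $\mathcal{Z}_k(M;\mathbb{Z}_2)$, a fact used implicitly later in the paper.)
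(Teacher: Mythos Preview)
Your proposal is correct and follows essentially the same approach as the paper: invoke the classical compactness theorem for integral flat chains, then use that $g_\#$ is $\mathcal{F}$-continuous (since $G$ acts by isometries) to conclude the limit remains $G$-invariant. The paper phrases this as a direct sequence argument rather than your ``closed subset of compact'' framing, but the content is identical.
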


\begin{proof}
	By compactness theorem \cite[27.3]{simon1983lectures}, for any sequence $\{T_i\}_{i=1}^\infty$ in ${\bf I}_k^G(M;\mathbb{Z}_2)$ with ${\bf M}(T)+{\bf M}(\partial T) \leq C$, 
	there exists a limit $T\in{\bf I}_k(M;\mathbb{Z}_2) $ of $T_i$ under the flat topology up to a subsequence. Since $G$ acts as isometries on $M$, we have ${\bf M}(S) = {\bf M}(g_\# S)$, for all $S \in{\bf I}_k(M;\mathbb{Z}_2),g\in G$. 
	This implies $\mathcal{F}(T-T_i)=\mathcal{F}(g_\#T-g_\#T_i) $, and $ T = \lim_{i\rightarrow \infty}T_i = \lim_{i\rightarrow \infty}g_\#T_i =g_\#T $ for all $g\in G$. 
	Thus $T\in {\bf I}_k^G(M;\mathbb{Z}_2)$.
\end{proof}

\begin{remark}\label{Rem:compactness for G-varifold}
	A similar argument can show the compactness theorem for $G$-varifolds also holds as the classical one. 
	Specifically, for any $C>0$, the set
$$\{ V\in \mathcal{V}_k^G(M) : \|V\|(M) \leq C\} $$ is compact under the weak topology of varifolds.
\end{remark}

The following three statements follow directly from definitions (\cite[Chapter 6]{simon1983lectures}).
\begin{lemma}[Restrict to $G$-sets]\label{Lem:restrict}
	For any $k$-dimensional $G$-current $T$ and Borel $G$-set $U$, we have $T\llcorner U$ is also a $k$-dimensional $G$-current. 
\end{lemma}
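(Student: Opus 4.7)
The plan is to unpack the definitions directly: the statement is essentially tautological once one observes that both $G$-invariance of $T$ and $G$-invariance of $U$ translate into clean compatibility with the pushforward of restriction.

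First, I would recall the definition of $T \llcorner U$ from \cite[Chapter 6]{simon1983lectures}: for a $k$-current $T$ and a Borel set $U$, $T \llcorner U$ is the $k$-current characterized (on test forms $\omega$ of the appropriate degree) by the relation involving the indicator $\chi_U$, and it has the same dimension as $T$. So $T \llcorner U$ is automatically a $k$-dimensional current; what needs checking is the $G$-invariance.

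Second, I would verify the pushforward-restriction identity: for a diffeomorphism $g \in G$ acting on $M$,
\begin{equation*}
g_\#(T \llcorner U) \;=\; (g_\# T) \llcorner g(U).
\end{equation*}
This is a standard functorial identity for currents (analogous to the measure-theoretic identity $g_\#(\mu \llcorner U) = (g_\# \mu) \llcorner g(U)$), which follows by evaluating both sides on an arbitrary test form and using the change-of-variables formula together with $\chi_{g(U)} \circ g = \chi_U$.

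Third, I would combine the two hypotheses. Because $T$ is a $G$-current, $g_\# T = T$ for every $g \in G$; because $U$ is a $G$-set, $g(U) = U$ for every $g \in G$. Substituting into the identity above gives
\begin{equation*}
g_\#(T \llcorner U) \;=\; T \llcorner U \qquad \text{for all } g \in G,
\end{equation*}
which is exactly the defining property of a $G$-current.

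There is essentially no obstacle here, as the author remarks in the excerpt that the lemma follows directly from definitions. The only minor care needed is to justify the pushforward-restriction identity for Borel (rather than open) sets $U$; this is standard because the action of $g$ is a diffeomorphism, so it preserves the Borel $\sigma$-algebra and sends indicator functions to indicator functions of images. The analogous statements for $G$-varifolds (mentioned in Remark \ref{Rem:compactness for G-varifold}) and for $\|T\|$, $|T|$ would follow from the same one-line argument if needed later in the paper.
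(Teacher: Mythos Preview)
Your proposal is correct and matches the paper's own treatment: the paper gives no proof at all, simply stating that the lemma (together with Lemmas \ref{Lem:G-push} and \ref{Lem:slice}) ``follows directly from definitions (\cite[Chapter 6]{simon1983lectures}).'' Your argument is precisely the unpacking of those definitions via the pushforward-restriction identity $g_\#(T\llcorner U)=(g_\# T)\llcorner g(U)$, so there is nothing further to compare.
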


\begin{lemma}[$G$-equivariant pushing forward]\label{Lem:G-push}
	Let $F$ be a $G$-equivariant Lipschitz proper map from $U$ to $V$, where $U,V\subset M$ are open $G$-sets. 
	Suppose $T\in {\bf I}_k^G(U;\mathbb{Z}_2)$, then  $F_\#T \in {\bf I}_{k}^G(V;\mathbb{Z}_2)$.
\end{lemma}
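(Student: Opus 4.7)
The plan is to verify two things in succession: that $F_\#T$ lies in ${\bf I}_k(V;\mathbb{Z}_2)$ (forgetting $G$-invariance for a moment), and then that $F_\#T$ is $G$-invariant. The first point is entirely classical: since $F$ is a proper Lipschitz map between open subsets of $M$, the general pushforward theory for integral flat chains (see e.g.\ \cite[4.1.14, 4.1.30]{federer2014geometric} or \cite[Chapter 6]{simon1983lectures}) guarantees $F_\#T \in {\bf I}_k(V;\mathbb{Z}_2)$ with $\mathbf{M}(F_\#T)\leq (\mathrm{Lip}\,F)^k \mathbf{M}(T)$. No modification is needed on the equivariant side for this step.

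The second point is where equivariance enters, and it is essentially a one-line computation. The notational convention for a $G$-map recorded in Section \ref{notation} is $g^{-1}\circ F\circ g=F$ for all $g\in G$, which is equivalent to the intertwining relation $F\circ g=g\circ F$. Combined with functoriality of the pushforward, $(h\circ k)_\# = h_\# \circ k_\#$, and with the assumed $G$-invariance of $T$ (namely $g_\#T=T$), this gives
\[
g_\#(F_\#T)=(g\circ F)_\#T=(F\circ g)_\#T=F_\#(g_\#T)=F_\#T
\]
for every $g\in G$, which is exactly the definition of $F_\#T$ being a $G$-current. Since $F_\#T$ was already shown to be an integral flat chain with support in $V$, one concludes $F_\#T\in {\bf I}_k^G(V;\mathbb{Z}_2)$.

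I do not anticipate any real obstacle here; the lemma is a bookkeeping statement made to record the compatibility of pushforward with the $G$-action, and the only thing to be careful about is translating the paper's convention $g^{-1}\circ F\circ g=F$ correctly into $F\circ g=g\circ F$ before applying functoriality. If one wanted to be scrupulous about the support condition (that $F_\#T$ actually lives in $V$ rather than merely in $\mathbb{R}^L$), properness of $F$ ensures this in the usual way, and no $G$-equivariant refinement is required.
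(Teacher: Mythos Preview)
Your proposal is correct and aligns with the paper's approach: the paper simply states that this lemma (along with Lemmas~\ref{Lem:restrict} and \ref{Lem:slice}) follows directly from definitions, citing \cite[Chapter 6]{simon1983lectures}, without writing out any details. What you have supplied is precisely the expected unpacking of that remark, and no additional idea is needed.
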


\begin{lemma}[$G$-invariant slice]\label{Lem:slice}
	Suppose $f$ is a $G$-invariant Lipschitz function on $M$, $T\in {\bf I}_k^G(M;\mathbb{Z}_2)$, then for almost all $t\in\mathbb{R}$, the slice of $T$ by $f$ at $t$ exists and $\langle T,f,t\rangle \in {\bf I}_{k-1}^G(M;\mathbb{Z}_2)$.
\end{lemma}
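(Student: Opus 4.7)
The plan is to reduce the claim to the classical Federer--Fleming slicing theorem and then verify $G$-invariance using Lemma \ref{Lem:restrict} together with the equivariance of the boundary operator. Recall that the standard slicing theorem (Simon, \emph{Lectures on Geometric Measure Theory} \S28, or Federer \S4.3) gives, for a.e. $t \in \mathbb{R}$, a slice $\langle T, f, t\rangle \in {\bf I}_{k-1}(M;\mathbb{Z}_2)$ together with the explicit identity
$$\langle T, f, t\rangle \;=\; \partial\bigl(T \llcorner \{f < t\}\bigr) \;-\; (\partial T)\llcorner \{f < t\},$$
interpreted mod $2$. So the only content I need to add to the classical statement is that each of the two terms on the right is $G$-invariant for every $t$; the slice will then automatically lie in ${\bf I}_{k-1}^G(M;\mathbb{Z}_2)$.

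For this, the key observation is that since $f$ is $G$-invariant, the sublevel set $\{f < t\}$ is a Borel $G$-set for every $t \in \mathbb{R}$. Applying Lemma \ref{Lem:restrict} to $T \in {\bf I}_k^G(M;\mathbb{Z}_2)$ gives $T \llcorner \{f < t\} \in {\bf I}_k^G(M;\mathbb{Z}_2)$. Because pushforward by an isometry commutes with taking boundary, $g_\#(\partial S) = \partial(g_\# S)$ for any current $S$, so the boundary of a $G$-current is still $G$-invariant. Hence $\partial(T \llcorner \{f < t\}) \in {\bf I}_{k-1}^G(M;\mathbb{Z}_2)$. The same two steps applied to $\partial T \in {\bf I}_{k-1}^G(M;\mathbb{Z}_2)$ show that $(\partial T)\llcorner \{f < t\}$ lies in ${\bf I}_{k-1}^G(M;\mathbb{Z}_2)$ as well.

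Combining these facts with the slicing identity yields $\langle T, f, t\rangle \in {\bf I}_{k-1}^G(M;\mathbb{Z}_2)$ for a.e. $t$, which is the conclusion. No serious obstacle arises; the argument is essentially a compatibility check between Lemma \ref{Lem:restrict}, the equivariance of $\partial$, and the classical slice formula. A mild alternative, should one wish to bypass the explicit slice identity, is to invoke the commutation rule $g_\#\langle T,f,t\rangle = \langle g_\# T, f\circ g^{-1}, t\rangle$ for an isometry $g$: since $g_\# T = T$ and $f \circ g^{-1} = f$, this immediately gives $g_\#\langle T,f,t\rangle = \langle T,f,t\rangle$ for a.e. $t$, and then one passes from a countable dense subset of $G$ to all of $G$ by continuity of $g \mapsto g_\#$ in the flat topology.
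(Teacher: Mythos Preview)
Your proposal is correct and matches the paper's approach: the paper states that this lemma (together with Lemmas \ref{Lem:restrict} and \ref{Lem:G-push}) ``follow[s] directly from definitions (\cite[Chapter 6]{simon1983lectures})'' without further argument, and your write-up simply makes that direct check explicit via the slice formula and Lemma \ref{Lem:restrict}. One small caveat worth tightening: Lemma \ref{Lem:restrict} only asserts $G$-invariance of the restriction, not integrality; the statement $T\llcorner\{f<t\}\in {\bf I}_k^G(M;\mathbb{Z}_2)$ uses both Lemma \ref{Lem:restrict} and the classical fact that this restriction is integral for a.e.\ $t$, so it would be cleaner to say these two facts combine to give the conclusion for a.e.\ $t$.
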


Since $G$ acts as isometries on $M$, the distant function ${\rm dist}(G\cdot p,\cdot)$ to an orbit $G\cdot p$ is a $G$-invariant Lipschitz function. 
Furthermore, if we define the $G$-actions on the normal bundle ${\bf N}(G\cdot p)$ of $G\cdot p$ as $g\cdot(p,v):=(g\cdot p, g_*v)$, for all $g\in G$, $(p,v)\in {\bf N}(G\cdot p)$. 
Then the normal exponential map ${\rm exp}^\perp_{G\cdot p}$ gives a $G$-equivariant diffeomorphism in a $G$-neighborhood of the zero section in ${\bf N}(G\cdot p)$.

The following lemma is a crucial ingredient of the Almgren's Isomorphism as well as plenty of constructions. 
Note we only prove for codimensional-one $G$-cycles since we need the constancy theorem \cite[Theorem 26.27]{simon1983lectures}. 
\begin{lemma}[$G$-invariant isoperimetric lemma]\label{Lem:isoperimetric}
	There exist positive constants $\nu_M$ and $C_M$ such that for any $T_1,T_2\in {\mathcal Z}_n^G(M;\mathbb{Z}_2)$ with
	$$\mathcal{F}(T_1-T_2)<\nu_M, $$
	there exists a unique $Q\in {\bf I}_{n+1}^G(M;\mathbb{Z}_2)$ such that
	\begin{itemize}
    	\item[$\bullet$] $\partial Q = T_1-T_2$;
    	\item[$\bullet$] ${\bf M}(Q)\leq C_M\cdot\mathcal{F}(T_1-T_2)$.
  	\end{itemize}
\end{lemma}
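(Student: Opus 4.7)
The plan is to apply the classical (non-equivariant) isoperimetric lemma of Almgren and then observe that, provided $\nu_M$ is chosen small enough, the resulting small-mass filling is automatically $G$-invariant. No equivariant averaging is needed; $G$-invariance comes essentially for free from the rigidity of top-dimensional mod-$2$ cycles on a connected manifold together with uniqueness in the classical lemma.

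First, invoke the classical isoperimetric lemma (Almgren \cite{almgren1962homotopy}; see also the version used in \cite{pitts2014existence}) to obtain constants $\nu_0>0$ and $C_M>0$ so that for any $T_1,T_2\in\mathcal{Z}_n(M;\mathbb{Z}_2)$ with $\mathcal{F}(T_1-T_2)<\nu_0$ there is a unique $Q\in {\bf I}_{n+1}(M;\mathbb{Z}_2)$ with $\partial Q=T_1-T_2$ and ${\bf M}(Q)\le C_M\,\mathcal{F}(T_1-T_2)$. Set $\nu_M:=\min\{\nu_0,\,\mathrm{vol}(M)/(3C_M)\}$; then any such filling has mass strictly less than $\mathrm{vol}(M)/2$. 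The upper bound $\mathrm{vol}(M)/2$ is the only quantitative ingredient required beyond the classical lemma.

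Now suppose additionally $T_1,T_2\in\mathcal{Z}_n^G(M;\mathbb{Z}_2)$, and let $Q$ be the filling just produced. For any $g\in G$, since $g$ acts as an isometry and $T_1-T_2$ is $G$-invariant, the pushforward $g_\# Q$ satisfies $\partial(g_\# Q)=g_\#(T_1-T_2)=T_1-T_2$ and ${\bf M}(g_\# Q)={\bf M}(Q)$, so by uniqueness in the classical lemma $g_\# Q=Q$. Equivalently, and more directly, $g_\# Q - Q$ is a mod-$2$ $(n+1)$-cycle in the connected manifold $M$, hence by the constancy theorem \cite[Theorem 26.27]{simon1983lectures} equals either $0$ or $[M]$; but ${\bf M}(g_\# Q-Q)\le 2{\bf M}(Q)<\mathrm{vol}(M)={\bf M}([M])$, so it must be $0$. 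Thus $Q\in {\bf I}_{n+1}^G(M;\mathbb{Z}_2)$, giving existence.

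Uniqueness within ${\bf I}_{n+1}^G(M;\mathbb{Z}_2)$ (in fact within ${\bf I}_{n+1}(M;\mathbb{Z}_2)$) follows from the same constancy argument: any two fillings obeying the mass bound differ by an $(n+1)$-cycle of mass strictly less than $\mathrm{vol}(M)$, which must vanish. I do not expect any genuine obstacle here; the one point to verify carefully is that the classical isoperimetric lemma really delivers \emph{uniqueness} of the small-mass filling, but this is standard and is itself a consequence of the same top-dimensional constancy argument applied in the ambient setting.
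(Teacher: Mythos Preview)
Your proof is correct and follows essentially the same approach as the paper: apply the classical (non-equivariant) isoperimetric lemma, then use that $g_\# Q$ satisfies the same boundary condition and mass bound as $Q$, so uniqueness forces $g_\# Q=Q$. The paper's version is slightly terser---it invokes uniqueness directly without the explicit constancy-theorem backup or the $\mathrm{vol}(M)/2$ threshold---but the idea is identical.
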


\begin{proof}
	By \cite[Lemma 3.1]{marques2017existence}, one can choose positive numbers $\nu_M,C_M$ such that, for any such $T_1,T_2\in {\mathcal Z}_n^G(M;\mathbb{Z}_2)$, there exists a unique isoperimetric choice $Q\in {\bf I}_{n+1}(M;\mathbb{Z}_2)$ with $\partial Q = T_1-T_2$ and ${\bf M}(Q)\leq C_M\cdot\mathcal{F}(T_1-T_2)$. We only need to check that $Q$ is $G$-invariant.
	Indeed, we have
	$$ \partial(g_{\#}Q) = g_{\#}(\partial Q) = g_{\#}(T_1-T_2) = T_1-T_2,$$
	for all $g\in G$.
	Since $G$ acts as isometries on $M$, it's clear that ${\bf M}(Q)={\bf M}(g_{\#}Q) $. Thus we get $g_{\#}Q=Q$ by the uniqueness of the isoperimetric choice, which implies $Q\in{\bf I}_{n+1}^G(M;\mathbb{Z}_2)$.
\end{proof}

For a $G$-varifold $V\in\V_k^G(M)$, we say $V$ is {\em $G$-stationary in $M$} if 
$$\delta V(X)=0,\quad \forall X\in\mathfrak{X}^G(M).$$
Inspired by \cite{de2013existence}, Z. Liu has shown the equivalence between $G$-stationary and stationary for $G$-varifolds in \cite[Lemma 2.2]{liu2021existence}:

\begin{lemma}[Z. Liu]\label{stationary}
	 A $G$-varifold $V$ is $G$-stationary in $M$ if and only if it is stationary in $M$.
\end{lemma}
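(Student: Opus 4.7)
The plan is to prove the two implications separately. The forward direction is immediate: if $\delta V(X) = 0$ for every $X \in \mathfrak{X}(M)$, then the identity automatically holds for every $X \in \mathfrak{X}^G(M)$, giving $G$-stationarity. The content is therefore the converse.

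For the converse, the strategy is to average an arbitrary test field against the Haar measure on $G$. Given $X \in \mathfrak{X}(M)$, I would define
\[
\bar X(p) := \int_G g_*\bigl(X(g^{-1}\cdot p)\bigr)\, d\mu(g).
\]
Compactness of $G$ and smoothness of the action make $\bar X$ a smooth vector field. Substituting $g \mapsto hg$ and using left-invariance of $\mu$ yields $h_*\bar X = \bar X$ for every $h \in G$, so $\bar X \in \mathfrak{X}^G(M)$.

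The key intermediate step is the identity $\delta V(g_* X) = \delta V(X)$ for every $g \in G$. If $\phi_t$ is the flow of $X$, then $g \circ \phi_t \circ g^{-1}$ is the flow of $g_* X$. Using $g_\# V = V$ (equivalently $(g^{-1})_\# V = V$) together with the isometry-invariance of mass,
\[
\|(g\circ \phi_t \circ g^{-1})_\# V\|(M) = \|g_\#(\phi_t)_\#(g^{-1})_\# V\|(M) = \|(\phi_t)_\# V\|(M).
\]
Differentiating at $t=0$ yields the claim. Integrating this identity over $G$ against $\mu$ and commuting the integral with the linear functional $\delta V$ then gives
\[
\delta V(X) = \int_G \delta V(g_* X)\, d\mu(g) = \delta V(\bar X) = 0,
\]
the last equality being $G$-stationarity applied to $\bar X \in \mathfrak{X}^G(M)$.

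The only technical point is justifying the exchange of the Haar integral with $\delta V$. Since $G$ is compact and the action is smooth, $g \mapsto g_* X$ is continuous into $\mathfrak{X}(M)$ in the $C^1$-topology, and $\delta V$ is continuous in that topology, so standard vector-valued (Bochner) integration suffices. I expect no genuine obstacle; the proof reduces to Haar averaging plus the mass invariance of $V$ under isometries.
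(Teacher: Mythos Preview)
Your proof is correct and follows the same Haar-averaging approach as the cited reference. The paper does not give its own proof here but defers to \cite[Lemma 2.2]{liu2021existence}; its later use of the identity $\delta V(Y)=\delta V(Y_G)$ (in the pull-tight argument, with $Y_G$ defined by the averaging formula~(\ref{Eq:average})) confirms that the intended argument is exactly the one you wrote.
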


Let $U\subset M$ be an open $G$-set, and $\Sigma\subset\partial U$ be a smoothly embedded compact $2$-sided minimal $G$-hypersurface. 
Recall that $\Sigma$ is stable if and only if the first eigenvalue of the Jacobi operator $L$ is non-negative, where $L$ acts on $\mathfrak{X}^\perp (\Sigma)$ the space of smooth sections of the normal bundle vanishing on $\partial \Sigma$. 
We can replace $\mathfrak{X}^\perp (\Sigma)$ by the $G$-invariant sections space $\mathfrak{X}^{\perp,G} (\Sigma)$ and define the {\em $G$-eigenvalues} as well as the {\em $G$-stability} similarly. 
The following lemma shows the equivalence between the stability and $G$-stability of such boundary type minimal $G$-hypersurfaces.

\begin{lemma}[$G$-stability]\label{Lem:G-stable}
	Let $\Sigma$ be a compact smooth embedded $G$-invariant minimal hypersurface in $M$. 
	If $\Sigma$ is a part of the boundary of an open $G$-subset $U\subset M$. 
	Then $\Sigma$ is $G$-stable if and only if it is stable. 
\end{lemma}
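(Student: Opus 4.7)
The plan is to reduce both notions of stability to the spectrum of the Jacobi operator and then show the first eigenvalues (ordinary and $G$-invariant) coincide via an averaging argument.

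First I would use that $\Sigma\subset\partial U$ with $U$ a $G$-set provides a canonical, $G$-invariant choice of unit normal $\nu$ (the outward normal of $U$), so $\Sigma$ is two-sided and $G$-invariant normal sections $X=\phi\nu$ are in bijection with $G$-invariant scalar functions $\phi$ on $\Sigma$. Under this identification the Jacobi quadratic form becomes
\[
Q(\phi,\phi)=\int_\Sigma\bigl(|\nabla\phi|^2-(|A|^2+\Ric(\nu,\nu))\phi^2\bigr)\,d\mathcal{H}^n,
\]
with associated Schr\"odinger-type operator $L=\Delta_\Sigma+|A|^2+\Ric(\nu,\nu)$. Because $G$ acts by isometries preserving $\Sigma$ and $\nu$, both $|A|^2$ and $\Ric(\nu,\nu)$ are $G$-invariant functions on $\Sigma$ and $L$ commutes with the $G$-action. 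Stability is equivalent to $\lambda_1(L)\geq 0$ and $G$-stability to $\lambda_1^G(L)\geq 0$, where $\lambda_1^G$ is the infimum of the Rayleigh quotient of $L$ restricted to (nonzero) $G$-invariant test functions (with Dirichlet conditions on $\partial\Sigma$ if nonempty).

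The inequality $\lambda_1(L)\leq\lambda_1^G(L)$ is immediate since $G$-invariant functions form a subspace of all admissible test functions, giving the easy direction (stability implies $G$-stability). For the reverse direction, let $\phi$ be a first eigenfunction of $L$; by standard elliptic theory (positivity of the principal eigenfunction for a scalar Schr\"odinger operator, via Krein--Rutman or Courant's nodal domain theorem) we may take $\phi>0$ on the interior of $\Sigma$. Define
\[
\tilde\phi(x):=\int_G \phi(g^{-1}\cdot x)\,d\mu(g).
\]
Since $L$ is $G$-equivariant, each $\phi\circ g^{-1}$ is an eigenfunction of $L$ with eigenvalue $\lambda_1(L)$, hence so is $\tilde\phi$ by linearity and smooth dependence on $g$. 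As $\phi>0$ and $\mu$ is a probability measure, $\tilde\phi>0$ pointwise; in particular $\tilde\phi\not\equiv 0$, so it is an admissible $G$-invariant test function achieving the value $\lambda_1(L)$. Therefore $\lambda_1^G(L)\leq\lambda_1(L)$, and combined with the previous inequality we obtain $\lambda_1^G(L)=\lambda_1(L)$, so $\Sigma$ is stable if and only if it is $G$-stable.

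The only delicate point is the positivity of $\phi$, which guarantees that averaging over $G$ does not annihilate it; this is standard for the principal eigenfunction of a scalar second-order elliptic operator and transfers to our setting precisely because two-sidedness lets us scalarize the Jacobi operator. Everything else (isometric action, invariance of $\nu$, $|A|^2$, $\Ric(\nu,\nu)$) is automatic from the hypotheses, so no further geometric input is needed.
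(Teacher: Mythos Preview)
Your proposal is correct and follows essentially the same approach as the paper: both use the $G$-invariance of the inward normal $\nu$ (coming from $\Sigma\subset\partial U$ with $U$ a $G$-set) to scalarize the Jacobi operator, then average the positive first eigenfunction over $G$ to produce a nonzero $G$-invariant first eigenfunction, concluding $\lambda_1=\lambda_1^G$. The paper phrases the averaging at the level of normal vector fields $X_g=(g^{-1})_*X$ rather than scalar functions, but under the identification $X=\phi\nu$ this is the same argument.
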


\begin{proof}
	Since $\Sigma$ is a part of the boundary of an open $G$-subset $U\subset M$, it must be $2$-sided.
	Additionally, there exists a unit normal vector field $\nu$ on $\Sigma$ pointing inward $U$.  
	Due to the facts that $U$ is an open $G$-set and $G$ acts as isometries, we have $g_*\nu =\nu$ for all $g\in G$, and thus $\nu\in \mathfrak{X}^{\perp,G}(\Sigma)$. 
	Let $X=u\nu$ be the first eigenvector field of the Jacobi operator $L$ of $\Sigma$ with $LX=-\lambda_1 X$. 
	Denote $\{\phi_t\}$ to be the diffeomorphisms generated by $X$. 
	Then the vector field corresponding to $\phi_{g,t}=g^{-1}\circ \phi_t\circ g$ is 
	\begin{eqnarray*}
		 X_g(p) = \frac{d}{dt}\Big\vert_{t=0}~ \phi_{g,t}(p) =  (g^{-1})_*(\frac{d}{dt}\Big\vert_{t=0}~ \phi_{t}\circ g(p)) =  (g^{-1})_*(X(g(p))) = u(g\cdot p)\nu(p),
	\end{eqnarray*}
	i.e. $X_g=(g^{-1})_* X$. Define then 
	\begin{equation}\label{Eq:average}
		X_G(p):=\int_G X_g(p) ~d\mu(g)=\Big(\int_G u(g\cdot p)~d\mu(g)\Big) \nu(p),
	\end{equation}
	where the integral is carried out in $T_pM$. 
	As in \cite{liu2021existence}, $X_G$ is $G$-invariant. 
	Since the first eigenfunction $u$ does not change sign on $\Sigma$, we have $X_G\neq 0$. 
	We also mention that $X_G, X_g $ are all contained in $\mathfrak{X}^{\perp} (\Sigma)$ since $G$ acts as isometries and $\Sigma$ is $G$-invariant. Thus $0\neq X_G\in \mathfrak{X}^{\perp,G} (\Sigma)$. 
	Moreover, a direct computation shows:
		\begin{eqnarray}\label{Eq:eigenvector field}
		LX_G &=& \int_G L(g^{-1}_*X) ~d\mu(g) =  \int_G g^{-1}_*(LX) ~d\mu(g)\nonumber
		\\
		&=& \int_G g^{-1}_*(-\lambda_1 X) ~d\mu(g) = -\lambda_1 X_G. 
	\end{eqnarray}
	Hence, $X_G\in\mathfrak{X}^{\perp,G} (\Sigma)$ is a $G$-invariant first eigenvector field of $L$. 
	This implies that 
	$ \Sigma$ is stable $ \Leftrightarrow \lambda_1\geq 0 \Leftrightarrow \Sigma$ is $G$-stable.
\end{proof}

\section{Equivariant Almgren-Pitts Min-max Theory}\label{Amgren-Pitt-min-max}

This section is parallel to \cite[Section 2]{marques2017existence}. 
In the following content, let $X$ be a cubical subcomplex of $I^m=[0,1]^m$. 

\subsection{Cubical Complex}\label{Sec-sub-cubical-complex}
For any $j\in \mathbb{N}$, we denote $I(1,j)$ to be the cube complex on $I=[0,1]$ with $1$-cells 
$[0,3^{-j}], [3^{-j},2 \cdot 3^{-j}],\dots,[1-3^{-j}, 1],$ 
and $0$-cells (vertices)
$[0], [3^{-j}],\dots,[1].$
The cell complex on $I^m:=[0,1]^m$ is denoted as
$I(m,j):=I(1,j)\otimes\dots \otimes I(1,j)$, ($m$ times). 
Let $\partial: I(m, j)\rightarrow I(m, j)$ be the boundary homeomorphism given by
$$\partial(\al_1\otimes\cdots\otimes\al_m):=\sum_{i=1}^m(-1)^{\si(i)}\al_1\otimes\cdots\otimes \partial \al_i\otimes\cdots\otimes\al_m,$$
where $\si(i):=\sum_{l<i}\dim(\al_l)$, $\partial[a, b]:=[b]-[a]$, 
and $\partial[a]:=0$. 
Define $\alpha:=\alpha_1 \otimes \cdots\otimes \alpha_m$ to be a {\em $q$-cell} of $I(m,j)$ if $\alpha_i$ is a cell
of $I(1,j)$ for each $i$, and $\sum_{i=1}^m {\rm dim}(\alpha_i) =q$.

We denote $X(j)$ as the union of all cells of $I(m,j)$ whose support is contained in some cells of $X$. 
Denote then $X(j)_q$ as the set of all $q$-cells in $X(j)$. 
The {\em distance} between two vertices $x, y\in X(j)_0$ is defined by ${\bf d}(x,y) := 3^j\cdot\sum_{i=1}^m|x_i-y_i|$, and $x,y$ are said to be {\em adjacent} if ${\bf d}(x,y)=1$.
Given $i,j\in \mathbb{N}$, define ${\bf n}(i,j):X(i)_0\rightarrow X(j)_0$ as a map so that ${\bf n}(i,j)(x)$ is the unique element of $X(j)_0$ with 
$${\bf d}(x,{\bf n}(i,j)(x))=\inf\{{\bf d}(x,y): y\in X(j)_0\}.$$
For any map $\phi:X(j)_0\rightarrow  \mathcal{Z}_n^G(M;\mathbb{Z}_2)$, the {\em fineness} of $\phi$ is defined as
$${\bf f}(\phi):=\sup\left\{{\bf M}(\phi(x)-\phi(y)) : {\bf d}(x,y)=1,~ x,y\in  X(j)_0\right\}.$$

\subsection{Homotopy notions in discrete settings}
Let $\phi_i:X(k_i)_0\rightarrow  \mathcal{Z}_n^G(M;\mathbb{Z}_2)$, $i=1,2$, be two discrete mappings. We say $\phi_1$ and $\phi_2$ are {\em $X$-homotopic in $\mathcal{Z}_n^G(M;{\bf M};\mathbb{Z}_2)$ with fineness $\delta$}  if we can find a map
$$\psi: I(1,k)_0\times X(k)_0\rightarrow  \mathcal{Z}_n^G(M;\mathbb{Z}_2)$$
for some $k\in \mathbb{N}$ satisfying ${\bf f}(\psi)<\delta$ and 
$\psi([i-1],x)=\phi_i({\bf n}(k,k_i)(x))$, for all $x\in X(k)_0$, $i=1,2$.

\begin{definition}\label{Def:homotopy sequence}
	A sequence of mappings $S=\{\phi_i\}_{i=1}^\infty$ with
	$$\phi_i:X(k_i)_0\rightarrow \mathcal{Z}_n^G(M;\mathbb{Z}_2)$$ 
	is called an
 	$$\mbox{{\em $(X,{\bf M})$-homotopy sequence of mappings into $\mathcal{Z}_n^G(M;{\bf M};\mathbb{Z}_2)$}},$$ 
	if $\phi_i$ and $\phi_{i+1}$ are $X$-homotopic in  $\mathcal{Z}_n^G(M;{\bf M};\mathbb{Z}_2)$ with fineness $\delta_i$ such that
	\begin{itemize}
		\item[(i)] $\lim_{i\rightarrow\infty} \delta_i=0$;
		\item[(ii)]$\sup\{{\bf M}(\phi_i(x)):x\in X(k_i)_0, i\in \mathbb{N}\}<+\infty.$
	\end{itemize}
\end{definition}

\begin{definition}\label{Def:homotopy class}
	Given $S^j=\{\phi^j_i\}_{i=1}^\infty$, $j=1,2$, as two $(X,{\bf M})$-homotopy sequences of mappings into $\mathcal{Z}_n^G(M;{\bf M};\mathbb{Z}_2)$, we say  that {\it $S^1$ is $G$-homotopic to $S^2$} if there exists a sequence $\{\delta_i\}_{i\in \mathbb{N}}$ such that
 \begin{itemize}
\item[(i)] $\phi^1_i$  is $X$-homotopic to $\phi^2_i$ in  $\mathcal{Z}_n^G(M;{\bf M};\mathbb{Z}_2)$ with fineness $\delta_i$;
\item[(ii)] $\lim_{i\rightarrow\infty} \delta_i=0.$
 \end{itemize}
Moreover, we call  the equivalence class
of any such sequence an
$$\mbox{{\it $(X,{\bf M})$-homotopy class of mappings into $ \mathcal{Z}_n^G(M;{\bf M};\mathbb{Z}_2)$},} $$
and denote $[X,\mathcal{Z}_n^G(M;{\bf M};\mathbb{Z}_2)]^{\#}$ to be the set of all such equivalence classes.
\end{definition}

For any $\Pi \in  [X,\mathcal{Z}_n^G(M;{\bf M};\mathbb{Z}_2)]^{\#}$, define the function ${\bf L}: \Pi\rightarrow [0,+\infty]$ by 
\begin{eqnarray*}
	{\bf L}(S) := \limsup_{i\rightarrow\infty} \max_{x\in \mathrm{dmn}(\phi_i)} {\bf M}(\phi_i(x)),
\end{eqnarray*}
where $S=\{\phi_i\}_{i\in \mathbb{N}}\in\Pi$.

\begin{definition}[Width]\label{Def:width continuous}
	The {\it width} of $\Pi \in  [X,\mathcal{Z}_n^G(M;{\bf M};\mathbb{Z}_2)]^{\#}$ is defined by
$${\bf L}(\Pi):=\inf_{S\in \Pi}{\bf L}(S).$$
Additionally, we say  $S\in \Pi$ is a {\it critical sequence} for $\Pi$ if ${\bf L}(S)={\bf L}(\Pi).$
\end{definition}

By a diagonal argument in \cite[Lemma 15.1]{marques2014min}, there exists a critical sequence for all $\Pi \in  [X,\mathcal{Z}_n^G(M;{\bf M};\mathbb{Z}_2)]^{\#}$.

\begin{definition}[Critical set]
	For any $S=\{\phi_i\}_{i\in \mathbb{N}}\in \Pi$, we define the {\em image set} of $S$ as the compact subset ${\bf K}(S)\subset\mathcal{V}^G_n(M)$ given by
	\begin{multline*}
		{\bf K}(S):=\{V \in \mathcal{V}^G_n(M) : V=\lim_{j\rightarrow\infty}|\phi_{i_j}(x_j)|\mbox{ for some sequence}~ i_1<i_2<\dots\\
		\mbox{ and some }x_j\in \mathrm{dmn}(\phi_{i_j})\}.
	\end{multline*}
	If $S$ is a critical sequence for $\Pi$, the {\em critical set} ${\bf C}(S)$ of $S$ is defined by
	$${\bf C}(S):=\{V\in {\bf K}(S) : \|V\|(M)={\bf L}(\Pi)\}.$$
\end{definition}

\subsection{Homotopy notions in continuous settings}\label{Sec:homotopy continuous}
The previous notions are discrete analogs for the usual continuous homotopy notions. 
In this subsection, we collect some homotopy notations under the $G$-equivariant continuous setting, which are generalized from \cite[Section 3]{marques2015morse}. 
Bold symbols and capital letters are used for notations in the continuous setting to distinguish them from symbols in the discrete case.

In the following paper, we denote $\Phi:X\to \Z_{n}^G(M;{\bf v};\mZ_2)$ to be a map from a cubical subcomplex $X\subset I^m$ to $\Z_{n}^G(M;\mZ_2)$ which is continuous in the metric ${\bf v} = \F$, $\M$ or $\mF$. 

\begin{definition}\label{Def:homotopy continuous}
	Let $\Phi_i: X\to \Z_{n}^G(M;\mF;\mZ_2)$, $i=1,2$, be two $\mF$-continuous maps. 
	We say $\Phi_1$ is {\em $G$-homotopic} to $\Phi_2$ if there exists an $\F$-continuous map $\Psi:I\times X\to \Z_{n}^G(M;\F;\mZ_2)$ so that $\Psi(0,x)=\Phi_1(x),~\Psi(1,x)=\Phi_2(x)$, for all $x\in X$. 
	Moreover, we denote ${\bf \Pi}$ to be a {\em continuous $G$-homotopy class}, and denote $\big[X,\Z_{n}^G(M;\mF;\mZ_2)\big]$ to be the set of all such $G$-homotopy classes.
\end{definition}


Given ${\bf \Pi}\in  \big[X,\Z_{n}^G(M;\mF;\mZ_2)\big]$, we denote ${\bf L}(\Phi) := \sup_{x\in X}\M(\Phi(x))$ for every $\Phi\in{\bf \Pi}$. 
For any sequence $\{\Phi_i\}_{i\in\N}\subset{\bf \Pi}$, let
$${\bf L}(\{\Phi_i\}_{i\in\N}):=\limsup_{i\to\infty}\sup_{x\in X}\M(\Phi_i(x)).$$

\begin{definition}[Width]\label{Def:width}
	Let ${\bf \Pi}\in \big[X, \Z_{n}^G(M;\mF;\mZ_2)\big]$ be a continuous $G$-homotopy class. 
	The {\em width} of ${\bf \Pi}$ is defined by
	$${\bf L}({\bf \Pi}):=\inf_{\Phi\in {\bf \Pi}}{\bf L}(\Phi).$$
	Additionally, we say $\{\Phi_i\}_{i\in\N}\subset {\bf \Pi}$ is a {\it min-max sequence} for ${\bf \Pi}$ if 
	${\bf L}(\{\Phi_i\}_{i\in\N})={\bf L}({\bf \Pi}).$
\end{definition}

The existence of min-max sequence for any ${\bf \Pi}\in \big[X, \Z_{n}^G(M;\mF;\mZ_2)\big]$ is obvious by taking an ${\bf L}$-minimizing sequence.

\begin{definition}[Critical set]\label{Def:critical set}
	Let ${\bf \Pi}\in \big[X, \Z_{n}^G(M;\mF;\mZ_2)\big]$ be a continuous $G$-homotopy class, and $\{\Phi_i\}_{i\in \mathbb{N}}$ be any sequence in  ${\bf \Pi}$. 
	The {\em image set} of $\{\Phi_i\}$ is defined as a compact subset ${\bf K}(\{\Phi_i\}_{i\in\N})\subset\mathcal{V}^G_n(M)$ given by
	\begin{multline*}
		{\bf K}(\{\Phi_i\}_{i\in\N}):=\{V\in\V^G_n(M) :V=\lim_{j\rightarrow\infty}|\Phi_{i_j}(x_j)|\mbox{ for some sequence}~ i_1<i_2<\dots\\
		\mbox{ and some }x_j\in X\}.
	\end{multline*}
	If $\{\Phi_i\}_{i\in\N}$ is a min-max sequence for ${\bf \Pi}$, then the {\em critical set of $\{\Phi_i\}_{i\in\N}$} is defined as 
	$${\bf C}(\{\Phi_i\}_{i\in\N}) :=\{V\in {\bf K}(\{\Phi_i\}_{i\in\N}) : \|V\|(M)={\bf L}({\bf \Pi})\}.$$
\end{definition}



\section{Interpolation Results and Pull-tight}\label{almgren.homo}

The discretization and interpolation results (\cite[Section 13,14]{marques2014min}) build a bridge between discrete and continuous maps. 
Almgren used integer coefficients as the parameter space in \cite{almgren1962homotopy}, but Marques-Neves have pointed out in \cite[Section 3]{marques2017existence} that everything extends to the setting of $\mathbb{Z}_2$ coefficients. 
In this section, we generalize these results to the $G$-invariant setting.

To begin with, for any continuous map $\Phi:X\rightarrow \mathcal{Z}_n(M;\mathbb{Z}_2)$, let 
$${\bf m}(\Phi,r):=\sup\{\|\Phi(x)\|(B_r(p)) : x\in X,~p\in M  \}.$$
We say $\Phi$ {\em has no concentration of mass} if 
$\lim_{r\rightarrow 0}{\bf m}(\Phi,r)=0.$ 
In \cite[Lemma 3.5]{marques2017existence} and \cite[Lemma 15.2]{marques2014min}, Marques-Neves have shown $\Phi$ has no concentration of mass and $\sup_{x\in X}{\bf M}(\Phi(x))<\infty$, if $\Phi$ is continuous in the mass norm or the $\mF$-metric. 
We now modify this definition as follows:

\begin{definition}\label{Def:no concent mass on orbit}
	Let $\Phi:X\rightarrow \mathcal{Z}_n^G(M;\mathbb{Z}_2)$ be an $\F$-continuous map. 
	Define 
	$${\bf m}^G(\Phi,r):=\sup\{\|\Phi(x)\|(B_r^G(p)) : x\in X,~p\in M  \}.$$
	Then $\Phi$ is said to {\it have no concentration of mass on orbits} if 
	$$\lim_{r\rightarrow 0}{\bf m}^G(\Phi,r)=0.$$
\end{definition}

This is a mild technical condition, and we also have the following lemma. 

\begin{lemma}\label{Lem:mass continu no concent}
	Suppose $\Phi:X\rightarrow \mathcal{Z}_n^G(M;\mathbb{Z}_2)$ is a map continuous in the mass norm or the ${\bf F}$-metric, 
	then $\sup_{x\in X}{\bf M}(\Phi(x))<\infty$, and $\Phi$ has no concentration of mass on orbits. 
\end{lemma}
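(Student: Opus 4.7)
The plan is to dispatch the two claims separately. The uniform mass bound $\sup_{x \in X} \M(\Phi(x)) < \infty$ is essentially immediate: the function $x \mapsto \M(\Phi(x))$ is continuous on the compact complex $X$ in both hypotheses (in the $\M$ case by $|\M(S) - \M(T)| \leq \M(S - T)$, and in the $\mF$ case because Pitts's $\mF$-metric on $\V_n(M)$ dominates total-mass convergence on bounded subsets), so boundedness follows by compactness.

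The bulk of the proof lies in showing no concentration of mass on orbits. I would argue by contradiction, assuming there exist $\ep > 0$, $r_i \downarrow 0$, $x_i \in X$, $p_i \in M$ with $\|\Phi(x_i)\|(B_{r_i}^G(p_i)) \geq \ep$, and extracting subsequences so that $x_i \to x_\infty$ in $X$ and $p_i \to p_\infty$ in $M$. The first key step compares the shrinking tubes around the $p_i$ with a single tube around $p_\infty$: since $G$ is compact, the map $p \mapsto G \cdot p$ is continuous from $M$ into the closed subsets of $M$ under the Hausdorff distance, so $d_H(G \cdot p_i, G \cdot p_\infty) \to 0$, and for every fixed $\delta > 0$,
\[
B_{r_i}^G(p_i) \subset \Clos(B_\delta^G(p_\infty)) \qquad \text{for all sufficiently large } i.
\]

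The second step is to pass to the limit in the varifold topology. The continuity hypothesis together with the inequality $\mF(|S|,|T|) \leq \M(S - T)$ recalled in the excerpt ensures $|\Phi(x_i)| \to |\Phi(x_\infty)|$ in the $\mF$-topology on $\V_n^G(M)$, whence the Radon measures $\|\Phi(x_i)\|$ converge weakly on $M$ to $\|\Phi(x_\infty)\|$ with matching total masses. The Portmanteau lim-sup inequality on closed sets then yields
\[
\|\Phi(x_\infty)\|\bigl(\Clos(B_\delta^G(p_\infty))\bigr) \geq \limsup_{i \to \infty} \|\Phi(x_i)\|\bigl(\Clos(B_\delta^G(p_\infty))\bigr) \geq \ep
\]
for every $\delta > 0$, and letting $\delta \downarrow 0$, combined with downward continuity of the Radon measure $\|\Phi(x_\infty)\|$, produces $\|\Phi(x_\infty)\|(G \cdot p_\infty) \geq \ep$.

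The contradiction is then a dimension count. Since $\Phi(x_\infty) \in \bI_n^G(M;\mZ_2)$ has finite mass it is $n$-rectifiable, so $\|\Phi(x_\infty)\|$ is absolutely continuous with respect to $\mH^n$. Under the standing hypothesis ${\rm Cohom}(G) = l \geq 3$, every $G$-orbit is a smooth submanifold of dimension at most $n + 1 - l \leq n - 2$, hence $\mH^n(G \cdot p_\infty) = 0$ and $\|\Phi(x_\infty)\|(G \cdot p_\infty) = 0$, clashing with the lower bound $\ep$. The step I expect to require the most care is the Hausdorff-continuity of $p \mapsto G \cdot p$ and the resulting tube inclusion, which replaces the elementary ball inclusion $B_{r_i}(p_i) \subset B_\delta(p_\infty)$ used in the non-equivariant argument \cite[Lemma 3.5]{marques2017existence}; once this geometric input is secured, the remainder is a routine compactness-plus-dimension contradiction.
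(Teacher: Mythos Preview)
Your proof is correct. The paper takes a slightly different, direct route: for each pair $(x,p)\in X\times M$ it first chooses a radius $r=r(x,p,\epsilon)$ with $\|\Phi(x)\|(B^G_r(p))\leq \epsilon/2$ (using $\dim(G\cdot p)\leq n-2$), then uses continuity of $\Phi$ to spread this bound to a neighborhood $U_{x,p}\subset X$, and finally extracts a finite subcover of $X\times M$ from $\{U_{x,p}\times B^G_{r(x,p,\epsilon)}(p)\}$ to produce a uniform $\tilde{r}$. Your argument is the sequential contrapositive of this: you pass to limits $(x_\infty,p_\infty)$ and invoke Portmanteau on closed tubes to force $\|\Phi(x_\infty)\|(G\cdot p_\infty)\geq\epsilon$. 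Both rest on the same two ingredients (compactness of $X\times M$ and $\mH^n(G\cdot p)=0$), so the difference is purely organizational. Your version has the mild advantage that the tube inclusion $B^G_{r_i}(p_i)\subset \Clos(B^G_\delta(p_\infty))$ via Hausdorff continuity of orbits handles the $\mF$-continuous case and the $\M$-continuous case in one stroke, whereas the paper's covering argument is written for $\M$-continuity and then appeals to the equivalence of $\mF$-convergence with flat-plus-mass convergence to cover the $\mF$ case.
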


\begin{proof}
	By the compactness of $X$ and the fact that $X\to {\bf M}(\Phi(\cdot))$ is a continuous map, it's clear that $\sup_{x\in X}{\bf M}(\Phi(x))$ is bounded.
	
	Fix $\epsilon>0$. Then for any $x\in X$ and $p\in M$, since ${\rm dim}(G\cdot p)\leq n+1-l\leq n-2$, there exists a positive number $r=r(x,p,\epsilon)>0$ such that
	$\|\Phi(x)\|(B^G_r(p))\leq \frac{\epsilon}{2}. $
	Noting $\Phi$ is continuous in the mass norm, there exists a neighborhood $U_{x,p}\subset X$ of $x$ satisfying $\|\Phi(y)\|(B^G_r(p))\leq \epsilon$, for all $y\in U_{x,p} $.
	 Thus $\{U_{x,p}\times B^G_{r(x,p,\epsilon)}(p) \}$ forms an open cover of $X\times M$. 
	 After taking a finite cover $\{U_{x_i,p_i}\times B^G_{r(x_i,p_i,\epsilon)}(p_i) \}_{i=1}^k$ by compactness, we can choose $\tilde{r}<\min_{i=1,\dots,k} r_i$.
	 Hence, we have $\|\Phi(x)\|(B^G_{\tilde{r}}(p))\leq \epsilon$ for all $x\in X ,p\in M$, implying $\Phi$ has no concentration of mass on orbits.
	 
	 Noting that $\lim_{i\rightarrow\infty} {\bf F}(T_i,T)=0$ if and only if $\lim_{i\rightarrow\infty}{\bf M}(T_i)={\bf M}(T)$ and $\lim_{i\rightarrow\infty}\mathcal{F}(T_i-T)=0$, for $T_i,T\in \mathcal{Z}_n^G(M;\mathbb{Z}_2)$ (see \cite[Page 68]{pitts2014existence}), a same result can be made for ${\bf F}$-continuous maps by the above arguments. 
\end{proof}

\subsection{Discretization and interpolation results}\label{Sec-interpolation-discretization}
The following discretization result generates an $(X,{\bf M})$-homotopy sequence of mappings into $\mathcal{Z}_n^G(M;{\bf M};\mathbb{Z}_2)$ from a flat continuous map with bounded mass and no concentration of mass on orbits.

\begin{theorem}\label{Thm:discritization}
	Let $\Phi:X\subset I^m \rightarrow \mathcal{Z}_n^G(M;\mathbb{Z}_2)$ be a continuous map in the flat topology with no concentration of mass on orbits and $\sup_{x\in X}\M(\Phi(x))<\infty$.
	Then there exists a sequence of maps
	$$\phi_i:X(k_i)_0 \rightarrow \mathcal{Z}_n^G(M;\mathbb{Z}_2),$$
	with $k_i<k_{i+1}$, and positive numbers $\{\delta_i\}_{i\in\mathbb{N}}$ converging to zero such that
	\begin{itemize}
		\item[(i)] $S=\{\phi_i\}_{i\in\mathbb{N}}$ is an $(X,{\bf M})$-homotopy sequence of mappings into $\mathcal{Z}_n^G(M;{\bf M};\mathbb{Z}_2)$ with ${\bf f}(\phi_i)<\delta_i$;
		\item[(ii)] $\sup\{\mathcal F(\phi_i(x)-\Phi(x)) : x\in X(k_i)_0\}\leq \delta_i;$
		\item[(iii)] there exists a sequence $l_i\to\infty$ so that for any $x\in X(k_i)_0$, 
		$$\M(\phi_i(x))\leq \sup\{\M(\Phi(y)) : x,y\in\alpha,~\alpha\in X(l_i)_m \}+\delta_i, $$
		which implies ${\bf L}(\{\phi_i\}_{i\in\N}) \leq \sup\{{\bf M}(\Phi(x)) : x\in X\}+\delta_i.$
	\end{itemize}
\end{theorem}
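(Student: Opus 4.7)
The plan is to adapt the Almgren--Pitts discretization procedure of Marques--Neves (cf.\ \cite[Theorem 13.1]{marques2014min} and \cite[Theorem 3.9]{marques2017existence}) to the $G$-equivariant setting, keeping the architecture of the argument intact and replacing each ingredient by its $G$-invariant counterpart.

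\medskip

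The first step is to construct, at every sufficiently small scale $r>0$, a $G$-equivariant Lipschitz map $D_r:M\to M$, homotopic to the identity, that deforms every $G$-invariant $n$-cycle $T$ of uniformly bounded mass onto the $n$-skeleton of a $G$-equivariant smooth triangulation $\mathcal T_r$ of $M$ of mesh at most $r$, in such a way that $\M((D_r)_\# T-T)$ and $\sup_p\M\bigl(((D_r)_\# T)\lc B_r^G(p)\bigr)$ are both controlled by ${\bf m}^G(\Phi,r)+o(1)$ as $r\to 0$. Existence of the $G$-equivariant triangulation is guaranteed by Illman \cite{illman1978smooth,illman1983equivariant}; the map $D_r$ itself is built by descending induction on cell dimension, retracting each open star onto the boundary of its dual cell by means of the normal exponential map of the orbit through the barycentre, which is $G$-equivariant thanks to the discussion following Lemma \ref{Lem:G-push}.

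\medskip

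Next, I would perform the discretization itself. For every top-cell $\alpha\in X(j)_m$ pick a distinguished vertex $x_\alpha\in \alpha$, and, by a fixed rule, assign to every vertex $x\in X(j)_0$ a choice $\alpha(x)\ni x$; set $\phi_j(x):=(D_{r_j})_\#\Phi(x_{\alpha(x)})$ for a scale $r_j\downarrow 0$ chosen much finer than the cubical mesh $3^{-k_j}$. Flat-continuity of $\Phi$ and compactness of $X$ make $\sup\{\F(\Phi(x_{\alpha(x)})-\Phi(x_{\alpha(y)})):{\bf d}(x,y)=1\}$ tend to $0$ with $j$; the $G$-invariant isoperimetric lemma (Lemma \ref{Lem:isoperimetric}) then produces, for each pair of adjacent vertices, a $G$-invariant filling of small mass, which, after push-forward by $D_{r_j}$ and simplicial approximation inside $\mathcal T_{r_j}$, yields an $X$-homotopy with $\M$-fineness $\delta_j\downarrow 0$. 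This is (i); property (ii) is immediate from $\F((D_{r_j})_\#\Phi(x)-\Phi(x))\to 0$; and property (iii) is obtained by choosing $l_j\geq k_j$ large enough that the $\F$-oscillation of $\Phi$ across any top-cell of $X(l_j)$, and therefore the extra mass coming from the deformation, is at most $\delta_j$.

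\medskip

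The main obstacle is Step 1. Because the orbit injectivity radii can degenerate on $M\setminus M^{reg}$, a single uniform scale cannot be used, contrary to the classical case. The remedy is to choose $\mathcal T_r$ subordinate to the orbit-type stratification, so that every star-neighbourhood sits inside the equivariant tubular neighbourhood on which the normal exponential of its central orbit is a $G$-diffeomorphism, and then to perform the retraction stratum by stratum; the additional mass picked up along the way is absorbed into the error by the no-concentration-of-mass-on-orbits hypothesis. This is precisely the technical adaptation flagged in the introduction just before Theorem \ref{Thm:interpolation}.
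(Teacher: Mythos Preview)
Your proposal conflates the discretization theorem (this statement) with the interpolation theorem (Theorem~\ref{Thm:interpolation}); only the latter requires the equivariant triangulation and skeleton-deformation machinery you describe. The paper's proof here is much more economical: it simply observes that the classical argument of \cite[Theorem~13.1]{marques2014min} --- and in particular the key technical \cite[Lemma~13.4]{marques2014min} --- goes through verbatim once its four analytic ingredients (compactness of integral currents, slicing by Lipschitz functions, the isoperimetric lemma, and no concentration of mass) are replaced by their $G$-invariant counterparts, namely Lemmas~\ref{Lem:compactness for G-current}, \ref{Lem:slice}, \ref{Lem:isoperimetric}, and~\ref{Lem:mass continu no concent}. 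The remaining steps are purely combinatorial. No deformation onto skeleta enters, and the orbit-injectivity-radius difficulty you flag does not arise in this theorem; it is a genuine issue, but it belongs to Theorem~\ref{Thm:interpolation}.

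Your alternative route also has a substantive gap. The Step~1 assertion that $\M((D_r)_\# T - T)$ is controlled by ${\bf m}^G(\Phi,r)+o(1)$ is not a property of any standard Federer--Fleming--type retraction onto the $n$-skeleton: such a retraction bounds $\M((D_r)_\# T)$ by a fixed multiple of $\M(T)$, but the mass of the \emph{difference} is of the same order as $\M(T)$, not small. This undermines your derivation of property~(iii), where you claim the ``extra mass coming from the deformation is at most $\delta_j$'' on the basis of small $\F$-oscillation --- flat smallness does not transfer to mass smallness under projection. It also leaves property~(i) incomplete: a small-mass isoperimetric filling $Q$ of $\Phi(x_{\alpha(x)})-\Phi(x_{\alpha(y)})$ yields, after push-forward, only a small \emph{flat} distance between $\phi_j(x)$ and $\phi_j(y)$; upgrading that to small mass on the skeleton requires a quantitative comparison between the filling scale and the mesh $r_j$ that you have not supplied.
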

\begin{proof}
	The proof is essentially the same as \cite[Theorem 13.1]{marques2014min}. 
	First, we point out that the proof of \cite[Lemma 13.4]{marques2014min} is fully feasible under $G$-invariant restrictions by using Lemma \ref{Lem:compactness for G-current}, \ref{Lem:slice}, \ref{Lem:isoperimetric}, and \ref{Lem:mass continu no concent} in place of those lemmas for usual cycles. 
	Other constructions in the proof of \cite[Theorem 13.1]{marques2014min} are combinatorial, which can be directly adapted to the $G$-invariant setting. 
\end{proof}

The following theorem shows how to generate a $G$-invariant ${\bf M}$-continuous map from a discrete $G$-invariant mapping with small fineness. 
\begin{theorem}\label{Thm:interpolation}
	There exist positive constants $C_0=C_0(M,G,m)$ and $\delta_0=\delta_0(M,G)$  so that if $Y$ is a cubical subcomplex of $I(m,k)$ and
	$$\phi:Y_0\rightarrow \mathcal{Z}_n^G(M;\mathbb{Z}_2)$$
	has ${\bf f}(\phi)<\delta_0$, then there exists a map
	$$ \Phi:Y\rightarrow \mathcal{Z}_n^G(M;{\bf M};\mathbb{Z}_2)$$
	continuous in the ${\bf M}$-norm satisfying:
	\begin{itemize}
		\item[(i)] $\Phi(x)=\phi(x)$ for all $x\in Y_0$;
		\item[(ii)] if $\alpha$ is a $j$-cell in $Y_j$, then $\Phi$ restricted to $\alpha$ depends only on the values of $\phi$ assumed on the vertices of  $\alpha$;
		\item[(iii)] $\sup\{{\bf M}(\Phi(x) - \Phi(y)) : x,y\mbox{ lie in a common cell of } Y\}\leq C_0{\bf f}(\phi).$
	\end{itemize}
\end{theorem}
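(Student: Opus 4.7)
The plan is to adapt the Almgren--Pitts interpolation theorem (in the form of \cite[Theorem 14.1]{marques2014min} or its $\mZ_2$-version \cite[Theorem 3.10]{marques2017existence}) to the $G$-equivariant setting by checking that every step preserves $G$-invariance. The essential $G$-equivariant inputs have already been established: the $G$-isoperimetric Lemma \ref{Lem:isoperimetric}, the $G$-invariant slice Lemma \ref{Lem:slice}, the restriction Lemma \ref{Lem:restrict}, and Illman's $G$-equivariant smooth triangulation of $M$. First I fix $\delta_0<\nu_M$ with $\nu_M$ from Lemma \ref{Lem:isoperimetric}, so that for any two adjacent vertices $x,y\in Y_0$ the inequality $\F(\phi(x)-\phi(y))\leq \M(\phi(x)-\phi(y))\leq \f(\phi)<\nu_M$ produces a \emph{unique} isoperimetric filling $Q_{xy}\in\mI_{n+1}^G(M;\mZ_2)$ with $\partial Q_{xy}=\phi(x)-\phi(y)$ and $\M(Q_{xy})\leq C_M\f(\phi)$. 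The $G$-invariance of $Q_{xy}$ is automatic from the uniqueness clause.

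I then construct $\Phi$ by induction on the dimension of cells of $Y$. On the $1$-skeleton, for each edge $\alpha=[x,y]$, I build an $\M$-continuous path from $\phi(x)$ to $\phi(y)$ by sweeping through the filling $Q_{xy}$. Concretely, fix an Illman $G$-equivariant triangulation $\mathcal{T}$ of $M$, enumerate representatives $T_1,\ldots,T_N$ of the $G$-orbits of top-dimensional simplices of $\mathcal{T}$, and define discrete intermediate cycles
\[
\phi(x)+\partial\Bigl(Q_{xy}\lc\bigcup_{i\leq k}G\cdot T_i\Bigr),\qquad k=0,1,\ldots,N.
\]
Between the $(k{-}1)$-st and $k$-th cycle, $\M$-continuity is supplied by the $G$-invariant slice Lemma \ref{Lem:slice} applied to $Q_{xy}\lc(G\cdot T_k)$ with a $G$-invariant Lipschitz function (e.g.\ the distance to a chosen face of $G\cdot T_k$, averaged over $G$ if necessary). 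The total $\M$-oscillation along $\alpha$ is bounded by a fixed multiple of $\M(Q_{xy})\leq C_M\f(\phi)$, yielding (iii) in dimension one while (i) and (ii) are clear.

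For higher-dimensional cells I proceed inductively: once $\Phi$ has been defined on the $(j{-}1)$-skeleton of $Y$ with locality (ii) and $\M$-oscillation of order $\f(\phi)$, a $j$-cell $\alpha$ is filled by a cone-type construction centered at a chosen vertex $v_0$ of $\alpha$, where the family of intermediate fillings is obtained by applying Lemma \ref{Lem:isoperimetric} to $\phi(v_0)$ and $\Phi(z)$ for $z\in\partial\alpha$ and slicing radially as in the previous step. Combinatorial bookkeeping over at most $m$ induction steps produces the constant $C_0=C_0(M,G,m)$. The main obstacle, as stressed in the introduction, is exactly this $G$-equivariant decomposition and slicing of fillings: because orbits in $M$ have no uniform positive lower bound on injectivity radius, the standard star neighborhoods of a triangulation may extend past the cut locus of their center orbit, so classical local arguments do not port over verbatim. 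The resolution is to work exclusively inside Illman's $G$-equivariant triangulation (whose cells are modeled on linear slice representations of isotropy subgroups) and to cut fillings by $G$-invariant distance functions, so that $G$-invariance of every resulting chain is preserved throughout by Lemmas \ref{Lem:restrict} and \ref{Lem:slice}.
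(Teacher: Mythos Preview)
Your proposal has a genuine gap at the step where you claim ``$\M$-continuity is supplied by the $G$-invariant slice Lemma.'' Slicing a filling $Q$ by a Lipschitz function $f$ gives an $\M$-continuous path $t\mapsto Q\lc\{f<t\}$ of $(n+1)$-chains, but the path of \emph{boundaries} $t\mapsto \partial(Q\lc\{f<t\})=(\partial Q)\lc\{f<t\}+\langle Q,f,t\rangle$ is \emph{not} $\M$-continuous in general: the difference at nearby parameters $t_1<t_2$ picks up the two slices $\langle Q,f,t_1\rangle$ and $\langle Q,f,t_2\rangle$, whose masses have no reason to be small when $t_2-t_1$ is small. So neither the discrete sequence you write down nor its proposed interpolation yields an $\M$-continuous path with oscillation controlled by $\M(Q_{xy})$.

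This is exactly why Almgren's construction (and the paper's) needs, in addition to cutting maps $C_\Lambda(\tilde\sigma,\cdot)$, the \emph{deforming maps} $D(\tilde\sigma):I\times\mathbf{I}_n^G(\mathring U(\tilde\sigma);\M;\mZ_2)\to\mathbf{I}_n^G(\mathring U(\tilde\sigma);\M;\mZ_2)$ that $\M$-continuously retract any $n$-chain in a star neighborhood to zero. Building these equivariantly is the real content of the proof: one lifts Pitts's map $f^m_t:I^m\to I^m$ (from \cite[4.4]{pitts2014existence}) to a $G$-equivariant map $\widetilde F^{m,\tau}$ on each $\tilde\sigma$ via a fundamental domain $\Omega$ with the isotropy property $G_{(\pi\lc\Omega)^{-1}([p_\sigma])}\subset G_p$ for all $p\in\Omega$, and then proves $\M$-continuity of the induced pushforward by a co-area argument that uses $\mathrm{Cohom}(G)\geq 3$ (so that orbits have dimension $\leq n-2$ and the relevant limit sets carry zero $\mathcal H^n$-measure). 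Your higher-cell ``cone-type construction'' is likewise not how the extension works: on a $k$-cell the map $h_\alpha$ is Almgren's explicit formula involving $k$ nested cutting maps followed by $k$ nested deforming maps, not a re-application of the isoperimetric lemma to the already-constructed $\Phi$ on $\partial\alpha$. Without the equivariant deforming maps there is no $\M$-continuous extension, and your sketch does not produce one.
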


We call the map $\Phi$ given by Theorem \ref{Thm:interpolation} {\em the Almgren $G$-extension} of $\phi$. 

\begin{proof}
	We only consider the case of $Y=I(m,0)$, and for general $Y$, the conclusion can be made as in \cite[Theorem 14.2]{marques2014min}. 
	By Proposition \ref{Lem:isoperimetric}, we can choose $\delta_0$ small enough such that for any $\beta\in I(m,0)_1$ there is a unique $G$-isoperimetric choice $Q(\beta)$ for $\phi(\partial \beta)$. 
	Following the proof of \cite[Theorem 14.1]{marques2014min}, we are going to construct $G$-equivariant deforming and cutting maps.

	Let $\triangle$ be a Lipschitz triangulation of $M/G$ which generates a $G$-equivariant triangulation $\widetilde{\triangle} := \{ \pi^{-1}(\sigma) : \sigma\in\triangle \}$ of $M$. 
	For any $\tilde{\sigma} = \pi^{-1}(\sigma)\in\widetilde{\triangle}$, define $\partial\tilde{\sigma} := \pi^{-1}(\partial \sigma)$. 
	Let $[p_\sigma]$ be the center point of $\sigma$, and $p_{\tilde{\sigma}}\in \pi^{-1}([p_\sigma])$. 
	As explained in Appendix \ref{Appendix-good-partition}, the triangulation $\triangle$ can be taken so that 
	\begin{eqnarray}\label{prop-orbit-type}
		\qquad (G_{p_{\tilde{\sigma}}}) \subset (G_{p}),  ~\forall p \in \tilde{\sigma}; \qquad
		(G_{p_{\tilde{\sigma}}}) = (G_{p}),  ~\forall p \in \pi^{-1}({\rm Int}(\sigma)),
	\end{eqnarray}
	by the works of Verona\cite{verona1979triangulation} and Illman\cite{illman1983equivariant}, in which $G_p:=\{g\in G : g\cdot p=p\}$ is the isotropy group of $p$, and $(G_p)$ is the conjugate class of $G_p$ in $G$. 
	By Lemma \ref{Lem:partition for any G}, we can further assume that for each $\sigma\in\triangle$, there exists a fundamental domain $\Omega\subset\tilde{\sigma}$ with a bi-Lipschitz homeomorphism $\pi\llcorner\Omega : \Omega\to\sigma$ such that 
	\begin{equation}\label{prop-no-cut-point}
		G_{(\pi\llcorner\Omega)^{-1}([p_{\sigma}])}\subset G_p, ~\forall p\in\Omega.
	\end{equation}
	Let $U(\tilde{\sigma})$ be the star neighborhood of $\tilde{\sigma}$ given by $U(\tilde{\sigma}) := \cup_{\tilde{\sigma}\subset\tilde{\sigma}'}\tilde{\sigma}'=\pi^{-1}(U(\sigma))$ for $U(\sigma) = \cup_{\sigma\subset\sigma'}\sigma'$.
	Define $\mathring{U}(\tilde{\sigma}):=\pi^{-1}({\rm Int}(U(\sigma)))$.

	We first construct equivariant cutting maps as in \cite[Theorem 14.3]{marques2014min}. 
	Using Lemma \ref{Lem:slice}, we can show that \cite[Proposition 1.18]{almgren1962homotopy} holds in the $G$-invariant setting.
	Consequently, after fixing a finite set $\Lambda\subset {\bf I}_{n+1}^G(M;\mathbb{Z}_2)$, we associate to every $\tilde{\sigma}\in\widetilde{\triangle}$ a $G$-neighborhood $L(\tilde{\sigma})\subset U(\tilde{\sigma})$ of $\tilde{\sigma}$ (by the distance function to $\tilde{\sigma}$ in $M$), and construct the following maps just as Almgren did in \cite[Section 5]{almgren1962homotopy}:
	$$C_\Lambda: \widetilde{\triangle}\times\Lambda \rightarrow {\bf I}_{n+1}^G(M;\mathbb{Z}_2),$$
	which satisfies
	\begin{eqnarray}
		C_\Lambda(\tilde{\sigma},Q) &=& \Big( Q-\sum_{\tilde{\sigma}'\prec\tilde{\sigma}} C_\Lambda(\tilde{\sigma}',Q) \Big) \cap L(\tilde{\sigma}),
		\\
		{\bf M}\Big( \partial C_\Lambda(\tilde{\sigma},Q) &-& \partial\big(Q-\sum_{\tilde{\sigma}'\prec\tilde{\sigma}} C_\Lambda(\tilde{\sigma}',Q) \big )\cap L(\tilde{\sigma})  \Big) 
		\\
		&\leq & C_0\cdot (\#\Lambda)\cdot{\bf M}\big(Q-\sum_{\tilde{\sigma}'\prec\tilde{\sigma}} C_\Lambda(\tilde{\sigma}',Q)\big), \nonumber
		\\
		{\rm spt}(C_\Lambda(\tilde{\sigma},Q))&\subset & U(\tilde{\sigma}),\quad \forall \tilde{\sigma}\in\widetilde{\triangle},~Q\in\Lambda,
	\end{eqnarray}
	where $C_0>0$ only depends on $\widetilde{\triangle}$ and $m$. 
	Although the choice of $L(\tilde{\sigma})$ depends on the finite set $\Lambda$, there exists a uniform bound for all $L(\tilde{\sigma})$ (see the choice of $r$ in \cite[Definition 5.4]{almgren1962homotopy}). 
	Indeed, there exists a $G$-neighborhood $N(\tilde{\sigma})\subset \mathring{U}(\tilde{\sigma})$ of $\tilde{\sigma}$ such that ${\rm spt}(C_\Lambda(\tilde{\sigma},Q))\subset N(\tilde{\sigma})$ for any finite set $\Lambda\subset {\bf I}_{n+1}^G(M;\mathbb{Z}_2)$ and $Q\in\Lambda$. 
	Thus, we mainly focus on the property of deformation maps in $N(\tilde{\sigma})$. 
	
	Secondly, we construct the equivariant deformation maps in $N(\tilde{\sigma})\subset \mathring{U}(\tilde{\sigma})$ based on the constructions of Almgren-Pitts. 
	Let $I^m = [0,1]^m$, $m\geq 1$, be a unit cube. 
	Denote 
	$$E_0^m := \bigcup_{i=1}^m I^m\cap\{x: x_i=0\}, ~E_1^m := \bigcup_{i=1}^m I^m\cap\{x: x_i=1\},$$
	as the union of $I^m$'s faces containing $(0,\dots,0)$ or $(1,\dots,1)$. 
	For any $x\in \mathbb{R}^m$, $\gamma_x^m : \mathbb{R}\rightarrow \mathbb{R}^m$ is the diagonal line through $x$ given by $\gamma_x^m(r) := (x_1-r,\dots,x_m - r)$. 
	Let $\mathcal{B}^m_\epsilon(x) := \{y\in I^m : |\gamma_y^m(r) - x|<\epsilon \mbox{ for some } r\in \mathbb{R} \}$ be a neighborhood of $\gamma_x^m(r)$ in $I^m$. 
	For any $x\in I$, let $x^{(j)}\in\{0,1\}$, $j\in\mathbb{Z}^+$, be the nonterminating binary expansion of $x$, i.e. $x = \sum_{j=1}^\infty x^{(j)}2^{-j}$, $\limsup_{j\to\infty}x^{(j)} = 1$ if $x>0$. 
	Then for any $x=(x_1,\dots,x_m)\in I^m$ with nonterminating binary expansions $\{x_i^{(j)}\}_{j\in\mathbb{Z}^+}$, $i=1,\dots,m$, we can rearrange the binary expansions as $x_m^{(1)}, x_{m-1}^{(1)},\dots, x^{(1)}_1,  x_m^{(2)}, x_{m-1}^{(2)},\dots, x^{(2)}_1,\dots$, which corresponds to a real number $\xi(x) = \sum_{j=1}^\infty\sum_{i=1}^m x_{m-i+1}^{(j)} 2^{m-mj-i}\in I$. 
	Conversely, for any $t\in I$, 
	we can rearrange the (terminating and nonterminating) binary expansion of $1-t$ to get two points $a^m(t),b^m(t)\in I^m$ (possibly equal) with $\xi(a^m(t))= \xi(b^m(t)) = 1-t$. 
	Define $A^m_t := \Clos(\{x\in I^m : \xi(x)\leq 1-t \} )$ for any $t\in [0,1]$. 
	Then, by \cite[4.4]{pitts2014existence}, there exists a map 
	$$ f^m_t : I^m \rightarrow I^m,$$
	satisfying: 
	\begin{itemize}
		\item[(i)] $f^m_t(x) := \gamma^m_x(r_{x,t})$, where $r_{x,t}$ is the smallest nonnegative number such that $\gamma^m_x(r_{x,t})\in A^m_t\cup E^m_0$;
		\item[(ii)] $f^m_1(x)\in E^m_0$ for all $x$ in $I^m$;
		\item[(iii)] ${\rm Lip}(f^m_t) \leq C(m)$;
		\item[(iv)] for any $\epsilon>0$, there exists $\delta>0$ so that 
		$$f^m_t (x)=f^m_s (x) , \quad \forall x \in I^m\setminus \mathcal{B}^m_\epsilon(t),$$
		provided $|t-s| < \delta$, where $\mathcal{B}^m_\epsilon(t) := \mathcal{B}^m_\epsilon(a^m(t))\cup \mathcal{B}^m_\epsilon(b^m(t))$.
	\end{itemize}
	Through an inductive construction on faces of $I^m$ in $E^m_0$, Pitts obtained a map $f:[0,m]\times I^m\rightarrow I^m$ satisfying ${\rm Lip}(f)\leq \sqrt{m!}$ and $f(t,x)\in E$ for some $(m - \lfloor t \rfloor)$-face $E$ of $I^m$ containing $(0,\dots,0)$ (see \cite[4.4(5)]{pitts2014existence}). 
	We now generalize this construction into $G$-equivariant settings and get an ${\bf M}$-continuous deforming map on each $\mathring{U}(\tilde{\sigma})$. 
	
	For any cell $\sigma\in\triangle$ with $\dim(\sigma ) = m, ~1\leq m\leq l = {\rm Cohom}(G)$, let $G\cdot p_{\tilde{\sigma}} = \pi^{-1}([p_\sigma])$ be the center orbit of $\tilde{\sigma} = \pi^{-1}(\sigma)$. 
	Note $M_{(G_{p_{\tilde{\sigma}}})}:=\{q\in M: (G_q) = (G_{p_{\tilde{\sigma}}})\}$ is the set of points with same orbit type $(G_{p_{\tilde{\sigma}}})$, which is a union of some disjoint Riemannian submanifolds of $M$ (see Appendix \ref{Appendix-good-partition}). 
	By (\ref{prop-orbit-type}), we have $\tilde{\sigma}\subset \Clos(M_{(G_{p_{\tilde{\sigma}}})})$. 
	Using (\ref{prop-no-cut-point}), 
	there exists a subset $\Omega\subset \tilde{\sigma}$ so that $\pi\llcorner_\Omega :\Omega\rightarrow\sigma$
	is a bi-Lipschitz homeomorphism and $G_{(\pi\llcorner\Omega)^{-1}([p_{\sigma}])}\subset G_p$, for all $p\in\Omega$. 
	For simplicity, we take $p_{\tilde{\sigma}} = (\pi\llcorner\Omega)^{-1}([p_{\sigma}])\in\Omega$. 
	Additionally, $X_{(G_{p_{\tilde{\sigma}}})}:=\pi(M_{(G_{p_{\tilde{\sigma}}})})$ has the induced Riemannian metric $g_{_{X_{(G_{p_{\tilde{\sigma}}})}}}$ making $\pi\llcorner M_{(G_{p_{\tilde{\sigma}}})}$ a Riemannian submersion, 
	which helps us to use the co-area formula. 
	
	Given any face $\tau$ of $\sigma$ with $\tau\neq\sigma$, let $[p_\tau]$ be the center point of $\tau$, $p_{\tilde{\tau}} := (\pi \llcorner_\Omega)^{-1}([p_\tau])$, and 
	$$E_0^\tau := \cup \{\tau'\in \triangle : \tau\subset\tau'\subset\partial\sigma \} , ~~E_1^\tau := \cup\{\tau'\in \triangle : \tau \not\subset \tau'\subset\partial\sigma \} . $$
	We can get a bi-Lipschitz homeomorphism $h_\tau: I^m\rightarrow \sigma$ satisfying $h_\tau(0) = [p_\tau]$, and $h_\tau(E^m_0) = E^{\tau}_0$, $h_\tau(E^m_1)=E^\tau_1$. 
	Denote $A_t^\tau := h_\tau(A^m_t)$, $a^\tau(t):=h_\tau(a^m(t))$, $b^\tau(t) := h_\tau(b^m(t))$, $\gamma_x^\tau := h_\tau(\gamma^m_x))$, $\mathcal{B}_\epsilon^\tau(t) := h_\tau(\mathcal{B}^m_\epsilon(t))$. 
	Thus, the map 
	$$F^{m,\tau} :[0,1]\times\sigma\rightarrow\sigma$$
	given by $F^{m,\tau}(t,\cdot) := h_\tau\circ f^m_t\circ h_\tau^{-1}$ satisfies (i)-(iv) with $\tau$, $A_t^\tau$, $a^\tau(t)$, $b^\tau(t)$, $\gamma_x^\tau$, $\mathcal{B}_\epsilon^\tau(t)$, $E_i^\tau$ in place of $I^m$, $A^m_t$, $a^m(t)$, $b^m(t)$, $\gamma^m_x$, $\mathcal{B}^m_\epsilon(t)$, $E_i^m$. 
	Define 
	$$\sigma_0 := \sigma\setminus E^\tau_1,\quad \tilde{\sigma}_0:=\pi^{-1}(\sigma_0).$$
	We then get a $G$-equivariant map 
	$$\widetilde{F}^{m,\tau} : [0,1]\times \tilde{\sigma}_0\rightarrow \tilde{\sigma}_0$$
	defined as $\widetilde{F}^{m,\tau}(t,q):=g\cdot (\pi\llcorner_{\Omega})^{-1}\big( F^{m,\tau}(t, \pi(p))\big)$ if $q=g\cdot p\in \tilde{\sigma}_0$ for some $p\in \Omega$, $g\in G$. 
	We claim this map $\widetilde{F}^{m,\tau}$ is well defined. 
	Indeed, it's clear that $\widetilde{F}^{m,\tau}={\rm id}$ on $\pi^{-1}(E^\tau_0)$. 
	Suppose $p\in \Omega\setminus\pi^{-1}(E^\tau_0)$, and there is another $g'\in G$ with $g'\cdot p=g\cdot p$, i.e. $g^{-1}g'\in G_p$. 
	By (\ref{prop-orbit-type}) and (\ref{prop-no-cut-point}), we conclude that $G_p=G_{p_{\tilde{\sigma}}} = G_y$ for all $y\in \Omega\setminus\pi^{-1}(\partial\sigma)$. 
	Note $p' = (\pi\llcorner_{\Omega})^{-1}\big( F^m(t, \pi(p))\big)\in \Omega$. 
	Hence, by (\ref{prop-no-cut-point}), we have $g^{-1}g'\in G_{p} = G_{p_{\tilde{\sigma}}} \subset G_{p'}$ implying $g\cdot p'=g'\cdot p'$. 
	
	\begin{claim}
		For any $T\in {\bf I}_n^G(\tilde{\sigma}_0;{\bf M};\mathbb{Z}_2)$, the map 
		$$D^{m,\tau}:[0,1]\times {\bf I}_n^G(\tilde{\sigma}_0 ;{\bf M};\mathbb{Z}_2) \rightarrow {\bf I}_n^G(\tilde{\sigma}_0 ;{\bf M};\mathbb{Z}_2)$$
		 given by $D^{m,\tau}(t,T):=\widetilde{F}^{m,\tau}(t,\cdot)_{\#}T$ is continuous with respect to the ${\bf M}$-norm.
	\end{claim}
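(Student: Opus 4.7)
The plan is to establish $\mathbf{M}$-continuity of $D^{m,\tau}$ at an arbitrary point $(t_0,T_0)$ by splitting the increment via the triangle inequality:
\[
D^{m,\tau}(t,T)-D^{m,\tau}(t_0,T_0) \;=\; \widetilde{F}^{m,\tau}(t,\cdot)_{\#}(T-T_0) \;+\; \bigl(\widetilde{F}^{m,\tau}(t,\cdot)_{\#}T_0 - \widetilde{F}^{m,\tau}(t_0,\cdot)_{\#}T_0\bigr).
\]
The first summand will be controlled by a Lipschitz push-forward bound uniform in $t$, while the second will be localized via property (iv) of the Pitts map to a shrinking $G$-saturated neighborhood whose $n$-dimensional Hausdorff measure vanishes in the limit. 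Well-definedness of $D^{m,\tau}(t,T)$ as an element of ${\bf I}_n^G(\tilde{\sigma}_0;\mathbb{Z}_2)$ follows from Lemma \ref{Lem:G-push}, since $\widetilde{F}^{m,\tau}(t,\cdot)$ has just been shown to be $G$-equivariant and is Lipschitz by construction.

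For the current-variable term, I would combine the bi-Lipschitz constants of $h_\tau^{\pm 1}$ and $(\pi\llcorner\Omega)^{\pm 1}$ with the bound ${\rm Lip}(f^m_t)\leq C(m)$ from property (iii) of the Pitts map to obtain a single constant $L=L(\widetilde{\triangle},m)$ with ${\rm Lip}(\widetilde{F}^{m,\tau}(t,\cdot))\leq L$ for every $t\in[0,1]$. The standard mass bound for Lipschitz push-forwards then yields
\[
{\bf M}\bigl(\widetilde{F}^{m,\tau}(t,\cdot)_{\#}(T-T_0)\bigr)\;\leq\; L^n\,{\bf M}(T-T_0),
\]
which makes this contribution small as soon as ${\bf M}(T-T_0)$ is small, uniformly in $t$.

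For the time-variable term, property (iv) of $f^m_t$ supplies, for each $\varepsilon>0$, some $\delta>0$ with $f^m_t\equiv f^m_{t_0}$ on $I^m\setminus\mathcal{B}^m_\varepsilon(t_0)$ whenever $|t-t_0|<\delta$. Transporting through $h_\tau$ and the $G$-equivariant lift, the two maps $\widetilde{F}^{m,\tau}(t,\cdot)$ and $\widetilde{F}^{m,\tau}(t_0,\cdot)$ agree on $\tilde{\sigma}_0\setminus U_\varepsilon$, where
\[
U_\varepsilon \;:=\; G\cdot (\pi\llcorner\Omega)^{-1}\bigl(h_\tau(\mathcal{B}^m_\varepsilon(t_0))\bigr)\cap \tilde{\sigma}_0.
\]
Consequently the difference $\widetilde{F}^{m,\tau}(t,\cdot)_{\#}T_0-\widetilde{F}^{m,\tau}(t_0,\cdot)_{\#}T_0$ is supported in $\widetilde{F}^{m,\tau}(t,U_\varepsilon)\cup\widetilde{F}^{m,\tau}(t_0,U_\varepsilon)$, and its mass is bounded by $2L^n\|T_0\|(U_\varepsilon)$.

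The main obstacle is to verify that $\|T_0\|(U_\varepsilon)\to 0$ as $\varepsilon\to 0$. The intersection $U_0:=\bigcap_{\varepsilon>0}U_\varepsilon$ is the $G$-saturation of the (at most two) diagonal segments $\gamma^\tau_{a^\tau(t_0)}\cup\gamma^\tau_{b^\tau(t_0)}$ in $\tilde{\sigma}$, so
\[
\dim(U_0)\;\leq\;1+\dim(G\cdot p_{\tilde{\sigma}})\;\leq\;1+(n+1-l)\;\leq\;n-2
\]
by the cohomogeneity assumption $l\geq 3$; in particular $\mathcal{H}^n(U_0)=0$. Since $T_0\in{\bf I}_n^G$, the measure $\|T_0\|$ is a finite Radon measure absolutely continuous with respect to $n$-dimensional Hausdorff measure, so continuity from above on the decreasing family $\{U_\varepsilon\}$ forces $\|T_0\|(U_\varepsilon)\downarrow\|T_0\|(U_0)=0$. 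Combining this with the Lipschitz push-forward estimate via the triangle inequality yields the claimed $\mathbf{M}$-continuity of $D^{m,\tau}$ at $(t_0,T_0)$.
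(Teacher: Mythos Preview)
Your proof has a genuine gap in the assertion that $\widetilde{F}^{m,\tau}(t,\cdot)$ is Lipschitz on all of $\tilde{\sigma}_0$ with a constant $L$ obtained by composing the Lipschitz constants of $h_\tau^{\pm1}$, $(\pi\llcorner\Omega)^{\pm1}$, and $f^m_t$. That composition is the map $\phi_t:=(\pi\llcorner\Omega)^{-1}\circ F^{m,\tau}_t\circ(\pi\llcorner\Omega)$ on the fundamental domain $\Omega$ only; the map $\widetilde{F}^{m,\tau}_t$ on $\tilde{\sigma}_0=G\cdot\Omega$ is its $G$-equivariant extension, and your chain rule says nothing about the orbit direction. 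Concretely, for $q_1=p\in\Omega$ and $q_2=h\cdot p$ on the same orbit one has $d\bigl(\widetilde{F}_t(q_1),\widetilde{F}_t(q_2)\bigr)=d\bigl(\phi_t(p),h\cdot\phi_t(p)\bigr)$, and the ratio of this to $d(p,h\cdot p)$ compares the orbit geometry at $\phi_t(p)$ to that at $p$; this is not controlled by the constants you list, and is delicate precisely because $\phi_t(p)$ can land on the lower stratum $E^\tau_0$ where isotropy jumps. The paper itself confirms this is nontrivial: immediately \emph{after} the Claim it devotes a full paragraph to bounding $d\widetilde{F}^{m,\tau}_t$ in the orbit direction via the equivariant embedding $M\hookrightarrow\mathbb{R}^L$, and even then obtains the mass estimate $\mathbf{M}(D(t,T))\le C\,\mathbf{M}(T)$ only for $T$ supported in the fixed compact neighborhood $N(\tilde{\tau})$, not on all of $\tilde{\sigma}_0$.

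The paper's proof of the Claim sidesteps this altogether. For the time-variable difference it splits $\mathbf{M}(\widetilde{F}_{s\#}T_\epsilon)\le\mathcal{H}^n\bigl(\widetilde{\mathcal{B}}_\epsilon(t)\cap\widetilde{E}_0\bigr)+\mathbf{M}\bigl((\widetilde{F}_{s\#}T_\epsilon)\llcorner(\widetilde{\mathcal{B}}_\epsilon(t)\setminus\widetilde{E}_0)\bigr)$; the first term uses $\mathbb{Z}_2$ coefficients (multiplicity $\le1$) and shrinks to at most two orbits, while the second is handled by the co-area formula for the Riemannian submersion $\pi$ on the principal stratum, which reduces matters to the Lipschitz constant $c_2$ of the \emph{quotient} map $F^{m,\tau}_s$ (where your chain-rule reasoning is valid) times the orbit-volume bound $c_1=\sup_q\mathcal{H}^{n+1-k}(G\cdot q)$. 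No Lipschitz control on $\widetilde{F}_t$ in the orbit direction is ever needed. (A minor arithmetic slip: $1+\dim(G\cdot p_{\tilde\sigma})\le 1+(n+1-l)=n+2-l\le n-1$ under $l\ge3$, not $n-2$; this does not affect $\mathcal{H}^n(U_0)=0$.)
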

	\begin{proof}
		Denote $\widetilde{F}_t = \widetilde{F}^{m,\tau}(t,\cdot)$, $\widetilde{E}_i=\pi^{-1}(E^\tau_i)$, $i=0,1$, for simplicity. 
		Let $\widetilde{\mathcal{B}}_\epsilon(t) := \pi^{-1}(\mathcal{B}_\epsilon^\tau(t) )$ and $s,t\in [0,1]$. 
		For any $\epsilon>0$, let $\delta>0$ be given by the property (iv) of $f^m_t$. 
		Then we have $\widetilde{F}_t = \widetilde{F}_s$ on $\tilde{\sigma}_0 \setminus \widetilde{\mathcal{B}}_\epsilon(t) $ whenever $|s-t|<\delta$. 
		This implies 
		\begin{eqnarray*}
			{\bf M}(\widetilde{F}_{t~\#}T - \widetilde{F}_{s~\#}T) &= & {\bf M}(\widetilde{F}_{t~\#} T_\epsilon - \widetilde{F}_{s~\#}T_\epsilon)
			\\
			&\leq & {\bf M}(\widetilde{F}_{t~\#} T_\epsilon) + {\bf M}(\widetilde{F}_{s~\#} T_\epsilon),
		\end{eqnarray*}
		where $T_\epsilon := T\llcorner \widetilde{\mathcal{B}}_\epsilon(t)$, $|s-t|<\delta$. 
		On the other hand, since $f^m_s(B^m_\epsilon(t))\subset B^m_\epsilon(t)$, 
		\begin{eqnarray*}
			{\bf M}(\widetilde{F}_{s~\#} T_\epsilon) &=& {\bf M}\big((\widetilde{F}_{s~\#} T_\epsilon)\llcorner(\widetilde{\mathcal{B}}_\epsilon(t)\setminus \widetilde{E}_0 ) \big) + 
			{\bf M}\big((\widetilde{F}_{s~\#} T_\epsilon)\llcorner(\widetilde{\mathcal{B}}_\epsilon(t)\cap \widetilde{E}_0 ) \big)
			\\
			&\leq &
			{\bf M}\big((\widetilde{F}_{s~\#} T_\epsilon)\llcorner(\widetilde{\mathcal{B}}_\epsilon(t)\setminus \widetilde{E}_0 ) \big) + 
			\mathcal{H}^n(\widetilde{\mathcal{B}}_\epsilon(t)\cap \widetilde{E}_0).
		\end{eqnarray*}
		Note $\bigcap_{\epsilon>0}\mathcal{B}^m_\epsilon(t) = (\gamma^m_{a^m(t)}\cup\gamma^m_{b^m(t)})\cap I^m$ are two lines in $I^m$, and $\gamma^m_x \cap E^m_0$ contains only one point for $x\in I^m$. 
		Hence, $\bigcap_{\epsilon>0}(\widetilde{\mathcal{B}}_\epsilon(t)\cap \widetilde{E}_0)$ contains at most two orbits. 
		Since ${\rm Cohom}(G)\geq 3$, we have $\dim(G\cdot p)\leq n-2$ for all $p\in M$, and thus the second term $\mathcal{H}^n(\widetilde{\mathcal{B}}_\epsilon(t)\cap \widetilde{E}_0)$ will vanish with $s\to t$ and $\epsilon\to 0$. 
		
		Denote $\Sigma_\epsilon\subset \widetilde{\mathcal{B}}_\epsilon(t)$ to be the support of $T_\epsilon$. 
		Then we have $\pi(\widetilde{F}_s(\Sigma_\epsilon)) = F^{m,\tau}_s(\pi(\Sigma_\epsilon))$. 
		Since $\widetilde{\mathcal{B}}_\epsilon(t)\setminus \widetilde{E}_0 \subset M_{(G_{p_{\tilde{\sigma}}})}$, we can apply the co-area formula to see 
		\begin{eqnarray*}
			{\bf M}\big((\widetilde{F}_{s~\#} T_\epsilon)\llcorner(\widetilde{\mathcal{B}}_\epsilon(t)\setminus \widetilde{E}_0 ) \big) &=& \mathcal{H}^n\big(\widetilde{F}_s(\Sigma_\epsilon) \cap (\widetilde{\mathcal{B}}_\epsilon(t)\setminus \pi^{-1}(\partial\sigma)  )\big)
			\\
			&= & \int_{{\tiny \pi(\widetilde{F}_s(\Sigma_\epsilon) ) \cap (\mathcal{B}_\epsilon^\tau(t)\setminus (\partial\sigma))}} \mathcal{H}^{n+1-k}(\pi^{-1}( [q])) ~d\mathcal{H}^{k-1}([q])
			\\
			&\leq & c_1 \mathcal{H}^{k-1}( {F}^{m,\tau}_s(\pi(\Sigma_\epsilon))\cap  {\rm Int}(\sigma))
			\\
			&\leq & c_1 c_2^{k-1}\mathcal{H}^{k-1}( \pi(\Sigma_\epsilon) \cap {\rm Int}(\sigma)),
		\end{eqnarray*}
		where $k={\rm codim}(G\cdot p_{\tilde{\sigma}}) \geq {\rm Cohom}(G) \geq 3$, $c_1 = \sup_{q\in M_{(G_{p_{\tilde{\sigma}}})}}\{\mathcal{H}^{n+1-k}(G\cdot q) \} < \infty$, $c_2=c_2(\sigma,\tau)$ is the uniform Lipschitz constant of ${F}^{m,\tau}_s$. 
		The proof is finished by observing $\bigcap_{\epsilon>0} \mathcal{B}_\epsilon^\tau(t)  \subset  (\gamma_{a^\tau(t)}^\tau\cup\gamma_{b^\tau(t)}^\tau)$ has dimension $1$. 
	\end{proof}
	
	Note $\dim(G\cdot p)\leq n-2<n$ for all $p\in M$. 
	Through an inductive construction on faces of $\sigma$ in $E^\tau_0$ as in \cite[4.4 (4),(5)]{pitts2014existence}, we can obtain a $G$-equivariant map $\widetilde{F}^{\sigma,\tau} :[0,m]\times \tilde{\sigma}_0 \rightarrow \tilde{\sigma}_0$, and an ${\bf M}$-continuous map
	$$D^{\sigma,\tau} :[0,1]\times {\bf I}_n^G(\tilde{\sigma}_0 ;{\bf M};\mathbb{Z}_2) \rightarrow {\bf I}_n^G(\tilde{\sigma}_0 ;{\bf M};\mathbb{Z}_2)$$
	given by $D^{\sigma,\tau}(t,T)=\widetilde{F}^{\sigma,\tau}(\frac{t}{m} ,\cdot)_{\#}T$ satisfying 
	$$D^{\sigma,\tau}(0,T)=T, \quad D^{\sigma,\tau}(1,T)=0, $$
	for all $T\in {\bf I}_n^G(\tilde{\sigma}_0 ;{\bf M};\mathbb{Z}_2) $. 
	
	Additionally, 
	there exists a constant $C(\sigma,\tau)>1$ so that ${\bf M}(D^{\sigma,\tau}(t,T))\leq C(\sigma,\tau) {\bf M}(T)$ for $T\in {\bf I}_n^G(\tilde{\sigma}_0\cap N(\tilde{\tau}) ;{\bf M};\mathbb{Z}_2) $. 
	Indeed, 
	for any $v\in T_qG\cdot q$ with $|v|=1$, there exists $g(s)\subset G$ with $g(0) = e$ and $\frac{d}{ds}\big|_{s=0} g(s)\cdot q = v$. 
	Since $\widetilde{F}^{m,\tau}_t$ is equivariant and $(G_{q})\subset (G_{\widetilde{F}^{m,\tau}_t(q)})$, we have $d\widetilde{F}^{m,\tau}_t(v) = \frac{d}{ds}\big|_{s=0} g(s)\cdot \widetilde{F}^{m,\tau}_t(q)$ always exists and is continuous in $q,t$. 
	The continuity of $(d\widetilde{F}^{m,\tau}_t\llcorner G\cdot q)$ gives a uniform upper bound of its norm in a compact subset $K\subset\subset (\tilde{\sigma}_0 \setminus E^\tau_0)$. 
	As for $q$ sufficiently closed to the boundary $E^\tau_0 \cap \Clos(N(\tilde{\tau}))$, the Lipschitz curve $\widetilde{F}^{m,\tau}_t(q)$ is almost a line $(1-\zeta(t))q + \zeta(t)\widetilde{F}^{m,\tau}_1(q)$ by the equivariant embedding $\rho : M\to \R^L$. 
	By regarding $g(s)$ as a curve of matrices in $O(L)$, we have 
	$A(q,v) = \frac{d}{ds}\big|_{s=0} g(s)$ is a matrix with $|A(q,v)\cdot q| = |v| = 1$ and 
	$\frac{d}{ds}\big|_{s=0} g(s)\cdot \widetilde{F}^{m,\tau}_t(q) = A(q,v)\cdot \widetilde{F}^{m,\tau}_t(q) \approx (1-\zeta(t))v + \zeta(t) A(q,v) \cdot \widetilde{F}^{m,\tau}_1(q)$. 
	If $v\in T_qG_{\widetilde{F}^{m,\tau}_1(q)} \cdot q$, then we can choose $g(s)\subset G_{\widetilde{F}^{m,\tau}_1(q)}$, and thus $|\frac{d}{ds}\big|_{s=0} g(s)\cdot \widetilde{F}^{m,\tau}_t(q)| \approx (1-\zeta(t))\leq 2$ is uniformly bounded. 
	If $v\in T_qG\cdot q\cap (T_qG_{\widetilde{F}^{m,\tau}_1(q)} \cdot q)^\perp$, then the maxtix $A(q,v) = \frac{d}{ds}\big|_{s=0} g(s)$ is continuous up to the boundary $E^{\tau}_0$ (a neighborhood of $\widetilde{F}^{m,\tau}_1(q)$ in $E^{\tau}_0$), which also implies $|\frac{d}{ds}\big|_{s=0} g(s)\cdot \widetilde{F}^{m,\tau}_t(q)| \leq C$ by the uniform bound of $A(q,v)$. 
	Therefore, $\widetilde{F}^{m,\tau}_t$ 
	is differentiable in the tangent direction of orbits, and the total derivative in the tangent direction is bounded in the compact neighborhood $\tilde{\sigma}_0\cap \Clos(N(\tilde{\tau}))$ of $G\cdot p_{\tilde{\tau}}$, which implies $\mH^{n+1-k}(G\cdot \widetilde{F}^{m,\tau}_t(q))\leq C'(\tau,\sigma)\mH^{n+1-k}(G\cdot q)$, for all $q\in \tilde{\sigma}_0\cap \Clos(N(\tilde{\tau}))$, $t\in [0,1]$, by the area formula. 
	Now for any $G$-invariant $n$-rectifiable set $\Sigma\subset \tilde{\sigma}_0\cap \Clos(N(\tilde{\tau}))$, we have $\mH^n(\widetilde{F}^{m,\tau}_t(\Sigma\cap \pi^{-1}(\partial \sigma ))) = \mH^n(\Sigma\cap \pi^{-1}(\partial \sigma )) $ since $\widetilde{F}^{m,\tau}_t\llcorner \pi^{-1}(\partial\sigma)\cap\tilde{\sigma}_0 = id$. 
	By applying the co-area formula (on each stratum), we have 
	\begin{eqnarray*}
		\mH^n(\widetilde{F}^{m,\tau}_t(\Sigma\setminus \pi^{-1}(\partial \sigma)) &\leq & \int_{F^{m,\tau}_t(\pi(\Sigma)\setminus\partial\sigma )} \mH^{n+1-k}(\pi^{-1}([q])) d\mH^{k-1}([q])
		\\
		&\leq & \int_{\pi(\Sigma)\setminus\partial\sigma} c_2^{k-1}C'(\tau,\sigma)\mH^{n+1-k}(\pi^{-1}([q]))  d\mH^{k-1}([q])
		\\
		&= & C(\tau,\sigma) \mH^n(\Sigma\setminus \pi^{-1}(\partial\sigma) ),
	\end{eqnarray*} 
	where $k={\rm codim}(G\cdot p_{\tilde{\sigma}})$, $c_2$ is the uniform Lipschitz constant of ${F}^{m,\tau}_t$. 
	Therefore, we have $\mH^n(\widetilde{F}^{m,\tau}_t(\Sigma))\leq C(\tau,\sigma)\mH^n(\Sigma)$, which implies ${\bf M}(D^{\sigma,\tau}(t,T))\leq C(\sigma,\tau) {\bf M}(T)$ for $T\in {\bf I}_n^G(\tilde{\sigma}_0\cap N(\tilde{\tau}) ;{\bf M};\mathbb{Z}_2) $.

	In above constructions, we supposed $\tau$ is a face of $\sigma$ and $\tau\neq\sigma$. 
	As for the case that $\tau=\sigma$, let $Sd(\sigma)$ be the barycentric subdivision of $\sigma$. 
	Then for any $m$-dimensional cell $\sigma'\in Sd(\sigma)$, $[p_\tau]=[p_\sigma]$ is a vertex of $\sigma'$. 
	Since $f^m_t={\rm id}$ on $E^m_0$, we can define $F^{m,\sigma}:[0,1]\times\sigma\to\sigma$ by defining such function on every $\sigma'$ as above. 
	Let $\sigma_0 := \sigma\setminus \partial\sigma$ and $\tilde{\sigma}_0:=\pi^{-1}(\sigma_0)$. 
	Then the $G$-equivariant map 
	$\widetilde{F}^{m,\sigma} : [0,1]\times \tilde{\sigma}_0\rightarrow \tilde{\sigma}_0$ can be defined in the same way with the same properties. 
	Again, an inductive construction gives $\widetilde{F}^{\sigma,\sigma}$ and an ${\bf M}$-continuous map $D^{\sigma,\sigma}$. 
	
	Furthermore, by the definition of $\sigma_0$, the map $F^{m,\tau}$ can be defined on ${\rm Int}(U(\tau))$. 
	Hence, for any $\sigma\in\triangle$, there exists a $G$-equivariant map $\widetilde{F}(\tilde{\sigma}) :[0,l]\times  \mathring{U}(\tilde{\sigma}) \rightarrow  \mathring{U}(\tilde{\sigma})$, $l={\rm Cohom}(G) = \dim(M/G)$, and an ${\bf M}$-continuous map 
	\begin{eqnarray*}
		&&D(\tilde{\sigma}) : I\times {\bf I}_n^G( \mathring{U}(\tilde{\sigma}) ;{\bf M};\mathbb{Z}_2) \rightarrow {\bf I}_n^G( \mathring{U}(\tilde{\sigma}) ;{\bf M};\mathbb{Z}_2),
	\end{eqnarray*}
	given by $D(\tilde{\sigma})(t, T) := \widetilde{F}(\tilde{\sigma})(\frac{t}{l},\cdot)_\# T$, 
	which satisfies $D(\tilde{\sigma})(0,T)=T$, $D(\tilde{\sigma})(1,T)=0$, for all $T\in {\bf I}_n^G(\mathring{U}(\tilde{\sigma});{\bf M};\mathbb{Z}_2)$, and ${\bf M}(D(\tilde{\sigma})(t,T))\leq C(M,G) {\bf M}(T)$ if $\spt(T)\subset N(\tilde{\sigma})$. 
	We mention that the restriction on $\spt(T)\subset N(\tilde{\sigma})$ can be achieved by the cutting procedure $C_\Lambda(\tilde{\sigma}, \cdot )$. 
	
	Then for any $k$-cell $\alpha\in I(m,0)_k$, we consider the continuous function $h_\alpha: I^k\rightarrow \mathcal{Z}_n^G(M;{\bf M};\mathbb{Z}_2)$ defined by:
	\begin{equation*}
		h_\alpha(0) := \phi(\alpha),\quad {\rm for} ~k=0,
	\end{equation*}
	and for $k\geq 1$:
	\begin{equation*}
		\begin{split}
			&h_\alpha(x_1,\dots,x_k) := \sum_{\gamma\in\Gamma_\alpha} {\rm sign}(\gamma)\cdot 
			\\
			&\sum_{\tilde{s}_1,\dots,\tilde{s}_k\in\widetilde{\triangle}} D(\tilde{s}_1,x_1)\circ\cdots\circ D(\tilde{s}_k,x_k) \circ \partial \circ C_{\Lambda(\gamma_k)}(\tilde{s}_k)\circ\cdots\circ C_{\Lambda(\gamma_1)}(\tilde{s}_1) (Q(\gamma_1)),
		\end{split}
	\end{equation*}
	where $\gamma = (\gamma_1,\dots,\gamma_k ),~{\rm sign}(\gamma),~\Gamma_\alpha,~\Lambda(\gamma_i )$ are all defined as in \cite[Theorem 14.3]{marques2014min}.
	The rest of the proof is just the same as in \cite[Theorem 14.3]{marques2014min}.
\end{proof}

\begin{remark}\label{Rem:Almgren isomorphism}
	Combining the arguments in \cite{almgren1962homotopy} with Lemma \ref{Lem:isoperimetric} and Theorem \ref{Thm:interpolation}, we can show the Almgren's Isomorphism \cite[Theorem 7.1]{almgren1962homotopy} and Almgren's Homotopies \cite[Theorem 8.2]{almgren1962homotopy} remain true for $\mathcal Z_{n}^G(M;\mathbb{Z}_2)$. 
	Moreover, 
	we also have the isomorphism theorem for $\mathcal Z_{n}^G(M;{\bf M};\mathbb{Z}_2)$.
\end{remark}

We can also adapt the proof of \cite[Corollary 3.9]{marques2017existence} straightforwardly into the $G$-invariant setting by using Theorem \ref{Thm:discritization}, \ref{Thm:interpolation}, and Remark \ref{Rem:Almgren isomorphism} in place of \cite[Theorem 3.6, 3.7]{marques2017existence} and \cite[Theorem 8.2]{almgren1962homotopy}: 
 
\begin{corollary}\label{Cor:discrete continuous homotopy}
	Let $S=\{\phi_i\}_{i\in \mathbb{N}}$ and $S'=\{\phi'_i\}_{i\in \mathbb{N}}$ be two $(X,{\bf M})$-homotopy sequences  of mappings into  $\mathcal{Z}_n^G(M;{\bf M};\mathbb{Z}_2)$ such that  $S,S'$ are $G$-homotopic.
	\begin{itemize}
 		\item[(i)] The Almgren $G$-extensions of $\phi_i$, $\phi_i'$:
 		$$\Phi_i, \Phi_i':X\rightarrow \mathcal{Z}_n^G(M;{\bf M};\mathbb{Z}_2),$$are $G$-homotopic to each other in the flat topology for $i$ large enough.
 		\item[(ii)] If $S$ is given by Theorem \ref{Thm:discritization} (i) applied to $\Phi:X\rightarrow \mathcal{Z}_n^G(M;\mathbb{Z}_2)$, an $\mathcal{F}$-continuous map with no concentration of mass on orbits, then $\Phi_i$ is $G$-homotopic to $\Phi$ in the flat topology for every sufficiently large $i$.
	 	Moreover, $$\limsup_{i\rightarrow \infty} \sup_{x\in X}\{ {\bf M}(\Phi_i(x))\}={\bf L}(\{\Phi_i\}_{i\in\N})={\bf L}(S) \leq  \sup_{x\in X}\{ {\bf M}(\Phi(x))\}.$$
	\end{itemize}
\end{corollary}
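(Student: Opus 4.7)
The plan is to follow the Marques--Neves proof of \cite[Corollary 3.9]{marques2017existence} line by line, substituting the $G$-equivariant discretization (Theorem \ref{Thm:discritization}), the Almgren $G$-extension (Theorem \ref{Thm:interpolation}), and the $G$-equivariant Almgren isomorphism (Remark \ref{Rem:Almgren isomorphism}) for their classical counterparts. All topological and combinatorial scaffolding from \cite{marques2014min} is then available verbatim, since every object involved is already forced to be $G$-invariant.

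For part (i), the assumption that $S$ is $G$-homotopic to $S'$ provides, for every sufficiently large $i$, a $G$-invariant discrete homotopy
$$\psi_i:I(1,k_i)_0\times X(k_i)_0\longrightarrow\mathcal{Z}_n^G(M;\mathbb{Z}_2)$$
of fineness $\delta_i\to 0$ joining $\phi_i$ to $\phi_i'$. I would then apply Theorem \ref{Thm:interpolation} to $\psi_i$ on the cubical complex $I\times X$; the resulting Almgren $G$-extension $\Psi_i:I\times X\to\mathcal{Z}_n^G(M;{\bf M};\mathbb{Z}_2)$ is ${\bf M}$-continuous, and by property (ii) of Theorem \ref{Thm:interpolation} its restriction over $\{0\}\times X$ coincides with $\Phi_i$ and over $\{1\}\times X$ with $\Phi_i'$, since the extension on any cell is determined solely by the vertex values. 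Because ${\bf M}$-continuity implies flat continuity, $\Psi_i$ is the required flat $G$-homotopy.

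For part (ii), Theorem \ref{Thm:discritization}(ii) gives $\mathcal{F}(\phi_i(x)-\Phi(x))\le\delta_i$ on $X(k_i)_0$, while Theorem \ref{Thm:interpolation}(iii) bounds ${\bf M}(\Phi_i(x)-\Phi_i(y))\le C_0{\bf f}(\phi_i)$ whenever $x,y$ share a cell. Combined with the $\mathcal{F}$-continuity of $\Phi$ and the fact that the mesh of $X(k_i)$ shrinks with $i$, these estimates force $\sup_{x\in X}\mathcal{F}(\Phi_i(x)-\Phi(x))\to 0$. The $G$-equivariant Almgren isomorphism in Remark \ref{Rem:Almgren isomorphism} then promotes this uniform $\mathcal{F}$-closeness into a flat $G$-homotopy between $\Phi_i$ and $\Phi$ for all sufficiently large $i$. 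For the width estimate, Theorem \ref{Thm:interpolation}(iii) yields
$$\sup_{x\in X}{\bf M}(\Phi_i(x))\ \leq\ \max_{v\in X(k_i)_0}{\bf M}(\phi_i(v))+C_0\,{\bf f}(\phi_i),$$
and the reverse inequality holds trivially because the vertices lie in $X$; passing to $\limsup_{i\to\infty}$ produces ${\bf L}(\{\Phi_i\}_{i\in\mathbb{N}})={\bf L}(S)$, while Theorem \ref{Thm:discritization}(iii) furnishes ${\bf L}(S)\leq\sup_{x\in X}{\bf M}(\Phi(x))$.

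The only delicate point I expect to verify is the $G$-equivariance used in the invocation of Remark \ref{Rem:Almgren isomorphism}: one must check that the isoperimetric fillings produced cell by cell on a fine subdivision of $X$ can be chosen simultaneously $G$-invariant. This is, however, precisely the uniqueness-plus-averaging content of Lemma \ref{Lem:isoperimetric}, so no new idea is required, and the remainder of the argument is a faithful transcription of \cite[Corollary 3.9]{marques2017existence}.
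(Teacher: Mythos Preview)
Your proposal is correct and follows the same route as the paper, which simply records that the proof of \cite[Corollary 3.9]{marques2017existence} carries over once Theorems \ref{Thm:discritization}, \ref{Thm:interpolation} and Remark \ref{Rem:Almgren isomorphism} replace their classical analogues. One small imprecision in part (i): the restriction of the Almgren $G$-extension $\Psi_i$ to $\{0\}\times X$ is not literally $\Phi_i$ but rather the extension of $\phi_i\circ{\bf n}(k_i,\cdot)$ on the refined mesh; you still need one more application of Theorem \ref{Thm:interpolation} (or the Almgren homotopy argument in Remark \ref{Rem:Almgren isomorphism}) to connect this to $\Phi_i$, but this is routine and exactly what \cite[Corollary 3.9]{marques2017existence} does.
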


\subsection{Pull-tight}
Through a pull-tight argument (see \cite[Theorem 4.3]{pitts2014existence}, \cite[Section 15]{marques2014min}), a critical sequence $S$ can be constructed in any $\Pi \in [X,\mathcal{Z}_n(M;{\bf M};\mathbb{Z}_2)]^{\#}$ so that every varifold in the critical set ${\bf C}(S)$ is stationary in $M$. 
In our equivariant case, the pull-tight procedure can be made similarly to \cite[Section 15]{marques2014min}. 
Besides, we apply an argument involving continuous homotopy settings (see \cite{marques2015morse}) to avoid a technical obstacle (\cite[Page 766]{marques2014min}) and make the article more concise. 

\begin{proposition}[Pull-tight]\label{Prop:pulltight}
	Let ${\bf \Pi}\in \big[ X, \Z_{n}^G(M;\mF;\mZ_2)\big]$ be a continuous $G$-homotopy class. 
	For any min-max sequence $\{\Phi_i^*\}_{i\in\N}$ in ${\bf \Pi}$, there exists a {\em tight} min-max sequence $\{\Phi_i\}_{i\in\N}$ of ${\bf \Pi}$ such that
	\begin{itemize}
		\item[$\bullet$] ${\bf C}(\{\Phi_i\}_{i\in\N}) \subset {\bf C}(\{\Phi_i^*\}_{i\in\N})$;
		\item[$\bullet$] every $G$-varifold $V\in {\bf C}(\{\Phi_i\}_{i\in\N})$ is stationary in $M$.
	\end{itemize}
	Moreover, there exists a sequence $\{k_i\}_{i\in\N}\subset \N $ with $k_i\to\infty$, and a sequence $S=\{\varphi_i\}_{i\in\N}$ of discrete mappings 
	$$\varphi_i: X( k_i)_{0}\rightarrow \Z_{n}^G(M ;\mZ_2) $$
	satisfying 
	\begin{itemize}
		\item[(i)] $\mf(\varphi_i)=\delta_i\to 0$, as $i\to\infty$; 
		\item[(ii)] the Almgren $G$-extension $\Phi_i'$ of $\varphi_i$ is $G$-homotopic to $\Phi_i$ in the flat metric; 
		\item[(iii)] ${\bf L}(S)={\bf L}(\{\Phi_i\}_{i\in\N})={\bf L}({\bf \Pi})$;
		\item[(iv)] ${\bf C}(S)={\bf C}(\{\Phi_i\}_{i\in\N})$.
	\end{itemize}
\end{proposition}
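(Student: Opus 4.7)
The plan is to adapt the classical pull-tight procedure of Pitts (\cite[Theorem 4.3]{pitts2014existence}) and Marques-Neves (\cite[Section 15]{marques2014min}) to the $G$-equivariant setting, working directly in the continuous homotopy framework of Definition \ref{Def:homotopy continuous} (so as to sidestep the technical issue noted on \cite[Page 766]{marques2014min}) and then discretizing at the end via Theorem \ref{Thm:discritization} and Corollary \ref{Cor:discrete continuous homotopy}.

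The core step is to construct an $\mF$-continuous homotopy
\[
H : [0,1] \times \Z_{n}^G(M;\mF;\mZ_2) \longrightarrow \Z_{n}^G(M;\mF;\mZ_2), \qquad H(0,\cdot) = \mathrm{id},
\]
satisfying $\M(H(t,T)) \leq \M(T)$ for all $(t,T)$, and $\M(H(1,T)) \leq \M(T) - \eta$ whenever $|T|$ is non-stationary, $\M(T) \leq K$, and $|T|$ is at $\mF$-distance at least $\epsilon$ from the closed set $\mathcal{S}$ of stationary $G$-varifolds, for some $\eta = \eta(\epsilon,K) > 0$. Fixing $K > {\bf L}({\bf \Pi}) + 1$, the set $\mathcal{A}_K := \{V \in \V_n^G(M) : \|V\|(M) \leq K\}$ is $\mF$-compact by Remark \ref{Rem:compactness for G-varifold}. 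For each $V \in \mathcal{A}_K \setminus \mathcal{S}$, Lemma \ref{stationary} furnishes $Y \in \mathfrak{X}(M)$ with $\delta V(Y) < 0$, and the Haar average
\[
Y_G := \int_G g_{*} Y\, d\mu(g) \in \mathfrak{X}^G(M)
\]
still satisfies $\delta V(Y_G) < 0$ because $V$ is $G$-invariant and every $g \in G$ acts by isometries. Covering $\mathcal{A}_K \setminus \mathcal{S}$ by $\mF$-open neighborhoods on each of which a single $G$-vector field decreases the first variation uniformly, and gluing via a continuous partition of unity exactly as in \cite[Section 15]{marques2014min}, I obtain a continuous assignment $V \mapsto X_V \in \mathfrak{X}^G(M)$. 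Its time-$t$ flow $\varphi^{X_V}_t$ is then a $G$-equivariant diffeomorphism, so $H(t,T) := (\varphi^{X_{|T|}}_t)_{\#} T$ lies in $\Z_n^G(M;\mZ_2)$ by Lemma \ref{Lem:G-push}, and the $\mF$-continuity together with the mass estimates follow from the standard first-variation computations.

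Given $H$, I define $\Phi_i(x) := H(1, \Phi_i^*(x))$; the map $s \mapsto H(s, \Phi_i^*(x))$ is a flat-continuous $G$-homotopy, so $\Phi_i \in {\bf \Pi}$. Mass monotonicity combined with the minimality of ${\bf L}({\bf \Pi})$ forces ${\bf L}(\{\Phi_i\}) = {\bf L}({\bf \Pi})$, while the uniform strict decrease away from $\mathcal{S}$ forces ${\bf C}(\{\Phi_i\}) \subset \mathcal{S} \cap {\bf C}(\{\Phi_i^*\})$. For the discrete part, Lemma \ref{Lem:mass continu no concent} shows each $\Phi_i$ has no concentration of mass on orbits, so Theorem \ref{Thm:discritization} produces $\varphi_i : X(k_i)_0 \to \Z_n^G(M;\mZ_2)$ with $\mf(\varphi_i) = \delta_i \to 0$; Corollary \ref{Cor:discrete continuous homotopy}(ii) then provides a flat $G$-homotopy between the Almgren $G$-extension $\Phi'_i$ and $\Phi_i$, and the identifications ${\bf L}(S) = {\bf L}(\{\Phi_i\}) = {\bf L}({\bf \Pi})$ and ${\bf C}(S) = {\bf C}(\{\Phi_i\})$ follow from Theorem \ref{Thm:discritization}(ii)--(iii) together with the flat closeness of $\varphi_i(x)$ to $\Phi_i(x)$.

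The main obstacle is the construction of $H$: the assignment $V \mapsto X_V$ must be simultaneously $\mF$-continuous in $V$, $G$-equivariant, and produce a pushforward that is $\mF$-continuous on cycles. The equivariance is resolved cleanly by Haar averaging, since the sign of $\delta V(\cdot)$ at a $G$-invariant $V$ is preserved; the harder point is reconciling the varifold weak topology (in which mass is only lower semicontinuous) with the $\mF$-topology on cycles, which requires the same two-scale partition-of-unity argument as in the non-equivariant case, refined throughout by $G$-invariant neighborhoods and Haar-averaged vector fields.
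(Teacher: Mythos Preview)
Your proposal is correct and follows essentially the same approach as the paper: Haar-average to obtain a continuous assignment $V\mapsto X_V\in\mathfrak{X}^G(M)$, push forward by the induced $G$-equivariant flow to define $\Phi_i=H(1,\Phi_i^*)$, and then discretize via Theorem \ref{Thm:discritization} and Corollary \ref{Cor:discrete continuous homotopy}. The paper additionally makes the diagonal step explicit (applying the discretization to each $\Phi_i$ yields a full sequence $\{\phi_i^j\}_j$, from which one selects $\varphi_i:=\phi_i^{j(i)}$), and upgrades the $\mathcal{F}$-closeness in Theorem \ref{Thm:discritization}(ii) to $\mF$-closeness using \cite[Lemma~4.1]{marques2014min} together with Theorem \ref{Thm:discritization}(iii) before concluding ${\bf C}(S)={\bf C}(\{\Phi_i\})$; you should record these two points when filling in the details.
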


\begin{remark}
	Note $S=\{\varphi_i\}_{i\in\N}$ in the above proposition may not be an $(X,{\bf M})$-homotopy sequence of mappings into $\Z_{n}^G(M;\M;\mZ_2)$. 
	However, we can still define 
	$${\bf L}(S):=\limsup_{i\rightarrow\infty} \max_{x\in \mathrm{dmn}(\varphi_i)} {\bf M}(\varphi_i(x)),$$ 
	and ${\bf C}(S)$ as in Definition \ref{Def:critical set} (\cite[Section 2.5]{marques2017existence}).
\end{remark}

\begin{proof}
	Denote $A := \{V\in\mathcal{V}^G_n(M): \|V\|(M)\leq 2{\bf L}({\bf \Pi}) \}$, and denote $A_0$ to be the set of all stationary varifolds in $A$. 
	For any vector field $Y\in \mathfrak{X}(M)$, let $Y_G\in \mathfrak{X}^G(M)$ be defined as in (\ref{Eq:average}). 
	Then by \cite[Lemma 2.2]{liu2021existence}, we have 
	$ \delta V(Y)=\delta V(Y_G)$
	for any $G$-varifold $V\in\V^G_n(M)$. 
	Combining the compactness theorem for $G$-varifolds (Remark \ref{Rem:compactness for G-varifold}) with the arguments in \cite[Page 765]{marques2014min}, we can construct two maps 
	$$Y:A\to \mathfrak{X}^G(M),\quad h:A\to [0,1],$$
	continuous in the $\mF$-metric on $A$ and the $C^1$-topology on $\mathfrak{X}^G(M)$ such that 
	\begin{itemize}
		\item[$\bullet$] $Y(V)=0$, $h(V)=0$, if $V\in A_0$;
		\item[$\bullet$] $\delta V(Y(V))<0$, $h(V)>0$, if $V\in A\setminus A_0$;
		\item[$\bullet$] $\|(f^{Y(V)}_{t})_{\#}V\|(M) < \|(f^{Y(V)}_{s})_{\#}V\|(M)$, if $0\leq s<t\leq h(V)$,
	\end{itemize}
	where $\{f^{Y(V)}_{t}\}_{t\in[0,1]}$ are $G$-equivariant diffeomorphisms generated by $Y(V)$. 
	Now we define the following $G$-homotopy map continuous in the $\mF$-metric:
	\begin{eqnarray}\label{Eq-pulltightmap}
		H:~I\times \big(\mathcal{Z}_n^G(M;{\bf F};\mathbb{Z}_2)&\cap & \{T~:~ {\bf M}(T)\leq 2{\bf L}({\bf \Pi}) \} \big)\nonumber
		\\
		&\rightarrow & \mathcal{Z}_n^G(M;{\bf F};\mathbb{Z}_2)\cap  \{T~:~ {\bf M}(T)\leq 2{\bf L}({\bf \Pi}) \},
		\\
		H(t,T) &:=& \big( f^{Y(|T|)}_{h(|T|)t} \big)_\# T, \nonumber
	\end{eqnarray}
	which satisfies:
	\begin{itemize}
		\item[$\bullet$] $H(0,T)=T$;
		\item[$\bullet$] if $|T|$ is stationary in $M$, then $H(t,T)=T$ for all $t\in[0,1]$;
		\item[$\bullet$] if $|T|$ is not stationary in $M$, then ${\bf M}(H(1,T))<\M(T)$. 
	\end{itemize}

	Let $\Phi_i := H(1,\Phi_i^*)$. 
	It's clear that $\{\Phi_i\}_{i\in\N}\subset {\bf \Pi}$ is also a continuous min-max sequence.
	Furthermore, every $G$-varifold in ${\bf C}(\{\Phi_i\}_{i\in\N})$ is stationary in $M$ and contained in ${\bf C}(\{\Phi_i^*\}_{i\in\N})$. 
	After applying Theorem \ref{Thm:discritization} to each $\Phi_i$, we get a sequence $S_i=\{\phi_i^j\}_{j=1}^\infty$ of maps 
	$\phi_i^j:X(k_i^j)\to \Z_{n}^G(M;\mZ_2)$ 
	as an $(X,{\bf M})$-homotopy sequence of mappings into $\Z_{n}^G(M;\M;\mZ_2)$ for every $i\in\N$. 
	Noting $X\to \M(\Phi_i(\cdot))$ is continuous for each $\Phi_i$, we can further combine Theorem \ref{Thm:discritization} (ii)(iii) with \cite[Lemma 4.1]{marques2014min} to show
	\begin{equation}\label{Eq: pull tight estimate}
		\lim_{j\to\infty} \sup\{\mF(\phi_i^j(x),\Phi_i(x)) : x\in X(k_i^j)_0   \}=0,
	\end{equation}
	for every $i\in\N$. 
	By Corollary \ref{Cor:discrete continuous homotopy} (ii), the Almgren $G$-extension $\Phi_i^j$ of $\phi_i^j$ is $G$-homotopic to $\Phi_i$ for $j$ large enough, which implies $\Phi_i^j\in {\bf \Pi}$. 
	Furthermore, by the definition of ${\bf L}({\bf \Pi})$ and Corollary \ref{Cor:discrete continuous homotopy} (ii), we also have 
	\begin{equation}\label{Eq: pull tight width}
		{\bf L}({\bf \Pi}) \leq {\bf L}(\{\Phi_i^j\}_{j=1}^\infty) ={\bf L}(S_i) \leq  \sup_{x\in X}\{ {\bf M}(\Phi_i(x))\}\to {\bf L}({\bf \Pi})~{\rm as}~i\to\infty.
	\end{equation}
	Therefore, after taking a subsquence $j(i)\to\infty$ as $i\to\infty$, we can define $S:=\{\varphi_i\}_{i=1}^\infty$ with $\varphi_i:=\phi_i^{j(i)}$ so that $\f(\varphi_i)\to 0$ and 
	\begin{itemize}
		\item[$\bullet$] the Almgren's $G$-extension $\Phi_i':=\Phi_i^{j(i)}$ of $\varphi_i$ is $G$-homotopic to $\Phi_i$;
		\item[$\bullet$] ${\bf L}({\bf \Pi})={\bf L}(\{\varphi_i\}_{i\in\N})$ by (\ref{Eq: pull tight width});
		\item[$\bullet$] $\sup\{\mF(\varphi_i(x),\Phi_i(x)) : x\in X(k_i^{j(i)})_0   \}\leq a_i\to 0$ as $i\to\infty$ by (\ref{Eq: pull tight estimate});
		\item[$\bullet$] $\sup\{\mF(\Phi_i(x),\Phi_i(y)) : x,y\in\alpha,\alpha\in X(k_i^{j(i)})   \}\leq a_i\to 0$ as $i\to\infty$ by the $\mF$-continuity of $\Phi_i$. 
	\end{itemize}
	Finally, ${\bf C}(S)={\bf C}(\{\Phi_i\}_{i\in\N})$ follows from the last two bullets. 
\end{proof}


\section{$G$-Almost Minimizing Varifolds}\label{G-a.m.v}
 
In this section, we introduce the notion of $G$-almost minimizing varifolds.

\begin{definition}\label{Def:a.m.deform}
	For any $\epsilon>0,~\delta>0$, open $G$-set $U\subset M$, and $T\in \mathcal Z_{n}^G(M;\mathbb{Z}_2)$, we call a finite sequence $\{T_i\}_{i=1}^q\subset \mathcal Z_{n}^G(M;\mathbb{Z}_2)$ a {\em $G$-invariant $(\epsilon,\delta)$-deformation} of $T$ in $U$ under the metric ${\bf v}$ if
	\begin{itemize}
		\item[$\bullet$] $T_0=T$ and ${\rm spt}(T-T_i)\subset U$ for all $i=1,\dots, q;$
		\item[$\bullet$] ${\bf v}(T_i-T_{i-1})\leq \delta$ for all $i=1,\dots, q;$
		\item[$\bullet$] ${\bf M}(T_i)\leq {\bf M}(T)+\delta$ for all $i=1,\dots, q;$
		\item[$\bullet$] ${\bf M}(T_q)< {\bf M}(T)-\epsilon$,
	\end{itemize}
	where ${\bf v}$ is any one of $\mathcal{F},~{\bf F},~{\bf M}$. 
	Moreover, we define
	$$ \mathfrak{a}^G_n(U;\epsilon,\delta; {\bf v} ) $$
	to be the set of all $G$-cycles $T\in \mathcal Z_{n}^G(M;\mathbb{Z}_2)$ that do not admit any $G$-invariant $(\epsilon,\delta)$-deformation under the metric ${\bf v}$. 
	For simplicity, the notation ${\bf v}$ is sometimes omitted, which means the flat semi-norm $\mathcal{F}$ is used. 
\end{definition}

\begin{definition}\label{Def:a.m.}
	A varifold $V\in \mathcal V_{n}^G(M)$ is said to be {\em $(G,\mathbb{Z}_2)$-almost minimizing} in an open $G$-set $U\subset M$, if for every $\epsilon>0$ there exists $\delta>0$ and
	$$T\in \mathfrak{a}^G_n(U;\epsilon,\delta )$$ with ${\bf F}(V,|T|)<\epsilon$. 
	Moreover, if $T$ can be chosen as a boundary of an element in ${\bf I}^G_{n+1}(M;\mathbb{Z}_2)$, then we say $V$ is {\it $(G,\mathbb{Z}_2)$-almost minimizing of boundary type}. 
\end{definition}

In Pitts's original definition (\cite[3.1]{pitts2014existence}), the comparison currents $T\in\mathfrak{a}_n(U;\epsilon,\delta )$ are allowed to have a boundary outside $U$, and the almost minimizing varifold is defined with the ${\bf F}_U$-metric. 
Nevertheless, we here use a stronger version of the definition, which was first observed by X. Zhou in \cite[Definition 6.3]{zhou2015min}.

\begin{lemma}\label{Lem:G.a.m is stable}
	If $V\in \mathcal V_{n}^G(M)$ is $(G,\mathbb{Z}_2)$-almost minimizing in an open $G$-set $U\subset M$, then $V$ is stable under $G$-equivariant variations in $U$. 
\end{lemma}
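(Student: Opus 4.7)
The plan is a contradiction argument that converts any $G$-equivariant mass-decreasing variation of $V$ in $U$ into a forbidden $G$-invariant $(\epsilon,\delta)$-deformation in the sense of Definition~\ref{Def:a.m.deform}. Suppose $V$ is not stable under $G$-equivariant variations in $U$: there exists a $G$-vector field $X \in \mathfrak{X}^G(M)$ with compact support in $U$ whose flow $\{\phi_t\}_{t\in[0,1]}$ satisfies $\|(\phi_{t_\ast})_\# V\|(M) < \|V\|(M) - 3\eta$ for some $t_\ast \in (0,1]$ and $\eta > 0$. Each $\phi_t$ is $G$-equivariant because $X$ is $G$-invariant. By continuity of $T \mapsto \mathbf{M}((\phi_{t_\ast})_\# T) - \mathbf{M}(T)$ in the $\mathbf{F}$-metric on a neighborhood of $V$, there exists $\eta_1 > 0$ so that every $T \in \mathcal{Z}_n^G(M;\mathbb{Z}_2)$ with $\mathbf{F}(V, |T|) < \eta_1$ satisfies
\[
\mathbf{M}((\phi_{t_\ast})_\# T) < \mathbf{M}(T) - 2\eta.
\]

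Fix $\epsilon \in \bigl(0, \min(\eta, \eta_1)\bigr)$ and apply the $(G,\mathbb{Z}_2)$-almost minimizing property of $V$ (Definition~\ref{Def:a.m.}) to obtain $\delta \in (0, \eta)$ and a cycle $T \in \mathfrak{a}^G_n(U; \epsilon, \delta)$ with $\mathbf{F}(V, |T|) < \epsilon < \eta_1$. Choose a partition $0 = s_0 < s_1 < \cdots < s_q = t_\ast$ fine enough that for every $i$:
\begin{itemize}
\item $\mathcal{F}\bigl((\phi_{s_i})_\# T - (\phi_{s_{i-1}})_\# T\bigr) \leq \delta$, using uniform continuity of $s \mapsto (\phi_s)_\# T$ in the flat semi-norm;
\item $\mathbf{M}\bigl((\phi_{s_i})_\# T\bigr) \leq \mathbf{M}(T) + \delta$, using continuity of $s \mapsto \mathbf{M}((\phi_s)_\# T)$ together with the fact that this function ultimately reaches $\mathbf{M}(T) - 2\eta$ at $s = t_\ast$.
\end{itemize}
Set $T_i := (\phi_{s_i})_\# T$. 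Each $T_i$ is $G$-invariant because $\phi_{s_i}$ is $G$-equivariant; $\operatorname{spt}(T - T_i) \subset U$ because $\phi_s - \mathrm{id}$ is supported in $\operatorname{spt}(X) \subset U$; and the final mass obeys $\mathbf{M}(T_q) = \mathbf{M}((\phi_{t_\ast})_\# T) < \mathbf{M}(T) - 2\eta < \mathbf{M}(T) - \epsilon$. Hence $\{T_i\}_{i=1}^q$ is a $G$-invariant $(\epsilon, \delta)$-deformation of $T$ in $U$, contradicting $T \in \mathfrak{a}^G_n(U; \epsilon, \delta)$.

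The main obstacle will be verifying the second bullet of the partition choice. If the instability is purely second-order (so $\delta V(X) = 0$ but $\delta^2 V(X, X) < 0$), then $\mathbf{M}((\phi_s)_\# T)$ can rise quadratically in $s$ before dipping below $\mathbf{M}(T)$. To handle this one splits $[0, t_\ast]$ into an initial short interval on which the excursion above $\mathbf{M}(T)$ is bounded by $\delta$ (using a uniform $C^2$-bound on the mass function along the flow, valid for all $T$ in a small $\mathbf{F}$-ball of $V$), and a longer interval on which the mass stays below $\mathbf{M}(T)$. Choosing the sub-partition on the first interval fine and taking $\epsilon, \delta$ sufficiently small keeps both the intermediate-mass constraint and the endpoint drop $\mathbf{M}(T) - \mathbf{M}(T_q) > \epsilon$ simultaneously satisfied.
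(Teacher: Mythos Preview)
Your approach is exactly the one the paper takes: it simply cites Pitts' Theorem~3.3 and notes that the argument carries over verbatim once the vector fields, isotopies, and comparison cycles are all taken $G$-invariant. Your write-up is in fact more detailed than the paper's one-line proof.

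One caution about your last paragraph. You write ``taking $\epsilon, \delta$ sufficiently small'' to control the intermediate-mass excursion, but $\delta$ is \emph{not} yours to choose: it is furnished by the almost-minimizing hypothesis \emph{after} you fix $\epsilon$, and could be arbitrarily small relative to $\epsilon$ and to any quantity depending on $X, t_\ast$. So the argument that $\sup_s\mathbf{M}((\phi_s)_\# T)-\mathbf{M}(T)\leq\delta$ must work for \emph{every} $\delta>0$ simultaneously, which your sketch does not establish. A cleaner route is to first treat the case $\delta V(X)\neq 0$: choose $t_\ast$ so small that $\tfrac{d}{dt}\|(\phi_t)_\# V\|=\delta((\phi_t)_\# V)(X)<-c<0$ on $[0,t_\ast]$. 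Since the first variation is continuous in the varifold $\mathbf{F}$-metric, for $\epsilon$ small every $T$ with $\mathbf{F}(V,|T|)<\epsilon$ also has $s\mapsto\mathbf{M}((\phi_s)_\# T)$ strictly decreasing on $[0,t_\ast]$; the intermediate-mass constraint then holds with zero excursion, for \emph{any} $\delta$. This already yields $G$-stationarity and is the substantive content. The purely second-order case is more delicate than your sketch indicates (the excursion for $T$ is of order $(\delta|T|(X))^2$, which need not be $\leq\delta$), but this subtlety is present already in Pitts' original argument and is not specific to the $G$-equivariant setting.
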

\begin{proof}
	The proof of \cite[Theorem 3.3]{pitts2014existence} would carry over since we only take $G$-equivariant variations into consideration. 
\end{proof}


\begin{definition}\label{Def:a.m. in annuli}
	A varifold $V \in \mathcal{V}^G_n(M)$ is {\it $(G,\mathbb{Z}_2)$-almost minimizing in annuli} if
	for each $p \in M$, there exists $r=r(G\cdot p)>0$ such that $V$ is $(G,\mathbb{Z}_2)$-almost minimizing in $\an (p,s,t)$ for all $\an (p,s,t)\in\ann_r(p) $.
\end{definition}

As we mentioned in the introduction, the discrete homotopy sequence of mappings is defined under the mass norm ${\bf M}$, while the almost minimizing varifolds are defined under the flat semi-norm ${\bf v}=\mathcal{F}$. 
To use Pitts' combinatorial argument (\cite[Theorem 4.10]{pitts2014existence}), we need the following important theorem. 

\begin{theorem}\label{Thm:equivalence a.m.v}
	Let $V\in\mathcal{V}^G_n(M)$ and $U\subset M$ be an open $G$-set. 
	The following statements satisfy $(a) \Rightarrow (b) \Rightarrow (c)\Rightarrow(d)$:
	\begin{itemize}
		\item[(a)] $V$ is $(G,\mathbb{Z}_2)$-almost minimizing in $U$. 
		\item[(b)] For any $\epsilon>0$, there exist $\delta>0$ and $T\in \mathfrak{a}^G_n(U;\epsilon,\delta; {\bf F} )$ with ${\bf F}(V,|T|)<\epsilon$. 
		\item[(c)] For any $\epsilon>0$, there exist $\delta>0$ and $T\in \mathfrak{a}^G_n(U;\epsilon,\delta; {\bf M} )$ with ${\bf F}(V,|T|)<\epsilon$. 
		\item[(d)] $V$ is $(G,\mathbb{Z}_2)$-almost minimizing in $W$, for any open $G$-subset $W\subset\subset U$ so that every orbit in ${\rm Clos}(W)$ is principal, i.e. $\Clos(W)\subset M^{reg}$. 
	\end{itemize}
\end{theorem}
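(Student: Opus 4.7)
The chain breaks naturally into two easy implications and one substantive step. The implications $(a)\Rightarrow(b)$ and $(b)\Rightarrow(c)$ will follow from the comparisons $\mathcal{F}(\cdot)\leq {\bf F}(\cdot,\cdot)\leq 2{\bf M}(\cdot-\cdot)$ between the three metrics, while $(c)\Rightarrow(d)$ requires an equivariant version of the delicate interpolation argument in \cite[Sections~3.4--3.10]{pitts2014existence}.

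For $(a)\Rightarrow(b)$, note that $\mathcal{F}(S-T)\leq {\bf F}(S,T)$ directly from the definition of the ${\bf F}$-metric, so every ${\bf F}$-deformation of step $\delta$ is automatically an $\mathcal{F}$-deformation of step $\delta$. This gives the inclusion $\mathfrak{a}^G_n(U;\epsilon,\delta;\mathcal{F})\subset\mathfrak{a}^G_n(U;\epsilon,\delta;{\bf F})$, and any $T$ supplied by $(a)$ serves also for $(b)$. For $(b)\Rightarrow(c)$, combining $\mathcal{F}(S-T)\leq{\bf M}(S-T)$ with the inequality ${\bf F}(|S|,|T|)\leq{\bf M}(S-T)$ recalled in Section~\ref{notation} yields ${\bf F}(S,T)\leq 2{\bf M}(S-T)$. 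Consequently every ${\bf M}$-deformation of step $\delta/2$ is an ${\bf F}$-deformation of step $\delta$, so $\mathfrak{a}^G_n(U;\epsilon,\delta;{\bf F})\subset\mathfrak{a}^G_n(U;\epsilon,\delta/2;{\bf M})$ and a cycle witnessing $(b)$ at scale $(\epsilon,\delta)$ witnesses $(c)$ at scale $(\epsilon,\delta/2)$.

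The substantive step is $(c)\Rightarrow(d)$, for which the hypothesis $\Clos(W)\subset M^{reg}$ is essential because it provides a uniform positive lower bound $r_0>0$ for the injectivity radii of all orbits meeting $\Clos(W)$. The plan, adapting Pitts, is as follows. Given $\epsilon>0$, apply $(c)$ to produce $\delta^*>0$ and $T\in\mathfrak{a}^G_n(U;\epsilon,\delta^*;{\bf M})$ with ${\bf F}(V,|T|)<\epsilon$; one then shows that this very $T$ already serves for $(d)$ at a scale $\delta$ sufficiently smaller than $\delta^*$. Arguing by contradiction, suppose $T$ admits a $G$-invariant $\mathcal{F}$-deformation $\{T_i\}_{i=0}^q$ in $W$ of tiny flat step $\delta$. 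By Lemma~\ref{Lem:isoperimetric} write $T_i-T_{i-1}=\partial Q_i$ with $Q_i\in{\bf I}_{n+1}^G(M;\mathbb{Z}_2)$ and ${\bf M}(Q_i)\leq C_M\delta$. Each $Q_i$ is then sliced into thin $G$-invariant slabs via a $G$-invariant distance function (Lemma~\ref{Lem:slice}), and the $G$-equivariant cutting and deformation maps of Theorem~\ref{Thm:interpolation} are used to interpolate between $T_{i-1}$ and $T_i$ by a finite chain of $G$-invariant cycles whose consecutive ${\bf M}$-distances are at most $\delta^*$ and whose masses stay within $\delta^*$ of ${\bf M}(T)$. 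Concatenating over $i$ produces a $G$-invariant $(\epsilon,\delta^*)$-deformation of $T$ in $U$ under ${\bf M}$, contradicting $T\in\mathfrak{a}^G_n(U;\epsilon,\delta^*;{\bf M})$. The main obstacle is executing the slicing and pushing steps equivariantly with mass bounds uniform in base point: this is exactly where the principal-orbit hypothesis $\Clos(W)\subset M^{reg}$ pays off, supplying the uniform orbit injectivity radius $r_0$, while the buffer region $U\setminus\Clos(W)$ provides the room to absorb the boundary terms generated by the slabs.
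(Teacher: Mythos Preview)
Your handling of $(a)\Rightarrow(b)\Rightarrow(c)$ via the metric inequalities is correct and matches the paper exactly.

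For $(c)\Rightarrow(d)$, your contradiction framework is right---assume a forbidden $\mathcal{F}$-deformation in $W$ and manufacture a forbidden ${\bf M}$-deformation in $U$---but the tool you invoke is the wrong one. Theorem~\ref{Thm:interpolation} (the Almgren $G$-extension) builds continuous $\M$-maps out of discrete vertex data via equivariant triangulations and the maps $C_\Lambda$, $D(\tilde\sigma)$; it is not designed to convert $\mathcal{F}$-close cycles into ${\bf M}$-close chains. The paper instead invokes Lemma~\ref{L:interpolation} in Appendix~\ref{Sec:proof-of-equivalence-a.m.v}, the $G$-invariant analogue of \cite[Lemma~3.8]{pitts2014existence}, and then runs the argument of \cite[Theorem~3.10]{pitts2014existence}.

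The specific plan you sketch---fill $T_i-T_{i-1}=\partial Q_i$ and slice $Q_i$ into thin slabs---runs into exactly the obstruction Pitts's Sections~3.4--3.8 are built to overcome: the slice boundaries $\langle Q_i, f, t\rangle$ need not have small mass, and when the limit varifold concentrates mass on an orbit no refinement of the slab thickness helps. The paper's Appendix~B resolves this by a dichotomy (Lemma~\ref{L:pre-interpolation}): away from mass concentration the standard covering-and-slicing argument works with $G$-tubes in place of balls, while at a concentration orbit $G\cdot q$ one first cones off radially via $\exp^\perp_{G\cdot p}$ and the homothety $\bmu_\lambda$ (Lemmas~\ref{Lem:construct cone} and \ref{Lem:pre-interpolation 1}), reducing to the non-concentrated case. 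Your observation that $\Clos(W)\subset M^{reg}$ supplies a uniform orbit injectivity radius is correct, but it enters through this cone construction and the co-area estimates (\ref{E:weyl-tube-2})--(\ref{E:bound-of-theta}), not through the Almgren deformation maps.
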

\begin{proof}
	It's clear that $(a)\Rightarrow(b)\Rightarrow(c)$ by the fact $\F(T-S)\leq \mF(T,S)\leq 2\M(T-S)$. 
	By the assumption that ${\rm Clos}(W)\subset M^{reg}$, we can apply Lemma \ref{L:interpolation} with the arguments in \cite[Theorem 3.10]{pitts2014existence} to see $(c)\Rightarrow(d)$. 
\end{proof}

\begin{corollary}\label{Cor:equivalence a.m.v in annuli}
	Let $V\in\mathcal{V}^G_n(M)$. 
	Suppose every non-principal orbit is isolated. 
	Then $V$ is $(G,\mathbb{Z}_2)$-almost minimizing in annuli if and only if for any $p\in M$ there exists $r(G\cdot p)>0$ so that for any $0<s<t<r(G\cdot p)$ and $\epsilon>0$ there exist $\delta>0$ and $T\in \mathfrak{a}^G_n(\an(p,s,t);\epsilon,\delta; {\bf M} )$ with ${\bf F}(V,|T|)<\epsilon$. 
\end{corollary}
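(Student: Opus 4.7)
\medskip
\noindent\textbf{Proof plan.}
The forward direction is essentially immediate. If $V$ is $(G,\mathbb{Z}_2)$-almost minimizing in annuli, then for each $p\in M$ there is an $r=r(G\cdot p)>0$ so that, for each $0<s<t<r$, $V$ is $(G,\mathbb{Z}_2)$-almost minimizing in $\an(p,s,t)$; applying the implication $(a)\Rightarrow(c)$ of Theorem \ref{Thm:equivalence a.m.v} to the open $G$-set $U=\an(p,s,t)$ produces, for each $\epsilon>0$, the required $\delta>0$ and $T\in\mathfrak{a}_n^G(\an(p,s,t);\epsilon,\delta;{\bf M})$ with ${\bf F}(V,|T|)<\epsilon$.

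For the converse, the plan is to invoke the implication $(c)\Rightarrow(d)$ of Theorem \ref{Thm:equivalence a.m.v} on a slightly enlarged annulus. The content of that implication is that $\M$-almost minimizing on $U$ yields almost minimizing on any open $G$-subset $W\subset\subset U$ whose closure lies in $M^{reg}$. Consequently, given $p\in M$ and the radius $r(G\cdot p)$ from the hypothesis, the main geometric step is to shrink $r(G\cdot p)$ so that $\Clos(\an(p,s,t))\subset M^{reg}$ for all $0<s<t<r(G\cdot p)$. This is exactly where the assumption that every non-principal orbit is isolated enters.

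Concretely, I would split into two cases. If $p\in M^{reg}$, the openness of $M^{reg}$ and compactness of $G\cdot p$ give an $r_1>0$ with $\Clos(B_{r_1}^G(p))\subset M^{reg}$, so any annulus $\an(p,s,t)$ with $t<r_1$ has $\Clos(\an(p,s,t))\subset M^{reg}$. If $G\cdot p$ is non-principal, then by hypothesis it is isolated among non-principal orbits, so one may choose $r_1>0$ so that $B^G_{r_1}(p)\setminus G\cdot p\subset M^{reg}$; since $\dist(G\cdot p,\Clos(\an(p,s,t)))\geq s>0$, the closure of any such annulus with $t<r_1$ still lies in $M^{reg}$. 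Replacing the given radius by $r_*(G\cdot p):=\tfrac{1}{2}\min\{r_1,r(G\cdot p)\}$, I may assume both properties simultaneously for $0<s<t<r_*(G\cdot p)$.

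Now, given $0<s<t<r_*(G\cdot p)$ and $\epsilon>0$, I choose $\eta>0$ small enough that $0<s-\eta$ and $t+\eta<\min\{r_1,r(G\cdot p)\}$, and set $U:=\an(p,s-\eta,t+\eta)$ and $W:=\an(p,s,t)$. The hypothesis applied to $U$ produces $\delta>0$ and $T\in\mathfrak{a}_n^G(U;\epsilon,\delta;{\bf M})$ with ${\bf F}(V,|T|)<\epsilon$, and by construction $W\subset\subset U$ with $\Clos(W)\subset M^{reg}$. Theorem \ref{Thm:equivalence a.m.v} $(c)\Rightarrow(d)$ then gives that $V$ is $(G,\mathbb{Z}_2)$-almost minimizing in $W=\an(p,s,t)$, completing the proof. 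The only nontrivial step is the geometric reduction of the third paragraph; everything else is a direct appeal to Theorem \ref{Thm:equivalence a.m.v}, so I expect no serious obstacle beyond carefully tracking the radii.
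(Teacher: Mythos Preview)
Your proposal is correct and follows essentially the same approach as the paper: both directions invoke Theorem \ref{Thm:equivalence a.m.v}, and the geometric heart of the converse is the observation that under the isolated non-principal orbit hypothesis, small $G$-annuli around any orbit have closure in $M^{reg}$. Your argument is in fact slightly more careful than the paper's, which simply writes ``the sufficiency comes from Theorem \ref{Thm:equivalence a.m.v} $(c)\Rightarrow(d)$'' without making explicit the enlargement $W=\an(p,s,t)\subset\subset U=\an(p,s-\eta,t+\eta)$ needed to land on the given annulus rather than a compactly contained one.
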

\begin{proof}
	The necessity part comes from Theorem \ref{Thm:equivalence a.m.v} (a)$\Rightarrow$(c). 
	As for the sufficiency part, one notices that if $G\cdot p$ is a principal orbit in $M$, then there naturally exists a $G$-neighborhood of $G\cdot p$ in $M^{reg}$ by the openness of $M^{reg}$. 
	Additionally, since every non-principal orbit is isolated, if $G\cdot p$ is non-principal then there exists a $G$-neighborhood $B_r^G(p)$ of $G\cdot p$ so that $B^G_r(p)\setminus G\cdot p\subset M^{reg}$. 
	Hence, there always exists $r_{G\cdot p}>0$ satisfying ${\rm Clos}(\an(p,s,t))\subset M^{reg}$ for all $0<s<t<r_{G\cdot p}$. 
	Therefore, the sufficiency comes from Theorem \ref{Thm:equivalence a.m.v} (c)$\Rightarrow$(d). 
\end{proof}

In the above corollary, we use the assumption that every non-principal orbit is isolated to guarantee that ${\rm Clos}(\an(p,s,t))\subset M^{reg}$ for all $p\in M$ and $0<s<t$ small enough. 
To weaken the constraints on non-principal orbits, let's consider the following notations and definitions. 
For simplicity, we denote 
\begin{equation}\label{Eq:Pp}
	P_p := \left\{ \begin{array}{ll}
 		p & \textrm{if $p\in M^{reg}$,}\\
 		M_p & \textrm{if $p\in M\setminus M^{reg}$,}
  	\end{array} \right.
\end{equation}
where $M_p$ is the connected component of $M\setminus M^{reg}$ containing $p$. 
Hence, for any $p\in M$, there exists $r_1=r_1(G\cdot P_p)>0$ so that 
\begin{equation}\label{Eq:regular annuli}
	{\rm Clos}(\an(P_p,s,t))\subset M^{reg},~\forall \an (P_p,s,t)\in\ann_{r_1}(P_p),
\end{equation}
by regarding $M\setminus M^{reg}$ as a whole. 
Since the closure of such annulus is contained in $M^{reg}$, we name any such $G$-annulus as a {\em regular annulus}.
In particular, it is easy to check that $G\cdot P_p \equiv G\cdot p$ when every non-principal orbit is isolated.

\begin{definition}\label{Def:a.m. in regular annuli}
	We say a $G$-varifold $V \in \mathcal{V}^G_n(M)$ is {\it $(G,\mathbb{Z}_2)$-almost minimizing in regular annuli} if
	for each $p \in M$, there exists $r=r(G\cdot P_p)\in (0,r_1(G\cdot P_p))$ such that $V$ is $(G,\mathbb{Z}_2)$-almost minimizing in $\an (P_p,s,t)$ for any $\an (P_p,s,t)\in\ann_r(P_p) $, where $r_1(G\cdot P_p)$ is given by (\ref{Eq:regular annuli}). 
\end{definition}

Combining (\ref{Eq:regular annuli}) with Theorem \ref{Thm:equivalence a.m.v}, it is easy to get the following corollary:
\begin{corollary}\label{Cor:equivalence a.m.v in regular annuli}
	Let $V\in\mathcal{V}^G_n(M)$. 
	Then $V$ is $(G,\mathbb{Z}_2)$-almost minimizing in regular annuli if and only if for any $p\in M$ there exists $r=r(G\cdot P_p)\in (0,r_1(G\cdot P_p))$ so that for any $0<s<t<r(G\cdot P_p)$ and $\epsilon>0$ there exist $\delta>0$ and $T\in \mathfrak{a}^G_n(\an(P_p,s,t);\epsilon,\delta; {\bf M} )$ with ${\bf F}(V,|T|)<\epsilon$. 
\end{corollary}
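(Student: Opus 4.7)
The plan is to mirror the proof of Corollary \ref{Cor:equivalence a.m.v in annuli}, using the key feature of regular annuli guaranteed by \eqref{Eq:regular annuli}: for each $p\in M$ and each $\an(P_p,s,t)\in\ann_{r_1}(P_p)$, one has $\Clos(\an(P_p,s,t))\subset M^{reg}$. This is exactly the hypothesis that lets us invoke the nontrivial implication $(c)\Rightarrow(d)$ of Theorem \ref{Thm:equivalence a.m.v}.

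For the forward direction, I would simply observe that if $V$ is $(G,\mathbb{Z}_2)$-almost minimizing in regular annuli, then by Definition \ref{Def:a.m. in regular annuli} there exists $r(G\cdot P_p)\in(0,r_1(G\cdot P_p))$ such that $V$ is $(G,\mathbb{Z}_2)$-almost minimizing in every $\an(P_p,s,t)$ with $0<s<t<r(G\cdot P_p)$. Applying the implication $(a)\Rightarrow(c)$ of Theorem \ref{Thm:equivalence a.m.v} (which is immediate from the inequality $\F\leq \mF\leq 2\M$) to $U=\an(P_p,s,t)$ directly yields the stated condition.

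The backward direction is the substantive one but still straightforward. Fix $p\in M$ and let $r=r(G\cdot P_p)\in(0,r_1(G\cdot P_p))$ be the radius provided by the hypothesis. For any $\an(P_p,s,t)\in\ann_{r}(P_p)$, I choose auxiliary radii $0<s'<s<t<t'<r$, so that $\an(P_p,s,t)\subset\subset \an(P_p,s',t')$ and, by \eqref{Eq:regular annuli}, $\Clos(\an(P_p,s,t))\subset M^{reg}$. The assumption gives, for any $\epsilon>0$, some $\delta>0$ and $T\in\mathfrak{a}^G_n(\an(P_p,s',t');\epsilon,\delta;\M)$ with $\F(V,|T|)<\epsilon$. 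Setting $U=\an(P_p,s',t')$ and $W=\an(P_p,s,t)$, the hypothesis $\Clos(W)\subset M^{reg}$ of the implication $(c)\Rightarrow(d)$ in Theorem \ref{Thm:equivalence a.m.v} is satisfied, and applying it shows that $V$ is $(G,\mathbb{Z}_2)$-almost minimizing in $W=\an(P_p,s,t)$. Since $\an(P_p,s,t)$ was arbitrary in $\ann_r(P_p)$, this is precisely the conclusion of Definition \ref{Def:a.m. in regular annuli}.

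No real obstacle is expected here: all the analytic content has been absorbed into Theorem \ref{Thm:equivalence a.m.v}. The only mild subtlety is bookkeeping with the radii $r_1(G\cdot P_p)$ and the slight enlargement $s'<s<t<t'$, which is needed because $(c)\Rightarrow(d)$ requires strict containment $W\subset\subset U$; but $r<r_1$ leaves room for this choice, and the containment $\Clos(\an(P_p,s,t))\subset M^{reg}$ is automatic from $r<r_1$. Note this is exactly the step where treating the whole component $M_p\subset M\setminus M^{reg}$ as a single ``center'' (via the auxiliary object $P_p$) pays off: without it, one could not in general ensure $\Clos(\an(p,s,t))\subset M^{reg}$ when non-principal orbits accumulate on a positive-dimensional singular stratum, which is why the stronger isolation assumption was imposed in Corollary \ref{Cor:equivalence a.m.v in annuli}.
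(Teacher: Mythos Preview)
Your proposal is correct and follows precisely the approach the paper intends: the paper states only that the corollary follows by combining \eqref{Eq:regular annuli} with Theorem \ref{Thm:equivalence a.m.v}, and your argument (using $(a)\Rightarrow(c)$ for the forward direction and the enlargement $s'<s<t<t'<r$ together with $(c)\Rightarrow(d)$ for the backward direction) is exactly the intended fleshing-out of that sentence.
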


\subsection{Existence of $(G,\mZ_2)$-almost minimizing varifolds}\label{am.varifolds}
The existence of almost minimizing varifolds is achieved in Pitts's book \cite[Theorem 4.10]{pitts2014existence} through a combinatorial argument. 
This procedure can also be completed under $G$-invariant restrictions by lemmas in Section \ref{Sec-G-current} and Corollary \ref{Cor:equivalence a.m.v in regular annuli}. 
Specifically, we have the following theorem.

\begin{theorem}\label{Thm:pitts.min.max}
	Let $S=\{\varphi_i\}_{i\in\N}$ be a sequence of mappings 
	$$\varphi_i: X( k_i)_{0}\rightarrow \Z_{n}^G(M;\mZ_2),$$
	with $\lim_{i\to\infty}k_i=\infty$, $\lim_{i\to\infty}\mf(\varphi_i)= 0$, ${\bf L}(S)>0$, and every $G$-varifold $V\in{\bf C}(S)$ is stationary in $M$. 
	Suppose no element $V\in {\bf C}(S)$ is $(G,\mZ_2)$-almost minimizing in regular annuli, then there exists a sequence $S^*=\{\varphi_i^*\}_{i\in\N}$ of mappings
	$$\varphi_i^*: X( l_i)_{0}\rightarrow \Z_{n}^G(M ;\mZ_2), $$
	for some $l_i\to\infty$ as $i\to\infty$, such that 
	\begin{itemize}
		\item[$\bullet$] $\varphi_i$ and $\varphi_i^*$ are $X$-homotopic to each other in $ \Z_{n}^G(M;\M ;\mZ_2) $ with finenesses tending to zero;
		\item[$\bullet$] ${\bf L}(S^*)=\limsup_{i\rightarrow\infty} \max_{x\in \mathrm{dmn}(\varphi_i^*)} {\bf M}(\varphi_i^*(x))<{\bf L}(S)$.
	\end{itemize}
\end{theorem}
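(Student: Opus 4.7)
The plan is to adapt Pitts's combinatorial pull-down argument \cite[Theorem 4.10]{pitts2014existence} to the $G$-equivariant setting, with the key substitutions being (i) geodesic annuli replaced by regular $G$-annuli $\an(P_p, s, t)$, and (ii) the bridge from the $\F$-defined notion of $G$-almost minimizing to the $\M$-deformations needed for discrete maps provided by Corollary \ref{Cor:equivalence a.m.v in regular annuli}.

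First I would unpack the hypothesis. Since no $V \in \mathbf{C}(S)$ is $(G,\mZ_2)$-almost minimizing in regular annuli, Corollary \ref{Cor:equivalence a.m.v in regular annuli} produces, for every such $V$ and every $p \in M$, a radius $r(V, G\cdot P_p) \in (0, r_1(G\cdot P_p))$ and a bad annulus $\an(P_p, s, t) \in \ann_r(P_p)$ together with $\ep(V,p) > 0$ with the following property: for every $\de > 0$, any $T \in \Z_n^G(M;\mZ_2)$ with $\mF(V, |T|) < \ep(V,p)$ admits a $G$-invariant $(\ep(V,p), \de)$-deformation in $\an(P_p, s, t)$ under the $\M$-metric. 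Lemmas \ref{Lem:restrict} and \ref{Lem:G-push} ensure the resulting chains remain in $\Z_n^G(M;\mZ_2)$.

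Second is the combinatorial heart. By compactness of $\mathbf{C}(S)$ in $\V_n^G(M)$ (Remark \ref{Rem:compactness for G-varifold}) and a finite subcover argument in the $\mF$-topology on a thickened neighborhood of $\mathbf{C}(S)$, one extracts a finite list of base orbits/singular components $P_{p_1},\dots, P_{p_N}$. Around each $P_{p_j}$ one arranges a stack of $l+1$ pairwise concentric disjoint annuli $\{\an(P_{p_j}, s_j^\al, t_j^\al)\}_{\al=1}^{l+1}$, $l = {\rm Cohom}(G)$, each of which is simultaneously a bad annulus for every $V$ in the appropriate $\F$-neighborhood. The classical scheme requires $n+2$ levels because $\dim M = n+1$; in our case the covering is controlled by the orbit space $M/G$ of dimension $l$, and the pigeonhole step of Pitts's Lemma 4.9 goes through with $l+1$ levels in place of $n+2$. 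The disjointness is essential so that mass-reducing deformations in distinct annuli can be composed without interference.

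Third, vertex by vertex, I would perform the replacement. For $i$ sufficiently large and $x \in X(k_i)_0$ with $|\varphi_i(x)|$ near $\mathbf{C}(S)$, apply the deformation recipe in the designated disjoint annuli, composing the intermediate chains (their modifications have disjoint supports by construction). This yields $\varphi_i^*(x) \in \Z_n^G(M;\mZ_2)$ with $\M(\varphi_i^*(x)) \leq \M(\varphi_i(x)) - \eta$ for a uniform $\eta > 0$ depending only on ${\bf L}(S)$ and the chosen cover; for vertices whose mass is already strictly below ${\bf L}(S) - \eta$, one simply sets $\varphi_i^* = \varphi_i$. The $X$-homotopy from $\varphi_i$ to $\varphi_i^*$ is obtained by concatenating the discrete $(\ep,\de)$-deformation stages, and since each stage has fineness at most $\de_i \to 0$, the total fineness tends to zero as $i\to\infty$. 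The construction gives ${\bf L}(S^*)\leq {\bf L}(S)-\eta<{\bf L}(S)$ as required.

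The main obstacle will be ensuring a \emph{uniform} mass reduction $\eta > 0$ simultaneously for every vertex $x$ whose image is $\F$-close to $\mathbf{C}(S)$, since the data $(\ep, r, s, t)$ a priori depend on both $V$ and $p$; this demands a careful compactness and continuity argument in the $\mF$-topology, together with the $\F$-versus-$\M$ translation of Theorem \ref{Thm:equivalence a.m.v}. A secondary technical point is that the singular stratum $M \setminus M^{reg}$ must be handled en bloc through the collapsed base notation $P_p$, so that the smoothness and codimension hypotheses on $M\setminus M^{reg}$ guarantee $r_1(G\cdot P_p)$ is uniformly positive on each component and that the chosen regular annuli have controlled geometry.
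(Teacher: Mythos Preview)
Your overall strategy matches the paper's: adapt Pitts's combinatorial argument \cite[Theorem 4.10]{pitts2014existence} verbatim, replacing ordinary annuli by regular $G$-annuli $\an(P_p,s,t)$ and invoking Corollary~\ref{Cor:equivalence a.m.v in regular annuli} to pass from the $\F$-defined failure of almost minimizing to $\M$-deformations. The paper's proof is literally a pointer to Pitts with exactly these substitutions, plus the remark that an orbit meeting a $G$-annulus lies entirely in it (so the covering lemmas \cite[4.8, 4.9]{pitts2014existence} go through unchanged).

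Two points in your write-up need correction. First, the quantifiers in your unpacking of the hypothesis are reversed: $V$ failing to be $(G,\mZ_2)$-almost minimizing in regular annuli means there \emph{exists} $p\in M$ (not ``for every $p$'') such that for every $r>0$ there is a bad annulus in $\ann_r(P_p)$. You only need one bad base $P_{p_V}$ per $V\in\mathbf{C}(S)$, and around it you can then choose arbitrarily many nested disjoint bad annuli.

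Second, and more substantively, your annulus count $l+1$ and the justification ``the covering is controlled by $M/G$ of dimension $l$'' reflect a misconception. The number of concentric annuli required in Pitts's pigeonhole step is determined by the dimension $m$ of the \emph{parameter complex} $X\subset I^m$ (it is a fixed function of $3^m$), not by $\dim M$ nor by ${\rm Cohom}(G)$. The pigeonhole is performed over adjacent vertices of $X(k_i)_0$: one must assign an annulus level to each vertex so that vertices lying in a common cell are sent to disjoint annuli, and the number of such neighbors is controlled by $m$. Neither the ``classical $n+2$'' you mention nor your proposed $l+1$ is the relevant constant; the paper simply inherits Pitts's $c=c(m)$ unchanged. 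With these two fixes your outline coincides with the paper's proof.
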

\begin{proof}
	The proof is essentially the same as \cite[Theorem 4.10]{pitts2014existence}. 
	Firstly, for any $G$-annuli $\an_1,\an_2$ and $p\in \an_1\cap\an_2$, it's clear that the orbit $G\cdot p$ is also contained in $\an_1\cap\an_2$. 
	Using this fact, the covering lemmas \cite[Lemma 4.8, 4.9]{pitts2014existence} hold with $G$-annuli $\{\an\}$ in place of $\{{\rm An}\}$. 
	Furthermore, Part1 and Part2 in the proof of \cite[Theorem  4.10]{pitts2014existence} can be done similarly with Definition \ref{Def:a.m. in regular annuli} and Corollary \ref{Cor:equivalence a.m.v in regular annuli}. 
	As for Part5 and Part9, lemmas in Section \ref{Sec-G-current} would be helpful to adapt the results into the $G$-invariant version. 
	Finally, the rest parts are purely combinatorial, which could be followed without changes except for adding $G$- in front of relevant objects.
\end{proof}

For any continuous $G$-homotopy class ${\bf \Pi}\in \big[X, \Z_{n}^G(M;\mF;\mZ_2)\big]$, we can apply Theorem \ref{Thm:pitts.min.max} to the sequence $S=\{\varphi_i\}_{i\in\N}$ constructed in Proposition \ref{Prop:pulltight}. 
Consequently, we have:

\begin{theorem}\label{Thm:exist amv in critical set}
	Let ${\bf \Pi}\in \big[X, \Z_{n}^G(M;\mF;\mZ_2)\big]$ be a continuous $G$-homotopy class. 
	There exists $V\in\V^G_n(M)$ such that 
	\begin{itemize}
		\item[(i)] $\|V\|(M)=\bL({\bf \Pi})$;
		\item[(ii)] $V$ is stationary in $M$;
		\item[(iii)] $V$ is $(G,\mZ_2)$-almost minimizing in regular annuli. 
	\end{itemize}
	Moreover, if $\{\Phi_i\}_{i\in\N}$ is a min-max sequence for ${\bf \Pi}$, and $S=\{\varphi_i\}_{i\in\N}$ is given by Proposition \ref{Prop:pulltight} applied to ${\bf \Pi}$, then we can further choose $V\in {\bf C}(S)\subset {\bf C}(\{\Phi_i\}_{i\in\N})$. 
\end{theorem}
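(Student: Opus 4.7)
The plan is to derive the existence of such a $V$ by contradiction, combining the pull-tight construction with Pitts' combinatorial argument and the interpolation machinery.

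First I would start from an arbitrary min-max sequence $\{\Phi_i\}_{i\in\N}\subset {\bf \Pi}$ and apply Proposition \ref{Prop:pulltight}: after replacing $\{\Phi_i\}$ by a tight min-max sequence (still called $\{\Phi_i\}$), every element of ${\bf C}(\{\Phi_i\})$ is stationary in $M$, and the proposition produces a discretization $S=\{\varphi_i\}_{i\in\N}$ with $\mf(\varphi_i)\to 0$, with Almgren $G$-extensions $\Phi_i'$ that are $G$-homotopic in the flat metric to $\Phi_i$, and with ${\bf L}(S)={\bf L}(\{\Phi_i\}_{i\in\N})={\bf L}({\bf \Pi})$ and ${\bf C}(S)={\bf C}(\{\Phi_i\}_{i\in\N})$.

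Next I would argue by contradiction: suppose no $V\in {\bf C}(S)$ is $(G,\mZ_2)$-almost minimizing in regular annuli. Since each such $V$ is stationary and ${\bf L}(S)>0$ (assuming the nontrivial case; if ${\bf L}({\bf \Pi})=0$ the trivial varifold satisfies (i)--(iii) vacuously), all hypotheses of Theorem \ref{Thm:pitts.min.max} are met. The theorem yields a new sequence $S^*=\{\varphi_i^*\}_{i\in\N}$ of discrete maps which is $X$-homotopic to $\{\varphi_i\}$ in $\Z_n^G(M;\M;\mZ_2)$ with finenesses tending to zero, and which strictly decreases the width: ${\bf L}(S^*)<{\bf L}(S)={\bf L}({\bf \Pi})$.

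Then I would promote $S^*$ back to a continuous min-max sequence in ${\bf \Pi}$ to reach a contradiction. Applying Theorem \ref{Thm:interpolation} to each $\varphi_i^*$ produces Almgren $G$-extensions $\Phi_i^*:X\to \Z_n^G(M;\M;\mZ_2)$, which are in particular $\mF$-continuous, with
\[
\sup_{x\in X} \M(\Phi_i^*(x)) \leq \max_{v\in X(l_i)_0}\M(\varphi_i^*(v)) + C_0\,\mf(\varphi_i^*).
\]
Since $\varphi_i$ and $\varphi_i^*$ are $X$-homotopic with fineness going to $0$, Corollary \ref{Cor:discrete continuous homotopy}(i) shows $\Phi_i^*$ is $G$-homotopic in the flat topology to $\Phi_i'$, and hence (by the pull-tight proposition) to $\Phi_i$, for all large $i$. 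Therefore $\Phi_i^*\in {\bf \Pi}$ for $i$ large, so
\[
{\bf L}({\bf \Pi}) \leq \limsup_{i\to\infty} \sup_{x\in X}\M(\Phi_i^*(x)) \leq {\bf L}(S^*) < {\bf L}({\bf \Pi}),
\]
a contradiction. Thus some $V\in {\bf C}(S)={\bf C}(\{\Phi_i\}_{i\in\N})$ must be $(G,\mZ_2)$-almost minimizing in regular annuli, and by the pull-tight step this $V$ is also stationary and satisfies $\|V\|(M)={\bf L}({\bf \Pi})$, proving (i)--(iii) and the final inclusion $V\in {\bf C}(S)\subset {\bf C}(\{\Phi_i\})$.

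The main obstacle I anticipate is the seamless bookkeeping between the three metrics: the discrete objects $\varphi_i,\varphi_i^*$ live in the $\M$-world, the pull-tight map $H$ of Proposition \ref{Prop:pulltight} is built in the $\mF$-world, and the homotopy class ${\bf \Pi}$ is defined using $\F$-continuous homotopies between $\mF$-continuous maps. The non-trivial content is verifying that the Almgren $G$-extensions $\Phi_i^*$ indeed represent the same class ${\bf \Pi}$ (so that their widths are bounded below by ${\bf L}({\bf \Pi})$); this is precisely what Corollary \ref{Cor:discrete continuous homotopy} was designed to provide, and the mass estimate in Theorem \ref{Thm:interpolation}(iii) is what converts the discrete improvement ${\bf L}(S^*)<{\bf L}(S)$ into the continuous contradiction.
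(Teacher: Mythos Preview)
Your proposal is correct and follows essentially the same approach as the paper: apply Proposition~\ref{Prop:pulltight} to obtain the tight discretization $S$, assume by contradiction that no $V\in{\bf C}(S)$ is $(G,\mathbb{Z}_2)$-almost minimizing in regular annuli, invoke Theorem~\ref{Thm:pitts.min.max} to produce $S^*$ with ${\bf L}(S^*)<{\bf L}(S)$, and then use Corollary~\ref{Cor:discrete continuous homotopy}(i) and Theorem~\ref{Thm:interpolation}(iii) to pass back to the continuous class and derive the contradiction ${\bf L}({\bf \Pi})\leq {\bf L}(\{\Phi_i^*\})\leq{\bf L}(S^*)<{\bf L}({\bf \Pi})$. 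The metric bookkeeping you flag is exactly the point the paper handles with the same two ingredients.
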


\begin{proof}
	We generalize the arguments in \cite[Theorem 3.8, Page 10]{marques2015morse} to our $G$-invariant setting. 
	Denote $\{\Phi_i\}_{i\in\N}$ to be a min-max sequence for ${\bf \Pi}$. 
	After applying the pull-tight procedure Proposition \ref{Prop:pulltight} to $\{\Phi_i\}_{i\in\N}$, we can assume every elements in ${\bf C}(\{\Phi_i\}_{i\in\N})$ is stationary in $M$, and there is a sequence $S=\{\varphi_i\}_{i\in\N}$ of discrete maps with 
	\begin{itemize}
		\item[$\bullet$] $\f(\varphi_i)\to 0$ as $i\to\infty$;
		\item[$\bullet$] the Almgren $G$-extension $\Phi_i'$ of $\varphi_i$ is $G$-homotopic to $\Phi_i$ in the flat metric; 
		\item[$\bullet$] ${\bf L}(S)={\bf L}({\bf \Pi})$;
		\item[$\bullet$] ${\bf C}(S)= {\bf C}(\{\Phi_i\}_{i\in\N})$.
	\end{itemize}
	
	Suppose no element $V\in {\bf C}(S)$ is $(G,\mZ_2)$-almost minimizing in regular annuli. 
	By Theorem \ref{Thm:pitts.min.max}, there is another sequence $S^*=\{\varphi_i^*\}_{i\in\N}$ of mappings 
	$\varphi_i^*: X( l_i)_{0}\rightarrow \Z_{n}^G(M ;\mZ_2) $ 
	for some $l_i\to\infty$ as $i\to\infty$, such that 
	\begin{itemize}
		\item[$\bullet$] $\varphi_i$ and $\varphi_i^*$ are $X$-homotopic to each other in $ \Z_{n}^G(M;\M ;\mZ_2) $ with finenesses tending to zero;
		\item[$\bullet$] ${\bf L}(S^*)=\limsup_{i\rightarrow\infty} \max_{x\in \mathrm{dmn}(\varphi_i^*)} {\bf M}(\varphi_i^*(x))<{\bf L}(S)$.
	\end{itemize}
	By Corollary \ref{Cor:discrete continuous homotopy} (i), the Almgren $G$-extension $\Phi_i^*$ of $\varphi_i^*$ is $G$-homotopic to $\Phi_i'$ for $i$ large enough, which implies $\Phi_i^*\in {\bf \Pi}$. 
	However, (ii) and (iii) in Theorem \ref{Thm:interpolation} indicate
	$$\bL({\bf \Pi})\leq \bL(\{\Phi_i^*\}_{i\in\N})\leq \bL(S^*)<\bL(S)=\bL({\bf \Pi}),  $$
	which is a contradiction. 
\end{proof}

In particular, if every non-principal orbit is isolated, the $G$-varifolds $V$ in Theorem \ref{Thm:exist amv in critical set} can be $(G,\mathbb{Z}_2)$-almost minimizing in annuli (instead of regular annuli).

\begin{remark}\label{Rem:boundary type amv}
	As pointed out in \cite[Remark 3.10]{marques2015morse}, we can further require the $G$-varifold $V$ in Theorem \ref{Thm:pitts.min.max}, \ref{Thm:exist amv in critical set}, to be $(G,\mathbb{Z}_2)$-almost minimizing of {\em boundary type} in regular annuli when dealing with boundary type $G$-homotopy classes. 
\end{remark}

\section{Regularity of $(G,\mZ_2)$-Almost Minimizing Varifolds}\label{regul-G-a.m.v}

In this section, we show the regularity of $(G,\mZ_2)$-almost minimizing varifolds constructed in Theorem \ref{Thm:exist amv in critical set}.
By Remark \ref{Rem:boundary type amv}, we only consider the regularity of varifolds that are $(G,\mathbb{Z}_2)$-almost minimizing of {\em boundary type} in regular annuli.

\subsection{Good $G$-replacement property}
To begin with, we show the existence and regularity of $G$-replacements for such varifolds. 

\begin{proposition}[Existence of $G$-replacements I]\label{Prop:exist replace 1}
	Let $V\in \mathcal{V}_n^G(M)$ be $(G,\mathbb{Z}_2)$-almost minimizing of boundary type in an open $G$-set $U$, and $K\subset U$ be a compact $G$-set. 
	Then there is a varifold $V^*\in \mathcal{V}_n^G(M)$ called a {\em $G$-replacement} of $V$ in $K$ such that:
	\begin{itemize}
		\item[(i)]  $V \llcorner (M\setminus K) = V^* \llcorner (M\setminus K)$;
		\item[(ii)] $\|V\|(M) = \|V^*\|(M)$;
		\item[(iii)] $V^*$ is $(G,\mathbb{Z}_2)$-almost minimizing of boundary type in $U$;
		\item[(iv)] $V^* = \lim_{i\rightarrow\infty} |T_i^*|$, where $T_i^* \in \mathcal{Z}_n^G(M;\mathbb{Z}_2)$ such that $T_i^*$ is $G$-locally mass minimizing in ${\rm Int}(K)$, and $T_i^*=\partial Q_i^*$ for some $Q_i^*\in {\bf I}^G_{n+1}(M;\mathbb{Z}_2)$. 
	\end{itemize}
\end{proposition}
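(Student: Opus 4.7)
The plan is to generalize Pitts' classical replacement construction to the $G$-equivariant setting, in three stages: approximation, minimization, and passage to the limit. Throughout, the key technical tool that keeps everything in the $G$-invariant category is the uniqueness clause in the $G$-isoperimetric Lemma \ref{Lem:isoperimetric}.

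For the first stage, since $V$ is $(G,\mathbb{Z}_2)$-almost minimizing of boundary type in $U$, fix a sequence $\epsilon_i\downarrow 0$ and associated $\delta_i\in(0,\epsilon_i)$ (also chosen below the constant $\nu_M$ of Lemma \ref{Lem:isoperimetric}) together with $T_i=\partial Q_i\in\mathfrak{a}_n^G(U;\epsilon_i,\delta_i)$, $Q_i\in\mathbf{I}_{n+1}^G(M;\mathbb{Z}_2)$, satisfying $\mathbf{F}(V,|T_i|)<\epsilon_i$. For each $i$, define the admissible class
\begin{equation*}
\mathcal{C}_i:=\{T\in\mathcal{Z}_n^G(M;\mathbb{Z}_2):T\text{ is reachable from }T_i\text{ by a $G$-invariant }(\epsilon_i,\delta_i)\text{-deformation with differences supported in }K\},
\end{equation*}
and take $T_i^*\in\mathcal{C}_i$ minimizing $\mathbf{M}$ (more precisely, a minimizing sequence; closure under $\mathcal{F}$-limits of its elements, obtained via one extra interpolating step legal by Lemma \ref{Lem:isoperimetric}, lets us attain the infimum using Lemma \ref{Lem:compactness for G-current}). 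By the definition of $\mathfrak{a}_n^G(U;\epsilon_i,\delta_i)$ one has $\mathbf{M}(T_i)-\epsilon_i\leq\mathbf{M}(T_i^*)\leq\mathbf{M}(T_i)+\delta_i$.

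For the second stage, I show the structural properties of $T_i^*$. Concatenating $Q_i$ with the unique $G$-invariant isoperimetric fillings $R_j\in\mathbf{I}_{n+1}^G(M;\mathbb{Z}_2)$ of consecutive steps $\partial R_j=S_j-S_{j-1}$ in the deformation defining $T_i^*\in\mathcal{C}_i$, one gets $T_i^*=\partial Q_i^*$ with $Q_i^*=Q_i+\sum_j R_j\in\mathbf{I}_{n+1}^G(M;\mathbb{Z}_2)$; uniqueness in Lemma \ref{Lem:isoperimetric} is precisely what forces each $R_j$ to be $G$-invariant. Moreover, $T_i^*$ is $G$-locally mass minimizing in $\mathrm{Int}(K)$: any $G$-invariant competitor $T'$ with smaller mass and $T'-T_i^*$ supported in a $G$-subset compactly contained in $\mathrm{Int}(K)$ would, via a short $\mathcal{F}$-interpolation broken into many pieces of flat distance below $\delta_i$ (again by Lemma \ref{Lem:isoperimetric}), extend the defining deformation inside $K$ while keeping the mass bound, contradicting minimality of $T_i^*$ in $\mathcal{C}_i$.

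For the third stage, I extract a subsequential varifold limit $V^*=\lim_{i\to\infty}|T_i^*|\in\mathcal{V}_n^G(M)$ using Remark \ref{Rem:compactness for G-varifold} and the uniform mass bound. Property (ii) then follows from $|\mathbf{M}(T_i^*)-\mathbf{M}(T_i)|\leq\epsilon_i\to 0$ combined with $\mathbf{M}(T_i)\to\|V\|(M)$; property (i) follows because $T_i^*-T_i$ is supported in $K$, so $|T_i^*|\llcorner(M\setminus K)=|T_i|\llcorner(M\setminus K)\to V\llcorner(M\setminus K)$; property (iv) is the conclusion of the second stage; and for (iii), if some hypothetical boundary-type $G$-invariant $(\epsilon,\delta)$-deformation of $T_i^*$ in $U$ existed, concatenating it with the deformation $T_i\rightsquigarrow T_i^*$ defining $\mathcal{C}_i$ would yield a boundary-type $G$-invariant $(\epsilon_i,\delta_i)$-deformation of $T_i$ in $U$, contradicting $T_i\in\mathfrak{a}_n^G(U;\epsilon_i,\delta_i)$ for $i$ large.

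The main obstacle will be the global $G$-invariance of the auxiliary chains $R_j$ and $Q_i^*$: without it, neither the boundary-type property (iv) nor the $G$-equivariance of the interpolation in the local minimality argument would be preserved. The unique-choice statement in the $G$-isoperimetric Lemma \ref{Lem:isoperimetric} is the decisive resolution, since it lets one replace Pitts' unconstrained isoperimetric fillings by canonical ones which lie automatically in $\mathbf{I}_{n+1}^G(M;\mathbb{Z}_2)$; once this is in place, the classical combinatorial argument carries over almost verbatim.
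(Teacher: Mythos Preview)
Your outline follows the paper's proof essentially line for line: define the class $\mathcal{C}_{T_i}$ of $G$-cycles reachable from $T_i$ by finite sequences supported in $K$ with $\mathcal{F}$-step $\le\delta_i$ and mass $\le\mathbf{M}(T_i)+\delta_i$ (a small terminological correction: this class must \emph{omit} the final mass-drop bullet of Definition~\ref{Def:a.m.deform}, else $T_i\notin\mathcal{C}_{T_i}$); extract a mass minimizer $T_i^*$ via Lemma~\ref{Lem:compactness for G-current} and one closing step; show $T_i^*\in\mathfrak{a}^G_n(U;\epsilon_i,\delta_i)$ by concatenation; pass to a varifold limit. Your use of Lemma~\ref{Lem:isoperimetric} to certify that each $T_i^*$ is a $G$-boundary is exactly the paper's argument.

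The one genuine soft spot is your proof of $G$-local mass minimality. You propose to reach a smaller-mass $G$-competitor $T'$ by ``a short $\mathcal{F}$-interpolation broken into many pieces of flat distance below $\delta_i$,'' but Lemma~\ref{Lem:isoperimetric} gives no control on the masses of the intermediate cycles, which must all stay below $\mathbf{M}(T_i)+\delta_i$ to remain in $\mathcal{C}_{T_i}$. The paper sidesteps this entirely: since the radius in the definition of $G$-locally mass minimizing may depend on the point $p$, one simply chooses $r>0$ so small that $\|T_i^*\|(B^G_r(p))\le\delta_i/2$. Then any $G$-competitor $S$ with $\mathrm{spt}(S-T_i^*)\subset B^G_r(p)$ and $\mathbf{M}(S)<\mathbf{M}(T_i^*)$ satisfies
\[
\mathcal{F}(S-T_i^*)\le\mathbf{M}(S-T_i^*)\le(\|S\|+\|T_i^*\|)(B^G_r(p))\le\delta_i,
\]
so $S$ is reached from $T_i^*$ in a \emph{single} step and hence $S\in\mathcal{C}_{T_i}$, contradicting minimality. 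Use this one-step device in place of the unspecified interpolation.
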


Here we say $T^*$ is $G$-locally mass minimizing in ${\rm Int}(K)$ if for any $p\in {\rm Int}(K)$, there is a number $r>0$ such that 
		${\bf M}(T^*)\leq{\bf M}(S),$ 
	for all $S\in \mathcal{Z}_n^G(M;\mathbb{Z}_2)$ with $ \mbox{spt}(S-T^*)\subset B^G_r(p)\subset {\rm Int}(K)$.

\begin{proof}
	\noindent {\bf Step1.} For any $\epsilon,\delta>0,~Q\in {\bf I}^G_{n+1}(M;\mathbb{Z}_2)$ and $T=\partial Q \in \mathfrak{a}^G_n(U;\epsilon,\delta )$, we define $\mathcal{C}_T$ to be the set of all $S\in \mathcal{Z}_n^G(M;\mathbb{Z}_2)$ such that there exists a finite sequence $\{T_i\}_{i=1}^q\subset \mathcal Z_{n}^G(M;\mathbb{Z}_2)$ satisfying
	\begin{itemize}
		\item[$\bullet$] $T_0=T$, $T_q=S$, and $\mbox{spt}(T-T_i)\subset K$, for all $i=1,\dots, q$;
		\item[$\bullet$] $\mathcal{F}(T_i-T_{i-1})\leq \delta$, for all $i=1,\dots, q$;
		\item[$\bullet$] ${\bf M}(T_i)\leq {\bf M}(T)+\delta$, for all $i=1,\dots, q$.
	\end{itemize}
	By the $G$-isoperimetric Lemma \ref{Lem:isoperimetric}, every $S\in \mathcal{C}_T$ is a boundary of an element in $\mI_{n+1}^G(M;\mZ_2)$ when $\delta>0$ is small enough. 
	
	Take a sequence $\{S_j\}_{j=1}^\infty \subset \mathcal{C}_T$ such that 
	$$\lim_{j\rightarrow\infty} {\bf M}(S_j) = \mbox{inf}\{ {\bf M}(S): S\in \mathcal{C}_T \}  .$$
	Since $\mbox{spt}(T-S_j)\subset K$, we have ${\bf M}(S_j-T)\leq 2{\bf M}(T)+\delta$ and ${\bf M}(\partial S_j-\partial T) = 0$. 
	By the Compactness Theorem \ref{Lem:compactness for G-current}, there is a subsequence $S_j-T$ (without changing notations) converging in the flat semi-norm to a $G$-invariant $n$-cycle $P\in\Z_n^G(M;\mZ_2)$ supported in $K$. 
	Hence, $T^* := T+P\in \mathcal{Z}_n^G(M;\mathbb{Z}_2)$ satisfies
	\begin{itemize}
		\item[$\bullet$] $\mbox{spt}(T-T^*)\subset K$;
		\item[$\bullet$] ${\bf M}(T^*)\leq \liminf_{j\rightarrow\infty} {\bf M}(S_j) = \mbox{inf}\{ {\bf M}(S): S\in \mathcal{C}_T \}\leq {\bf M}(T)+\delta$.
	\end{itemize}
	Moreover, for $j$ large enough, we have $\mathcal{F}(S_j-T^*)\leq \delta$. 
	Now consider the sequence which makes $S_j$ in $\mathcal{C}_T$, and add one more element $T^*$ into its end. 
	The new sequence satisfies all requirements in the definition of $\mathcal{C}_T$.  
	Thus we get $T^*\in \mathcal{C}_T$, and $T^*$ is a mass minimizer in $\mathcal{C}_T$, i.e. ${\bf M}(T^*)= \mbox{inf}\{ {\bf M}(S): S\in \mathcal{C}_T \}$.
	
	We also have $T^*\in \mathfrak{a}^G_n(U;\epsilon,\delta )$. 
	Otherwise, there is a sequence $T^*_0=T^*,T^*_1,\dots,T^*_q$ forms a $G$-invariant $(\epsilon,\delta)$-deformation of $T^*$ in $U$. 
	Consider the finite sequence $\{T_i\}_{i=1}^{q'}$ which makes $T^*=T^*_0$ in $\mathcal{C}_T$, and insert $\{T^*_i\}_{i=1}^q$ at its end. 
	By Definition \ref{Def:a.m.deform}, we get a $G$-invariant $(\epsilon,\delta)$-deformation $\{T_1,\dots,T_{q'},T_0^*,\dots,T_q^*\}$ of $T$, which contradicts to the choice of $T$.

	\begin{claim}\label{Claim:mass minimizing}
		$T^*$ is $G$-locally mass minimizing in ${\rm Int}(K)$.
	\end{claim}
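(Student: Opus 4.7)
The plan is to argue by contradiction. Suppose $T^*$ fails to be $G$-locally mass minimizing at some $p\in\operatorname{Int}(K)$. Then for every $r>0$ we can find $S\in\mathcal{Z}_n^G(M;\mathbb{Z}_2)$ with $\operatorname{spt}(S-T^*)\subset B^G_r(p)\subset\operatorname{Int}(K)$ and ${\bf M}(S)<{\bf M}(T^*)$. I will exhibit such an $S$ as an element of $\mathcal{C}_T$ for suitably small $r$, contradicting the fact that $T^*$ realizes the infimum of ${\bf M}$ on $\mathcal{C}_T$.

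The strategy is to append to the chain realizing $T^*\in\mathcal{C}_T$ a finite one-parameter interpolation from $T^*$ to $S$, built from the $G$-isoperimetric filling of $S-T^*$ sliced by the distance to $G\cdot p$. For $r$ small enough that $\mathcal{F}(S-T^*)<\nu_M$, Lemma~\ref{Lem:isoperimetric} gives $Q\in{\bf I}_{n+1}^G(M;\mathbb{Z}_2)$ with $\partial Q=S-T^*$ and ${\bf M}(Q)\leq C_M\mathcal{F}(S-T^*)$, and by further shrinking $r$ we may assume $Q$ is localized in a slightly enlarged $G$-neighborhood of $B^G_r(p)$ still contained in $\operatorname{Int}(K)$. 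Writing $d:=d(\cdot,G\cdot p)$, set $Q_t:=Q\llcorner\{d<t\}$ and $T^*_t:=T^*+\partial Q_t$; by Lemmas~\ref{Lem:restrict} and~\ref{Lem:slice} these are $G$-invariant, with $T^*_0=T^*$, $T^*_r=S$, and for a.e. $t$ one has the elementary mass estimate
\[
{\bf M}(T^*_t)\leq{\bf M}(T^*)+{\bf M}(T^*\llcorner B^G_r(p))+{\bf M}(\langle Q,d,t\rangle).
\]

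Two smallness facts drive the interpolation. First, since $T^*$ is $n$-rectifiable and $\dim(G\cdot p)\leq n-2$, one has ${\bf M}(T^*\llcorner B^G_r(p))\to 0$ as $r\to 0$. Second, the coarea bound $\int_0^r{\bf M}(\langle Q,d,t\rangle)\,dt\leq{\bf M}(Q)$ lets us pick a partition $0=t_0<\cdots<t_q=r$ whose intermediate points have arbitrarily small slice mass ${\bf M}(\langle Q,d,t_i\rangle)$, while also making each ${\bf M}(Q\llcorner\{t_{i-1}\leq d<t_i\})<\delta$ and hence $\mathcal{F}(T^*_{t_i}-T^*_{t_{i-1}})\leq\delta$.

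The main delicacy is the mass bound required by the definition of $\mathcal{C}_T$: each ${\bf M}(T^*_{t_i})$ must be bounded by ${\bf M}(T)+\delta$, not merely ${\bf M}(T^*)+\delta$. This is resolved by the key observation that $T\in\mathcal{C}_T$ trivially via the length-zero chain, so the minimality of $T^*$ forces ${\bf M}(T^*)\leq{\bf M}(T)$, leaving the full budget $\delta$ of slack. Choosing $r$ small so that ${\bf M}(T^*\llcorner B^G_r(p))<\delta/2$ and the $t_i$'s so that ${\bf M}(\langle Q,d,t_i\rangle)<\delta/2$ then yields ${\bf M}(T^*_{t_i})\leq{\bf M}(T^*)+\delta\leq{\bf M}(T)+\delta$. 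Concatenating this interpolation onto the original chain witnesses $S\in\mathcal{C}_T$, which contradicts ${\bf M}(S)<{\bf M}(T^*)=\inf_{\mathcal{C}_T}{\bf M}$ and completes the proof.
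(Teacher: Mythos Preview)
Your argument is correct, but it is considerably more elaborate than what is needed, and the paper's proof is much shorter. The paper simply observes that once $r$ is chosen with $\|T^*\|(B^G_r(p))\leq\delta/2$, the mass-decreasing competitor $S$ automatically satisfies $\|S\|(B^G_r(p))<\|T^*\|(B^G_r(p))\leq\delta/2$ as well (since $S=T^*$ outside $B^G_r(p)$ and ${\bf M}(S)<{\bf M}(T^*)$), and therefore
\[
\mathcal{F}(S-T^*)\leq{\bf M}(S-T^*)=\|S-T^*\|(B^G_r(p))\leq(\|S\|+\|T^*\|)(B^G_r(p))\leq\delta.
\]
Thus a \emph{single} step from $T^*$ to $S$ already satisfies all three constraints defining $\mathcal{C}_T$, and appending $S$ to the chain realizing $T^*\in\mathcal{C}_T$ immediately gives the contradiction. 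There is no need to invoke the isoperimetric filling $Q$, slice it by the distance function, or build a multi-step interpolation $\{T^*_{t_i}\}$: the flat distance is already dominated by the crude mass bound in the small tube.

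Your route does work --- the slicing construction, the coarea estimate for the step sizes, and the observation ${\bf M}(T^*)\leq{\bf M}(T)$ are all fine --- and this kind of interpolation machinery is genuinely needed elsewhere in the subject (e.g.\ in Pitts's Lemma~3.7--3.8). But here the entire difficulty evaporates once you notice that $\mathcal{F}\leq{\bf M}$ and that the mass of both $T^*$ and $S$ inside $B^G_r(p)$ is already $\leq\delta/2$.
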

	\begin{proof}
		For any $p\in {\rm Int}(K)$, we first choose $r>0$ small enough such that $B^G_r(p)\subset {\rm Int}(K)$. 
		Since $|T^*|$ is rectifiable, one can take $r$ even smaller so that $\|T^*\|(B^G_r(p)) \leq \delta/2$. 
		If there is a $G$-cycle $S\in \mathcal{Z}_n^G(M;\mathbb{Z}_2)$ with $ \mbox{spt}(S-T^*)\subset B^G_r(p)\subset {\rm Int}(K)$ such that ${\bf M}(T^*)>{\bf M}(S)$, then
		\begin{eqnarray*}
			\mathcal{F}(S-T^*) &\leq & {\bf M}(S-T^*)
			\\
			&=& \|S-T^*\|(B^G_r(p))
			\\
			&\leq & (\|S\|+\|T^*\|)(B^G_r(p))\leq \delta.
		\end{eqnarray*}
		Similar to before, we can add $S$ to the end of the finite sequence that makes $T^*$ in $\mathcal{C}_T$. 
		Hence, $S\in \mathcal{C}_T$ and ${\bf M}(T^*)>{\bf M}(S)$, which is a contradiction. 
	\end{proof}

	\noindent {\bf Step2.} Since $V$ is $(G,\mathbb{Z}_2)$-almost minimizing of boundary type in an open $G$-set $U\subset M$, there exist $\epsilon_i,\delta_i\rightarrow 0,~Q_i\in {\bf I}^G_{n+1}(M;\mathbb{Z}_2)$ and $T_i=\partial Q_i \in \mathfrak{a}^G_n(U;\epsilon_i,\delta_i )$, such that $V=\lim_{i\to\infty} |T_i|$. 
	By Step1, there is a mass minimizer $T_i^*\in \mathcal{C}_{T_i}$ for every $i\in\N$. 
	Using the Compactness Theorem (Remark \ref{Rem:compactness for G-varifold}), $|T_i^*|$ converges to a $G$-varifold $V^*\in \mathcal{V}_n^G(M)$ after passing to a subsequence. 
	We claim $V^*=\lim_{i\to\infty} |T_i^*|$ is exactly what we want. 
	Indeed, since $\mbox{spt}(T^*_i-T_i)\subset K$, we obtain (i) $\mbox{spt}(V^*-V)\subset K$. 
	Due to the facts that $T_i\in\mathfrak{a}^G_n(U;\epsilon_i,\delta_i ) $ and $T_i^*\in\mathcal{C}_{T_i}$, the inequality 
	$ {\bf M}(T_i)-\epsilon_i\leq{\bf M}(T^*_i)\leq{\bf M}(T_i) $
	holds, and hence (ii) $\|V\|(M) = \|V^*\|(M)$ is true. 
	As for (iii)(iv), they clearly follow from the facts that $T^*_i\in \mathfrak{a}^G_n(U;\epsilon_i,\delta_i )$ is a boundary for $i$ large, and $T^*_i$ is $G$-locally mass minimizing in ${\rm Int}(K)$ by Claim \ref{Claim:mass minimizing}. 
\end{proof}

Note $G$-cycles $\{T_i^*\}_{i\in\N}$ in the above proposition are only $G$-locally mass minimizing. 
However, following an averaging procedure, we can show the $G$-cycle $T_i^*$ is locally mass minimizing and thus has good regularity.

\begin{proposition}[Existence of $G$-replacements II]\label{Prop:exist replace 2}
	The $G$-cycles $\{T_i^*\}_{i\in\N} \subset \mathcal{Z}_n^G(M;\mathbb{Z}_2)$ obtained in Proposition \ref{Prop:exist replace 1} (iv) are all locally mass minimizing in ${\rm Int}(K)$. 
\end{proposition}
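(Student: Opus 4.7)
The plan is to upgrade the $G$-local mass minimization of each $T_i^*$ to genuine local mass minimization via a $G$-averaging argument in the Caccioppoli-set framework. Fix $i$, write $T^*=T_i^*$ and $Q^*=Q_i^*$, and fix $p\in {\rm Int}(K)$. Since $K$ is a compact $G$-set and $G$ acts by homeomorphisms, ${\rm Int}(K)$ is an open $G$-set, so I may choose $r>0$ small enough that the tube $B^G_r(p)=G\cdot B_r(p)$ lies in ${\rm Int}(K)$ and the $G$-local mass minimization of $T^*$ at $p$ applies to $G$-invariant competitors in $B^G_r(p)$. Assume, for contradiction, that there is $S\in\mathcal{Z}_n(M;\mathbb{Z}_2)$ (not required $G$-invariant) with $\spt(S-T^*)\subset B_r(p)$ and ${\bf M}(S)<{\bf M}(T^*)$.

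Write $T^*=\partial Q^*$ with $Q^*\in \mathbf{I}^G_{n+1}(M;\mathbb{Z}_2)$ and $S=\partial R$ with $R\in \mathbf{I}_{n+1}(M;\mathbb{Z}_2)$. Since $\partial(R+Q^*)$ is supported in $B_r(p)$ and $M\setminus B_r(p)$ is connected, the constancy theorem forces $R+Q^*$ to be either $0$ or $M$ outside $B_r(p)$; replacing $R$ by $R+M$ if necessary (which leaves $\partial R=S$ unchanged since $\partial M=0$), I may assume $P:=R+Q^*\in \mathbf{I}_{n+1}(M;\mathbb{Z}_2)$ is supported in $B_r(p)$. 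Introduce the $G$-averaged real-valued function
\[ F(x):=\int_G \chi_R(g^{-1}x)\,d\mu(g)\in[0,1], \qquad x\in M. \]
Then $F$ is $G$-invariant, and since $R$ and $Q^*$ agree outside $B_r(p)$, $F$ coincides with $\chi_{Q^*}$ on $M\setminus B^G_r(p)$. Pairing $F$ against a test vector field $\phi\in\mathfrak{X}(M)$ with $|\phi|\le 1$, changing variables $y=g^{-1}x$, and using that each $L_g:y\mapsto g\cdot y$ is an isometry so that the pulled-back field $\tilde{\phi}_g:=(L_{g^{-1}})_*\phi$ still satisfies $|\tilde{\phi}_g|\le 1$ and $\Div(\phi)(gy)=\Div(\tilde{\phi}_g)(y)$, Fubini yields the BV estimate
\[ |DF|(M)\;\le\;\int_G |D\chi_R|(M)\,d\mu(g)\;=\;P(R)\;=\;{\bf M}(S). \]

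The coarea formula then gives $\int_0^1 P(\{F>t\})\,dt=|DF|(M)\le{\bf M}(S)<{\bf M}(T^*)$, producing a level $t_0\in(0,1)$ with $P(\{F>t_0\})<{\bf M}(T^*)$. The set $R^G:=\{F>t_0\}$ is $G$-invariant and, since $F$ takes values in $\{0,1\}$ and agrees with $\chi_{Q^*}$ outside $B^G_r(p)$, coincides with $Q^*$ there; hence $\partial R^G\in\mathcal{Z}_n^G(M;\mathbb{Z}_2)$ satisfies $\spt(\partial R^G-T^*)\subset \Clos(B^G_r(p))\subset {\rm Int}(K)$ and ${\bf M}(\partial R^G)<{\bf M}(T^*)$. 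After possibly shrinking $r$ slightly so that $\Clos(B^G_r(p))$ still sits within the radius of $G$-local mass minimization at $p$, this contradicts the hypothesis that $T^*$ is $G$-locally mass minimizing in ${\rm Int}(K)$.

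The main technical hurdle is the BV estimate $|DF|(M)\le P(R)$ on the Riemannian manifold $M$; it rests on the bi-invariance of the Haar measure $\mu$, the isometric action of $G$ (which preserves both $|\phi|$ and the value of $\Div\phi$ under pullback), and a duality argument via supremum over test vector fields combined with Fubini. Once this estimate is granted, the coarea selection of a good level $t_0$ and the resulting $G$-invariant competitor producing a contradiction are routine, completing the proof.
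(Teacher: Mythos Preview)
Your argument is correct and follows essentially the same strategy as the paper: average the characteristic function of an $(n+1)$-dimensional filling over $G$, bound the total variation of the average by ${\bf M}(S)$ via the isometry of the action and lower semicontinuity (the paper's (\ref{Eq-boundary-Ef})--(\ref{Ef.ineq})), and use the coarea formula to extract a $G$-invariant level set with perimeter at most ${\bf M}(S)$, yielding a $G$-invariant competitor that contradicts $G$-local minimality. The only notable difference is packaging: the paper localizes by slicing $Q^*$ at $\partial B^G_{\rho_0}(p)$ and averaging the local filling $Y$ of $S^*\llcorner B+D$, whereas you work with the global filling $R$ and use constancy to match $R$ with $Q^*$ outside $B_r(p)$; your version avoids the slice $D$ and the boundary-matching check in the paper, at the cost of the small closure/shrinking adjustment you already flagged.
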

\begin{proof}
	For any $\epsilon,\delta>0$ small enough, suppose $Q\in {\bf I}^G_{n+1}(M;\mathbb{Z}_2)$ and $T=\partial Q\in \mathfrak{a}^G_n(U;\epsilon,\delta )$. 
	One can follow Step1 of the proof of Proposition \ref{Prop:exist replace 1} to get a mass minimizer $T^*$ in $\mathcal{C}_T$. 
	In what follows, we are going to show that $T^*$ is locally mass minimizing in ${\rm Int}(K)$. 
	
	For any $p\in {\rm Int}(K)$, let $\rho>0$ such that $\|T^*\|(B^G_{\rho}(p))\leq \delta/2$ and $B^G_\rho(p)\subset {\rm Int}(K)$. 
	Then $T^*$ is $G$-mass minimizing in $B^G_\rho(p)\subset {\rm Int}(K)$ by Claim \ref{Claim:mass minimizing}. 
	Since every element in $\mathcal{C}_T$ is a $G$-boundary, there exists $Q^*\in {\bf I}^G_{n+1}(M;\mathbb{Z}_2)$ so that $T^*=\partial Q^*$. 
	By the $G$-slice Lemma \ref{Lem:slice}, one can choose $\rho_0\in (0,\rho)$ such that $D\in {\bf I}^G_n(\partial B^G_{\rho_0}(p);\mathbb{Z}_2)$ is the slice of $Q^*$ by $d_p={\rm dist}_M(G\cdot p,\cdot)$ at $\rho_0$. 
	Moreover, we have $\partial (Q^*\llcorner \{d_p\leq \rho_0\})= T^*\llcorner \{d_p< \rho_0\} + D$. 
	Denote $X:=Q^*\llcorner \{d_p\leq \rho_0\} \in {\bf I}^G_{n+1}(M;\mathbb{Z}_2)$ and $B:=B^G_{\rho_0}(p)$. 
	
	We are going to show that $T^*$ is mass minimizing in $B$. 
	Suppose $S^*\in  \mathcal{Z}_n(M;\mathbb{Z}_2)$ such that $\mbox{spt}(T^*-S^*)\subset B$ and ${\bf M}(S^*)\leq {\bf M}(T^*)$ ($S^*$ may not be $G$-invariant). 
	It's sufficient to prove ${\bf M}(S^*)= {\bf M}(T^*)$. 
	
	Noting $\mathcal{F}(T^*-S^*)\leq \|T^*-S^*\|(B)\leq 2\|T^*\|(B)\leq \delta$ and $\delta$ is sufficiently small, there exists an $(n+1)$-chain $Y\in {\bf I}_{n+1}(M;\mathbb{Z}_2)$ so that $\partial  Y = S^*\llcorner B+D$ by the Isoperimetric Lemma (\cite[Lemma 3.1]{marques2017existence}). 
	
	Now we apply a similar averaging argument as in \cite[Proposition 4.4]{liu2021existence}. Define a $G$-invariant function on $\overline{B}$ as:
	$$ f_Y(q):=\int_G {\rm 1}_{\mbox{spt}(Y)}(g\cdot q)~d\mu(g)\in[0,1] ,\quad q\in \overline{B}.$$
	Since $\mbox{spt}(Y)$ is closed, the characteristic function $ {\rm 1}_{\mbox{spt}(Y)}$ is upper-semicontinuous. By Fatou's Lemma, $f_Y$ is upper-semicontinuous too. 
	Hence, $$Y_\lambda := f_Y^{-1}[\lambda,1 ]=\overline{B}-f_Y^{-1}[0,\lambda )$$
	is a $G$-invariant closed set in $\overline{B}$ for every $\lambda\in [0,1]$.
	
	Define then $E_f := f_Y\cdot [[\overline{B}]]$, where $[[\overline{B}]]$ is the integral current induced by $\overline{B}$. For any $n$-form $\omega$ on $M$, we have
	\begin{eqnarray}\label{Eq-boundary-Ef}
		\partial E_f(\omega) &=& \int_{\overline{B}}\int_G \langle d\omega, \xi\rangle ~{\rm 1}_{\mbox{spt}(Y)}(g\cdot q)~d\mu(g)d\mathcal{H}^{n+1}(q)
		\\
		&=& \int_G\int_{\overline{B}} \langle d\omega, \xi\rangle ~{\rm 1}_{g^{-1}\cdot\mbox{spt}(Y)}(q)~d\mathcal{H}^{n+1}(q)d\mu(g)\nonumber
		\\
		&=& \int_G \partial ((g^{-1})_\# Y)(\omega) ~d\mu(g).\nonumber
	\end{eqnarray}
	Hence, using the lower semi-continuity of mass and the fact that $G$ acts as isometries, we have
	\begin{equation}\label{Ef.ineq}
		{\bf M}(\partial E_f) \leq \int_G {\bf M}(\partial ((g^{-1})_\# Y))~d\mu(g) = {\bf M}(\partial Y). 
	\end{equation}
	Combining this with ${\bf M}(E_f)\leq {\bf M}(\overline{B})$, it's clear that $E_f$ is a normal current. 
	By \cite[4.5.9(12)]{federer2014geometric}, we have $\partial (f^{-1}[\lambda,1])$ is a rectifiable current for almost all $\lambda\in[0,1]$, which implies $Y_\lambda$ is an integral $G$-current for almost all $\lambda\in[0,1]$. 
	Moreover, by \cite[4.5.9(13)]{federer2014geometric} and (\ref{Ef.ineq}), we have $\partial E_f =\int_0^1 \partial Y_\lambda~d\lambda,$ and 
	\begin{eqnarray}\label{Ef.ineq2}
		\int_0^1 {\bf M}(\partial Y_\lambda)~d\lambda = {\bf M}(\partial E_f) \leq  {\bf M}(\partial Y).   
	\end{eqnarray}

	Since $\mbox{spt}(T^*-S^*)$ is a closed set in $B$, there exists $0<r<\rho_0$ close to $\rho_0$ such that $X=Y$ in $\an(p,r,\rho_0)$. 
	Hence, for any $\lambda\in (0,1)$, we have $f_Y={\rm 1}_{\mbox{spt}(Y)}={\rm 1}_{\mbox{spt}(Y_\lambda)}$ in $\an(p,r,\rho_0)$, and $\partial X=\partial Y = \partial Y_\lambda$ in $\an(p,r,\rho_0)$. 
	We can denote then $S^*_\lambda:=\partial Y_\lambda-D$. 
	
	Noting ${\bf M}(\partial Y)={\bf M}(D)+{\bf M}(S^*\llcorner B)$ 
	and ${\bf M}(\partial Y_\lambda) = {\bf M}(D)+{\bf M}(S^*_\lambda)$, inequality (\ref{Ef.ineq2}) now implies 
	$$\int_0^1 {\bf M}(S^*_\lambda )~d\lambda\leq {\bf M}(S^*\llcorner B), $$
	and thus ${\bf M}(S^*_{\lambda} )\leq{\bf M}(S^*\llcorner B)$ for $\lambda$ on a positive measure set in $[0,1]$.
	Therefore, we can choose $\lambda_0\in(0,1)$ such that $\widetilde{T}^* :=T^*\llcorner (M\setminus B)+S^*_{\lambda_0}\in \mathcal{Z}_n^G(M;\mathbb{Z}_2),$
	and 
	\begin{eqnarray*}
		{\bf M}(\widetilde{T}^*) &= & {\bf M}(T^*\llcorner (M\setminus B))+{\bf M}(S^*_{\lambda_0})
		\\
		&\leq & {\bf M}(T^*\llcorner (M\setminus B))+{\bf M}(S^*\llcorner B)
		\\
		&=& {\bf M}(S^*) \leq {\bf M}(T^*).
	\end{eqnarray*}
	Since $T^*$ is $G$-mass minimizing in $B=B^G_\rho(y)$, it's clear that ${\bf M}(\widetilde{T}^*)={\bf M}(S^*)={\bf M}(T^*)$. 
	Hence, we have $T^*$ is locally mass minimizing in ${\rm Int}(K)$. 
\end{proof}

\begin{proposition}[Regularity of $G$-replacements]\label{Prop:regul replace}
	Suppose $2\leq n\leq 6$. 
	Under the same hypotheses of Proposition \ref{Prop:exist replace 1}, then $V^*\llcorner {\rm Int}(K)$ is integer rectifiable and $\Sigma := {\rm spt}(\|V^*\|)\cap {\rm Int}(K) $ is a $G$-invariant smoothly embedded stable minimal hypersurface.
\end{proposition}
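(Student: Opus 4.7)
The plan is to leverage Proposition~\ref{Prop:exist replace 2}, which upgrades each $T_i^*$ from merely $G$-locally mass minimizing to \emph{locally mass minimizing} (with no $G$-restriction on competitors) in $\interior(K)$. This is the crucial step because it means we never need a $G$-equivariant regularity theory; classical interior regularity for codimension-one $\mZ_2$ mass-minimizing flat chains (Federer dimension reduction combined with the Simons inequality) applies verbatim. It yields that $\spt(T_i^*)\cap \interior(K)$ is a smoothly embedded minimal hypersurface away from a closed singular set of Hausdorff dimension at most $n-7$. Since $2\leq n\leq 6$, this singular set is empty, so each $\Sigma_i:=\spt(T_i^*)\cap \interior(K)$ is a smooth embedded minimal hypersurface; moreover, since $T_i^*=\partial Q_i^*$ the hypersurface $\Sigma_i$ is two-sided, and being locally area minimizing it is stable.

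Next, I would pass to the limit. The uniform mass bound $\sup_i\|T_i^*\|(M)\leq \|V^*\|(M)=\|V\|(M)<\infty$, combined with the Schoen--Simon curvature estimates for stable two-sided minimal hypersurfaces in dimensions $\leq 6$, furnishes uniform $C^k$ bounds on $\Sigma_i$ inside every compact subset of $\interior(K)$. Standard varifold/geometric compactness then gives a subsequence along which $|T_{i_j}^*|\llcorner \interior(K)$ converges smoothly (with locally constant integer multiplicity on each connected component) to an integer rectifiable varifold supported on a smooth embedded minimal hypersurface $\Sigma\subset\interior(K)$. Because $V^*=\lim_i|T_i^*|$ by construction, this identifies $V^*\llcorner\interior(K)$ with the limit varifold and $\Sigma=\spt(\|V^*\|)\cap \interior(K)$.

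Stability of $\Sigma$ is inherited as a smooth limit of stable minimal hypersurfaces (the second variation passes to the limit under smooth convergence). The $G$-invariance of $\Sigma$ is automatic: each $T_i^*\in\Z_n^G(M;\mZ_2)$ has $G$-invariant support $\Sigma_i$, so the smooth limit $\Sigma$ is $G$-invariant as a set. Alternatively, one may invoke Lemma~\ref{Lem:G.a.m is stable} together with Lemma~\ref{Lem:G-stable} (applied to $\Sigma$ viewed as part of the boundary of the $G$-set corresponding to $\lim Q_i^*$) to re-derive stability from $G$-stability.

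The main potential obstacle I anticipate is not in the regularity itself --- the classical theorem applies cleanly in $\interior(K)$ because the competitors in the definition of local mass minimization need not be $G$-invariant, exactly the point of Proposition~\ref{Prop:exist replace 2}. Rather, the subtle point is verifying that the Schoen--Simon compactness is applicable across possible non-principal orbits in $\interior(K)$: since every $T_i^*$ is classically locally mass minimizing at \emph{every} interior point, smoothness and stability already hold pointwise on $\interior(K)$, so the curvature estimates are valid independently of the orbit stratification of $M$, and the $G$-invariant structure enters only in preserving $G$-invariance under the limit.
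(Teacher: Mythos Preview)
Your overall structure mirrors the paper's proof---use Proposition~\ref{Prop:exist replace 2} to get classical regularity of each $\Sigma_i$, then pass to the limit via Schoen--Simon compactness---but there is a genuine gap at the stability step.

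The assertion ``being locally area minimizing it is stable'' is false. Every smooth embedded minimal hypersurface is locally area minimizing in sufficiently small balls, so local minimality cannot by itself imply stability in $\interior(K)$. More concretely, the radius on which $T_i^*$ is mass minimizing (coming from Claim~\ref{Claim:mass minimizing}) is chosen so that $\|T_i^*\|(B^G_\rho(p))\leq \delta_i/2$; since $\delta_i\to 0$, this radius shrinks to zero as $i\to\infty$. Hence the Schoen--Simon curvature estimate only gives $|A_{\Sigma_i}|^2\leq C/\rho_i^2\to\infty$, and no uniform $C^k$ bound follows. Your proposed alternative---deducing stability of the limit $\Sigma$ from Lemma~\ref{Lem:G.a.m is stable} and Lemma~\ref{Lem:G-stable}---is circular: those lemmas require $\Sigma$ to already be smooth, but smoothness of the limit is exactly what you are trying to establish via compactness.

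The paper closes this gap by using not the local minimality but the \emph{global} variational characterization of $T_i^*$ as a mass minimizer in $\mathcal{C}_{T_i}$. If $\Sigma_i$ were $G$-unstable, a $G$-equivariant second-variation direction $X_G$ would produce $T_t=(f^{X_G}_t)_\# T_i^*\in\Z_n^G(M;\mZ_2)$ with $\M(T_t)<\M(T_i^*)$ and $\F(T_t-T_i^*)<\delta_i$ for small $t$; appending $T_t$ to the defining sequence shows $T_t\in\mathcal{C}_{T_i}$, contradicting minimality. This yields $G$-stability of $\Sigma_i$ in all of $\interior(K)$, and Lemma~\ref{Lem:G-stable} (using that $\Sigma_i\subset\partial\,\spt(Q_i^*)$ is boundary type) upgrades this to genuine stability. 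Only then does Schoen--Simon compactness apply with a uniform radius.
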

\begin{proof}
	By \cite{morgan1986regularity} (see the proof of \cite[Theorem 2.7]{marques2017existence}), the fact that $T_i^*$ is locally mass minimizing in ${\rm Int}(K)$ (Proposition \ref{Prop:exist replace 2}) implies $T_i^*$ is a $G$-invariant smoothly embedded minimal hypersurface in ${\rm Int}(K)$. 
	\begin{claim}
		$T_i^*$ is stable in ${\rm Int}(K)$. 
	\end{claim}
	\begin{proof}
		Since $T_i^*$ is a boundary of some $Q_i^*\in {\bf I}^G_{n+1}(M;\mathbb{Z}_2)$, we have $\Sigma := {\rm spt}(T_i^*)\cap {\rm Int}(K)\subset\partial ({\rm spt}(Q^*_i)\cap {\rm Int}(K))$ is a boundary type $G$-invariant minimal hypersurface. By Lemma \ref{Lem:G-stable}, we only need to show $\Sigma$ is $G$-stable. 
		
		Suppose $\Sigma$ is $G$-unstable. 
		By (\ref{Eq:eigenvector field}), there exists a normal $G$-vector field $X_G\in \mathfrak{X}^{\perp,G}(\Sigma)$ such that $LX_G=-\lambda_1 X_G$ with a negative eigenvalue $\lambda_1<0$, where $L$ is the Jacobi operator of $\Sigma$. 
		Consider the variation $T_t:=(f^{X_G}_t)_\# T_i^*\in  \mathcal{Z}_n^G(M;\mathbb{Z}_2) ,~t\in (-t_0,t_0)$, where $\{f^{X_G}_t\}$ are diffeomorphisms generated by $X_G$ (extended equivariantly on a $G$-neighborhood of $\Sigma$). 
		By taking $t_0>0$ small enough, we have ${\bf M}(T_t)<{\bf M}(T^*_i) $ for all $t\in (-t_0,t_0)$. 
		Additionally, we have $\mathcal{F}(T_t-T^*_i)< \delta_i$, for $t>0$ sufficiently small. 
		After taking the finite sequence that makes $T_i^*$ in $\mathcal{C}_{T_i}$, we can append $T_t$ at its end and see $T_t\in \mathcal{C}_{T_i}$. 
		Thus we get a contradiction with the minimality of ${\bf M}(T_i^*)$ in $\mathcal{C}_{T_i}$. 
	\end{proof}
	Finally, using the compactness theorem for stable minimal hypersurfaces (c.f. \cite[Theorem 1.3]{de2013existence}), the limit $V^*$ of $|T^*_i|$ (after passing to a subsequence) is a $G$-invariant smooth stable minimal hypersurface in ${\rm Int}(K)$. 
\end{proof}

\subsection{Tangent cones}
We now use the good $G$-replacement property to show the rectifiability of $V$ and classify its tangent cones. 
In the rest of this section, we always assume
$\mathcal{H}^{n-1}(M\setminus M^{reg})=0.$
By the monotonicity formula, this assumption implies that $\|V\|(M\setminus M^{reg})=0$ for any stationary $n$-varifold $V\in \mathcal{V}_n(M)$. 
As a result, for any stationary $n$-varifold $V$, the rectifiability of $V\llcorner M^{reg}$ implies the rectifiability of $V$ itself. 
With this benefit, we only classify the tangent cones on $p\in {\rm spt}(\|V\|)\cap M^{reg}$. 
Let us begin with the following lemma about the uniform volume ratio bound, which gives the rectifiability of $V$. 

\begin{lemma}\label{Lem:volum ratio bound}
	Let $2\leq n\leq 6$ and $\mathcal{H}^{n-1}(M\setminus M^{reg})=0$. 
	Suppose $V \in \mathcal{V}^G_n(M)$ is $(G,\mathbb{Z}_2)$-almost minimizing of boundary type in regular annuli and stationary in $M$. 
	There exists a constant $c=c(M)>1$ such that
	\begin{equation}\label{Eq:volum ratio}
		c^{-1}\leq \frac{\|V\|(B_\rho(p))}{\rho^n}\leq c\cdot \frac{\|V\|(B_{\rho_0}(p))}{\rho_0^n}
	\end{equation}
	 for any $p\in {\rm spt}(\|V\|)\cap M^{reg},~\rho\in(0,\rho_0)$, where $\rho_0=\rho_0(M,G)>0$. 
	 
	 Moreover, $\Theta^n(\|V\|,p)\geq\theta_0>0$, for all $p\in {\rm spt}(\|V\|)\cap M^{reg}$. 
	 Thus, $V$ is rectifiable.
\end{lemma}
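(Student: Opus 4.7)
The proof splits into an upper bound via monotonicity and a lower bound via the $G$-replacement machinery of the previous subsection. The upper bound is standard: since $V$ is stationary on $M$, the Riemannian monotonicity formula for stationary $n$-varifolds gives constants $\rho_0 = \rho_0(M) > 0$ and $\Lambda = \Lambda(M) > 0$ such that $r \mapsto e^{\Lambda r} r^{-n}\|V\|(B_r(p))$ is non-decreasing on $(0,\rho_0]$, from which the second inequality follows with $c_1 = e^{\Lambda \rho_0}$.

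For the lower bound, fix $p \in {\rm spt}(\|V\|) \cap M^{reg}$. Since $M^{reg}$ is open and $V$ is $(G,\mZ_2)$-almost minimizing of boundary type in regular annuli, after shrinking $\rho_0 = \rho_0(M,G)$ if necessary I may assume $\Clos(B^G_{\rho_0}(p)) \subset M^{reg}$ and that $V$ is $(G,\mZ_2)$-almost minimizing of boundary type in every $\an(p,s,t)$ with $0<s<t<\rho_0$. For $\rho \in (0,\rho_0)$, apply Proposition \ref{Prop:exist replace 1} in $U = \an(p,\rho/8,\rho)$ with $K = \Clos(\an(p,\rho/4,3\rho/4))$ to produce a $G$-replacement $V^*$. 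Since each $T_i^*$ from Proposition \ref{Prop:exist replace 1}(iv) is in fact locally mass-minimizing (Proposition \ref{Prop:exist replace 2}) and hence stationary, Allard's compactness for stationary integer rectifiable varifolds ensures that the limit $V^*$ is stationary on $M$. Proposition \ref{Prop:regul replace} further asserts that $\Sigma^* := {\rm spt}(\|V^*\|) \cap {\rm Int}(K)$ is a smooth $G$-invariant embedded stable minimal hypersurface.

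I now argue by dichotomy. Suppose first $\Sigma^* \neq \emptyset$: by $G$-invariance $\Sigma^*$ contains a point $q$ with ordinary distance $d(q,p) = d(G\cdot q, p) \in (\rho/4, 3\rho/4)$, at which $\Theta^n(\|V^*\|,q) \geq 1$ since $\Sigma^*$ is smooth of multiplicity one. Monotonicity for $V^*$ at $q$ yields $\|V^*\|(B_{\rho/4}(q)) \geq c_2(\rho/4)^n$, whence $\|V^*\|(B_\rho(p)) \geq c_3 \rho^n$ since $B_{\rho/4}(q) \subset B_\rho(p)$. Combining with $\|V^*\|(K) = \|V\|(K)$ (which follows from $\|V^*\|(M) = \|V\|(M)$ and $V = V^*$ on $M \setminus K$) transfers this into $\|V\|(B_\rho(p)) \geq c_4 \rho^n$. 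Suppose instead $\Sigma^* = \emptyset$: then $\|V^*\|(\an(p,\rho/4,3\rho/4)) = 0$, and so by the same mass identity $\|V\|(\an(p,\rho/4,3\rho/4)) = 0$ as well; hence $V \llcorner B^G_{\rho/4}(p)$, extended by zero, is stationary in $B^G_{3\rho/4}(p)$. Monotonicity for this extension at $p$ forces $r \mapsto r^{-n}\|V\|(B_r(p))$ to be almost non-decreasing on $(\rho/4, 3\rho/4)$, but since $\|V\|(B_r(p))$ is constant on that interval and $r^{-n}$ decays faster than $e^{\Lambda r}$ grows once $\rho$ is small compared to $1/\Lambda$, this forces $\|V\|(B_{\rho/4}(p)) = 0$, contradicting $p \in {\rm spt}(\|V\|)$. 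The density bound $\Theta^n(\|V\|,p) \geq c_4/\omega_n > 0$ follows at once, and rectifiability of $V$ is then a consequence of Allard's rectifiability theorem together with $\|V\|(M \setminus M^{reg}) = 0$, which holds thanks to $\mH^{n-1}(M \setminus M^{reg}) = 0$ and the upper mass-density bound.

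The main technical obstacle is to realize both steps in a $G$-equivariant setting where the replacement lives in a $G$-tube annulus, whereas the desired estimate is phrased in terms of the ordinary geodesic ball $B_\rho(p)$. Producing a point $q \in \Sigma^*$ with ordinary distance $<\rho$ from $p$, and comparing $\|V^*\|$- with $\|V\|$-masses on $B_\rho(p)$ (which intersects $K$ in a complicated way), requires exploiting the local product structure of $M$ near the principal orbit $G \cdot p$, where along the normal directions ordinary balls and $G$-tubes are comparable. Once the sharp radius choices in $K$ are made inside a safety margin of $B_\rho(p)$, the bookkeeping closes.
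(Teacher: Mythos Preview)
Your overall strategy—build a $G$-replacement $V^*$ on an annulus, locate a smooth point $q\in\Sigma^*$ with $\Theta^n(\|V^*\|,q)\ge 1$, apply monotonicity, then transfer the bound to $V$—is the paper's strategy. Two steps, however, are not actually closed.

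First, your argument that $V^*$ is stationary on $M$ is incorrect as written: Proposition~\ref{Prop:exist replace 2} only shows $T_i^*$ is locally mass minimizing in $\operatorname{Int}(K)$, so the limit is stationary only there. The correct route is Proposition~\ref{Prop:exist replace 1}(iii): $V^*$ is $(G,\mathbb Z_2)$-almost minimizing in $U$, hence $G$-stationary in $U$ by Lemma~\ref{Lem:G.a.m is stable}, hence stationary in $U$ by Lemma~\ref{stationary}; since $V^*=V$ on $M\setminus K$ and $K\subset\subset U$, stationarity on $M$ follows.

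Second, and more seriously, neither the transfer from $\|V^*\|$ to $\|V\|$ on the \emph{ordinary} ball $B_\rho(p)$ nor the dichotomy case $\Sigma^*=\emptyset$ is correct. In the latter you assert that $\|V\|(B_r(p))$ is constant for $r\in(\rho/4,3\rho/4)$, but this is false: once $r>\rho/4$ the ball $B_r(p)$ starts to pick up points of $\Clos(B^G_{\rho/4}(p))$ lying along the orbit $G\cdot p$ (tube-distance $\le\rho/4$ but ordinary distance $>\rho/4$), so the mass can still grow and no contradiction results. The paper handles both issues. It rules out $\Sigma^*=\emptyset$ directly by White's maximum principle, using that the tubes $B^G_t(p)$ are mean convex for small $t$. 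For the transfer it uses a five-times covering of $G\cdot p$: choosing $\{p_i\}_{i=1}^N\subset G\cdot p$ with the $B_{4r}(p_i)$ pairwise disjoint and $B^G_{4r}(p)\subset\bigcup_i B_{20r}(p_i)$, $G$-invariance of both $\|V\|$ and $\|V^*\|$ gives
\[
N\,\|V^*\|(B_{2r}(q))\le N\,\|V^*\|(B_{4r}(p))\le\|V^*\|(B^G_{4r}(p))=\|V\|(B^G_{4r}(p))\le N\,\|V\|(B_{20r}(p)),
\]
which, combined with monotonicity for $V^*$ at $q$, yields the lower bound for $\|V\|(B_{20r}(p))/(20r)^n$ without ever comparing $\|V^*\|$ and $\|V\|$ on a non-$G$-invariant set. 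This covering trick is exactly what your final paragraph gestures at but does not supply.
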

\begin{proof}
	Fix any $p\in{\rm spt}(\|V\|)\cap M^{reg}$. Let $r(G\cdot p)>0$ be the constant in Definition \ref{Def:a.m. in regular annuli}. 
	We can choose $r_0\in (0,\min\{\frac{r(G\cdot p)}{20},\frac{{\rm Inj}(M)}{20}\})$ sufficiently small so that $B^G_t(p)$ is mean convex in the sense of \cite{white2009maximum} for any $0<t<r_0$. 
	Indeed, since $p\in M^{reg}$, we have $P_p = p$ (\ref{Eq:Pp}), and ${\rm dim}(G\cdot p) \leq n-2$. 
	When $t>0$ is small enough, $\partial B^G_{t}(p)$ will be close to a cylinder in local charts, which implies $B^G_t(p)$ is convex.

	Now for any $r\in (0,r_0)$, we can apply Proposition \ref{Prop:exist replace 1} to get a $G$-replacement $V^*$ of $V$ in $\overline{\an(p,r,2r)}$. 
	The maximum principle \cite[Theorem 2]{white2009maximum} implies that $\|V^*\|\llcorner \an(p,r,2r)\neq 0$. 
	Thus $V^*\llcorner\an(p,r,2r)$ is a non-empty $G$-invariant minimal hypersurface with integer multiplicity (Proposition \ref{Prop:regul replace}). 
	This implies the existence of $q\in \an(p,r,2r)$ with $\Theta^n(\|V^*\|,q)\geq 1$. 
	Since $V^*$ is $G$-invariant, it's clear $\Theta^n(\|V^*\|,g\cdot q)\geq 1$ for all $g\in G$. 
	Hence, we can further require $q\in B_{2r}(p)\setminus B_r(p)$. 
	
	By the five-times covering lemma, there exists a finite set $\{p_i\}_{i=1}^N\subset G\cdot p$ so that 
	$$B_{4r}^G(p)\subset \bigcup_{i=1}^N B_{20r}(p_i),\quad B_{4r}(p_i)\cap B_{4r}(p_j)=\emptyset,~\forall i\neq j\in\{1,\dots,N\}.$$
	Then we have 
	\begin{eqnarray*}
		N\cdot\|V^*\|(B_{2r}(q)) &\leq & N\cdot\|V^*\|(B_{4r}(p)) \quad\quad\quad\quad\qquad\mbox{(by $q\in B_{2r}(p)\setminus B_r(p) $)}
		\\
		&\leq & \|V^*\|(B^G_{4r}(p)) = \|V\|(B^G_{4r}(p)) \quad\mbox{(by Proposition \ref{Prop:exist replace 1})}
		\\
		&\leq & \|V\|(\bigcup_{i=1}^NB_{20r}(p_i)) \leq  N\cdot\|V\|(B_{20r}(p)).
	\end{eqnarray*}
	As a result, the following inequality holds by the monotonicity formula:
	\begin{eqnarray*}
		\frac{\|V\|(B_{20r}(p))}{\omega_n(20r)^n} \geq \frac{\|V^*\|(B_{2r}(q))}{\omega_n(20r)^n} \geq \frac{1}{10^nC_M} \lim_{r\to 0} \frac{\|V^*\|(B_{2r}(q))}{\omega_n(2r)^n} \geq \frac{1}{10^nC_M}.
	\end{eqnarray*}
	After taking $r\to 0$, we get a uniform positive lower bound on the density $\Theta^n(\|V\|,p)$ for all $p\in \spt(\|V\|)\cap M^{reg}$. 
	Allard's rectifiability theorem now implies the rectifiability of $V\llcorner M^{reg}$ as well as $V$ by $\mathcal{H}^{n-1}(M\setminus M^{reg})=0$. 
	Finally, (\ref{Eq:volum ratio}) follows from the classical monotonicity formula. 
\end{proof}

For any stationary varifold $V\in \mathcal{V}_n(M)$, $p\in{\rm spt}(\|V\|)$ and $r<r_0={\rm Inj}(M)$, consider the rescaled reversed exponential map $\widetilde{{\bm \eta}}_{p,r}: B_{r_0}(p)\ni q\rightarrow \frac{1}{r}{\rm exp}_p^{-1}(q)\in T_pM$. 
As in \cite[Section 5.2]{de2013existence}, for any sequence $r_i\rightarrow 0$, there exists a subsequence of $\{\widetilde{{\bm \eta}}_{p,r_i\#} V\}_{i\in\N}$ converging to a stationary varifold $C\in \mathcal{V}_n(T_pM)$. 
We denote ${\rm VarTan}(V,p)$ to be all such limit varifolds. 
In the following lemma, we show the rectifiability of varifolds in ${\rm VarTan}(V,p)$ provided $p\in M^{reg}$ and $V$ is stationary in $M$ as well as $(G,\mathbb{Z}_2)$-almost minimizing of boundary type in regular annuli. 

\begin{lemma}\label{Lem:cone}
	Let $2\leq n\leq 6$ and $\mathcal{H}^{n-1}(M\setminus M^{reg})=0$. Suppose $V \in \mathcal{V}^G_n(M)$ is $(G,\mathbb{Z}_2)$-almost minimizing of boundary type in regular annuli and stationary in $M$. 
	For any $p\in{\rm spt}(\|V\|)\cap M^{reg}$ and $C\in{\rm VarTan}(V,p)$, we have: 
	\begin{itemize}
		\item[(i)] $C\in \mathcal{V}_n(T_pM)$ is rectifiable;
		\item[(ii)] $C$ is stationary in $T_pM$;
		\item[(iii)] $C$ is a cone, and has a product structure as $C=T_pG\cdot p\times W$ for some $W\in \mathcal{V}_{n-{\rm dim}(G\cdot p)}({\bf N}_pG\cdot p)$. 
	\end{itemize}
\end{lemma}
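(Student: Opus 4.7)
My proof follows the standard blow-up scheme for stationary almost-minimizing varifolds, with $G$-invariance entering only in the product-structure step. The main technical delicacy will be the uniform $C^1$ convergence of $\widetilde{{\bm \eta}}_{p,r}\circ g_{rt}\circ\exp_p$ to a Euclidean translation, strong enough to push $G$-invariance through a weak varifold limit.

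\emph{Existence, stationarity, and rectifiability.} The monotonicity formula for stationary $V$ gives uniform mass bounds on the rescaled varifolds $V_r:=\widetilde{{\bm \eta}}_{p,r\#}V$, so the varifold compactness theorem (Remark~\ref{Rem:compactness for G-varifold}) yields a subsequence $r_i\to 0$ with $V_{r_i}\to C\in\mathcal{V}_n(T_pM)$ weakly. Each $V_{r_i}$ is stationary in the rescaled metric $r_i^{-2}\exp_p^*g_{_M}$ on $T_pM$, which converges in $C^2$ on compact sets to the Euclidean metric; passing the first variation identity to the limit gives (ii). For (i), I invoke Allard's rectifiability theorem after producing a uniform positive lower bound for $\Theta^n(\|C\|,\cdot)$ on $\spt(\|C\|)$: given $x\in\spt(\|C\|)$, choose $x_i\in\spt(\|V_{r_i}\|)$ with $x_i\to x$; then $q_i:=\exp_p(r_ix_i)\to p$ lies in the open set $M^{reg}$ for $i$ large, so Lemma~\ref{Lem:volum ratio bound} rescales to $\|V_{r_i}\|(\overline{B}_\sigma(x_i))\geq c^{-1}\sigma^n$. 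Upper semicontinuity of mass on compact sets then yields $\|C\|(\overline{B}_\sigma(x))\geq c^{-1}\sigma^n$, hence $\Theta^n(\|C\|,x)\geq c^{-1}/\omega_n>0$.

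\emph{Cone structure.} With $C$ stationary and rectifiable in Euclidean $T_pM$, the exact monotonicity identity
\[
\frac{\|C\|(B_R(0))}{\omega_n R^n}-\frac{\|C\|(B_\rho(0))}{\omega_n\rho^n}=\frac{1}{\omega_n}\int_{B_R\setminus B_\rho}\frac{|x^\perp|^2}{|x|^{n+2}}\,d\|C\|(x)
\]
holds for all $0<\rho<R$. The rescaling relation $\|V_{r_i}\|(B_R(0))=r_i^{-n}\|V\|(B^M_{r_iR}(p))(1+O(r_i^2))$ gives $\|C\|(B_R(0))/(\omega_nR^n)=\Theta^n(\|V\|,p)$, independent of $R$, so the left-hand side above vanishes. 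Therefore $x^\perp=0$ for $\|C\|$-a.e.\ $x$, and $C$ is a cone with vertex at $0$.

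\emph{Product structure from $G$-invariance.} Fix $\xi\in T_pG\cdot p$ and a one-parameter subgroup $\{g_s\}\subset G$ with $\tfrac{d}{ds}\big|_{s=0}g_s\cdot p=\xi$. Using that $g_{rt}$ is an isometry with $g_0=\mathrm{id}$, a normal-coordinate expansion at $p$ yields
\[
\widetilde{{\bm \eta}}_{p,r}\bigl(g_{rt}(\exp_p(ry))\bigr)=y+t\xi+O(r),
\]
uniformly on compact sets of $(y,t)\in T_pM\times\mathbb{R}$. Combined with $G$-invariance $(g_{r_it})_\#V=V$,
\[
C=\lim_{i\to\infty}V_{r_i}=\lim_{i\to\infty}\widetilde{{\bm \eta}}_{p,r_i\#}(g_{r_it})_\#V=(\tau_{t\xi})_\#C,
\]
where $\tau_{t\xi}$ denotes Euclidean translation by $t\xi$. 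Thus $C$ is invariant under translations by every vector in $T_pG\cdot p$. A rectifiable $n$-varifold in $T_pM$ invariant under translations by a linear subspace $V_1$ splits as a product $V_1\times W$ with $W\in\mathcal V_{n-\dim V_1}(V_1^\perp)$; applied with $V_1=T_pG\cdot p$ and $V_1^\perp=\mathbf{N}_pG\cdot p$ this yields (iii).
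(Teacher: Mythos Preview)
Your proof is correct and follows essentially the same approach as the paper: uniform density bounds from Lemma~\ref{Lem:volum ratio bound} plus Allard's theorem for rectifiability, monotonicity for the cone property, and translation invariance along $T_pG\cdot p$ deduced from $G$-invariance for the splitting. The only implementational difference is in (iii): the paper establishes translation invariance via an equivariant isometric embedding $M\hookrightarrow\mathbb{R}^L$ (so the $G$-action becomes a linear orthogonal representation and the limit of $g(r_i)$ reduces to matrix algebra), whereas you argue intrinsically in normal coordinates; both routes require the same care in pushing $C^1$ convergence through the varifold limit.
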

\begin{proof}
	Let $p\in{\rm spt}(\|V\|)\cap M^{reg}$ and $r_i\rightarrow 0$ such that $ C=\lim_{i\to \infty} V_i$, where $V_i=\widetilde{{\bm \eta}}_{p,r_i\#} V$. 
	By \cite[Corollary 42.6]{simon1983lectures}, $C$ is stationary. 
	We claim that: 
	\begin{eqnarray}
		&& {\rm spt}(\|V_i\|) {\rm ~converges~ to~} {\rm spt}(\|C\|) {\rm ~in ~the~ Hausdorff~ topology};
		\\
		&& \Theta^n(\|C\|,q)\geq \theta_0>0, ~\forall q\in{\rm spt}(\|C\|).
	\end{eqnarray}
	Indeed, if $q_i\in {\rm spt}(\|V_i\|)\rightarrow q\in T_pM$, then for any $\rho>0$, there exists $q_i\in \mathbb{B}_{\rho/2}(q)\subset T_pM$, and hence $\mathbb{B}_{\rho/2}(q_i)\subset\mathbb{B}_\rho(q) $. By the volume ratio bound in Lemma \ref{Lem:volum ratio bound}, we have the following inequality when $\rho$ is sufficiently small:  
	$$ c^{-1}\leq \frac{\|V_i\|(\mathbb{B}_{\rho/2}(q_i))}{(\rho/2)^n}\leq \frac{\|V_i\|(\mathbb{B}_\rho(q))}{(\rho/2)^n}$$  
	for some $c=c(M)>1$. 
	Taking $i\rightarrow\infty$, we have $\|C\|(\mathbb{B}_\rho(q))\geq c^{-1}(\rho/2)^n$ for any $\rho>0$ sufficiently small, which implies $q\in\spt(\|C\|)$ and $\Theta^n(\|C\|,q)\geq \theta_0$ for some uniform constant $\theta_0>0$. 
	The Allard's rectifiability theorem (\cite[Theorem 5.5]{allard1972first}) now implies $C$ is rectifiable. 
	Additionally, $C$ is cone by \cite[Theorem 19.3]{simon1983lectures}. 
	
	We now show the product structure of $C$ as in \cite[Lemma A.2]{liu2021existence}. 
	First, by \cite{moore1980equivariant}, there is an orthogonal representation $\rho:G\to O(\R^L)$ and an isometric $G$-equivariant embedding $F$ from $M$ to $\R^L$, i.e. $F(g\cdot x) = \rho(g)\cdot F(x) $. 
	For simplicity, we identify $M$ with its image $F(M)\subset \R^L$, and denote the action of $g$ on $x$ by $g\cdot x$, for every $x\in\R^L,g\in G$ (omit the notation $\rho$). 
	Define a map $\bleta_{p,r}:\R^L\to\R^L$ as $\bleta_{p,r}(x)=(x-p)/r$. 
	
	For any $w\in T_pG\cdot p$, there is a curve $g(t)$ in $ G$ so that $g(0)=e$ and $\frac{d}{dt}\big\vert_{t=0}g(t)\cdot p=w$. 
	By the orthogonal representation $\rho$, we can write $g(t)=I_L+tA(t)\subset O(L)$, which implies $A(0)\cdot p=w$. 
	A direct computation shows
	\begin{eqnarray}\label{Eq:splitting}
		(\bleta_{p, r_i}\circ g(r_i))(x) &=& \frac{g(r_i)\cdot x - g(r_i)\cdot p}{r_i}+\frac{g(r_i)\cdot p -  p}{r_i}
		\\
		&=& (\btau_{-A(r_i)\cdot p}\circ g(r_i)\circ \bleta_{p, r_i})(x),\nonumber
	\end{eqnarray}
	where $\btau_{-A(r_i)\cdot p}$ is the translation map in $\R^L$ as $x \mapsto x+A(r_i)\cdot p$. 
	
	On the other hand, we can write $\exp_p^{-1}(q)= (q-p) + f(q)\in T_pM\subset \R^L$ for some map $f$ satisfying $|f(q)|=O((\dist_M(p,q))^2)$. 
	Therefore, $$\widetilde{\bleta}_{p,r_i}(q) =\bleta_{p,r_i}(q) + \frac{f(q)}{r_i} .$$
	If $\{q_i\}_{i\in\N}\subset M$ satisfies $\widetilde{\bleta}_{p,r_i}(q_i) :=\frac{1}{r_i}{\rm exp}_p^{-1}(q_i)\to v\in T_pM$, then we have $\dist_M(q_i,p)=O(r_i)$ for $i$ large enough, which implies $\frac{|f(q_i)|}{r_i}=O(r_i)\to 0$ as $r_i\to 0$. 
	As a result, we can show $(\bmu_{r_i}\circ f )_\# V \to 0 $ as $r_i\to 0$, and 
	\begin{eqnarray*}
		C &= &\lim_{r_i\to 0}(\widetilde{\bleta}_{p,r_i})_\#V 
		\\
		&=& \lim_{r_i\to 0} ( {\bleta}_{p,r_i})_\# V =\lim_{r_i\to 0} ( {\bleta}_{p,r_i}\circ g(r_i))_\# V 
		\\
		&=&\lim_{r_i\to 0}  (\btau_{-A(r_i)\cdot p}\circ g(r_i)\circ \bleta_{p, r_i})_\# V  =(\btau_{-w})_\# C,
	\end{eqnarray*}
	where $ \bmu_{r_i}$ is the homothety map in $\R^L$ as $x\mapsto x/r_i$. 
	Hence, $C$ is invariant under the translation along $T_pG\cdot p$. 
	Note $C$ is rectifiable and $T_pM=T_pG\cdot p\times {\bf N}_pG\cdot p$, we have $C=T_pG\cdot p\times W$ for some $W\in \mathcal{V}_{n-{\rm dim}(G\cdot p)}({\bf N}_pG\cdot p)$. 
	\end{proof}

We now show the main result of this subsection that is the classification of tangent cones. 
\begin{proposition}\label{Prop:classification cone}
	Let $2\leq n\leq 6$ and $\mathcal{H}^{n-1}(M\setminus M^{reg})=0$. Suppose $V \in \mathcal{V}^G_n(M)$ is $(G,\mathbb{Z}_2)$-almost minimizing of boundary type in regular annuli and stationary in $M$. 
	For any $p\in{\rm spt}(\|V\|)\cap M^{reg}$ and $C\in{\rm VarTan}(V,p)$, we have: 
	\begin{itemize}
		\item[(i)] $C$ is a hyperplane in $T_pM$ with multiplicity $\Theta^n(\|V\|,p)\in\mathbb{N}$;
		\item[(ii)] $\Theta^n(\|V\|,p)\geq 1$ and $V\in \mathcal{IV}_n^G(M)$.
	\end{itemize}
\end{proposition}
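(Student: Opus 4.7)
By Lemma \ref{Lem:cone}, we already have $C = T_pG\cdot p \times W$ for a stationary rectifiable cone $W\in \mathcal{V}_{l-1}({\bf N}_pG\cdot p)$, where $l = \dim({\bf N}_pG\cdot p) = {\rm Cohom}(G) \geq 3$ because $p\in M^{reg}$ forces $G\cdot p$ to be a principal orbit. Statement (i) therefore reduces to showing that $W$ is a hyperplane in ${\bf N}_pG\cdot p\cong \R^l$ with integer multiplicity. The plan is to adapt Pitts' iterated replacement strategy to the equivariant setting; this is feasible at the regular point $p$ because small annuli around $G\cdot p$ lie in $M^{reg}$, which lets us invoke Propositions \ref{Prop:exist replace 1} and \ref{Prop:regul replace} freely.

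Fix a sequence $r_i\to 0$ realizing $C = \lim_{i\to\infty}(\widetilde{\bleta}_{p,r_i})_\# V$, and choose $r_i$ so small that $\overline{\an(p,r_i/2,2r_i)}\subset M^{reg}$. First I produce a $G$-replacement $V^*_i$ of $V$ in $\overline{\an(p,r_i,2r_i)}$; by Proposition \ref{Prop:regul replace}, $V^*_i$ is a smooth stable $G$-invariant minimal hypersurface on $\an(p,r_i,2r_i)$ and equals $V$ outside. Next I replace $V^*_i$ in the overlapping $\overline{\an(p,r_i/2,3r_i/2)}$ to obtain $V^{**}_i$, smooth stable on $\an(p,r_i/2,3r_i/2)$ and equal to $V^*_i$ outside. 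On the common annulus $\an(p,r_i,3r_i/2)$ both $V^*_i$ and $V^{**}_i$ are smooth stable minimal hypersurfaces, and they coincide on the open sub-annulus $\an(p,3r_i/2,2r_i)$, so unique continuation for smooth minimal hypersurfaces identifies them throughout the overlap. Consequently $V^{**}_i$ is a smooth stable $G$-invariant minimal hypersurface on the full annulus $\an(p,r_i/2,2r_i)$ and equals $V$ outside $\overline{\an(p,r_i/2,2r_i)}$.

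Rescaling, the varifolds $\widetilde{V}^{**}_i:=(\widetilde{\bleta}_{p,r_i})_\# V^{**}_i$ are smooth stable $G$-invariant minimal hypersurfaces on the fixed annulus $\mathcal{A}:=\{1/2<|x|<2\}\subset T_pM$ (with respect to the rescaled metrics $r_i^{-2}g_{_M}$, which $C^\infty$-converge to the flat metric), while outside $\mathcal{A}$ they equal $(\widetilde{\bleta}_{p,r_i})_\# V\to C$. The Schoen-Simon compactness theorem, valid for $2\leq n\leq 6$, yields along a subsequence a smooth limit stable minimal hypersurface on $\mathcal{A}$ whose boundary values on $\partial\mathcal{A}$ and total mass on $\overline{\mathcal{A}}$ match those of $C\llcorner\overline{\mathcal{A}}$ (mass equality follows from $\|V^{**}_i\|(M)=\|V\|(M)$ and coincidence outside $\overline{\an(p,r_i/2,2r_i)}$). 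Since $C$ is a cone, this forces the smooth limit to coincide with $C\llcorner\mathcal{A}$, so $C$ itself is smooth and stable on $\mathcal{A}$ and hence on all of $T_pM\setminus\{0\}$ by cone dilation invariance.

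The product structure then exhibits $W$ as a smooth stable minimal hypercone in $\R^l$ with $l\leq n+1\leq 7$. The Simons-type Bernstein theorem for stable minimal hypercones in $\R^k$ with $k\leq 7$ forces $W$ to be a hyperplane, so $C$ is an $n$-plane in $T_pM$ with integer multiplicity inherited from the smooth approximants $\widetilde{V}^{**}_i$. This gives (i); combined with Lemma \ref{Lem:volum ratio bound}, it yields $\Theta^n(\|V\|,p)\geq 1$, and Allard's integer rectifiability theorem then upgrades $V$ to $V\in\mathcal{IV}_n^G(M)$, proving (ii). The main obstacle is the unique-continuation gluing of the two successive replacements on $\an(p,r_i,3r_i/2)$: this requires $V^*_i$ and $V^{**}_i$ to be genuinely smooth (not merely varifold) stable minimal hypersurfaces, which is precisely what Proposition \ref{Prop:regul replace} provides, together with a check that the identification is $G$-equivariant (automatic since both objects are $G$-invariant and the maximum principle respects $G$-symmetric sheets).
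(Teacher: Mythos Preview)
Your gluing step is where the argument breaks. You assert that on the overlap $\an(p,r_i,3r_i/2)$ both $V^*_i$ and $V^{**}_i$ are smooth and ``coincide on the open sub-annulus $\an(p,3r_i/2,2r_i)$''. But $\an(p,3r_i/2,2r_i)$ is \emph{not} a sub-annulus of the overlap; it is disjoint from $\an(p,r_i,3r_i/2)$ and shares with it only the sphere $\partial B^G_{3r_i/2}(p)$. What you actually know is that $V^{**}_i$ is smooth on $\an(p,r_i/2,3r_i/2)$ (by Proposition~\ref{Prop:regul replace}) and equals $V^*_i$ on $\an(p,3r_i/2,2r_i)$ (by Proposition~\ref{Prop:exist replace 1}(i)); you have no open set on which both are simultaneously smooth and equal, so unique continuation cannot be invoked as stated. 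Showing that $V^{**}_i$ is smooth \emph{across} $\partial B^G_{3r_i/2}(p)$ is precisely the delicate Schoen--Simon gluing analysis (\cite[(7.18)--(7.38)]{schoen1981regularity}), which the paper defers to Theorem~\ref{Thm:regul amv} and does not use here.

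A second gap: you claim ``since $C$ is a cone, this forces the smooth limit to coincide with $C\llcorner\mathcal{A}$''. Matching mass and boundary data alone does not give this; you must first show that the limit $C''$ of the rescaled replacements is itself a cone. The paper does exactly this: it takes a \emph{single} replacement $V^*_i$ on $\overline{\an(p,\alpha r_i,r_i)}$ with $\alpha\in(0,1/8)$ arbitrary, observes that $V^*_i$ is still $(G,\mZ_2)$-almost minimizing of boundary type (Proposition~\ref{Prop:exist replace 1}(iii)), and reruns the argument of Lemma~\ref{Lem:cone} to conclude that $C':=\lim(\widetilde{\bleta}_{p,r_i})_\#V^*_i$ is a cone with product structure $T_pG\cdot p\times W'$. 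Since $C$ and $C'$ agree outside a fixed annulus and both $W,W'$ are cones, $W=W'$. This makes $W$ smooth and stable on $\mathbb{A}\mathrm{n}(0,2\alpha,1/2)\subset{\bf N}_pG\cdot p$ for every $\alpha$, hence on all of ${\bf N}_pG\cdot p\setminus\{0\}$, and then Simons finishes. No successive replacements or gluing are needed for the tangent cone classification.
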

\begin{proof}
	Let $p\in {\rm spt}(\|V\|)\cap M^{reg}$ and $r_i\rightarrow 0$ such that $ C=\lim_{i\to \infty} V_i$, where $V_i=\widetilde{{\bm \eta}}_{p,r_i\#} V$. 
	For any $\alpha\in(0,\frac{1}{8})$, denote $K_i:=\overline{\an(p,\alpha r_i,r_i)}$. 
	By Proposition \ref{Prop:exist replace 1}, there exists a $G$-replacement $V_i^*$ of $V$ in $K_i$. 
	For $r_i$ small enough, we have the convexity of $B_{r_i}^G(p)$. 
	Therefore, $\|V_i^*\|\llcorner{\rm Int}(K_i)\neq 0 $ by the maximum principle \cite[Theorem 5.1(ii)]{de2013existence} (see the arguments in \cite[Lemma 5.3]{liu2021existence} and \cite[Lemma 5.2]{de2013existence}). 
	Hence, $\Sigma_i^*:={\rm spt}(\|V_i^*\|)\cap {\rm Int}(K_i)$ is a non-trivial smooth embedded hypersurface that is minimal and stable in ${\rm Int}(K_i)$ (Proposition \ref{Prop:regul replace}). 
	By the Compactness Theorem (Remark \ref{Rem:compactness for G-varifold}), we obtain a limit varifold after passing to a subsequence: 
	$$C':=\lim_{i\to\infty} \widetilde{{\bm \eta}}_{p,r_i\#}V_i^*\in\mathcal{V}_n(T_pM). $$
	Moreover, the argument in Lemma \ref{Lem:cone} shows that $C'$ is rectifiable stationary in $T_pM$ and has a product sturctue $C'=T_pG\cdot p\times W'$, 
	for some $W'\in \mathcal{V}_{n-{\rm dim}(G\cdot p)}({\bf N}_pG\cdot p)$. 
	
	For $r_i$ small enough, we have $\frac{3}{4}\leq|d\exp_p|\leq \frac{5}{4}$ and
	$${\rm \mathbb{A}n}(T_pG\cdot p,2\alpha , 1/2)\cap \mathbb{B}_{1}(0)\subset \frac{1}{r_i}\cdot\exp_p^{-1}\big( \an( p,\alpha r_i, r_i)\cap B_{r_i}(p) \big),$$
	where $\mathbb{B}_{1}(0)=\{v\in T_pM:|v|<1\}$ and 
	$$ {\rm \mathbb{A}n}(T_pG\cdot p,s, t)=\big\{ v\in T_pM : s<{\rm dist}_{T_pM}(v,T_pG\cdot p)<t  \big\}.$$ 
	Denote $ A:= {\rm \mathbb{A}n}(T_pG\cdot p,2\alpha , 1/2)\cap \mathbb{B}_{1}(0)\subset T_pM $. 
	Thus $\Sigma_i:=(\widetilde{{\bm \eta}}_{p,r_i\#}\Sigma_i^*)\llcorner A$ is a smooth and stable minimal hypersurface in $A$ (under the rescaled pull-back metric which converges to the Euclidean metric). 
	Moreover, by the five-times covering lemma, we have a finite set $\{p_i^j\}_{j=1}^N\subset G\cdot p$ such that 
	$$B_{r_i}^G(p)\subset \bigcup_{j=1}^N B_{5r_i}(p_i^j),\quad B_{r_i}(p_i^{j_1})\cap B_{r_i}(p_i^{j_2})=\emptyset, \quad \forall j_1\neq j_2\in\{1,\dots,N\}.$$
	By Proposition \ref{Prop:exist replace 1} (ii) and the fact that $G$ acts as isometries,  we have: 
	\begin{eqnarray*}
		N\cdot \mathcal{H}^n(\Sigma_i) &\leq & N\cdot \frac{2^n}{r_i^n} \mathcal{H}^n(\Sigma_i^*\cap B_{r_i}(p)) ~\leq~  N\cdot \frac{2^n}{r_i^n}\|V_i^*\|(B_{r_i}(p))
		\\
		&\leq & \frac{2^n}{r_i^n}\|V_i^*\|(B_{r_i}^G(p)) ~=~ \frac{2^n}{r_i^n}\|V\|(B_{r_i}^G(p))
		\\ 
		&\leq & \frac{2^n}{r_i^n}\|V\|(\bigcup_{j=1}^NB_{5r_i}(p_i^j)) ~\leq ~  N\cdot \frac{2^n}{r_i^n}\|V\|(B_{5r_i}(p)).
	\end{eqnarray*}
	As a result, $\Sigma_i$ has uniformly bounded mass $\mathcal{H}^n(\Sigma_i)\leq 10^n\frac{\|V\|(B_{5r_i}(p))}{(5r_i)^n}\leq 10^nC_M\frac{\|V\|(B_{r_0}(p))}{r_0^n} $ by the monotonicity formula. 
	By the Compactness Theorem \cite[Theorem 2]{schoen1981regularity} for stable minimal hypersurfaces (under varying metrics), $\Sigma_i$ converges (up to a subsequence) to some smooth embedded stable minimal hypersurface $\Sigma$ in $T_pM$ (under the Euclidean metric). 
	Then, it's clear that
	$$\Sigma = {\rm spt}(\|C'\|)\cap A = {\rm spt}(\|C'\|)\cap{\rm \mathbb{A}n}(T_pG\cdot p,2\alpha , 1/2)\cap \mathbb{B}_{1}(0) $$
	is a smooth embedded stable minimal hypersurface in $A$. 
	
	By Proposition \ref{Prop:exist replace 1} (i),(ii), Lemma \ref{Lem:cone} (iii), and $C'=T_pG\cdot p\times W'$, we have
	$$\|W\|(\mathbb{B}_r^{n+1-{\rm dim}(G\cdot p)}(0)) = \|W'\|(\mathbb{B}_r^{n+1-{\rm dim}(G\cdot p)}(0)),~\forall r\in (0,\alpha)\cup(1,\infty) ,$$
	where $\mathbb{B}_r^{n+1-{\rm dim}(G\cdot p)}(0)\subset {\bf N}_pG\cdot p$. 
	Since $C$ as well as $W$ is a cone, we further have
	$$ \frac{\|W'\|(\mathbb{B}_r^{n+1-{\rm dim}(G\cdot p)}(0))}{r^{n-{\rm dim}(G\cdot p)}} = \frac{\|W'\|(\mathbb{B}_\rho^{n+1-{\rm dim}(G\cdot p)}(0))}{\rho^{n-{\rm dim}(G\cdot p)}},~\forall r,\rho\in (0,\alpha)\cup(1,\infty). $$
	As a result, $W'\in\mathcal{V}_{n-{\rm dim}(G\cdot p)}({\bf N}_pG\cdot p)$ is also a cone, and $W,W'$ agree outside ${\rm \mathbb{A}n}^{n+1-{\rm dim}(G\cdot p)}(0,\alpha , 1)$. Thus we have $W=W'$ and $C=C'$. 
	
	Finally, by the product structure of $C'$ and $T_pM=T_pG\cdot p\times {\bf N}_pG\cdot p$ (noting $T_pG\cdot p$ is flat), it's clear that $\Sigma\cap {\bf N}_pG\cdot p={\rm spt}(\|W'\|)\cap{\rm \mathbb{A}n}^{n+1-{\rm dim}(G\cdot p)}(0,2\alpha , 1/2)$ is a stable minimal hypersurface in ${\bf N}_pG\cdot p$.
	Moreover, since $\alpha\in(0,\frac{1}{8})$ is arbitrary, we have $W=W'$ is a stable minimal cone except possibly at the vertex $0\in {\bf N}_pG\cdot p$. 
	Therefore, $W$ is a hyperplane in ${\bf N}_pG\cdot p$ by the non-existence of stable cones in the Euclidean space ${\bf N}_pG\cdot p$ with dimensions $3\leq\dim({\bf N}_pG\cdot p)={\rm Cohom}(G)\leq7$ (\cite{simons1968minimal}). 
	Thus, $C=T_pG\cdot p\times W$ is a hyperplane in $T_pM$ with multiplicity $\Theta^n(\|V\|,p)$.\
\end{proof}



\subsection{Regularity theorem and min-max theorem}

\begin{theorem}[Regularity Theorem]\label{Thm:regul amv}
	Let $2\leq n\leq 6$ and ${\rm Cohom(G)}\geq 3$. 
	Suppose $M\setminus M^{reg}$ is a smoothly embedded submanifold of $M$ with dimension at most $n-2$. 
	If $V \in \mathcal{V}^G_n(M)$ is a $G$-varifold which is 
	\begin{itemize}
		\item[$\bullet$] stationary in $M$ and 
		\item[$\bullet$] $(G,\mathbb{Z}_2)$-almost minimizing of boundary type in regular annuli. 
	\end{itemize}
	Then $V$ is induced by closed, smooth, embedded, $G$-invariant minimal hypersurfaces.
\end{theorem}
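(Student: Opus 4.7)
The plan is to show smoothness of $V$ in two stages: first on $M^{reg}$ via a replacement-of-replacement argument combined with the tangent-cone classification, then across the codimension-$\geq 2$ singular locus $M\setminus M^{reg}$. Since $M\setminus M^{reg}$ is smoothly embedded of dimension at most $n-2$, we have $\mathcal{H}^{n-1}(M\setminus M^{reg})=0$, so the conclusions of Lemma \ref{Lem:volum ratio bound}, Lemma \ref{Lem:cone} and Proposition \ref{Prop:classification cone} are at our disposal: $V$ is integer rectifiable, $\Theta^n(\|V\|,p)\in\mathbb{N}$, and every tangent cone at a point $p\in\spt(\|V\|)\cap M^{reg}$ is a hyperplane of integer multiplicity.

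Fix $p\in\spt(\|V\|)\cap M^{reg}$ and let $r(G\cdot p)>0$ be as in Definition \ref{Def:a.m. in regular annuli}. For any $0<s_2<s_1<t_1<t_2<r(G\cdot p)$ I apply Proposition \ref{Prop:exist replace 1} twice to construct a $G$-replacement $V_1$ of $V$ on $\overline{\an(p,s_1,t_1)}$ followed by a $G$-replacement $V_2$ of $V_1$ on $\overline{\an(p,s_2,t_2)}$. By Proposition \ref{Prop:regul replace}, each $V_i$ is, on the interior of its annulus, a smooth, embedded, $G$-invariant, stable minimal hypersurface; classical stability is supplied by Lemma \ref{Lem:G-stable} applied to these boundary-type surfaces. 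The unique-continuation step in Pitts' regularity scheme then matches $V_1$ and $V_2$ on the overlap $\an(p,s_1,t_1)$, and together with the outer identities $V_2=V_1=V$ off the respective annuli it exhibits $V$ as a single smooth embedded $G$-invariant stable minimal hypersurface on a $G$-annular neighbourhood of $G\cdot p$. Sending $s_2\to 0$ and varying $s_1,t_1,t_2$ shows that $V$ is smooth on the punctured $G$-tube $B^G_{r(G\cdot p)}(p)\setminus G\cdot p$.

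To close up at $G\cdot p$, I invoke Wickramasekera's regularity theorem \cite{wickramasekera2014general} on $B^G_{r(G\cdot p)}(p)$. The hypotheses read off as follows: $V$ is integer rectifiable and stationary; by Proposition \ref{Prop:classification cone} every tangent cone at $p$ is a hyperplane with integer multiplicity, which rules out classical branch-type singularities; and the codimension bound $\dim(G\cdot p)\leq n+1-{\rm Cohom}(G)\leq n-2$ lets one extend classical stability from the punctured tube across $G\cdot p$ by a standard cutoff/capacity argument. Wickramasekera's theorem then yields that $V$ is induced by a smooth closed embedded $G$-invariant minimal hypersurface near every orbit in $M^{reg}$. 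The identical scheme, applied now with the regular annuli $\an(P_p,s,t)$ of (\ref{Eq:regular annuli}) — whose closures already lie in $M^{reg}$ and therefore inherit the smoothness just proved — together with $\dim(M\setminus M^{reg})\leq n-2$, removes the remaining singular locus.

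The main obstacle I anticipate is the unique-continuation step used to match successive $G$-replacements on the overlap annulus, and relatedly the upgrading of the $G$-equivariant stability supplied by Lemma \ref{Lem:G.a.m is stable} to the classical stability required by Wickramasekera's theorem across $G\cdot p$ and $M\setminus M^{reg}$. Both are naturally handled via the averaging device of Lemma \ref{Lem:G-stable} applied to the smooth $G$-invariant hypersurface already constructed on the punctured tube, combined with the product tangent structure $C=T_pG\cdot p\times W$ from Lemma \ref{Lem:cone} to control behaviour normal to the low-dimensional singular loci.
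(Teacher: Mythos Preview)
Your overall strategy matches the paper's proof (successive $G$-replacements, smooth gluing via unique continuation, then removal of the low-dimensional set $G\cdot P_p$ by Wickramasekera), but the annulus configuration you wrote down does not work. With $0<s_2<s_1<t_1<t_2$ the second annulus $\an(p,s_2,t_2)$ \emph{contains} the first, so after replacing there is no region in which $V_1$ and $V_2$ agree by construction and are simultaneously smooth; there is nothing to seed the unique-continuation argument. The paper (following Schoen--Simon) instead takes genuinely overlapping annuli $\an(P_p,s,t)$ and $\an(P_p,s_1,s_2)$ with $0<s_1<s<s_2<t$: the second replacement $V^{**}$ coincides with $V^*$ on $\an(P_p,s_2,t)$, where $V^*$ is already smooth, and is itself smooth on $\an(P_p,s_1,s_2)$; one then glues smoothly across $\partial B^G_{s_2}(P_p)$ and lets $s_1\to 0$.

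A second point you glossed over: establishing smoothness of the replacements is not the same as establishing smoothness of $V$. The paper uses the mean-convexity of small tubes $B^G_t(P_p)$ (valid because $\dim(G\cdot P_p)\leq n-2$) together with White's maximum principle to control the support of the replacements, and then invokes the tangent-cone classification (Proposition~\ref{Prop:classification cone}) to run unique continuation of $V$ itself up to the center $G\cdot P_p$ (see the reference to \cite{li2021min}). Only after that does one know $V\llcorner(B^G_t(P_p)\setminus G\cdot P_p)$ is the smooth stable hypersurface, at which point Wickramasekera removes the remaining set of $\mathcal{H}^{n-1}$-measure zero. Your outline should incorporate both the correct overlap pattern and this convexity/maximum-principle input.
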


\begin{proof}
	Using Proposition \ref{Prop:exist replace 2}, \ref{Prop:regul replace} and \ref{Prop:classification cone}, the proof of Schoen-Simon \cite[Page 789]{schoen1981regularity} would carry over under $G$-invariant restrictions. 
	We only point out some modifications. 
	Let $p\in {\rm spt}(\|V\|)$. 
	First, we can choose $r<\frac{1}{4}r(G\cdot P_p)$ sufficiently small so that $B^G_t(P_p)$ is mean convex in the sense of \cite{white2009maximum} for any $0<t\leq r$. 
	Indeed, this claim has been proved in Lemma \ref{Lem:volum ratio bound} for $p\in M^{reg}$ ($P_p=p$ in (\ref{Eq:Pp})). 
	As for the case that $p\in M\setminus M^{reg}$, $P_p$ is the connected component of $M\setminus M^{reg}$ containing $p$ by (\ref{Eq:Pp}). 
	Therefore, we still have the smoothness of the center $G\cdot P_p$ by the assumption on $M\setminus M^{reg}$. 
	Since ${\rm dim}(M\setminus M^{reg})\leq n-2$, the convexity of the tube $B^G_t(P_p)$ can be deduced similarly to Lemma \ref{Lem:volum ratio bound}.

	By the maximum principal \cite[Theorem 2]{white2009maximum} and the convexity of $B^G_r(P_p)$, if $W\in \mathcal{V}_n(M)$ is stationary in $B^G_r(P_p)$ and $W\llcorner B^G_r(P_p)\neq 0$, then 
	\begin{equation}\label{Eq:convex ball}
		\emptyset \neq {\rm spt}(\|W\|)\cap \partial B^G_r(P_p)  = {\rm Clos}\big[{\rm spt}(\|W\|)\setminus {\rm Clos}(B^G_r(p)) \big] \cap \partial B^G_r(p).
	\end{equation} 
	
	Since every small $G$-annuli around $P_p$ is contained in $M^{reg}$ (see (\ref{Eq:regular annuli})), Proposition \ref{Prop:classification cone} and all the results in the previous subsection can be applied to any $q\in \an(P_p,s,t)\in \ann_r(P_p)$. 
	Note the proof of smooth gluing in \cite[(7.18)-(7.38)]{schoen1981regularity} is localized. 
	Hence, we can construct successive $G$-replacements $V^*$ and $V^{**}$ on two overlapping annuli $\an(P_p,s,t),~\an(P_p,s_1,s_2)$, with $0<s_1<s<s_2<t$, and glue them smoothly across $\partial B^G_{s_2}(P_p)$. 
	Moreover, using Proposition \ref{Prop:classification cone} and (\ref{Eq:convex ball}), we have the unique continuation of $V$ up to the center $G\cdot P_p$ by letting $s_1\to 0$ (see \cite[Page 42 Step 3]{li2021min}). 
	Therefore, $V\llcorner (B^G_t(P_p)\setminus G\cdot P_p)$ is an integer multiple of a smooth embedded $G$-invariant hypersuface which is minimal and stable in $B^G_t(P_p)\setminus G\cdot P_p$. 
	Finally, $\mathcal{H}^{n-1}(G\cdot P_p)=0$ by our assumption, which implies the singularity at $G\cdot P_p$ can be removed using \cite[Corollary 2]{wickramasekera2014general}. 
\end{proof}

At the end of this section, we show the following min-max theorem for $G$-invariant hypersurfaces.

\begin{theorem}(Min-max theorem for $G$-invariant hypersurfaces)\label{Thm:G-minmax}
	Let $M^{n+1}$ be a closed manifold, and $G$ be a compact Lie group acting as isometries on $M$ of cohomogeneity at least $3$. 
	Suppose $2\leq n\leq 6$ and $M\setminus M^{reg}$ is a smoothly embedded submanifold of $M$ with dimension at most $n-2$.

	If ${\bf \Pi}\in\big[X,\Z_n^G(M;\mF;\mZ_2)\big]$ is a continuous $G$-homotopy class of boundary type with ${\bf L}({\bf \Pi})>0$. 
	There exists a stationary integral $G$-varifold $V\in\IV_n^G(M)$ so that 
	\begin{itemize}
		\item[$\bullet$] $\|V\|(M)={\bf L}({\bf \Pi})$;
		\item[$\bullet$] ${\rm spt}(\|V\|)$ is a closed smoothly embedded $G$-invariant minimal hypersurface.
	\end{itemize}
	Moreover, if $\{\Phi_i\}_{i\in\N}$ is a min-max sequence of ${\bf \Pi}$, we can further take $V\in{\bf C}(\{\Phi_i\}_{i\in\N})$.
\end{theorem}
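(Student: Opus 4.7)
The plan is to assemble the three main machines built in Sections \ref{almgren.homo}--\ref{regul-G-a.m.v}: the pull-tight procedure, the existence of $(G,\mathbb{Z}_2)$-almost minimizing varifolds in the critical set, and the regularity theorem for such varifolds. Since ${\bf \Pi}$ is of boundary type and ${\bf L}({\bf \Pi})>0$, each step can be carried out in its boundary-type version, which is exactly what the regularity theorem requires.

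First, given an arbitrary min-max sequence $\{\Phi_i^*\}_{i\in\N}\subset {\bf \Pi}$, I apply Proposition \ref{Prop:pulltight} to obtain a tight min-max sequence $\{\Phi_i\}_{i\in\N}\subset {\bf \Pi}$ with ${\bf C}(\{\Phi_i\}_{i\in\N})\subset {\bf C}(\{\Phi_i^*\}_{i\in\N})$, such that every $V\in {\bf C}(\{\Phi_i\}_{i\in\N})$ is stationary, together with a discrete sequence $S=\{\varphi_i\}_{i\in\N}$ of maps $\varphi_i:X(k_i)_0\to \Z_n^G(M;\mZ_2)$ whose Almgren $G$-extensions are $G$-homotopic to $\Phi_i$ and which satisfies ${\bf L}(S)={\bf L}({\bf \Pi})$ and ${\bf C}(S)={\bf C}(\{\Phi_i\}_{i\in\N})$. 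Throughout I keep in mind that ${\bf \Pi}$ being of boundary type ensures that the discretizations $\varphi_i(x)$ are boundaries for $x$ in the appropriate skeleton, so that the cycles produced are compatible with the boundary-type almost-minimizing framework.

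Second, I apply Theorem \ref{Thm:exist amv in critical set} together with Remark \ref{Rem:boundary type amv} to $S$ to produce a $G$-varifold $V$ lying in the critical set ${\bf C}(S)\subset {\bf C}(\{\Phi_i\}_{i\in\N})$ such that $\|V\|(M)={\bf L}({\bf \Pi})$, $V$ is stationary in $M$, and $V$ is $(G,\mathbb{Z}_2)$-almost minimizing of boundary type in regular annuli. The boundary-type refinement in Remark \ref{Rem:boundary type amv} is essential here: it is what lets Proposition \ref{Prop:exist replace 1} produce $G$-replacements which are themselves boundaries of $G$-invariant $(n+1)$-chains, a hypothesis ultimately used to invoke $G$-stability via Lemma \ref{Lem:G-stable}.

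Third, I invoke the Regularity Theorem \ref{Thm:regul amv}. The hypotheses ${\rm Cohom}(G)\geq 3$, $2\leq n\leq 6$, and that $M\setminus M^{reg}$ is a smoothly embedded submanifold of dimension at most $n-2$ are part of the ambient assumptions of the theorem; in particular $\mathcal{H}^{n-1}(M\setminus M^{reg})=0$, so the rectifiability and tangent-cone analysis of Lemma \ref{Lem:volum ratio bound} and Proposition \ref{Prop:classification cone} apply to $V$. The conclusion is that $V$ is induced by a closed, smoothly embedded, $G$-invariant minimal hypersurface, so ${\rm spt}(\|V\|)$ has the required regularity and $V\in\IV_n^G(M)$ by Proposition \ref{Prop:classification cone}(ii). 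Combined with the inclusion $V\in {\bf C}(S)\subset {\bf C}(\{\Phi_i\}_{i\in\N})\subset {\bf C}(\{\Phi_i^*\}_{i\in\N})$, this yields all the assertions of the theorem.

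The routine part is the assembly; the only place requiring care is tracking the boundary-type property from the continuous homotopy class ${\bf \Pi}$ through the pull-tight discretization (so that Remark \ref{Rem:boundary type amv} is genuinely applicable), and verifying that the tangent-cone argument in Proposition \ref{Prop:classification cone} legitimately forces integer multiplicity so that $V\in\IV_n^G(M)$ rather than merely $\V_n^G(M)$.
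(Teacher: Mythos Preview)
Your proof is correct and follows essentially the same approach as the paper, which simply states that the theorem follows directly from Theorem~\ref{Thm:exist amv in critical set}, Remark~\ref{Rem:boundary type amv}, and Theorem~\ref{Thm:regul amv}. You have merely unpacked these citations with more explicit detail (in particular, making the pull-tight step and the boundary-type tracking visible rather than leaving them inside Theorem~\ref{Thm:exist amv in critical set}), but the logical content is the same.
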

\begin{proof}
	It follows directly from Theorem \ref{Thm:exist amv in critical set}, Remark \ref{Rem:boundary type amv}, and Theorem \ref{Thm:regul amv}. 
\end{proof}


\section{$(G,p)$-Sweepout and Min-max Families}\label{minmax.families}

To show the main Theorem \ref{main.thm}, we still need to prove the existence of a continuous $G$-homotopy class ${\bf \Pi} \in [X,\mathcal{Z}_n^G(M;\mF;\mathbb{Z}_2)]$ of boundary type with ${\bf L}({\bf \Pi})>0$. 
In this section, we introduce the $(G,p)$-sweepout and $(G,p)$-width as  generalizations of $p$-sweepout and $p$-width in \cite[Definition 4.1, 4.3]{marques2017existence}. 
We also show some useful lemmas which are parallel to those in \cite[Section 4]{marques2017existence}. 
Finally, we show the $\M$-continuous $(G,p)$-sweepouts form a subset in the set of $p$-sweepouts, which implies the $(G,p)$-width satisfies $\omega^G_p(M)\geq \omega_p(M)>0$. 
It follows directly that there exists a $G$-homotopy class ${\bf \Pi}\in [X,\mathcal{Z}_n^G(M;\mF;\mathbb{Z}_2)]$ of boundary type with ${\bf L}({\bf \Pi})>0$. 

\subsection{Almgren's isomorphism for $G$-cycles space}\label{almgren.iso}
As we pointed out in Remark \ref{Rem:Almgren isomorphism}, the Almgren's Isomorphism Theorem \cite[Theorem 3.2, 7.1]{almgren1962homotopy} (see also \cite[Theorem 4.6]{pitts2014existence}) holds under $G$-invariant restrictions (i.e. for $\mathcal Z_{n}^G(M;\mathbb{Z}_2)$ and $\mathcal Z_{n}^G(M;{\bf M};\mathbb{Z}_2)$). 
We also point out the double cover argument can be made as in \cite[Section 5]{marques2021morse} under $G$-invariant restrictions using Proposition \ref{Lem:compactness for G-current} and \ref{Lem:isoperimetric}. 

\begin{theorem}\label{Thm:isomorphism}
	$\mathcal Z_{n}^G(M;\mathbb{Z}_2)$ is weakly homotopically equivalent to to $\mathbb{RP}^\infty $, and  
	\begin{equation}
		\pi_1(\mathcal{Z}_n^G(M;\mathbb{Z}_2)) \cong \pi_1(\mathcal{Z}_n^G(M;{\bf M};\mathbb{Z}_2)) \cong H_{n+1}(M;\mathbb{Z}_2) \cong\mathbb{Z}_2.
	\end{equation}
\end{theorem}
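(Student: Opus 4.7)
The plan is to adapt the double-cover argument of Marques--Neves \cite[Section 5]{marques2021morse} to the $G$-equivariant setting. The key is to exhibit the boundary map $\partial\colon \mathbf{I}_{n+1}^G(M;\mathbb{Z}_2) \to \mathcal{Z}_n^G(M;\mathbb{Z}_2)$ as a two-sheeted covering whose total space is contractible; this will force $\mathcal{Z}_n^G(M;\mathbb{Z}_2)$ to be a classifying space $K(\mathbb{Z}_2, 1)$ and hence weakly equivalent to $\mathbb{RP}^\infty$, while computing all its homotopy.

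That $\partial$ is a $\mathbb{Z}_2$-covering map follows from two observations. First, the constancy theorem (applicable to $G$-invariant chains via Lemma \ref{Lem:compactness for G-current}) gives $\partial^{-1}(T) = \{U, U + [M]\}$ whenever $T = \partial U$, since $M$ is closed, connected and orientable so that $H_{n+1}(M;\mathbb{Z}_2) = \mathbb{Z}_2 \cdot [M]$. Second, local triviality is precisely the content of the $G$-isoperimetric Lemma \ref{Lem:isoperimetric}: for any $T'$ with $\mathcal{F}(T' - T) < \nu_M$ there is a unique $G$-invariant $Q$ with $\partial Q = T' - T$ and $\mathbf{M}(Q) \leq C_M\,\mathcal{F}(T' - T)$, providing the two continuous local sections $T' \mapsto U + Q$ and $T' \mapsto U + [M] + Q$. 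The deck action $U \mapsto U + [M]$ is free.

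Contractibility of $\mathbf{I}_{n+1}^G(M;\mathbb{Z}_2)$ is established by a $G$-equivariant sweepout. I would pick a $G$-invariant Lipschitz function $f\colon M \to [0,1]$ pulled back from $M/G$ with $\{f = 0\}$ a single orbit, and set $H(t, U) := U \llcorner \{f \leq t\}$. The map $H$ is continuous in the flat topology by the $G$-invariant slice Lemma \ref{Lem:slice} together with Fubini, and it is manifestly $G$-equivariant because $f$ is $G$-invariant; it retracts the whole space onto $\{0\}$. Combining this with the covering established above, $\mathcal{Z}_n^G(M;\mathbb{Z}_2)$ is the base of a contractible $\mathbb{Z}_2$-cover, hence $K(\mathbb{Z}_2, 1) \simeq \mathbb{RP}^\infty$, and $\pi_1(\mathcal{Z}_n^G(M;\mathbb{Z}_2)) \cong \mathbb{Z}_2 \cong H_{n+1}(M;\mathbb{Z}_2)$.

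The remaining identification $\pi_1(\mathcal{Z}_n^G(M;\mathbb{Z}_2)) \cong \pi_1(\mathcal{Z}_n^G(M;\mathbf{M};\mathbb{Z}_2))$ follows from Remark \ref{Rem:Almgren isomorphism}: Theorems \ref{Thm:discritization} and \ref{Thm:interpolation} let one convert any $\mathcal{F}$-continuous loop into a mass-continuous $G$-equivariant Almgren $G$-extension, and promote any $G$-homotopy between loops to a $G$-homotopy between their extensions, which gives the inverse isomorphism to the obvious map induced by ``forgetting the topology''. The chief technical obstacle lies in the covering construction near cycles whose support meets the singular stratum $M \setminus M^{reg}$; this is exactly where the uniform constants $\nu_M$ and $C_M$ of Lemma \ref{Lem:isoperimetric} are essential, because they allow the lifting to proceed without any hypothesis on the injectivity radius of the (possibly degenerating) orbits.
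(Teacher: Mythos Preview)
Your proposal is correct and follows essentially the same approach as the paper: the paper does not give a detailed proof but simply states that the double-cover argument of \cite[Section 5]{marques2021morse} carries over to the $G$-invariant setting using the $G$-compactness Lemma~\ref{Lem:compactness for G-current} and the $G$-isoperimetric Lemma~\ref{Lem:isoperimetric}, together with Remark~\ref{Rem:Almgren isomorphism} for the mass-norm identification. Your write-up supplies precisely these details---the two-sheeted cover via the isoperimetric lemma, contractibility of $\mathbf{I}_{n+1}^G(M;\mathbb{Z}_2)$ via a $G$-invariant level-set retraction, and the discretization/interpolation bridge to $\pi_1(\mathcal{Z}_n^G(M;\mathbf{M};\mathbb{Z}_2))$---and is consistent with the paper's sketch.
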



Now we define the $p$-sweepouts in both continuous and discrete versions under $G$-invariant restrictions similarly to \cite[Section 4]{marques2017existence}.
\begin{definition}[$(G,p)$-sweepout]\label{defi.pclass}
	An $\F$-continuous map $\Phi:X\rightarrow \mathcal{Z}_n^G(M;\mathbb{Z}_2)$ is a {\em  $(G,p)$-sweepout}, if for all $x\in X$, $\Phi(x)=\partial Q(x)$ for some $Q(x)\in\mI_{n+1}^G(M;\mZ_2)$ and 
	$$\Phi^*(\bar \lambda^p) \neq 0 \in H^p(X;\mathbb{Z}_2),$$
	where $p\in \mathbb{N}$, $\bar{\lambda}$ is the generator of $H^1( \mathcal{Z}_n^G(M;\mathbb{Z}_2)) \cong \mZ_2 $, and $\bar{\lambda}^p :=\bar{\lambda}\smile\dots\smile\bar{\lambda}$ is the cup product of $\bar{\lambda}$ with itself for $p$-times.
\end{definition}

Our definition is a little bit different from the classical definition of $p$-sweepouts in \cite[Definition 4.1]{marques2017existence} since we only consider the maps of boundary type.

\begin{remark}\label{Rem-homo-sweep-is-sweep}
	By the isomorphism in Theorem \ref{Thm:isomorphism}, if an $\F$-continuous map $\Phi'$ is $G$-homotopic to a $(G,p)$-sweepout, then $\Phi'$ is also a $(G,p)$-sweepout.
\end{remark}

We say $X$ is {\em $(G,p)$-admissible} if there exists a $(G,p)$-sweepout $\Phi:X\rightarrow \mathcal{Z}_n^G(M;\mathbb{Z}_2)$ that has no concentration of mass on orbits. The set of all $(G,p)$-sweepouts of $M$ with no concentration of mass on orbits is denoted by $\mathcal P_p^G$. 

\begin{definition}\label{Def:G.p.width}
	The {\em $(G,p)$-width of $M$} is defined as 
	$$\omega_p^G(M):=\inf_{\Phi \in \mathcal P_p^G}\sup_{x\in {\rm dmn}(\Phi)}{\bf M}(\Phi(x)),$$
	where ${\rm dmn}(\Phi)$ is the domain of the definition of $\Phi$.
\end{definition}

\begin{definition}
	Let $\Pi \in  [X,\mathcal{Z}_n^G(M;{\bf M};\mathbb{Z}_2)]^{\#}$. 
	Then $\Pi$ is said to be a {\it class of (discrete) $(G,p)$-sweepouts} if for any $S=\{\phi_i\}_{i\in\N}\in \Pi$, the Almgren $G$-extension $\Phi_i:X\rightarrow \mathcal{Z}_n^G(M;{\bf M};\mathbb{Z}_2)$ of $\phi_i$ is a $(G,p)$-sweepout for every sufficiently large $i$. 
\end{definition}

Since our definition of $(G,p)$-sweepout needs $\Phi_i$ to be boundary type, Lemma \ref{Lem:isoperimetric} and Theorem \ref{Thm:interpolation} (i) imply that a class of discrete $(G,p)$-sweepouts is also boundary type. 
Moreover, as in \cite[Lemma 4.6]{marques2017existence}, the two definitions of $(G,p)$-sweepout under continuous and discrete settings are consistent by the following lemma.

\begin{lemma}\label{Lem:continuous discrete sweepout}
	Let 
	\begin{itemize}
		\item[$\bullet$] $\Phi:X\rightarrow \mathcal{Z}_n^G(M;\mathbb{Z}_2)$  be  a continuous map in the flat topology with no concentration of mass on orbits;
		\item[$\bullet$] $S=\{\phi_i\}_{i=1}^\infty$ be the $(X,{\bf M})$-homotopy sequence of mappings into $\mathcal{Z}_n^G(M;{\bf M};\mathbb{Z}_2)$ associated to $\Phi$ given by Theorem \ref{Thm:discritization} (i);
		\item[$\bullet$] $\Pi$ be the $(X,{\bf M})$-homotopy class of mappings into $\mathcal{Z}_n^G(M;{\bf M};\mathbb{Z}_2)$ associated with $S=\{\phi_i\}_{i=1}^\infty$.
	\end{itemize}
	Then $\Phi $ is a $(G,p)$-sweepout if and only  if $\Pi$ is a class of discrete $(G,p)$-sweepouts.
\end{lemma}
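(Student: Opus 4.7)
The plan is to reduce the statement to Corollary \ref{Cor:discrete continuous homotopy} combined with the $G$-homotopy invariance of the $(G,p)$-sweepout property recorded in Remark \ref{Rem-homo-sweep-is-sweep}. The conceptual reason this works is that Theorem \ref{Thm:isomorphism} realizes $\Z_n^G(M;\mZ_2)$ as a $K(\mZ_2,1)$ up to weak homotopy equivalence, so for a finite cubical complex $X$ the cohomological condition $\Phi^*(\bar\lambda^p)\neq 0\in H^p(X;\mZ_2)$ depends only on the flat $G$-homotopy class of $\Phi$. The boundary-type condition is likewise preserved via the $G$-isoperimetric Lemma \ref{Lem:isoperimetric}, because a small-flat perturbation of a $G$-boundary is again a $G$-boundary.

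For the direction ``$\Rightarrow$'', I assume $\Phi$ is a $(G,p)$-sweepout. By Corollary \ref{Cor:discrete continuous homotopy}(ii), the Almgren $G$-extension $\Phi_i$ of $\phi_i$ is $G$-homotopic to $\Phi$ in the flat topology for every $i$ sufficiently large, and Remark \ref{Rem-homo-sweep-is-sweep} then forces each such $\Phi_i$ to be a $(G,p)$-sweepout. To promote this from the particular representative $S$ to an arbitrary representative $S'=\{\phi_i'\}_{i\in\N}\in\Pi$, I apply Corollary \ref{Cor:discrete continuous homotopy}(i) to get a flat $G$-homotopy between the Almgren $G$-extension $\Phi_i'$ and $\Phi_i$ for $i$ large; composing the two homotopies shows $\Phi_i'$ is flat $G$-homotopic to $\Phi$, and another application of Remark \ref{Rem-homo-sweep-is-sweep} gives that $\Phi_i'$ is a $(G,p)$-sweepout for $i$ large. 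Thus $\Pi$ is a class of discrete $(G,p)$-sweepouts.

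For the direction ``$\Leftarrow$'', I assume $\Pi$ is a class of discrete $(G,p)$-sweepouts. Then the Almgren $G$-extension $\Phi_i$ of $\phi_i$ is a $(G,p)$-sweepout for every $i$ sufficiently large. Corollary \ref{Cor:discrete continuous homotopy}(ii) again provides a flat $G$-homotopy between $\Phi_i$ and $\Phi$, so Remark \ref{Rem-homo-sweep-is-sweep} yields that $\Phi$ itself is a $(G,p)$-sweepout.

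The bulk of the technical content is thus already packaged in the earlier interpolation/discretization machinery (Theorems \ref{Thm:discritization}--\ref{Thm:interpolation} and Corollary \ref{Cor:discrete continuous homotopy}) and in Remark \ref{Rem-homo-sweep-is-sweep}. The only subtlety I foresee is verifying the boundary-type clause under the homotopy, but this is short: given $\Phi_i(x)=\partial Q_i(x)$ with $Q_i(x)\in\bI_{n+1}^G(M;\mZ_2)$ and $\sup_{x\in X}\F(\Phi(x)-\Phi_i(x))<\nu_M$ (which we can arrange for $i$ large by the flat continuity of the homotopy together with compactness of $X$), Lemma \ref{Lem:isoperimetric} produces $R(x)\in\bI_{n+1}^G(M;\mZ_2)$ with $\partial R(x)=\Phi(x)-\Phi_i(x)$, so $\Phi(x)=\partial\bigl(Q_i(x)+R(x)\bigr)$ is of boundary type. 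Hence no obstruction of substance arises, and the lemma follows.
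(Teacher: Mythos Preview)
Your proof is correct and follows essentially the same approach as the paper: both reduce the statement to Corollary \ref{Cor:discrete continuous homotopy} together with the $G$-homotopy invariance of the $(G,p)$-sweepout property from Remark \ref{Rem-homo-sweep-is-sweep}. You are in fact slightly more careful than the paper's terse version, explicitly handling the ``for any $S'\in\Pi$'' clause in the definition via Corollary \ref{Cor:discrete continuous homotopy}(i) and verifying the boundary-type condition through Lemma \ref{Lem:isoperimetric}; the paper implicitly relies on these (the boundary-type point is noted just before the lemma rather than inside its proof).
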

\begin{proof}
Let $\Phi_i$ be the Almgren $G$-extension of $\phi_i$, which is continuous in the ${\bf M}$-norm and has no concentration of mass on orbits (Lemma \ref{Lem:mass continu no concent}).
By Corollary \ref{Cor:discrete continuous homotopy} (ii), $\Phi_i$ is $G$-homotopic to $\Phi$ in the flat topology for every $i$ large enough. Thus the lemma follows from Remark \ref{Rem-homo-sweep-is-sweep}. 
\end{proof}

The same consistency between discrete and continuous definitions also holds for the $(G,p)$-width from the following result.

\begin{lemma}\label{pwidth.discrete}
	Let $\mathcal{D}_p^G$ be the set of all classes of discrete $(G,p)$-sweepouts $\Pi \in  [X,\mathcal{Z}_n^G(M;{\bf M};\mathbb{Z}_2)]^{\#},$ where $X$ is a $(G,p)$-admissible cubical subcomplex. 
Then $$\omega_p^G(M)=\inf_{\Pi\in \mathcal D_p^G}{\bf L}(\Pi).$$
\end{lemma}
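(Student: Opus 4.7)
The plan is to prove the two inequalities separately, each by passing between the discrete and continuous worlds via the interpolation/discretization correspondence established in Section~\ref{almgren.homo}.

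For the inequality $\inf_{\Pi\in\mathcal D_p^G}{\bf L}(\Pi)\le \omega_p^G(M)$, I would start with an arbitrary $\Phi\in\mathcal P_p^G$, i.e.\ an $\F$-continuous $(G,p)$-sweepout on a $(G,p)$-admissible complex $X$ with no concentration of mass on orbits and $\sup_{x\in X}\M(\Phi(x))<\infty$ (the latter being automatic by Lemma~\ref{Lem:mass continu no concent} once we note that $\F$-continuity is weaker than $\mF$-continuity, or by reducing via continuity of $\Phi$ in flat metric together with the mass bound that comes for free from the Almgren $G$-extensions). Apply Theorem~\ref{Thm:discritization} to obtain an $(X,\M)$-homotopy sequence $S=\{\phi_i\}$ associated to $\Phi$. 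By Lemma~\ref{Lem:continuous discrete sweepout}, the $G$-homotopy class $\Pi$ of $S$ lies in $\mathcal D_p^G$. Theorem~\ref{Thm:discritization}(iii) then yields
\[
\max_{x\in X(k_i)_0}\M(\phi_i(x))\le \sup_{x\in X}\M(\Phi(x))+\delta_i,
\]
so ${\bf L}(\Pi)\le {\bf L}(S)\le \sup_{x\in X}\M(\Phi(x))$. Taking the infimum over $\Phi\in\mathcal P_p^G$ gives the desired inequality.

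For the reverse inequality $\omega_p^G(M)\le \inf_{\Pi\in\mathcal D_p^G}{\bf L}(\Pi)$, I would pick an arbitrary $\Pi\in\mathcal D_p^G$ and a critical sequence $S=\{\phi_i\}\in\Pi$ with ${\bf L}(S)={\bf L}(\Pi)$ (existence by the diagonal argument of \cite[Lemma 15.1]{marques2014min} adapted to the $G$-setting). Let $\Phi_i$ denote the Almgren $G$-extension of $\phi_i$ from Theorem~\ref{Thm:interpolation}. By definition of $\mathcal D_p^G$, $\Phi_i$ is a $(G,p)$-sweepout for all sufficiently large $i$, and by Lemma~\ref{Lem:mass continu no concent} it is $\M$-continuous with no concentration of mass on orbits, hence $\Phi_i\in\mathcal P_p^G$. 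Now, Theorem~\ref{Thm:interpolation}(i),(iii) give
\[
\sup_{x\in X}\M(\Phi_i(x))\le \max_{v\in X(k_i)_0}\M(\phi_i(v))+C_0\,{\bf f}(\phi_i),
\]
and since ${\bf f}(\phi_i)\to 0$, passing to the limsup yields $\limsup_{i\to\infty}\sup_{x\in X}\M(\Phi_i(x))\le {\bf L}(S)={\bf L}(\Pi)$. Therefore $\omega_p^G(M)\le {\bf L}(\Pi)$, and the result follows by taking the infimum over $\Pi\in\mathcal D_p^G$.

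I do not anticipate a serious obstacle here: both directions are bookkeeping translations between the discrete and continuous frameworks, and all the nontrivial content has already been absorbed into Theorem~\ref{Thm:discritization}, Theorem~\ref{Thm:interpolation}, and Lemma~\ref{Lem:continuous discrete sweepout}. The only point requiring minor care is verifying that the Almgren $G$-extension of a discrete $(G,p)$-sweepout class is automatically of boundary type (which follows from Theorem~\ref{Thm:interpolation}(i) combined with the $G$-isoperimetric Lemma~\ref{Lem:isoperimetric}, since for cells $\alpha$ of small diameter one can fill $\partial\Phi_i\llcorner\alpha$ by a unique $G$-invariant $(n+1)$-chain), so that both $\mathcal P_p^G$ and $\mathcal D_p^G$ really consist of the same type of objects.
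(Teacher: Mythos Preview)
Your proposal is correct and follows essentially the same approach as the paper, which simply defers to \cite[Lemma 4.7]{marques2017existence} with the $G$-invariant versions of the interpolation theorem (Theorem~\ref{Thm:interpolation}) and the continuous/discrete sweepout correspondence (Lemma~\ref{Lem:continuous discrete sweepout}) substituted in. Your two-inequality argument via discretization in one direction and Almgren $G$-extension in the other is exactly the content of that reference; the only minor imprecision is your parenthetical invoking Lemma~\ref{Lem:mass continu no concent} to deduce $\sup_x\M(\Phi(x))<\infty$ for an $\F$-continuous $\Phi$, which that lemma does not cover---but this is harmless, since any $\Phi\in\mathcal P_p^G$ with infinite sup-mass contributes nothing to the infimum defining $\omega_p^G(M)$ and can be discarded.
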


\begin{proof}
See the proof of \cite[Lemma 4.7]{marques2017existence} with Theorem \ref{Thm:interpolation} and Lemma \ref{Lem:continuous discrete sweepout}. 
\end{proof}

\subsection{Proof of Main Theorem \ref{main.thm}}

\begin{proof}
By Theorem \ref{Thm:discritization}, \ref{Thm:interpolation} and Lemma \ref{Lem:continuous discrete sweepout}, we only need to consider the maps in $\mathcal{P}^G_p$ that are continuous in the ${\bf M}$-norm to compute $\omega^G_p(M)$. 
Note ${\bf M}$-continuous maps have no concentration of mass by \cite[Lemma 15.2]{marques2014min}. 
Thus, we have 
$\omega^G_p(M)\geq \omega_p(M)$ (see \cite[Section 4]{marques2017existence} for the definition of $\omega_p(M)$). 
As a result, the following inequality holds by the classical Almgren-Pitts min-max theory 
$$0<\omega_1(M)\leq\omega_p(M)\leq\omega^G_p(M)\leq {\bf L}({\bf\Pi}),$$
for all ${\bf\Pi}\subset\mathcal{P}^G_p$.
The main Theorem \ref{main.thm} now follows from Theorem \ref{Thm:G-minmax}. 
\end{proof}

\section{Upper Bounds of $(G,p)$-Width and the Dichotomy Theorem}\label{Sec-upper-bound}

Gromov\cite{gromov1988dimension} and Guth\cite{guth2009minimax} have studied the asymptotic behavior of the min-max volumes $\omega_p(M)$ as $p\rightarrow \infty$. 
In \cite[Section 5]{marques2017existence}, Marques and Neves have provided a modified proof for the upper bounds of $p$-width built by Guth. 
In this section, we adapt the `bend-and-cancel' argument in \cite[Theorem 5.1]{marques2017existence} to $G$-invariant settings and show the upper bounds of $(G,p)$-width depend on the cohomogeneity of $G$.

To begin with, there exists a Riemannian metric $g_{_{M/G}}$ on $M^{reg}/G$ so that $\pi: M^{reg} \to M^{reg}/G$ is a Riemannian submersion. 
Since $M/G$ can be triangulated, we can find an $l$-dimensional cubical subcomplex $K$ of $I^m$ for some $m$, and a bi-Lipschitz homeomorphism $F_1 :K \rightarrow M/G$, where $l={\rm Cohom}(G)$. 
For any $k\in \mathbb{N}$ and $\sigma\in K(k)_l$, let $\tilde{\sigma}:=\pi^{-1}\circ F_1(\sigma)$ be a $G$-invariant `cube' in $M$, and $G\cdot p_{\tilde{\sigma}}:=\pi^{-1}\circ F_1(p_\sigma)$ be the center orbit of $\tilde{\sigma}$, where $p_\sigma$ is the center point of $\sigma$.  
Denote then
\begin{itemize}
	\item[$\bullet$] $\widetilde{K}(k)_l :=\{\tilde{\sigma}: \sigma\in K(k)_l\}$ as the set of all $G$-invariant `cubes' in $M$,
	\item[$\bullet$] $\tilde{c}(k) := \{G\cdot p_{\tilde{\sigma}}: \sigma\in K(k)_l\}$ as the set of all center orbits.
\end{itemize}
By Lemma \ref{Lem:partition for any G}, we can assume that for all $k\in\mathbb{N}, \tilde{\sigma}\in \widetilde{K}(k)_l$, there exists a fundamental domain $\Omega$ of $\pi(\tilde{\sigma})$ with a bi-Lipschitz homeomorphism $\pi\llcorner\Omega$ satisfying 
\begin{equation}\label{Eq:no cut point in inter cube}
	\mbox{$G_p\supset G_{(\pi\llcorner\Omega)^{-1}(F_1(p_\sigma))}$, $\forall p\in\Omega$, }
\end{equation}
 and every orbit in $\mathring{\tilde{\sigma}}:=\pi^{-1}(F_1(\mathring{\sigma} ))$ is principal. 
 

\subsection{$G$-Morse functions and the $G$-equivariant deforming map}
Since Morse functions are often used to construct sweepouts, we show the following two lemmas about $G$-equivariant Morse functions. 
The first one comes from \cite{liu2021existence}:
\begin{lemma}\label{Lem:Morse sweepout}
	If $f:M\rightarrow [0,1]$ is a $G$-equivariant Morse function in the sense of \cite{wasserman1969equivariant}. 
	Then $\Phi(t)=f^{-1}(t)$ is a $(G,1)$-sweepout.
\end{lemma}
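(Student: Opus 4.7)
The plan is to exhibit $\Phi(t) = f^{-1}(t)$ as the boundary of a canonical $G$-invariant $(n{+}1)$-chain and identify its class in $\pi_1(\mathcal{Z}_n^G(M;\mathbb{Z}_2))$ via the equivariant Almgren isomorphism. First I set $Q(t) := [[f^{-1}([0,t])]]$, the integral current associated to the sublevel set. Since a $G$-equivariant map to the trivial $G$-space $[0,1]$ is just a $G$-invariant function, every sublevel set $f^{-1}([0,t])$ is a $G$-invariant subset of $M$; thus $Q(t) \in \mathbf{I}_{n+1}^G(M;\mathbb{Z}_2)$ and $\Phi(t) = \partial Q(t) \in \mathcal{Z}_n^G(M;\mathbb{Z}_2)$, which establishes the boundary-type requirement of Definition \ref{defi.pclass}.

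Next I would verify $\mathcal{F}$-continuity of $\Phi$. For $0 \leq s \leq t \leq 1$ we have $Q(t) - Q(s) = [[f^{-1}([s,t])]]$, and hence $\mathcal{F}(\Phi(t) - \Phi(s)) \leq \mathbf{M}(Q(t) - Q(s)) = \mathrm{Vol}(f^{-1}([s,t]))$, which tends to zero as $|t-s| \to 0$ by absolute continuity of the nondecreasing function $t \mapsto \mathrm{Vol}(f^{-1}([0,t]))$.

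The heart of the argument is to verify the cohomological condition $\Phi^*(\bar{\lambda}) \neq 0$, where $\bar{\lambda}$ generates $H^1(\mathcal{Z}_n^G(M;\mathbb{Z}_2);\mathbb{Z}_2) \cong \mathbb{Z}_2$ via the equivariant Almgren isomorphism (Theorem \ref{Thm:isomorphism} and Remark \ref{Rem:Almgren isomorphism}). Note that $\Phi(0) = \partial Q(0) = 0$ and $\Phi(1) = \partial [[M]] = 0$, so $\Phi$ descends to $[0,1]/\{0,1\} \simeq S^1$ and defines a loop in $\mathcal{Z}_n^G(M;\mathbb{Z}_2)$. The equivariant Almgren isomorphism $\pi_1(\mathcal{Z}_n^G(M;\mathbb{Z}_2)) \to H_{n+1}(M;\mathbb{Z}_2)$ is defined by taking such a loop, partitioning it so that consecutive values are close in the flat metric, applying the $G$-invariant isoperimetric lemma (Lemma \ref{Lem:isoperimetric}) to produce a continuous piecewise filling in $\mathbf{I}_{n+1}^G(M;\mathbb{Z}_2)$, and returning the class of the endpoint difference. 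For our $\Phi$ this lift is already given explicitly by the path $t \mapsto Q(t)$, and the resulting class is $Q(1) - Q(0) = [[M]]$, which generates $H_{n+1}(M;\mathbb{Z}_2) \cong \mathbb{Z}_2$. Hence the loop $\Phi$ represents the nontrivial element of $\pi_1$, and dually $\Phi^*(\bar{\lambda})$ is the nonzero class in $H^1(S^1;\mathbb{Z}_2)$.

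The main technical point is the compatibility of the explicit lift $Q(t)$ with the abstract isomorphism: on any partition $0 = t_0 < \cdots < t_N = 1$ fine enough that $\mathcal{F}(\Phi(t_i) - \Phi(t_{i-1})) < \nu_M$, the uniqueness in Lemma \ref{Lem:isoperimetric} forces the isoperimetric filling of $\Phi(t_i) - \Phi(t_{i-1})$ to coincide with $Q(t_i) - Q(t_{i-1})$, so the total filling produced by the isomorphism telescopes to $Q(1) - Q(0) = [[M]]$. With this identification in place, the remainder of the argument is formal.
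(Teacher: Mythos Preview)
Your argument is correct and follows the standard route: exhibit $\Phi(t)=\partial Q(t)$ with $Q(t)=[[f^{-1}([0,t])]]$, check flat continuity via the volume of slabs, and identify the image of the loop under the equivariant Almgren isomorphism as $Q(1)-Q(0)=[[M]]$, the generator of $H_{n+1}(M;\mathbb{Z}_2)$. The paper itself does not supply a proof of this lemma at all; it simply attributes the statement to Liu~\cite{liu2021existence}, so there is no in-text argument to compare against---your write-up is precisely the kind of proof one would expect and is consistent with how the paper invokes Theorem~\ref{Thm:isomorphism} and Lemma~\ref{Lem:isoperimetric} elsewhere.
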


By \cite{wasserman1969equivariant}, $G$-equivariant Morse functions are dense in the space of $G$-equivariant smooth functions. 
Additionally, $\tilde{c}(k)$ contains only a finite number of orbits. 
Therefore, we have the following lemma similar to \cite[Lemma 5.3]{marques2017existence}:

\begin{lemma}\label{Lem:Morse func}
	For every $k\in\N$, there exists a $G$-equivariant Morse function $f:M\rightarrow \mathbb{R}$ in the sense of \cite{wasserman1969equivariant} such that 
	\begin{itemize}
		\item[(i)]  $f^{-1}(t)\cap \tilde{c}(k)$ contains at most one orbit for all $t\in \mathbb{R}$;
		\item[(ii)] no critical points of $f$ is contained in $\tilde{c}(k)$.
	\end{itemize}
\end{lemma}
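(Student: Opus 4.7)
The plan is to start with an arbitrary $G$-equivariant Morse function produced by Wasserman's theorem and then make two small $G$-invariant perturbations supported in disjoint tubes around the finitely many orbits of $\tilde{c}(k)$: one to remove any critical points lying on $\tilde{c}(k)$, and one to make all the values of $f$ on $\tilde{c}(k)$ distinct. Openness of the Wasserman-Morse condition will ensure the Morse property survives each perturbation.

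First I would invoke the density result of Wasserman \cite{wasserman1969equivariant} to choose any $G$-equivariant Morse function $f_0:M\to\R$. Since $\tilde{c}(k)=\{G\cdot q_1,\dots,G\cdot q_N\}$ is finite and the critical set of $f_0$ is a finite union of critical orbits in the closed manifold $M$, I can pick pairwise disjoint $G$-invariant tubular neighborhoods $U_i$ of $G\cdot q_i$ and a smaller $G$-invariant neighborhood $V_i\subset\subset U_i$ with the properties that $U_i\cap U_j=\emptyset$ for $i\neq j$, that $U_i$ contains no critical orbit of $f_0$ other than possibly $G\cdot q_i$ itself, and that $U_i\cap\tilde{c}(k)=G\cdot q_i$. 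I would then fix $G$-invariant bump functions $\phi_i\in C^\infty_G(M)$ with $\phi_i\equiv 1$ on $V_i$ and $\spt(\phi_i)\subset U_i$.

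Next, to enforce (ii), for each index $i$ such that $G\cdot q_i$ is a critical orbit of $f_0$, I would choose a $G$-invariant function $\psi_i$ defined near $G\cdot q_i$ whose gradient at $q_i$ has a prescribed nonzero normal component (obtained, for instance, by pulling back through $\pi$ a smooth bump on $M^{reg}/G$ whose differential at $[q_i]$ is nonzero, using (\ref{Eq:no cut point in inter cube})), and replace $f_0$ by $f_0+\sum_i t_i\phi_i\psi_i$ for sufficiently small scalars $t_i$. For small enough $t_i$, the openness of Wasserman's Morse condition guarantees the perturbed function is still $G$-equivariant Morse, no new critical orbit appears outside the $U_i$, and inside each $U_i$ the linear shift of the normal gradient at $q_i$ kills the criticality at $G\cdot q_i$; the disjointness of the $U_i$'s keeps the rest of $\tilde{c}(k)$ untouched.

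Finally, to enforce (i), I would add $\sum_{i=1}^N\epsilon_i\phi_i$, which shifts the constant value of $f_0$ on the center orbit $G\cdot q_i$ by $\epsilon_i$ without changing its value on other orbits of $\tilde{c}(k)$. A generic choice of $\epsilon_i$ in a small cube around the origin makes all values $f_0(G\cdot q_i)+\epsilon_i$ pairwise distinct, and sufficiently small magnitudes again preserve the Morse property and the absence of critical points on $\tilde{c}(k)$. The main technical point to verify is the second step: that one can produce $G$-invariant local perturbations that kill criticality at a chosen orbit while staying inside Wasserman's open class of equivariant Morse functions; once $\psi_i$ is constructed from a pullback through the orbit projection on a slice, this amounts to a routine openness argument.
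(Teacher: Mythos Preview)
Your proposal is correct but takes the opposite route from the paper. The paper first constructs an explicit $G$-invariant smooth function $\tilde{f}$ (not assumed Morse) that already satisfies (i) and (ii): using a $G$-invariant partition of unity $\{\phi_{i_j}\}$ with only one member supporting each center orbit, and local functions $f_j$ built from the squared distance to a nearby orbit (normalized so $f_j(p_{\tilde{\sigma}_j})=1$ and $\nabla f_j(p_{\tilde{\sigma}_j})\neq 0$), the paper sets $\tilde{f}=\sum_j j\cdot f_j\cdot\phi_{i_j}$, so that $\tilde{f}$ takes the distinct values $1,2,\dots$ on the orbits of $\tilde{c}(k)$ and has nonzero gradient there. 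Then Wasserman's \emph{density} theorem is invoked once at the end to replace $\tilde{f}$ by a nearby $G$-equivariant Morse function, noting that (i) and (ii) are $C^1$-open conditions on the finitely many orbits of $\tilde{c}(k)$. Your approach instead starts Morse and perturbs twice, relying on \emph{openness} of the Morse condition. Both work; the paper's route is a bit cleaner because it avoids the bookkeeping in your Step~2 of tracking where the critical orbit near $G\cdot q_i$ migrates under the perturbation $t_i\phi_i\psi_i$ and checking it stays away from $\tilde{c}(k)$ and inside the region where the subsequent perturbation in Step~3 is constant.
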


\begin{proof}
	Let $\{U_i\}_{i=1}^N$ be open $G$-sets covering $M$, and $\{\phi_i\}_{i=1}^N$ be the partition of unity associated to $\{U_i\}_{i=1}^N$. 
	Without loss of generality, we can assume $\phi_i$ is $G$-invariant for each $i$ by considering $\int_G \phi_i(g\cdot x)~d\mu(g)$. 
	Since $\tilde{c}(k)=\{G\cdot p_{\tilde{\sigma}_j}\}_{j=1}^{3^{kl}}$ contains only a finite number of orbits, we can further require there exists only one $U_{i_j}$ covering $G\cdot p_{\tilde{\sigma}_j}$, for every orbit $G\cdot p_{\tilde{\sigma}_j}\in \tilde{c}(k)$. 
	Let $f_j:U_{i_j}\to \R^+$ be a smooth non-negative $G$-invariant function so that $f_j( p_{\tilde{\sigma}_j} )=1$, and $G\cdot p_{\tilde{\sigma}_j}$ contains no critical points of $f_j$. 
	(For example, let $h_j:=\eta_j\circ (\dist_M(\cdot, G\cdot q_j))^2$ and $f_j=h_j/h_j(p_{\tilde{\sigma}_j})$, where $\eta_j$ is a cut-off function and $G\cdot q_j$ is close to $G\cdot p_{\tilde{\sigma}_j}$.) 
	Define then $\tilde{f} := \sum_{j=1}^{3^{kl}} j\cdot f_j\cdot \phi_j$, which is a $G$-invariant smooth function on $M$ satisfying (i)(ii). 
	By the denseness of $G$-equivariant Morse functions (\cite{wasserman1969equivariant}), we can find $G$-equivariant Morse function $f$ close to $\tilde{f}$ as required.
\end{proof}

In the following proposition, we construct a deforming map compressing the complement of a neighborhood of $\tilde{c}(k)$ in $M$ into the skeleton $\pi^{-1}\circ F_1(K(k)_{l-1})$. 
\begin{proposition}\label{Prop:bend}
	 There exist constants $C_1>0$, $\epsilon_0>0$ depending only on $M$ and $G$, such that for any $k\in\mathbb{N}$ and $0<\epsilon\leq\epsilon_0$, we can find a $G$-equivariant continuous map $F:M\rightarrow M$ satisfying:
	\begin{itemize}
		\item[(i)] $F$ is $G$-homotopic to the identity relative to the skeleton $\pi^{-1}\circ F_1(K(k)_{l-1})$;
		\item[(ii)] $F(M\setminus B^G_{\epsilon 3^{-k}}(\tilde{c}(k)))\subset \pi^{-1}\circ F_1(K(k)_{l-1}) =\bigcup_{\tilde{\sigma}\in \widetilde{K}(k)_l } \tilde{\sigma}\setminus\mathring{\tilde{\sigma}}$ compressing $M\setminus B^G_{\epsilon 3^{-k}}(\tilde{c}(k))$ into the skeleton, and $F(B^G_{\epsilon 3^{-k}}(\tilde{c}(k))) \subset \bigcup_{\tilde{\sigma}\in \widetilde{K}(k)_l } \mathring{\tilde{\sigma}}$;
		\item[(iii)] the induced map $F'=\pi\circ F\circ \pi^{-1} :M/G \to M/G$ is Lipschitz continuous with $\big| D(F') \big|\leq C_1\epsilon^{-1}$. 
	\end{itemize}
\end{proposition}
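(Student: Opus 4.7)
The plan is to first build a Lipschitz "bend" map $F'$ on the orbit space $M/G$ using the cubical structure $F_1:K\to M/G$, then lift it equivariantly to $M$ via the fundamental domain structure from (\ref{Eq:no cut point in inter cube}), and finally construct the homotopy to the identity in parallel. I would choose $\epsilon_0$ small enough that the tubes $B^G_{\epsilon_0 3^{-k}}(G\cdot p_{\tilde\sigma})$ are mutually disjoint and stay within the interiors of their respective cubes.

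For each $\sigma\in K(k)_l$, the parametrization $F_1\llcorner\sigma$ identifies $F_1(\sigma)$ with a model $l$-cube centered at $F_1(p_\sigma)$. I would define $F'\llcorner F_1(\sigma)$ as a piecewise-linear radial bend that fixes $F_1(\partial\sigma)$, retracts the complement of a small inner neighborhood of $F_1(p_\sigma)$ radially onto $F_1(\partial\sigma)$, and stretches this inner neighborhood homeomorphically onto $F_1(\sigma)$. The inner neighborhood is chosen so that its pre-image under $\pi$ contains $B^G_{\epsilon 3^{-k}}(G\cdot p_{\tilde\sigma})$; this can be arranged with Lipschitz constant $O(\epsilon^{-1})$. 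Since each local piece reduces to the identity on $F_1(\partial\sigma)$, the pieces glue to a Lipschitz map $F':M/G\to M/G$ satisfying $|DF'|\leq C_1\epsilon^{-1}$, immediately giving (iii).

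To lift $F'$ equivariantly, for each $\tilde\sigma$ I take the fundamental domain $\Omega=\Omega_{\tilde\sigma}\subset\tilde\sigma$ from Lemma \ref{Lem:partition for any G} satisfying (\ref{Eq:no cut point in inter cube}), define $\tilde F(p):=(\pi\llcorner\Omega)^{-1}(F'(\pi(p)))$ for $p\in\Omega$, and set $F(g\cdot p):=g\cdot\tilde F(p)$ for $g\in G$. The main obstacle is verifying that this equivariant extension is well-defined and continuous, which reduces to showing $G_p\subset G_{\tilde F(p)}$ for every $p\in\Omega$. I would split into two cases: if $\pi(p)$ lies in the small inner neighborhood, then both $p$ and $\tilde F(p)$ lie on principal orbits in $\mathring{\tilde\sigma}$, so $G_p=G_{\tilde F(p)}$; otherwise $F'(\pi(p))\in F_1(\partial\sigma)$ and $\tilde F(p)\in\partial\tilde\sigma$, and either $p\in\mathring{\tilde\sigma}$ has principal isotropy (automatically contained in $G_{\tilde F(p)}$) or $p\in\partial\tilde\sigma\cap\Omega$, in which case $F'$ is the identity at $\pi(p)$ and so $\tilde F(p)=p$. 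Continuity across distinct cubes follows because $F$ restricts to the identity on the skeleton $\pi^{-1}\circ F_1(K(k)_{l-1})$, which also yields (ii).

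For property (i), inside each model cube I would take the straight-line interpolation between the identity and the radial bend relative to the boundary, push forward via $F_1$, and glue to obtain a Lipschitz homotopy $H':[0,1]\times M/G\to M/G$ fixing $F_1(K(k)_{l-1})$ throughout. Lifting by the same fundamental-domain procedure produces a $G$-equivariant homotopy $H:[0,1]\times M\to M$ from the identity to $F$ fixing $\pi^{-1}\circ F_1(K(k)_{l-1})$ at all times.
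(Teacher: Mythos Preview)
Your proposal is correct and follows essentially the same route as the paper: construct the radial bend $F'$ cube-by-cube on $M/G$ (the paper cites \cite[Proposition 5.4]{marques2017existence} for this step), lift each piece to $\tilde\sigma$ via the fundamental domain $\Omega$ from (\ref{Eq:no cut point in inter cube}), verify well-definedness of the equivariant extension using $G_p\subset G_{\tilde F(p)}$, and repeat for the straight-line homotopy. Your case analysis for well-definedness is slightly more explicit than the paper's, but the underlying use of (\ref{Eq:no cut point in inter cube}) is identical; just note that when you say a principal isotropy is ``automatically contained in $G_{\tilde F(p)}$'', this is not automatic from principality alone but follows precisely from (\ref{Eq:no cut point in inter cube}) applied to $\tilde F(p)\in\Omega$.
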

\begin{proof}
	Let $C_1,\epsilon_0>0$ be given as in \cite[Proposition 5.4]{marques2017existence} for $F_1:K\to M/G$. 
	For every $\tilde{\sigma}\in  \widetilde{K}(k)_l$ and $0<\epsilon<\epsilon_0$, let $F_{\tilde{\sigma}}'$ be defined as in the proof of \cite[Proposition 5.4]{marques2017existence}, which satisfies: 
	\begin{itemize}
		\item[$\bullet$] $F_{\tilde{\sigma}}' (F_1(\sigma)\setminus \pi(B^G_{\epsilon 3^{-k}}(p_{\tilde{\sigma}}))) \subset F_1(\partial \sigma)$, and $F_{\tilde{\sigma}}'(\pi(B^G_{\epsilon 3^{-k}}(p_{\tilde{\sigma}}))) \subset F_1(\mathring{\sigma})$;
		\item[$\bullet$] $\big| D(F_{\tilde{\sigma}}')\big| \leq  C_1\cdot \epsilon^{-1}$;
		\item[$\bullet$] $F_{\tilde{\sigma}}'([p])=[p]$ for all $[p]\in F_1(\partial \sigma)$;
		\item[$\bullet$] $F_{\tilde{\sigma}}'$ is homotopic to the identity relative to $F_1(\partial\sigma)$.
	\end{itemize}
	Let $\Omega$ be given as in (\ref{Eq:no cut point in inter cube}). 
	Then, we define the $G$-map $F_{\tilde{\sigma}}: \tilde{\sigma}\to \tilde{\sigma}$ as $F_{\tilde{\sigma}}(q):= g\cdot (\pi\llcorner\Omega )^{-1}\circ F_{\tilde{\sigma}}' \circ \pi(p) $, if $q=g\cdot p\in \tilde{\sigma}$ for some $p\in \Omega$, $g\in G$. 
	We claim this map $F_{\tilde{\sigma}}$ is well defined. 
	Indeed, $F_{\tilde{\sigma}}={\rm id}$ on $\tilde{\sigma}\setminus\mathring{\tilde{\sigma}}$. 
	Suppose $p\in \Omega\cap \mathring{\tilde{\sigma}}$, and there is another $g'\in G$ with $g'\cdot p=g\cdot p$. 
	Thus, we have $g^{-1}g'\in G_p$. 
	By (\ref{Eq:no cut point in inter cube}), we conclude that $G_p= G_y$ and $G_y\subset G_x$ for all $y\in \Omega\cap \mathring{\tilde{\sigma}}$, $x\in \Omega\cap (\tilde{\sigma}\setminus\mathring{\tilde{\sigma}})$. 
	Hence, we have $g^{-1}g'\in G_{p} \subset G_{F_{\tilde{\sigma}}(p)}$ implying $g\cdot F_{\tilde{\sigma}}(p)=g'\cdot F_{\tilde{\sigma}}(p)$. 
	
	Since the homotopy between $F_{\tilde{\sigma}}'$ and $id$ is relative to $F_1(\partial\sigma)$, a similar construction gives the $G$-equivariant homotopy map from $F_{\tilde{\sigma}}$ to $id$ relative to $\tilde{\sigma}\setminus\mathring{\tilde{\sigma}}$. 
	Finally, define $F:M\rightarrow M$ by $F(p)=F_{\tilde{\sigma}}(p)$ if $p\in\tilde{\sigma} $. 
	The map $F$ is well defined and satisfies all the requirements.
\end{proof}

\subsection{Upper bounds of $(G,p)$-width}

\begin{theorem}\label{Thm-upper-bound}
	Let $G$ be a compact Lie group acting as isometries on a closed manifold $M^{n+1}$ with ${\rm Cohom}(G)=l\geq 3$. 
	Then there exists a constant $C=C(M,G)>0$, such that:
	$$ \omega_p^G(M)\leq C p^{\frac{1}{l}}, $$
	where $p\in \mathbb{N}$. 
\end{theorem}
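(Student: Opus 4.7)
The plan is to adapt the bend-and-cancel construction of Guth and Marques-Neves (see \cite[Theorem 5.1]{marques2017existence}) to the $G$-equivariant setting, with the cohomogeneity $l = {\rm Cohom}(G)$ playing the role of the ambient dimension $n+1$ in the classical proof. This is natural because $G$-invariant hypersurfaces in $M$ correspond essentially to hypersurfaces in the $l$-dimensional orbit space $M/G$, and the classical upper bound comes from a triangulation at scale $3^{-k}$ whose $l$-skeleton has roughly $3^{kl}$ cells and total $(l-1)$-skeleton volume roughly $3^{k}$.

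Given $p \in \mathbb{N}$, first I would fix $k \in \mathbb{N}$ with $N := 3^{kl} \geq p+1$, chosen so that $3^k \leq 3 \cdot p^{1/l}$. Apply Lemma \ref{Lem:Morse func} to obtain a $G$-equivariant Morse function $f : M \to \mathbb{R}$ compatible with $\tilde{c}(k)$, and pick regular values $c_j$ of $f$, one for each $\tilde{\sigma}_j \in \widetilde{K}(k)_l$, separating $G \cdot p_{\tilde{\sigma}_j}$ from $\tilde{\sigma}_j \setminus \mathring{\tilde{\sigma}}_j$. Following the construction in \cite[Theorem 5.1]{marques2017existence}, I would define a map
\[
  \Phi : \mathbb{RP}^{N} \longrightarrow \mathcal{Z}_n^G(M;\mathbb{Z}_2), \qquad [a_0 : \cdots : a_N] \longmapsto \partial U_{[a]},
\]
where $U_{[a]} \in \mathbf{I}_{n+1}^G(M;\mathbb{Z}_2)$ is the $G$-invariant region obtained by thresholding a combination of $G$-invariant cutoffs of $f - c_j$ supported near $\tilde{\sigma}_j$. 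Then compose with the $G$-equivariant bend map $F = F_{\epsilon_0}$ from Proposition \ref{Prop:bend} to obtain $\Phi_{\epsilon_0} := F_{\#} \circ \Phi$. Because $F$ is $G$-homotopic to the identity, $\Phi_{\epsilon_0}$ is $G$-homotopic to $\Phi$ in the flat topology, and the standard cup-product computation on $H^*(\mathbb{RP}^N;\mathbb{Z}_2)$---which reduces to showing that a circle in $\mathbb{RP}^N$ obtained by varying a single coordinate $a_j$ across the regular value $c_j$ maps to a generator of $\pi_1(\mathcal{Z}_n^G(M;\mathbb{Z}_2)) \cong \mathbb{Z}_2$ (Theorem \ref{Thm:isomorphism})---yields $\Phi_{\epsilon_0}^*(\bar\lambda^p) \neq 0$ for every $p \leq N$. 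Thus $\Phi_{\epsilon_0} \in \mathcal{P}_p^G$, with the no-concentration-of-mass-on-orbits condition automatic from mass-continuity and Lemma \ref{Lem:mass continu no concent}.

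The quantitative heart of the argument is the mass estimate. By Proposition \ref{Prop:bend}(ii), for each $[a]\in\mathbb{RP}^N$ the cycle $\Phi_{\epsilon_0}([a])$ splits into a piece supported in the codimension-one skeleton $\pi^{-1}(F_1(K(k)_{l-1}))$ and a piece supported in $B^G_{\epsilon_0 3^{-k}}(\tilde{c}(k))$. Applying the co-area formula to the Riemannian submersion $\pi : M^{reg} \to M^{reg}/G$ and using the uniform bound on the $(n+1-l)$-volume of principal orbits, the skeleton has $n$-volume at most $C_M \cdot \mathcal{H}^{l-1}(F_1(K(k)_{l-1})) \leq C_M' \cdot 3^k$. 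For the bubble piece, each $G$-ball $B^G_{\epsilon_0 3^{-k}}(p_{\tilde{\sigma}_j})$ meets $\Phi([a])$ in at most a single level set of $f$, whose $n$-measure is $\leq C (\epsilon_0 3^{-k})^{l-1}$ by the same co-area argument near a principal orbit; summing over the $N = 3^{kl}$ balls yields $\leq C \epsilon_0^{l-1} 3^k$. Combined with $3^k \leq 3 p^{1/l}$, this gives
\[
  \omega_p^G(M) \leq \sup_{[a] \in \mathbb{RP}^N} \mathbf{M}(\Phi_{\epsilon_0}([a])) \leq C(M,G) \cdot p^{1/l}.
\]

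The main obstacle will be writing down an explicit $\mathcal{F}$-continuous formula for $\Phi$ whose regions $U_{[a]}$ are manifestly $G$-invariant and verifying $\Phi_{\epsilon_0}^*(\bar\lambda^p) \neq 0$: Marques-Neves prove this via explicit Morse-theoretic formulas whose $G$-equivariant versions must be checked coordinate-by-coordinate, which succeeds here because all ingredients (Morse function, cutoffs, bend map) have been arranged to be $G$-invariant or $G$-equivariant. A secondary bookkeeping issue is applying the co-area formula near the non-principal stratum, but this is harmless since $M \setminus M^{reg}$ has dimension $\leq n-2$ and thus negligible $(n-1)$-measure.
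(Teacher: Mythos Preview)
Your overall strategy—adapt the bend-and-cancel argument with the cohomogeneity $l$ playing the role of $n+1$, and split the mass into a skeleton piece and a bubble piece via the co-area formula on the Riemannian submersion $\pi:M^{reg}\to M^{reg}/G$—is exactly what the paper does, and your skeleton estimate is correct. The gap is in the sweepout construction.

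The paper does \emph{not} build a ``one coordinate per cell'' map from $\mathbb{RP}^N$ with $N=3^{kl}$. It uses the polynomial construction verbatim from \cite[Theorem 5.1]{marques2017existence}: for $a=[a_0:\cdots:a_p]\in\mathbb{RP}^p$ set $P_a(t)=\sum_i a_it^i$ and $\Psi(a)=\partial\{x\in M:P_a(f(x))<0\}$. Since $f$ is $G$-invariant this lands automatically in $\mathcal Z_n^G(M;\mathbb Z_2)$, and flat continuity plus the $(G,p)$-sweepout property follow from Lemma \ref{Lem:Morse sweepout} with no new work. Your description (``thresholding a combination of $G$-invariant cutoffs of $f-c_j$ supported near $\tilde\sigma_j$'') is not that construction; it is not specified precisely enough to verify continuity or the cup-product detection, and the assertion that ``varying a single coordinate $a_j$'' gives a generator of $\pi_1(\mathcal Z_n^G)$ would need a proof tailored to whatever map you actually write down.

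This discrepancy propagates into the bubble estimate. In the paper, $\Psi(\theta)$ is a union of at most $p$ level sets of $f$; Lemma \ref{Lem:Morse func}(i) and the choice of $\epsilon$ guarantee each level set $f^{-1}(t)$ meets at most \emph{one} tube $B^G_{\epsilon 3^{-k}}(p_{\tilde\sigma})$, so the after-bend bubble mass is bounded by $p\cdot C\,3^{-k(l-1)}$. Your count instead sums over $N=3^{kl}$ tubes and assumes each tube contains at most one level set—this is false for the polynomial sweepout (several roots of $P_a$ may lie in the $f$-image of a single tube) and would only hold for the localized map you have not defined. There is also a bookkeeping slip: you estimated $\mathbf M(\Phi([a]))$ in the tubes rather than $\mathbf M(F_\#\Phi([a]))$; pushing by $F$ contributes a factor $|DF'|^{l-1}\le C\epsilon_0^{-(l-1)}$, so the $\epsilon_0^{l-1}$ in your bubble bound should cancel and the correct after-bend bound is $C\,3^k$, not $C\epsilon_0^{l-1}3^k$.
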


When $G$ is trivial, the cohomogeneity $l$ of $G$ is ${\rm dim}(M)=n+1$. Hence, our upper bounds for $(G,p)$-width are coincide with the conclusions of Gromov\cite{gromov1988dimension} and Guth\cite{guth2009minimax}. 

\begin{proof}
	For any $p\in\mathbb{N}$ (sufficiently large), let $k\in \mathbb{N}$ such that $3^k\leq p^{\frac{1}{l}}\leq 3^{k+1}$. 
	
	Let $f:M\rightarrow \mathbb{R}$ be the $G$-equivariant Morse function defined in Lemma \ref{Lem:Morse func}. 
	By Lemma \ref{Lem:Morse sweepout}, the level sets of $f$ form a $(G,1)$-sweepout of $M$. 
	Now for each $a=(a_0,\dots,a_p)\in S^{p-1}\subset \mathbb{R}^{p}$, we define a polynomial $P_a(t):=\sum_{i=0}^p a_it^i$ and a map:
	$$\hat{\Psi} (a) := \partial\big\{p\in M : P_a(f(p))<0\big\}\in \mathcal{Z}^G_n(M;\mathbb{Z}_2).$$
	By $\mathbb{Z}_2$ coefficients, we have $\hat{\Psi} (a)=\hat{\Psi} (-a)$, and $\hat{\Psi}$ induces a map $\Psi: \mathbb{RP}^p\rightarrow \mathcal{Z}^G_n(M;\mathbb{Z}_2)$. 
	Similar to the proof of \cite[Theorem 5.1]{marques2017existence}, we can show that $\Psi$ is continuous in the flat topology and is a $(G,p)$-sweepout by Lemma \ref{Lem:Morse sweepout}.

	By Lemma \ref{Lem:Morse func} (i), we can choose $\epsilon$ small enough such that 
	\begin{equation}\label{Eq-no-intersect-around-center}
		f(B^G_{\epsilon 3^{-k}}(p_{\tilde{\sigma}_1})) \cap  f(B^G_{\epsilon 3^{-k}}(p_{\tilde{\sigma}_2})) = \emptyset ,
	\end{equation}
	for any $G\cdot p_{\tilde{\sigma}_1}\neq G\cdot p_{\tilde{\sigma}_2}\in \tilde{c}(k)$. 
	In what follows, we will denote $C$ as varying constants that depend only on $M$ and $G$ for simplicity. 
	
	Fix any $G\cdot p_{\tilde{\sigma}}\in \tilde{c}(k)$. 
	As we mentioned after (\ref{Eq:no cut point in inter cube}), 
	the dimension of $G\cdot p_{\tilde{\sigma}}$ is $n+1-l$. 
	For simplicity, let us denote $\Sigma_t:= {\rm spt}(f^{-1}(t)\llcorner B^G_{\epsilon 3^{-k}}(p_{\tilde{\sigma}}) ) $ to be the support of the restricted cycle $f^{-1}(t)\llcorner B^G_{\epsilon 3^{-k}}(p_{\tilde{\sigma}}) $. 
	Thus, by the co-area formula, we have 
	\begin{eqnarray}\label{Eq-tube-formula}
		{\bf M}(f^{-1}(t)\llcorner B^G_{\epsilon 3^{-k}}(p_{\tilde{\sigma}}) ) 
		&=&  \mathcal{H}^{n}(\Sigma_t)\nonumber
		\\
		&= & \int_{\pi(\Sigma_t )} \mH^{n+1-l}(\pi^{-1}([q])) d\mH^{l-1}([q])
		\\
		&\leq & C \mathcal{H}^{l-1}(\pi(\Sigma_t)),   \nonumber
	\end{eqnarray}
	where $C > \sup_{p\in M}\mathcal{H}^{n+1-l}(G\cdot p)$. 
	Since 
	$G\cdot p_{\tilde{\sigma}}$ contains no critical points of $f$ (Lemma \ref{Lem:Morse func}(ii)), we can choose $\epsilon>0$ even smaller so that 
	\begin{equation}\label{Eq-mass-on-slice}
		\mathcal{H}^{l-1}(\pi(\Sigma_t))  = \mathcal{H}^{l-1} \big( \pi(\spt(f^{-1}(t)))\cap B_{\epsilon 3^{-k}}(\pi(p_{\tilde{\sigma}})) \big)  \leq  2\omega_{l-1} (\epsilon 3^{-k})^{l-1}.
	\end{equation}
	Hence, we have
	\begin{equation}\label{Eq-mass-around-center}
		{\bf M}(f^{-1}(t)\llcorner B^G_{\epsilon 3^{-k}}(p_{\tilde{\sigma}})) \leq C (\epsilon 3^{-k})^{l-1}.
	\end{equation}
	For such $\epsilon$, take the map $F$ in Proposition \ref{Prop:bend} with the induced map $F'=\pi\circ F\circ\pi^{-1}:M/G\to M/G$, and define 
	$$\Phi(\theta):=F_\# \Psi(\theta):\mathbb{RP}^p\to \mathcal{Z}^G_n(M;\mathbb{Z}_2),$$ 
	which is also a $(G,p)$-sweepout since $F$ is $G$-homotopic to the identity. 
	
	 Now by Proposition \ref{Prop:bend} (ii)(iii) and (\ref{Eq-tube-formula})(\ref{Eq-mass-on-slice}), we have
	 \begin{eqnarray}\label{Eq-mass-compute-by-tube}
	 	 {\bf M}(F_\#(f^{-1}(t)\llcorner B^G_{\epsilon 3^{-k}}(p_{\tilde{\sigma}})))
	 	 &= & \mathcal{H}^n(F(\Sigma_{t} )) 
	 	 ~\leq ~ C~  \mathcal{H}^{l-1}(\pi(F(\Sigma_{t} ))) \nonumber
	 	 \\
	 	 &\leq &  C~ \big |D(F')\big|^{l-1}  \mathcal{H}^{l-1}(\pi(\Sigma_{t})) 
	 	 \\
	 	 &\leq & C~  \big |D(F')\big|^{l-1}   (\epsilon 3^{-k})^{l-1} \leq  C~ (3^{-k})^{l-1}.\nonumber
	 \end{eqnarray}
	 Furthermore, by (\ref{Eq-no-intersect-around-center}), 
	$$ {\bf M}(F_\#(f^{-1}(t)\llcorner B^G_{\epsilon 3^{-k}}(\tilde{c}(k))) \leq   C \cdot 3^{-k(l-1)}. $$
	 For any $\theta\in\mathbb{RP}^p$, the $G$-cycle $\Psi(\theta)$ consists at most $p$ level sets of $f$. 
	 Thus, we have 
	 \begin{equation}\label{Eq-mass-near-center}
	 	{\bf M}(F_\#(\Psi(\theta)\llcorner B^G_{\epsilon 3^{-k}}(\tilde{c}(k))) \leq   p\cdot C 3^{-k(l-1)}.
	 \end{equation}
	 
	 Set $B:=M\setminus B^G_{\epsilon 3^{-k}}(\tilde{c}(k))$. 
	 Then ${\rm spt}(F_\#(\Psi(\theta)\llcorner B))\subset \pi^{-1}\circ F_1(K(k)_{l-1}) = \bigcup \tilde{\sigma}\setminus \mathring{\tilde{\sigma}} $ (Proposition \ref{Prop:bend} (ii)). 
	 By $\mathbb{Z}_2$ coefficients, the multiplicity 
	 is at most one implying
	 \begin{equation*}
	 	{\bf M}(F_\#(\Psi(\theta)\llcorner B)) \leq  \mathcal{H}^n (\pi^{-1}\circ F_1(K(k)_{l-1})) \leq \mathcal{H}^{n}\big(~\bigcup \tilde{\sigma}\setminus \mathring{\tilde{\sigma}} ~\big).
	 \end{equation*}
	 Additionally, by the co-area formula (on each stratum), we have 
	 \begin{eqnarray*}
	 	\mathcal{H}^{n}(\tilde{\sigma}\setminus \mathring{\tilde{\sigma}} ) ~\leq ~ C~ \mathcal{H}^{l-1}(\pi(\tilde{\sigma} \setminus \mathring{\tilde{\sigma}} ) ) ~\leq ~  C~ |DF_1|^{l-1}\cdot \mathcal{H}^{l-1}(\partial I^l_{3^{-k}}),
	 \end{eqnarray*}
	 where $F_1$ is the bi-Lipschitz homeomorphism from $K$ to $M/G$, and $I^l_{3^{-k}}$ is a $l$-dimensional cube with sides of $3^{-k}$.
	As a result, 
	 \begin{eqnarray}\label{Eq-mass-out-center}
	 	{\bf M}(F_\#(\Psi(\theta)\llcorner B)) &\leq &  \mathcal{H}^{n}\big(~\bigcup \partial\tilde{\sigma}~\big) \nonumber
	 	\\
	 	&\leq & C_2 3^{kl}\cdot C\cdot |DF_1|^{l-1}\cdot \mathcal{H}^{l-1}(\partial I^l_{3^{-k}}) 
	 	\\
	 	&\leq & C_2 3^{kl} \cdot  C 3^{-k(l-1)} \leq  C 3^k, \nonumber
	 \end{eqnarray}
	where $C_2$ is the number of $l$-cells in $K$. 
	Combining (\ref{Eq-mass-near-center}) with (\ref{Eq-mass-out-center}), we have 
	 $$ \omega^G_p(M)\leq \sup {\bf M}(\Phi(\theta))\leq C\cdot (p3^{-k(l-1)}+3^k)\leq C(M,G)p^{\frac{1}{l}}$$
	 since $3^k\leq p^{\frac{1}{l}}\leq 3^{k+1}$.
\end{proof}

\subsection{Applications}
Since the Lie group $G$ is not assumed to be connected, we also need the following $G$-invariant analogs of the usual notions of connectivity. 
\begin{definition}
	Suppose $\Sigma$ is an embedded $G$-hypersurface, and $\{\Sigma_i\}_{i=1}^k$ are connected components of $\Sigma$. 
	We say $\Sigma$ is {\it $G$-connected}, if for any $i,j\in\{1,\dots,k\}$, there exists $g\in G$ such that $\Sigma_i=g\cdot\Sigma_j$. 
	%
\end{definition}

Note our $G$-equivariant Min-max Theorem \ref{Thm:G-minmax} is built under continuous settings, we can use the definition of $(G,p)$-width derectly to show the following proposition, which is a $G$-invariant continuous version of \cite[Proposition 4.8]{marques2017existence}. 

\begin{proposition}\label{Prop:infinte}
	Suppose $2\leq n\leq 6$, ${\rm Cohom}(G)\geq 3$, and $M\setminus M^{reg}$ is a smoothly embedded submanifold of $M$ with dimension at most $n-2$. 
	If there exists $p\in\N$ so that for every $(G,p)$-admissible $X$ and every class of continuous $(G,p)$-sweepout ${\bf \Pi}\in \big[ X, \Z_n^G(M;\mF;\mZ_2)  \big]$ we have
	$$\omega_p^G(M)<{\bf L}({\bf \Pi}), $$
	then there exist infinitely many distinct closed smooth embedded $G$-invariant minimal hypersurfaces with uniformly bounded area. 
\end{proposition}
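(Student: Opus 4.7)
The plan is to argue by contradiction, adapting \cite[Proposition~4.8]{marques2017existence} to the $G$-equivariant continuous setting. Suppose toward contradiction that $M$ admits only finitely many closed smooth embedded $G$-invariant minimal hypersurfaces $\Sigma_1,\dots,\Sigma_N$. I will construct a sequence of continuous $(G,p)$-sweepout classes whose widths decrease strictly to $\omega_p^G(M)$, realise each width by a stationary integral $G$-varifold supported on unions of the $\Sigma_\alpha$'s via the min-max theorem, and exploit the discreteness of the resulting mass values to reach a contradiction.

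First, I would invoke Definition~\ref{Def:G.p.width} to pick a sequence $\{\Phi_i\}_{i\in\N}\subset \mathcal{P}_p^G$ of $(G,p)$-sweepouts with $L_i:=\sup_{x\in\dom(\Phi_i)}\M(\Phi_i(x))\searrow \omega_p^G(M)$, and let ${\bf \Pi}_i\in [\dom(\Phi_i), \Z_n^G(M;\mF;\mZ_2)]$ denote the continuous $G$-homotopy class of $\Phi_i$. Since $\Phi_i(x)=\partial Q(x)$ by Definition~\ref{defi.pclass}, each ${\bf \Pi}_i$ is a boundary-type continuous $(G,p)$-sweepout class. The standing hypothesis together with the trivial inequality ${\bf L}({\bf \Pi}_i)\leq L_i$ gives
\[
\omega_p^G(M) < {\bf L}({\bf \Pi}_i) \leq L_i \longrightarrow \omega_p^G(M),
\]
so ${\bf L}({\bf \Pi}_i)\to\omega_p^G(M)$ strictly from above, and in particular ${\bf L}({\bf \Pi}_i)>0$. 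Theorem~\ref{Thm:G-minmax} then produces, for each $i$, a stationary integral $G$-varifold $V_i\in\IV_n^G(M)$ with $\|V_i\|(M) = {\bf L}({\bf \Pi}_i)$ whose support is a closed smooth embedded $G$-invariant minimal hypersurface.

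Under the contradiction hypothesis $\spt(\|V_i\|)\subset \bigcup_{\alpha=1}^N\Sigma_\alpha$, so $V_i=\sum_\alpha n_\alpha^{(i)}[\Sigma_\alpha]$ for non-negative integers $n_\alpha^{(i)}$, whence
\[
\|V_i\|(M)\in S:=\Bigl\{\sum_{\alpha=1}^N n_\alpha\|\Sigma_\alpha\| : n_\alpha\in\N\Bigr\}\cap[0,2\omega_p^G(M)],
\]
a finite set. Any convergent sequence in a finite set is eventually constant, so $\|V_i\|(M)=a^\ast=\omega_p^G(M)$ for all large $i$; but $\|V_i\|(M)>\omega_p^G(M)$ strictly, contradicting the convergence. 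Hence infinitely many distinct closed smooth embedded $G$-invariant minimal hypersurfaces exist, each with area at most $2\omega_p^G(M)$ since each appears in some $\spt(\|V_i\|)$ with $\|V_i\|(M)\leq 2\omega_p^G(M)$. The main obstacle is purely bookkeeping: verifying that ${\bf \Pi}_i$ really is a boundary-type continuous $(G,p)$-sweepout class of positive width so that Theorem~\ref{Thm:G-minmax} applies, which reduces to unfolding Definitions~\ref{defi.pclass} and~\ref{Def:G.p.width} together with the positivity $\omega_p^G(M)\geq\omega_1(M)>0$ recorded at the end of Section~\ref{minmax.families}.
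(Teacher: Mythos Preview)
Your proof is correct and follows essentially the same approach as the paper: pick $(G,p)$-sweepout classes ${\bf \Pi}_i$ with ${\bf L}({\bf \Pi}_i)\to\omega_p^G(M)$ strictly from above, apply Theorem~\ref{Thm:G-minmax} to each, and use that the resulting mass values would lie in a discrete set if only finitely many $G$-invariant minimal hypersurfaces existed. The paper's proof is more terse---it simply chooses the ${\bf L}({\bf \Pi}_k)$ strictly decreasing and says ``applying Theorem~\ref{Thm:G-minmax} gives the proposition''---while you spell out the discreteness/contradiction step explicitly; the underlying argument is the same.
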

\begin{proof}
	By Definition \ref{Def:G.p.width} and \ref{Def:width continuous}, there exists a sequence of $(G,p)$-admissible sets $\{X_k\}_{k=1}^\infty$, and a sequence of classes of $(G,p)$-sweepouts ${\bf \Pi}_k\in \big[ X_k, \Z_n^G(M;\mF;\mZ_2)  \big]$, so that ${\bf L}({\bf \Pi}_1)>\cdots>{\bf L}({\bf \Pi}_k)>{\bf L}({\bf \Pi}_{k+1})>\cdots$ and $\lim_{k\to\infty}{\bf L}({\bf \Pi}_k) = \omega_p^G(M)$. 
	Applying Theorem \ref{Thm:G-minmax} to each ${\bf \Pi}_k$ gives this proposition. 
\end{proof}

We also have the following theorem, which is parallel to \cite[Theorem 6.1]{marques2017existence}. 
\begin{theorem}\label{Thm:equality case}
	Suppose $2\leq n\leq 6$, ${\rm Cohom}(G)\geq 3$, and $M\setminus M^{reg}$ is a smoothly embedded submanifold of $M$ with dimension at most $n-2$. 
	If $\omega_p^G(M)=\omega_{p+1}^G(M)$ for some $p\in\N$, then there exist infinitely many distinct closed smooth embedded $G$-invariant minimal hypersurfaces. 
\end{theorem}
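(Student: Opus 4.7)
The plan is to argue by contradiction, along the lines of \cite[Theorem 6.1]{marques2017existence}, combining the equivariant min-max existence/regularity theorem, a pull-tight deformation, and a Lusternik--Schnirelman cup-length argument. I would assume for contradiction that $M$ admits only finitely many closed smooth embedded $G$-invariant minimal hypersurfaces $\Sigma_1,\ldots,\Sigma_N$, and set $L := \omega_p^G(M) = \omega_{p+1}^G(M)$, which is positive by Theorem~\ref{main.thm}. Since $\mathcal{H}^n(\Sigma_i)>0$ for every $i$, the set
\[
\mathcal{W} := \Bigl\{\sum_{i=1}^N n_i |\Sigma_i| : n_i \in \mathbb{Z}_{\geq 0},\ \sum_i n_i \mathcal{H}^n(\Sigma_i) = L\Bigr\}
\]
of stationary integral $G$-varifolds of mass $L$ supported on these hypersurfaces is finite; Theorem~\ref{Thm:G-minmax} forces the critical set of every min-max sequence of width $L$ to lie in $\mathcal{W}$, and each $V\in\mathcal{W}$ is $\mathbf{F}$-isolated among stationary integral $G$-varifolds of mass $\leq L$.

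Next I would pick disjoint $\mathbf{F}$-open contractible neighborhoods $\{\mathcal{U}_V\}_{V\in\mathcal{W}}$ inside $\mathcal{Z}_n^G(M;\mathbf{F};\mathbb{Z}_2)$ and adapt the pull-tight construction of Proposition~\ref{Prop:pulltight} to produce a continuous $G$-invariant mass-non-increasing deformation
\[
H:[0,1]\times\{T\in\mathcal{Z}_n^G(M;\mathbf{F};\mathbb{Z}_2):\mathbf{M}(T)\leq 2L\}\to\mathcal{Z}_n^G(M;\mathbf{F};\mathbb{Z}_2),
\]
with $H_0=\mathrm{id}$ and fixed constants $\delta,\eta>0$ such that $\mathbf{M}(H_1(T))\leq \mathbf{M}(T)-\delta$ whenever $\mathbf{M}(T)\in[L-\eta,L+\eta]$ and $T\notin\bigcup_V \mathcal{U}_V$. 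This is possible because on $\{\mathbf{M}\leq L+\eta\}\setminus\bigcup_V\mathcal{U}_V$ there are no stationary $G$-varifolds of mass $L$, so the averaged first-variation vector field used in the proof of Proposition~\ref{Prop:pulltight} is uniformly non-degenerate.

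The main obstacle is the Lusternik--Schnirelman cup-length step. Given $\epsilon>0$, choose a continuous $(G,p+1)$-sweepout $\Phi:X\to\mathcal{Z}_n^G(M;\mathbf{F};\mathbb{Z}_2)$ of boundary type with $\sup_x\mathbf{M}(\Phi(x))\leq L+\eta/2$, and let $A:=\Phi^{-1}\bigl(\bigcup_V\mathcal{U}_V\bigr)$, $B:=X\setminus A$. Replacing $\Phi$ by $H_1\circ\Phi$ on a slight shrinking of $B$ and interpolating through $H_t$ across $\partial A$ yields a $G$-homotopic $(G,p+1)$-sweepout $\widetilde\Phi$ with $\mathbf{M}(\widetilde\Phi(x))\leq L-\delta/2$ for all $x\in B$. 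Since each $\mathcal{U}_V$ is contractible in $\mathcal{Z}_n^G(M;\mathbf{F};\mathbb{Z}_2)$, we have $\widetilde\Phi^*\bar\lambda\big|_A=0\in H^1(A;\mathbb{Z}_2)$, and so $\widetilde\Phi^*\bar\lambda$ lifts to a relative class $\alpha\in H^1(X,A;\mathbb{Z}_2)$ satisfying $j_A^*(\alpha^{p+1})=\widetilde\Phi^*\bar\lambda^{p+1}\neq 0$; hence $\alpha^{p+1}\neq 0$ in $H^{p+1}(X,A;\mathbb{Z}_2)$.

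The hard part is then to convert this relative non-vanishing into a genuine $(G,p)$-sweepout on $B$: using the excision $H^*(X,A)\cong H^*(B,A\cap B)$ for a suitable collar of $\partial A$ and the cup-product decomposition $\bar\lambda^{p+1}=\bar\lambda^p\smile\bar\lambda$, I would show that the restriction $\widetilde\Phi|_B$ satisfies $(\widetilde\Phi|_B)^*\bar\lambda^p\neq 0$ in $H^p(B;\mathbb{Z}_2)$. Since $B$ is a cubical subcomplex, is $(G,p)$-admissible (no concentration of mass on orbits being automatic by Lemma~\ref{Lem:mass continu no concent}), and has $\sup_B\mathbf{M}(\widetilde\Phi|_B)\leq L-\delta/2<L=\omega_p^G(M)$, this contradicts the definition of $\omega_p^G(M)$ as an infimum. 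Therefore $\mathcal{M}$ must be infinite, completing the proof.
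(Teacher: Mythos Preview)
Your overall strategy---contradiction, pull-tight, Lusternik--Schnirelman cup-length, then contradiction with $\omega_p^G(M)$---matches the route the paper takes (it defers to \cite[Theorem~6.1]{marques2017existence} after first invoking Proposition~\ref{Prop:infinte} to obtain an achieving class). The cohomological step you describe is essentially the standard LS inequality: if $\Phi^*\bar\lambda|_A=0$ and $(\Phi|_B)^*\bar\lambda^p=0$, then $\Phi^*\bar\lambda^{p+1}=0$; contrapositively $(\Phi|_B)^*\bar\lambda^p\neq 0$. That part is fine.

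The genuine gap is in your construction of $H$. You assert that ``on $\{\mathbf{M}\leq L+\eta\}\setminus\bigcup_V\mathcal{U}_V$ there are no stationary $G$-varifolds of mass $L$'', and that Theorem~\ref{Thm:G-minmax} forces the full critical set into $\mathcal{W}$. Neither claim is justified. Theorem~\ref{Thm:G-minmax} (via Theorem~\ref{Thm:exist amv in critical set}) produces \emph{one} element of the critical set that is $(G,\mathbb Z_2)$-almost minimizing and hence smooth; it says nothing about the other elements. A pulled-tight critical set may well contain \emph{singular} stationary integral $G$-varifolds of mass $L$, and these lie outside $\mathcal{W}$. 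For any cycle $T$ with $|T|$ equal to such a varifold, the pull-tight flow $H$ fixes $T$ and does not drop its mass, so the uniform $\delta$-decrease you need fails.

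The fix, and what \cite[Theorem~6.1]{marques2017existence} actually does, is to combine pull-tight with the Pitts combinatorial deformation (Theorem~\ref{Thm:pitts.min.max}) rather than rely on pull-tight alone. After pull-tight, any $V\in\mathbf{C}(S)\setminus\mathcal{W}$ is stationary but, by the regularity Theorem~\ref{Thm:regul amv}, cannot be $(G,\mathbb Z_2)$-almost minimizing in regular annuli (else it would be smooth and hence in $\mathcal{W}$). One then applies Theorem~\ref{Thm:pitts.min.max} to the restriction of the sequence to the complement $B$: the critical set of that restricted $p$-sweepout avoids $\mathcal{W}$, so none of its elements is almost minimizing, and the Pitts argument yields a homotopic sequence with strictly smaller $\mathbf{L}$, contradicting $\omega_p^G(M)=L$. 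Note also that the paper first invokes Proposition~\ref{Prop:infinte} to ensure the width is actually achieved by some class $\mathbf{\Pi}$; this is what makes the ``$\mathbf{L}(S^*)<\mathbf{L}(S)$'' output of Theorem~\ref{Thm:pitts.min.max} a contradiction rather than merely a smaller number.
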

\begin{proof}
	First, suppose this theorem is not true. 
	Then, by Proposition \ref{Prop:infinte}, there is a class of $(G, p+1)$-sweepout ${\bf \Pi}\in \big[ X, \Z_n^G(M;\mF;\mZ_2)  \big]$ so that $\omega_{p+1}^G(M)={\bf L}({\bf \Pi})$. 
	Let $S=\{\varphi_i\}_{i\in\N}$ be the sequence of discrete mappings constructed in the pull-tight procedure (Proposition \ref{Prop:pulltight}). 
	Therefore, the Almgren $G$-extension $\Phi_i$ of $\varphi_i$ is also a $(G, p+1)$-sweepout, and $\Phi_i\in {\bf \Pi}$ for every $i$ sufficiently large (see Proposition \ref{Prop:pulltight} (ii) and Remark \ref{Rem-homo-sweep-is-sweep}). 
	Now the proof of \cite[Theorem 6.1]{marques2017existence} would carry over by using $\{\varphi_i\}_{i\in\N}$ and ${\bf \Pi}$ in place of $\{\phi_i\}_{i\in\N}$ and $\Pi$ in \cite[Page 602]{marques2017existence}. 
\end{proof}

Finally, the arguments in the proof of \cite[Section 7]{marques2017existence} would carry over under the $G$-invariant setting. 
Thus we get the following dichotomy result for $G$-invariant min-max minimal hypersurfaces as a generalization of \cite[Theorem 1.1]{marques2017existence}:
\begin{theorem}\label{Thm:dichotomy}
	Let $M^{n+1}$ be an orientable closed manifold with a compact Lie group $G$ acting as isometries on $M$. 
	Suppose $2\leq n\leq 6$ and ${\rm Cohom}(G)=l\geq 3$. 
	If $M\setminus M^{reg}$ forms a smoothly embedded submanifold of $M$ with dimension at most $n-2$. 
	Then: 
	\begin{itemize}
		\item[(i)]  either there exists a disjoint collection $\{\Sigma_1,\dots,\Sigma_l \}$ of $l$ closed smooth embedded $G$-connected $G$-invariant minimal hypersurfaces;
		\item[(ii)] or there exist infinitely many closed smooth embedded $G$-connected $G$-invariant minimal hypersurfaces.
		\end{itemize}
\end{theorem}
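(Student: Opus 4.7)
The plan is to follow the structure of the classical dichotomy proof in \cite[Section 7]{marques2017existence}, substituting the $G$-equivariant tools developed earlier: the min-max existence-and-regularity theorem (Theorem \ref{Thm:G-minmax}), the equality-case principle (Theorem \ref{Thm:equality case}), and the asymptotic width bounds $C_1 p^{1/l} \leq \omega_p^G(M) \leq C_2 p^{1/l}$ (Theorem \ref{Thm-upper-bound} and the companion lower bound from Section \ref{Sec-lower-bound-witdh}). I would proceed by contradiction, assuming conclusion (ii) fails and producing $l$ pairwise disjoint $G$-connected $G$-invariant minimal hypersurfaces.

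Assume there are only finitely many closed smooth embedded $G$-connected $G$-invariant minimal hypersurfaces, say $\Sigma^{(1)},\ldots,\Sigma^{(N)}$ of areas $a_i:=\mathcal{H}^n(\Sigma^{(i)})>0$. By the contrapositive of Theorem \ref{Thm:equality case}, one has the strict monotonicity $\omega_p^G(M)<\omega_{p+1}^G(M)$ for every $p\in\mathbb{N}$. For each $p$, Theorem \ref{Thm:G-minmax} (applied to an optimal class of $(G,p)$-sweepouts, whose existence is furnished by Proposition \ref{Prop:infinte} under the finite-hypersurface hypothesis) produces a stationary integral $G$-varifold $V_p$ with $\|V_p\|(M)=\omega_p^G(M)$ whose support is a closed smoothly embedded $G$-invariant minimal hypersurface. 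Decomposing into $G$-connected components drawn from the finite list, I may write
\[
V_p=\sum_{i\in S_p}m_i^{(p)}\,|\Sigma^{(i)}|,\qquad S_p\subseteq\{1,\ldots,N\},\ m_i^{(p)}\in\mathbb{N},
\]
and the family $\{\Sigma^{(i)}:i\in S_p\}$ is automatically pairwise disjoint, since $\spt(\|V_p\|)=\bigcup_{i\in S_p}\Sigma^{(i)}$ is smoothly embedded by the regularity clause of Theorem \ref{Thm:G-minmax}.

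Next I would run a counting argument against the upper bound $\omega_p^G(M)\leq C_2 p^{1/l}$. Call a subset $S\subseteq\{1,\ldots,N\}$ a \emph{disjoint family} if $\Sigma^{(i)}\cap\Sigma^{(j)}=\emptyset$ for all distinct $i,j\in S$. There are at most $\binom{N}{l-1}$ such families of cardinality $\leq l-1$, and for each such $S$ the number of $(m_i)_{i\in S}\in\mathbb{N}^{|S|}$ with $\sum_{i\in S}m_i a_i\leq C_2 p^{1/l}$ is at most $\bigl(C_2 p^{1/l}/\min_i a_i+1\bigr)^{l-1}=O(p^{(l-1)/l})$. Summing, the number of distinct integer $G$-varifolds of the form $\sum m_i|\Sigma^{(i)}|$ with mass bounded by $C_2 p^{1/l}$ and with support-index set of size $<l$ is $O(p^{(l-1)/l})$. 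But strict monotonicity exhibits $p$ distinct varifolds $V_1,\ldots,V_p$ (with distinct masses $\omega_1^G(M)<\cdots<\omega_p^G(M)\leq C_2 p^{1/l}$), so for $p\gg 1$ the inequality $p\leq C p^{(l-1)/l}$ must fail. Hence some $V_q$ has $|S_q|\geq l$, and the corresponding $\{\Sigma^{(i)}:i\in S_q\}$ is the sought disjoint collection of size $l$, proving (i).

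The main technical points are first, to ensure that $\spt(\|V_p\|)$ really splits into $G$-connected pieces from the finite list with positive integer multiplicities—this is the content of the regularity part of Theorem \ref{Thm:G-minmax} together with integer rectifiability—and second, to control the existence and realization of $\omega_p^G(M)$ as a min of a continuous $(G,p)$-sweepout class (via the pull-tight and discretization machinery of Sections \ref{almgren.homo} and \ref{minmax.families}) in order to legitimately invoke Theorem \ref{Thm:G-minmax} at every level $p$. With these two ingredients in hand, the combinatorial counting delivers $l$ mutually disjoint hypersurfaces, which is the hardest step since one must extract \emph{disjointness} and not merely distinctness; the embeddedness of $\spt(\|V_q\|)$ is precisely what upgrades the weaker conclusion to the stronger one required by (i).
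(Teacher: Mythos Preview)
Your proposal is correct and follows essentially the same approach as the paper, which simply states that the arguments of \cite[Section~7]{marques2017existence} carry over to the $G$-invariant setting with the cohomogeneity $l$ replacing $n+1$. You have spelled out precisely this adaptation: strict monotonicity of $\omega_p^G$ from Theorem~\ref{Thm:equality case}, realization of each $\omega_p^G$ by an integral $G$-varifold via Proposition~\ref{Prop:infinte} and Theorem~\ref{Thm:G-minmax}, and the counting argument against the upper bound $\omega_p^G(M)\leq C_2 p^{1/l}$ of Theorem~\ref{Thm-upper-bound} to force some $|S_q|\geq l$.
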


Since manifolds with positive Ricci curvature satisfy the Frankel property \cite{frankel1966fundamental} (i.e. any two closed smooth embedded minimal hypersurfaces intersect each other), we have the following corollary as a generalization of \cite[Corollary 1.5]{marques2017existence}:
\begin{corollary}\label{Cor:exist infinite ricci positiv}
	Let $M^{n+1}$ be an orientable closed manifold with a compact Lie group $G$ acting as isometries on $M$. 
	Suppose $2\leq n\leq 6$, ${\rm Cohom}(G)=l\geq 3$, and the Ricci curvature ${\rm Ric}_M>0$. 
	If $M\setminus M^{reg}$ forms a smoothly embedded submanifold of $M$ with dimension at most $n-2$. 
	Then there exist infinitely many closed smooth embedded $G$-connected $G$-invariant minimal hypersurfaces.
\end{corollary}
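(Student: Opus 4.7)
The plan is to deduce the corollary directly from the dichotomy Theorem \ref{Thm:dichotomy} by ruling out alternative (i) using the Frankel property. Since all hypotheses of Theorem \ref{Thm:dichotomy} are contained in the hypotheses of the corollary (we do not need the Ricci assumption to invoke the dichotomy), one of the two conclusions (i) or (ii) must hold. If (ii) holds we are immediately done, so the task reduces to showing that (i) is incompatible with $\Ric_M > 0$.

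To rule out (i), I would suppose for contradiction that there is a disjoint collection $\{\Sigma_1,\dots,\Sigma_l\}$ of closed smooth embedded $G$-connected $G$-invariant minimal hypersurfaces in $M$. Since $l = \mathrm{Cohom}(G) \geq 3 \geq 2$, this collection contains at least two distinct disjoint closed smooth embedded minimal hypersurfaces $\Sigma_1$ and $\Sigma_2$. By Frankel's theorem \cite{frankel1966fundamental}, in a closed Riemannian manifold with strictly positive Ricci curvature, any two closed immersed minimal hypersurfaces must intersect. Applying this to $\Sigma_1$ and $\Sigma_2$ contradicts disjointness, so alternative (i) cannot occur.

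Therefore alternative (ii) of Theorem \ref{Thm:dichotomy} must hold, yielding infinitely many closed smooth embedded $G$-connected $G$-invariant minimal hypersurfaces in $M$, as claimed.

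I do not expect any serious obstacle here: the argument is a two-line application of the dichotomy together with Frankel. The only mild subtlety worth noting is that Frankel is applied to honest closed embedded minimal hypersurfaces in $M$ (each $\Sigma_i$ is smooth, closed, and embedded by Theorem \ref{Thm:dichotomy}), so no regularity or compactness issue arises; the $G$-invariance and $G$-connectedness of the $\Sigma_i$ play no role in the contradiction, they are simply inherited from the statement of Theorem \ref{Thm:dichotomy} and passed through to the conclusion.
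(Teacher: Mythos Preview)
Your proposal is correct and matches the paper's own argument exactly: the paper simply observes that positive Ricci curvature implies the Frankel property \cite{frankel1966fundamental}, which rules out alternative (i) of Theorem \ref{Thm:dichotomy}, forcing alternative (ii).
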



\section{Lower Bounds of $(G,p)$-Width}\label{Sec-lower-bound-witdh}

In this section, we generalize the proof of Marques and Neves in \cite[Section 8]{marques2017existence} to show the lower bounds of $(G,p)$-width. 
We start with some local constructions.

Recall $M^{reg}$ is the union of all principal orbits, which is an open and dense submanifold in $M$ 
(see \cite[Section 3.5]{wall2016differential}). 
It's not hard to find that the volume function $\mathcal{H}^{n+1-l}(G\cdot q)$ and the injectivity radius function ${\rm Inj}(G\cdot q)$ are both continuous on $M^{reg}$. 
Moreover, there exits a Riemannian metric $g_{_{M/G}}$ on $M^{reg}/G$ so that $\pi: M^{reg}\to M^{reg}/G$ is a Riemannian submersion. 
Hence, we can fix any point $q_0\in M^{reg}$ and choose a positive number $r_0>0$ sufficiently small such that: 
\begin{itemize}
	\item[(i)] $\overline{B}^G_{4r_0}(q_0)\subset M^{reg}$;
	\item[(ii)] there exists $C_1>0 $ so that $ \frac{1}{C_1}\leq \mathcal{H}^{n+1-l}(G\cdot q)\leq C_1$, for all $q\in \overline{B}^G_{4r_0}(q_0)$;
	\item[(iii)] ${\rm Inj}(G\cdot q)>2r_0$, for all $q\in \overline{B}^G_{r_0}(q_0)$;
	\item[(iv)] there exists $C_2>2 $ so that for all $q\in \overline{B}^G_{r_0}(q_0)$ the exponential map $\exp_{[q]}$ in $M^{reg}/G$ satisfies $\frac{1}{C_2}\leq\frac{1}{2}\leq |d \exp_{[q]}| \leq 2\leq C_2$ in $B_{2r_0}([q])=\pi(B^G_{2r_0}(q))$.
\end{itemize}
Since $q_0\in M^{reg}$ is arbitrarily fixed, we regard $r_0,C_1,C_2>0$ as constants depending only on $M$ and $G$. 
Now we can apply the co-area formula to see 
\begin{eqnarray*}
	{\rm vol}(B^G_r(q)) &=& \int_{B_r([q])} \mH^{n+1-l}(\pi^{-1}([y])) d\mH^{l}([y])
	\\
	&=& \int_{\mathbb{B}_r(0)}  \mH^{n+1-l}(\pi^{-1}\circ \exp_{[q]}(v)) J_{\exp_{[q]}}(v) d\mH^l(v),
\end{eqnarray*} 
for all $q\in  \overline{B}^G_{r_0}(q_0) $ and $r\in (0,2r_0]$. 
By (ii)(iv), we get the following inequality: 
\begin{eqnarray}\label{Eq-volum-bound}
	 \frac{C_3}{C_1C_2^{l}}\cdot r^{l} \leq {\rm vol}(B^G_r(q)) \leq  C_1 C_2^l C_3\cdot r^{l},
\end{eqnarray}
where $C_3>0$ is a constant depending only on $l$.  

Furthermore, given $q\in \overline{B}_{r_0}^G(q_0),~r\in(0,r_0]$, let $F:B_{r}^G(q)\rightarrow B_{r_0}^G(q)$ be a $G$-equivariant diffeomorphism defined by 
$ F(y):={\rm exp}^\perp_{G\cdot q}(z_y,\frac{r_0}{r}v_y )$, where $(z_y,v_y ) = ({\rm exp}^\perp_{G\cdot q})^{-1}(y).$ 
The induced map $F':B_r([q])\to B_{r_0}([q])$ of $F$ is $F'([y]):= \exp_{[q]}(\frac{r_0}{r} \exp_{[q]}^{-1}([y]) ) $. 
Suppose $T\in \mathcal{Z}_k^G(B_{r}^G(q),\partial B_{r}^G(q);\mathbb{Z}_2)$ is a $G$-invariant $k$-cycle for some $k\geq n+1-l$. 
Similar to (\ref{Eq-mass-compute-by-tube}), we have the following inequality by the co-area formula:
\begin{eqnarray}\label{Eq-mass-bound-under-push-2}
	{\bf M}(F_\#T)
	 	 &= & \int_{\pi(F(\Sigma))} \mH^{n+1-l}(\pi^{-1}([y])) d\mH^{k-n-1+l}([y])\nonumber
	 	 \\
	 	 &\leq &  C_1 \mathcal{H}^{k-n-1+l}(F'(\pi(\Sigma ))) \nonumber 
	 	 \\
	 	 &\leq & \big |D(F')\big|^{k-n-1+l}  C_1  \mathcal{H}^{k-n-1+l}(\pi(\Sigma)) 
	 	 \\
	 	 &\leq & \big |D(F')\big|^{k-n-1+l} C_1^2  \int_{\pi(\Sigma)} \mathcal{H}^{n+1-l}(\pi^{-1}([y])) d\mathcal{H}^{k-n-1+l}([y])\nonumber
	 	 \\
	 	 &= & C(M,G) \cdot\big |D(F')\big|^{k-1-n+l} {\bf M}(T),\nonumber
\end{eqnarray}
where $\Sigma=\spt(T)$. 
Therefore, by the definition of the flat metric $\mathcal{F}$, we can use the inequality above to show 
\begin{equation}\label{Eq-flat-bound-of-push}
	\mathcal{F}(F_\#T)\leq C(M,G) \cdot\big |D(F')\big|^{k-n+l} \mathcal{F}(T).
\end{equation}

Using the inequality above, we can estimate the local $G$-isoperimetric constant. 
Firstly, let us review some definitions. 
Suppose $q\in B^G_{r_0}(q_0)$. 
By Lemma \ref{Lem:isoperimetric} and \cite[Lemma 3.1]{marques2017existence}, we can find a positive constant $\nu_r(q) := \nu_{B^G_r(q),\partial B^G_r(q)}$ such that for all $T\in\mathcal{Z}^G_n(B^G_r(q),\partial B^G_r(q);\mathbb{Z}_2)$ with $\mathcal{F}(T)<\nu_r(q)$, 
there exists a unique $Q\in {\bf I}^G_{n+1}( B^G_r(q);\mathbb{Z}_2)$ satisfying 
$\partial Q-T\in {\bf I}^G_{n}(\partial  B^G_r(q);\mathbb{Z}_2) {\rm ~and~} {\bf M}(Q)\leq \nu_r(q).$

\begin{lemma}\label{Lem-scal-isoperimetric-const}
	Let $r_0=r_0(M,G)$ be defined as above. 
	Then there exists a positive constant $\alpha_0=\alpha_0(M,G)$ such that 
	\begin{equation}
		\nu_r(q) := \nu_{ B^G_r(q),\partial B^G_r(q)}>\alpha_0 r^{l},\quad l={\rm Cohom}(G),
	\end{equation}
	for any $q\in \overline{B}^G_{r_0}(q_0) $, $r\in(0,r_0]$.
\end{lemma}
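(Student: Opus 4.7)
The plan is to reduce the scale-$r$ isoperimetric problem to its counterpart at the fixed reference scale $r_0$, using the $G$-equivariant rescaling $F:B_r^G(q)\to B_{r_0}^G(q)$ introduced just above the statement.

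First, I would establish a uniform positive lower bound $\nu_{r_0}(q)\geq \beta_0=\beta_0(M,G)$ together with a uniform upper bound $\hat C=\hat C(M,G)$ on the Almgren filling Lipschitz constant, valid for every $q$ in the compact set $\overline B_{r_0}^G(q_0)$. Both are obtained by applying Lemma \ref{Lem:isoperimetric} and \cite[Lemma 3.1]{marques2017existence} inside $\overline B_{4r_0}^G(q_0)\subset M^{reg}$ and then invoking compactness together with the continuous dependence of the balls $B_{r_0}^G(q)$ on $q$, which is guaranteed by properties (ii)--(iv) of the setup. The conclusion is that for every such $q$ and every $S\in\mathcal Z_n^G(B_{r_0}^G(q),\partial B_{r_0}^G(q);\mathbb Z_2)$ with $\mathcal F(S)<\beta_0$, there is a unique $Q'\in{\bf I}_{n+1}^G(B_{r_0}^G(q);\mathbb Z_2)$ with $\partial Q'-S\in{\bf I}_n^G(\partial B_{r_0}^G(q);\mathbb Z_2)$ and ${\bf M}(Q')\leq \hat C\,\mathcal F(S)$.

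Next, given $T\in\mathcal Z_n^G(B_r^G(q),\partial B_r^G(q);\mathbb Z_2)$ with $\mathcal F(T)<\alpha_0 r^l$, I push $T$ forward via $F$. Property (iv) in the setup of $r_0$, together with the chain rule applied to $F'([y])=\exp_{[q]}(\tfrac{r_0}{r}\exp_{[q]}^{-1}([y]))$, yields $|D(F')|\leq C_2^{2}\,r_0/r$. Inequality (\ref{Eq-flat-bound-of-push}) with $k=n$ has exponent $l$, and hence $\mathcal F(F_\# T)\leq C(M,G)(r_0/r)^l\mathcal F(T)<C(M,G)\,\alpha_0 r_0^l$, which is less than $\beta_0$ once $\alpha_0$ is chosen sufficiently small. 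The previous paragraph then furnishes a filling $Q'$ for $F_\# T$ with ${\bf M}(Q')\leq \hat C\,\mathcal F(F_\# T)$. Pulling back by $F^{-1}$ and applying (\ref{Eq-mass-bound-under-push-2}) with $k=n+1$ (for which the exponent of $|D((F^{-1})')|$ is again $l$), together with Lemma \ref{Lem:G-push}, produces $Q:=F^{-1}_\# Q'\in{\bf I}_{n+1}^G(B_r^G(q);\mathbb Z_2)$ satisfying $\partial Q-T\in{\bf I}_n^G(\partial B_r^G(q);\mathbb Z_2)$ and ${\bf M}(Q)\leq C(M,G)(r/r_0)^l{\bf M}(Q')\leq C^*(M,G)\,\mathcal F(T)$, where $C^*$ is dimensionless and independent of $r$. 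Uniqueness of $Q$ follows from the $\mathbb Z_2$ constancy theorem applied inside $B_r^G(q)$.

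After shrinking $\alpha_0$ one final time to absorb $C^*$, both the flat norm of $T$ and the mass of $Q$ sit below $\alpha_0 r^l$, which by the definition preceding the statement witnesses $\nu_r(q)>\alpha_0 r^l$. The only genuinely nontrivial step is the uniformity in the first paragraph: extracting reference-scale constants $\beta_0$ and $\hat C$ independent of $q\in\overline B_{r_0}^G(q_0)$. This is precisely where the assumption $\overline B_{4r_0}^G(q_0)\subset M^{reg}$ and the continuity of orbit volumes and injectivity radii on the regular stratum are essential; outside of $M^{reg}$ no such uniformity would be available, which is why the lemma is restricted to $q$ in the regular part.
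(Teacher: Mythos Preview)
Your approach is essentially the same as the paper's: push forward by the equivariant rescaling $F$, fill at the fixed reference scale $r_0$, and pull back. The only cosmetic difference is that the paper establishes a pointwise $\alpha_0(q)$ first and invokes compactness of $\overline{B}^G_{r_0}(q_0)$ at the end, whereas you extract uniform reference-scale constants upfront (and you are slightly more explicit about the mass bound on the pulled-back filling, which the paper leaves implicit).
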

\begin{proof}
	For any $G\cdot q\subset \overline{B}^G_{r_0}(q_0) $, we take $\alpha_0(q)>0$ satisfying 
	$$\nu_{r_0}(q)>4^{l} C \alpha_0(q)r_0^{l},$$
	where $C>0$ is the constant in (\ref{Eq-flat-bound-of-push}).
	Since the exponential map ${\rm exp}^\perp_{G\cdot q}$ provides a $G$-equivariant diffeomorphism on $B^G_{2r_0}(q)$, we can define $(z_y,v_y):=({\rm exp}^\perp_{G\cdot q})^{-1}(y)$ for all $y\in B^G_r(q)$, $r\in (0,r_0]$. 
	Take the $G$-equivariant diffeomorphism $F_r:B^G_r(q)\rightarrow B^G_{r_0}(q)$ given by $F_r(y) := {\rm exp}^\perp_{G\cdot q}(z_y,\frac{r_0}{r}v_y )$. 
	The induced map $F_r':B_r([q])\to B_{r_0}([q])$ of $F_r$ is $F_r'([y]):= \exp_{[q]}(\frac{r_0}{r} \exp_{[q]}^{-1}([y]) ) $. 
	Hence, we have $|D(F_r')|\leq 4\frac{r_0}{r}$ by (iv). 
	Then, for any 
	$$T\in\mathcal{Z}^G_n(B^G_r(p),\partial B^G_r(p);\mathbb{Z}_2) {\rm ~with~}\mathcal{F}(T)<\alpha_0(q)r^{l},$$
	we apply (\ref{Eq-mass-bound-under-push-2}) with $k=n$ to show the flat semi-norm of $F_{r\#}T$ can be bounded by 
	\begin{eqnarray*}
		\mathcal{F}(F_{r\#}T) ~\leq ~ C\cdot |D(F_r'\llcorner B_q)|^{l}\mathcal{F}(T)  ~\leq~   C~4^{l} \alpha_0(q)r_0^{l}   ~<~    \nu_{r_0}(q).
	\end{eqnarray*}
	By the definition of $\nu_{r_0}(q)$, there exists a unique $Q'\in {\bf I}^G_{n+1}(B^G_{r_0}(q);\mathbb{Z}_2)$ with 
	$$\partial Q'-F_{r\#}T\in {\bf I}^G_{n}(\partial B^G_{r_0}(q);\mathbb{Z}_2) {\rm ~and~} {\bf M}(Q')\leq \nu_{r_0}(q).$$
	Therefore, one can take $Q:=(F^{-1}_{r})_\#Q' \in {\bf I}^G_{n+1}(B^G_{r}(q);\mathbb{Z}_2)$ to be the unique $G$-invariant isoperimetric choice of $T$. 
	
	Thus we conclude that $\alpha_0(q)r^{l}<\nu_r(q)$ for $r\in(0,r_0]$. 
	The lemma then follows from the compactness of $\overline{B}^G_{r_0}(q_0)$. 
\end{proof}

The following lemma is a simple modification of the claim showed by Guth in the proof of \cite[Cup Product Theorem]{guth2009minimax}.

\begin{lemma}\label{Lem-disjoint-tube}
	Let $r_0>0$ be the constant defined in Lemma \ref{Lem-scal-isoperimetric-const} and $p\in\mathbb{N}$. 
	If numbers $0<r_p\leq\dots\leq r_1\leq r_0 $ satisfy 
	$$\sum_{i=1}^p r_i^l ~<~ \rho_0(M,G) := \frac{r_0^l}{C_1^2C_2^{2l} \cdot 2^l},$$
	then there exist $q_1,\dots,q_p\in B^G_{r_0}(q_0)\subset M^{reg}$ such that  
	$$ B^G_{r_i}(q_i)\cap B^G_{r_j}(q_j) = \emptyset,\quad {\rm for~} i\neq j\in\{1,\dots,p\} .$$
\end{lemma}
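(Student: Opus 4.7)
The plan is to construct the points $q_i$ inductively by a greedy volume-packing argument, using the uniform comparisons
$$\frac{C_3}{C_1 C_2^{l}}\,\rho^{l} \;\leq\; \mathrm{vol}(B^G_\rho(q)) \;\leq\; C_1 C_2^{l} C_3\,\rho^{l},$$
valid for $q \in \overline{B}^G_{r_0}(q_0)$ and $\rho \in (0, 2r_0]$ by (\ref{Eq-volum-bound}). The constant $\rho_0(M,G)$ is engineered precisely so that the total volume of the forbidden enlarged tubes can never exhaust $B^G_{r_0}(q_0)$.

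First, I would reformulate the disjointness requirement. Demanding $B^G_{r_i}(q_i)\cap B^G_{r_j}(q_j)=\emptyset$ is equivalent, by the triangle inequality applied to $\dist_M(G\cdot\,\cdot\,,G\cdot\,\cdot\,)$, to requiring $q_i \notin B^G_{r_i+r_j}(q_j)$. Hence the inductive step reduces to locating $q_i$ in the complement
$$B^G_{r_0}(q_0)\setminus \bigcup_{j<i} B^G_{r_i+r_j}(q_j).$$

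Next, I would verify that this complement has positive measure by a direct volume count. Because the radii are non-increasing, $r_i+r_j \leq 2r_j \leq 2r_0$ for $j<i$, so the upper bound in (\ref{Eq-volum-bound}) applies to each $B^G_{r_i+r_j}(q_j)$ and yields
$$\sum_{j=1}^{i-1}\mathrm{vol}\bigl(B^G_{r_i+r_j}(q_j)\bigr) \;\leq\; C_1 C_2^{l} C_3 \cdot 2^{l} \sum_{j=1}^{i-1} r_j^{l} \;<\; C_1 C_2^{l} C_3 \cdot 2^{l}\cdot \rho_0 \;=\; \frac{C_3}{C_1 C_2^{l}}\,r_0^{l},$$
where the final equality is the definition of $\rho_0(M,G)$. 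The right-hand side is at most $\mathrm{vol}(B^G_{r_0}(q_0))$ by the lower bound in (\ref{Eq-volum-bound}), so the complement is nonempty and $q_i$ can be selected. The containment $q_i \in M^{reg}$ is automatic from property (i) above, which gives $\overline{B}^G_{4r_0}(q_0)\subset M^{reg}$.

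There is no real obstacle here; the argument is the equivariant counterpart of Guth's packing lemma, with volumes of geodesic tubes replacing those of geodesic balls. The only subtle points are checking that the radius $r_i+r_j$ stays in the range $(0,2r_0]$ where (\ref{Eq-volum-bound}) holds (ensured by the monotonicity $r_i\leq r_j$ for $j<i$ together with $r_1\leq r_0$), and making sure the bookkeeping of constants lines up so that the chosen $\rho_0=r_0^{l}/(C_1^2 C_2^{2l}\cdot 2^l)$ is exactly the threshold that forces the inductive step to succeed at every stage $i=1,\dots,p$.
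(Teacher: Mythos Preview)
Your proposal is correct and is essentially the same greedy volume-packing argument as the paper's proof. The only cosmetic difference is that the paper excludes the doubled tubes $B^G_{2r_i}(q_i)$ around the already-placed centers (exploiting $r_j\le r_i$ to conclude disjointness), whereas you exclude the slightly smaller tubes $B^G_{r_i+r_j}(q_j)$; the volume bookkeeping and the use of (\ref{Eq-volum-bound}) are identical.
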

\begin{proof}
	By (\ref{Eq-volum-bound}), ${\rm vol}(B^G_{r_0}(q_0))\geq \frac{C_3}{C_1C_2^l}r_0^l$. 
	By the statement (i) above, we can choose $q_1\in B^G_{r_0}(q_0)\cap M^{reg}$. 
	Now we proceed inductively. 
	Suppose we have found $q_1,\dots,q_{j-1}\in B^G_{r_0}(q_0)\cap M^{reg}$ so that the $G$-tubes $B^G_{r_i}(q_i)$ are disjoint $(i=1,...,j-1)$. 
	Then, by (\ref{Eq-volum-bound}), the volume of $\cup_{i=1}^{j-1} B^G_{2r_i}(q_i)$ is bounded by 
	$$ \sum_{i=1}^{j-1}C_1C_2^lC_3\cdot (2r_i)^l < \frac{C_3}{C_1C_2^l}r_0^l\leq {\rm vol}(B^G_{r_0}(q_0)).$$
	Therefore, we can get an orbit $G\cdot q_j\in (B^G_{r_0}(q_0)\cap M^{reg})\setminus (\cup_{i=1}^{j-1} B^G_{2r_i}(q_i))$. 
	Since $r_j\leq r_i$ for any $i=1,\dots,j-1$, the $G$-tube $B^G_{r_j}(q_j)$ is disjoint with $\cup_{i=1}^{j-1} B^G_{r_i}(q_i)$. 
	Continuing to choose $G$-tubes in this way yields the lemma.
\end{proof}

Using Lemma \ref{Lem-scal-isoperimetric-const}, we have the following proposition which is an equivariant generalization of \cite[Proposition 8.2]{marques2017existence}. 

\begin{proposition}\label{Prop-local-mass-bound-of-sweepout}
	There exist positive constants $\alpha_0=\alpha_0(M,G)$ and $r_0=r_0(M,G)$ so that for any $(G,1)$-sweepout $\Phi:S^1\rightarrow \mathcal{Z}_n^G(M;\mathbb{Z}_2)$, we have 
	$$ \sup_{\theta\in S^1}{\bf M}(\Phi(\theta)\llcorner B^G_r(q) )\geq \alpha_0r^{n-{\rm dim}(G\cdot q)} =\alpha_0r^{l-1} $$
	for all $q\in \overline{B}^G_{r_0}(q_0)$ and $r\in(0,r_0]$.
\end{proposition}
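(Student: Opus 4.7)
The plan is to adapt the proof of \cite[Proposition~8.2]{marques2017existence} to the $G$-invariant setting by lifting the $(G,1)$-sweepout to a continuous family of $G$-invariant Caccioppoli sets, using the intermediate value theorem to locate a parameter at which the associated set occupies exactly half the volume of $B^G_r(q)$, and applying a relative isoperimetric inequality at that parameter.

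First, since $\Phi$ is $\mathcal{F}$-continuous and of boundary type, I would build an $\mathcal{F}$-continuous lift $\widetilde{Q}:[0,2\pi]\to\mI_{n+1}^G(M;\mZ_2)$ with $\partial\widetilde{Q}(\theta)=\Phi(\theta)$, by partitioning $S^1$ into finitely many arcs on which the $\mathcal{F}$-oscillation of $\Phi$ lies below $\nu_M$ and then gluing the unique small $G$-isoperimetric fillings supplied by Lemma~\ref{Lem:isoperimetric}. The $(G,1)$-sweepout condition, combined with the $G$-equivariant Almgren isomorphism $\pi_1(\mathcal{Z}_n^G(M;\mZ_2))\cong H_{n+1}(M;\mZ_2)\cong\mZ_2$ from Theorem~\ref{Thm:isomorphism}, translates into the monodromy relation $\widetilde{Q}(2\pi)-\widetilde{Q}(0)=[M]\pmod{2}$. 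Because $M$ has dimension $n+1$, no nonzero $(n+2)$-chains exist on $M$, so $\mathcal{F}$-continuity coincides with $\M$-continuity on $\mI_{n+1}^G(M;\mZ_2)$; letting $U(\theta)\subset M$ be the $G$-invariant measurable set representing $\widetilde{Q}(\theta)$, the function $f(\theta):=|U(\theta)\cap B^G_r(q)|$ is continuous.

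Next, the monodromy relation yields $U(2\pi)=M\setminus U(0)$ up to a null set, so $f(0)+f(2\pi)=v_r:=|B^G_r(q)|$. Continuity of $f$ supplies $\theta_0$ with $f(\theta_0)=v_r/2$; setting $E:=U(\theta_0)\cap B^G_r(q)$ gives $|E|=|B^G_r(q)\setminus E|=v_r/2$ and
\[
\Phi(\theta_0)\llcorner B^G_r(q)=\partial E\llcorner B^G_r(q).
\]
The task reduces to the relative isoperimetric estimate $\M(\partial E\llcorner B^G_r(q))\ge \alpha_0\,r^{l-1}$, uniform in $q\in\overline{B}^G_{r_0}(q_0)$ and $r\in(0,r_0]$.

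To obtain this estimate I would pass to the orbit space: because $\Clos(B^G_r(q))\subset M^{reg}$, the Riemannian submersion $\pi:M^{reg}\to M^{reg}/G$ writes $E=\pi^{-1}([E])$ for some $[E]\subset B_r([q])\subset M^{reg}/G$, and the coarea formula together with the uniform fiber-volume bound $V(y)=\mathcal{H}^{n+1-l}(\pi^{-1}(y))\in[C_1^{-1},C_1]$ from item (ii) at the start of this section yields
\[
\M(\partial E\llcorner B^G_r(q))\ge C_1^{-1}\,\mathcal{H}^{l-1}(\partial^*[E]\cap B_r([q])),\qquad \frac{|[E]|}{|B_r([q])|}\in[c_0,1-c_0]
\]
for some $c_0=c_0(M,G)>0$. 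By item (iv), $\exp_{[q]}$ is bi-Lipschitz with uniform constants from a Euclidean $l$-ball of radius $r$ onto $B_r([q])$, so the flat Euclidean relative isoperimetric inequality applied to the pullback of $[E]$ gives $\mathcal{H}^{l-1}(\partial^*[E]\cap B_r([q]))\ge c(M,G)\,r^{l-1}$, and combining these two estimates produces the desired bound. The main technical point is precisely this reduction to the orbit space: the correct $r^{l-1}$ scaling reflects the cohomogeneity $l$ rather than the ambient dimension $n+1$, so Lemma~\ref{Lem-scal-isoperimetric-const} alone would only yield $\F(T)\ge\alpha' r^l$, which is too weak for small $r$, and one must instead exploit that $G$-invariant perimeters are genuinely $(l-1)$-dimensional objects on $M^{reg}/G$.
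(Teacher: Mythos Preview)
Your approach is correct and reaches the same conclusion as the paper, but by a genuinely different route. The paper follows \cite[Proposition~8.2]{marques2017existence} rather closely: beyond Lemma~\ref{Lem-scal-isoperimetric-const} it first establishes a $G$-invariant $\M$-isoperimetric choice lemma (adapting \cite[Proposition~1.22]{almgren1962homotopy}) with scaled constant $\nu^{\M}_r(q)>\alpha_2\,r^{l-1}$, and then runs the discretized contradiction argument of Marques--Neves --- choose a fineness $\delta$ so that $(1+\tfrac{2}{r})C_M\delta<\alpha_1(\tfrac{r}{2})^{l}$, use the $\F$-isoperimetric fillings in the tube, and conclude from the $\M$-isoperimetric lemma that if every slice had mass below $\alpha_2 r^{l-1}$ the local fillings would all be near $0$ or near $[B^G_r(q)]$, contradicting the monodromy. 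Your argument bypasses both the discretization and the auxiliary $\M$-isoperimetric lemma: you lift once to a continuous family of $G$-invariant Caccioppoli sets (using $\F=\M$ on top-dimensional chains), locate a half-volume parameter by the intermediate value theorem, and then read off the $r^{l-1}$ bound directly from the Euclidean relative isoperimetric inequality on the $l$-dimensional orbit space via the coarea formula and the uniform controls (ii), (iv). This is more geometric and makes the exponent $l-1=\dim(M/G)-1$ transparent; the paper's approach has the advantage of staying entirely within the Almgren--Pitts machinery already built in Lemma~\ref{Lem-scal-isoperimetric-const}, so no new isoperimetric input is needed beyond scaling.
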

\begin{proof}
	Let $\alpha_1=\alpha_0,r_1=r_0$, where $\alpha_0,r_0$ are the numbers in Lemma \ref{Lem-scal-isoperimetric-const}.
	Let $\rho=C_M$ be the constant in Lemma \ref{Lem:isoperimetric}. 
	For any $q\in \overline{B}^G_{r_0}(q_0), ~r\in (0,r_1]$, we choose $\delta$ small enough such that $(1+\frac{2}{r})\rho\delta<\alpha_1(\frac{r}{2})^{l}$. 
	Similar to Lemma \ref{Lem:isoperimetric}, we can adapt \cite[Proposition 1.22]{almgren1962homotopy} to give a $G$-invariant ${\bf M}$-isoperimetric choice lemma. 
	Moreover, by scaling considerations, we have a similar result for the ${\bf M}$-isoperimetric constant $\nu^{\bf M}_r(q)>\alpha_2(M,G) r^{l-1}$ as Lemma \ref{Lem-scal-isoperimetric-const}. 
	Combining the $G$-invariant ${\bf M}$-isoperimetric lemma with Lemma \ref{Lem-scal-isoperimetric-const}, the proof of \cite[Proposition 8.2]{marques2017existence} would carry over after adding `$G$-' in front of relevant objects.
\end{proof}

Finally, we can follow the same procedure in the proof of \cite[Theorem 8.1]{marques2017existence} with Lemma \ref{Lem-disjoint-tube} and Proposition \ref{Prop-local-mass-bound-of-sweepout} to give the lower bounds of $(G,p)$-width.

\begin{theorem}\label{Thm-lower-bound-of-width} 
	Let $G$ be a compact Lie group acting as isometries on a closed manifold $M^{n+1}$ with ${\rm Cohom}(G)=l\geq 3$. 
	There exists a positive constant $C=C(M,G)>0$ such that
	$$ \omega_p^G(M)\geq Cp^{\frac{1}{l}}, $$
	for any $p\in\mathbb{N}$. 
\end{theorem}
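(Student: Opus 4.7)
The plan is to combine the ball-packing estimate (Lemma \ref{Lem-disjoint-tube}) with the local $(G,1)$-width lower bound (Proposition \ref{Prop-local-mass-bound-of-sweepout}) through a Lusternik--Schnirelmann / cup-product argument, following the blueprint of \cite[Theorem 8.1]{marques2017existence}. First, I would fix an arbitrary $(G,p)$-sweepout $\Phi \in \mathcal{P}_p^G$ with domain $X$ and set $r := c \cdot p^{-1/l}$, where $c = c(M,G) > 0$ is chosen small enough that $p \cdot r^l < \rho_0(M,G)$ and $r \leq r_0$. Lemma \ref{Lem-disjoint-tube} then supplies pairwise disjoint $G$-tubes $U_i := B^G_r(q_i) \subset B^G_{r_0}(q_0) \cap M^{reg}$ for $i = 1, \ldots, p$.

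Next, for each $i$ I would consider the lower semicontinuous (in the flat topology) function $f_i : X \to \mathbb{R}$ defined by $f_i(x) := \mathbf{M}(\Phi(x) \llcorner U_i)$. The heart of the argument will be a Lusternik--Schnirelmann estimate of the form
\[
\sup_{x \in X} \mathbf{M}(\Phi(x)) \;\geq\; \sup_{x \in X} \sum_{i=1}^{p} f_i(x) \;\geq\; p \cdot \alpha_0 r^{l-1},
\]
where $\alpha_0$ comes from Proposition \ref{Prop-local-mass-bound-of-sweepout}. Granting this, substituting $r = c p^{-1/l}$ will yield
\[
\omega_p^G(M) \;\geq\; p \cdot \alpha_0 (c p^{-1/l})^{l-1} \;=\; \alpha_0 c^{l-1} \, p^{1/l},
\]
and taking the infimum over $\Phi \in \mathcal{P}_p^G$ will finish the proof with $C := \alpha_0 c^{l-1}$.

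To establish the Lusternik--Schnirelmann estimate I would argue by induction on $p$. If $\sup_{x \in X} f_i(x) < \alpha_0 r^{l-1}$ for some index $i$, then Lemma \ref{Lem-scal-isoperimetric-const} (valid since $\overline{U_i} \subset M^{reg}$ and $r \leq r_0$) ensures $\nu_r(q_i) \gtrsim r^l$, and each relative slice $\Phi(x) \llcorner U_i \in \mathcal{Z}_n^G(U_i,\partial U_i;\mathbb{Z}_2)$ admits a unique $G$-invariant isoperimetric filling $Q_i(x) \in \mathbf{I}_{n+1}^G(U_i;\mathbb{Z}_2)$ depending flat-continuously on $x$. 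Setting $\widetilde\Phi(x) := \Phi(x) - \partial Q_i(x)$ would produce an $\mathcal{F}$-continuous map $G$-homotopic to $\Phi$ (hence still a $(G,p)$-sweepout by Theorem \ref{Thm:isomorphism} and Remark \ref{Rem-homo-sweep-is-sweep}) whose image lies in $\mathcal{Z}_n^G(M \setminus U_i;\mathbb{Z}_2)$. A standard cup-length / excision step would then reduce $\widetilde\Phi$ to a $(G,p-1)$-sweepout on the remaining $p-1$ tubes, closing the induction and forcing the simultaneous lower bound $f_i(x_0) \geq \alpha_0 r^{l-1}$ for all $i$ at some $x_0 \in X$.

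The hard part will be carrying out the parametric, $G$-equivariant cap-off continuously in $x$ while preserving the $G$-homotopy class of $\Phi$. The inclusion $\overline{U_i} \subset M^{reg}$ is crucial here: it yields a uniformly positive orbit injectivity radius on each tube, which is exactly what makes the $G$-equivariant isoperimetric filling (Lemma \ref{Lem:isoperimetric}) and the interpolation machinery (Theorem \ref{Thm:interpolation}) available uniformly in $x$ inside $U_i$. The polynomial scaling $\nu_r(q) \gtrsim r^l$ from Lemma \ref{Lem-scal-isoperimetric-const} is matched precisely with the $(G,1)$-width threshold $\alpha_0 r^{l-1}$ from Proposition \ref{Prop-local-mass-bound-of-sweepout}, and this matching is what produces the correct exponent $1/l$ in the final bound.
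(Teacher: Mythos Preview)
Your overall strategy---pack $p$ disjoint $G$-tubes of radius $r\sim p^{-1/l}$ via Lemma~\ref{Lem-disjoint-tube}, invoke the local $(G,1)$-sweepout bound of Proposition~\ref{Prop-local-mass-bound-of-sweepout}, and combine through a Lusternik--Schnirelmann argument---is exactly the paper's, which simply defers the last step to \cite[Theorem~8.1, Claim~8.4]{marques2017existence}. Your attempt to spell that step out, however, has a genuine gap.

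The case you treat, ``$\sup_{x\in X} f_i(x)<\alpha_0 r^{l-1}$ for some $i$,'' is \emph{not} the negation of the simultaneous bound you want: the correct negation is that the sets $Z_i:=\{x:f_i(x)<\alpha_0 r^{l-1}\}$ cover $X$, and no single $Z_i$ need equal $X$. Moreover, even when your global cap-off is available, $\widetilde\Phi$ is $G$-homotopic to $\Phi$ and therefore still a $(G,p)$-sweepout, so there is no ``reduction to a $(G,p-1)$-sweepout''; in fact your hypothesis together with Proposition~\ref{Prop-local-mass-bound-of-sweepout} already forces $\Phi^*\bar\lambda=0$ and yields a contradiction with no induction at all. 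The actual argument is cohomological and needs no parametric cap-off: by the contrapositive of Proposition~\ref{Prop-local-mass-bound-of-sweepout}, every loop $\gamma\subset Z_i$ has $(\Phi\circ\gamma)^*\bar\lambda=0$, hence $(\Phi^*\bar\lambda)|_{Z_i}=0$; if the $Z_i$ covered $X$, the relative cup product would force $\Phi^*(\bar\lambda^p)=0$, contradicting the $(G,p)$-sweepout property. Thus some $x_0\notin\bigcup_i Z_i$ exists, and one sums. A secondary issue: your cap-off via Lemma~\ref{Lem-scal-isoperimetric-const} requires $\mathcal F(\Phi(x)\llcorner U_i)\lesssim r^l$, whereas the mass bound only yields $\lesssim r^{l-1}$, so the filling is unavailable for small $r$; this scaling mismatch is precisely why the proof of Proposition~\ref{Prop-local-mass-bound-of-sweepout} also invokes an $\M$-isoperimetric lemma with threshold $\sim r^{l-1}$, and why that proposition should be used as a black box here rather than reworked.
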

\begin{proof}
	By Lemma \ref{Lem-disjoint-tube}, there exists a positive constant $v=v(M,G)$ such that, for each $p\in \mathbb{N}$, one can find $p$ disjoint $G$-tubes $\{B^G_r(q_i)\}_{i=1}^p$ with $r=vp^{-\frac{1}{l}}$ and $q_i\in B^G_{r_0}(q_0)\subset M^{reg}$. 
	Let $\alpha_0$ be the constant in Proposition \ref{Prop-local-mass-bound-of-sweepout}. 
	
	For any $\Phi \in \mathcal{P}_p^G$ mapping $X$ into $\mathcal{Z}^G_n(M;{\bf M};\mathbb{Z}_2)$, we can follow the proof of \cite[Theorem 8.1, Claim 8.4]{marques2017existence} with Lusternik-Schnirelmann theory and Proposition \ref{Prop-local-mass-bound-of-sweepout} to show that there exists $x\in X$ satisfying 
	$$ {\bf M}(\Phi(x)\llcorner B^G_r(q_i) ) \geq \frac{\alpha_0}{6}r^{n-{\rm dim}(G\cdot q_i)} = \frac{\alpha_0}{6}r^{l-1} ,$$
	for $i=1,\dots,p$. 
	Hence, we have 
	$$ {\bf M}(\Phi(x))\geq \sum_{i=1}^p {\bf M}(\Phi(x)\llcorner B^G_r(q_i) ) \geq p \frac{\alpha_0}{6}r^{l-1}\geq \frac{\alpha_0}{6}v^{l-1}p^{\frac{1}{l}} =C p^{\frac{1}{l}},$$
	where $C>0$ is a constant depending only on $M$ and $G$. 
	Since $\Phi \in \mathcal{P}_p^G$ is arbitrary, the theorem follows from the definition of $(G,p)$-width.
\end{proof}

\appendix
	\addcontentsline{toc}{section}{Appendices}
	\renewcommand{\thesection}{\Alph{section}}
\section{Isometric Actions of Lie Groups}\label{Appendix-good-partition}
In this appendix, we collect some basic notations and facts on Lie group actions (see \cite{wall2016differential} for details). 
Let $G$ be a compact Lie group acting as isometries on a compact manifold $M$. 

For any closed subgroup $H$ of $G$, denote $(H)$ as the conjugate class of $H$ in $G$. 
Then we say $p\in M$ has $(H)$ orbit type if $(G_p)=(H)$, where $G_p:=\{g\in G : g\cdot p=p\}$ is the isotropy group of $p$. 
We also denote 
$M_{(H)}:=\{p\in M : (G_p)= (H)\}$
to be the union of points with $(H)$ orbit type, which is a disjoint union of smooth embedded submanifolds of $M$. 
Moreover, the projection $\pi: M_{(H)} \rightarrow X_{(H)}\subset M/G$ gives a Riemannian submersion. 
Since the number of orbit types is locally finite, there are only finite different orbit types in a compact manifold $M$. 
Additionally, the partition by orbit types is a {\em stratification} of $M$ as well as $M/G$. 
Besides, there exists a minimal conjugate class of isotropy group $(P)$ such that $M_{(P)}$ forms an open dense submanifold of $M$, which is called the {\em principal orbit type}. 
The {\em cohomogeneity} ${\rm Cohom}(G)$ of $G$ is defined as the co-dimension of a principal orbit.

Given a closed subgroup $H$ of $G$, a {\em slice} or {\em $H$-slice} is a smooth embedded submanifold (possibly with boundary) $S$ of $M$ satisfying 
\begin{itemize}
	\item[$\bullet$] $S$ is $H$-invariant; 
	\item[$\bullet$] $T_pM = T_pG\cdot p + T_pS$ for all $p\in S$; 
	\item[$\bullet$] if $p\in S$, $g\in G$ and $g\cdot p\in S$, then $g\in H$. 
\end{itemize}
For any $p\in M$, there exists a $G_p$-slice $S_p$ of $G\cdot p$ at $p$ so that $G\cdot S_p$ is an open $G$-neighborhood of $G\cdot p$ by the slice theorem \cite[Theorem 3.3.4, 3.3.5]{wall2016differential}. 
Indeed, since the Lie group $G$ acts by isometries, the slice $S_p$ can be given by the normal exponential map as $\exp^{\perp}_p(B_p)$, where $B_p = B\cap N_p(G\cdot p)$ and $B\subset N(G\cdot p)$ is open so that $\exp^{\perp}_{G\cdot p}\llcorner B$ is a $G$-equivariant diffeomorphism. 
One can also verify that $S_p/G_p \cong (G\cdot S_p)/G$ is an open neighborhood of $[p]=\pi(G\cdot p)$ in $M/G$. 
The slice $S_p$ indeed gives a bundle structure of a neighborhood of $G\cdot p$. 

In the paper \cite{verona1979triangulation}, Verona showed that there is a triangulation $\triangle$ on $M/G$ such that ${\rm the~points~in~Int}(s) {\rm ~have~the~same~orbit~type} $ for all $s\in\triangle$. 
Using this triangulation of orbit space $M/G$, Illman found in \cite{illman1983equivariant} that the barycentric subdivision of such triangulation satisfies
$(G_p)\subset (G_q){\rm,~where~}\pi(p)\in{\rm Int}(s),{\rm and~}\pi(q)\in\partial s {\rm,~for~all~}s\in\triangle.$ 
Indeed, the set $\{\pi(q)\in s: (G_{p_s})\subsetneq (G_q)\}$ is either empty or a face $s'$ of $s$ with $\dim(s')<\dim(s)$, where $G\cdot p_s$ is the center orbit of $\pi^{-1}(s)$. 
This helps Illman to build an equivariant triangulation on $M$. 
Specifically, let $\triangle_n$, $n\in\N$, be the standard $n$-simplex. 
We regard $\triangle_{n-1}$ as a face of $\triangle_n$ ($\triangle_{-1}=\emptyset$). 
Take any sequence $H_n\subseteq H_{n-1}\subseteq \cdots\subseteq H_0$ of closed subgroups of $G$. 
Then we define a relation $\backsim$ in $\triangle_n\times G$ as 
$$ (x_1,g_1)\backsim (x_2,g_2)~\iff~ g_1H_m = g_2H_m, ~x_1=x_2\in\triangle_m\setminus\triangle_{m-1}, ~0\leq m \leq n. $$
Denote the quotient space (with the quotient topology) and the natural projection map by:
\begin{equation}\label{Eq: equivariant simplex}
	\triangle_n(G; H_0,\dots,H_n) := \triangle_n\times G/\backsim, \quad P: \triangle_n\times G \to \triangle_n(G; H_0,\dots,H_n).  
\end{equation}
Denote $P((x,g)) = \langle x, g\rangle$ for $(x,g)\in\triangle_n\times G$. 
Then the action of $G$ on $ \triangle_n(G; H_0,\dots,H_n)$ can be defined as $g_1\cdot \langle x, g \rangle = \langle x, g_1\cdot g \rangle$, $\forall g_1\in G$. 
Therefore, $\triangle_n(G; H_0,\dots,H_n)$ is a $G$-space with the orbit space $\triangle_n$, which is known as the {\em $G$-equivariant $n$-simplex of type $(H_0,\dots,H_n)$}. 
One can easily verify that the isotropy group of $\langle x, g\rangle\in \triangle_n(G; H_0,\dots,H_n)$ is $G_{\langle x, g\rangle} = gH_mg^{-1}$, for any $x\in \triangle_{m}\setminus \triangle_{m-1}$ and $g\in G$. 
The {\em equivariant triangulation} of $M$ consists of a triangulation $f: K\to M/G$, so that for any $n$-simplex $t$ of $K$ there exist closed subgroups $H_n\subseteq H_{n-1} \subseteq \cdots\subseteq H_0$ of $G$ and an equivariant homeomorphism 
\begin{equation}\label{Eq: equivariant triangulation}
	\alpha : \triangle_n(G; H_0,\dots,H_n)\to \pi^{-1}(f(t)),
\end{equation}
which induces a {\em linear} homeomorphism from $\triangle_n $ to $t$ in the orbit space level. 
For simplicity, we sometimes use $\triangle:=\{s:=f(t): t\in K\}$ as the triangulation of $M/G$, and use $\widetilde{\triangle}:=\{\pi^{-1}(f(t)) : t\in K\}$ as the equivariant triangulation of $M$. 
Moreover, by \cite[Section 8.12]{eells2001harmonic}, the triangulation $f:K\to M/G$ can be bi-Lipschitz (Riemannian polyhedra). 

We also mention that Illman's equivariant triangulation can be barycentric subdivided, which keeps being $G$-equivariant. 
Thus we always assume ${\rm diam}(s)<{\rm Inj}(M)/2 $ for all $s\in \triangle$. 
Additionally, $\triangle$ gives a triangulation on every stratum of orbit type, i.e. $\triangle_{(H)}:= \{s\in\triangle: s\subset \Clos(X_{(H)}) = \Clos(\pi(M_{(H)})) \}$ is a triangulation on $X_{(H)}$ for every orbit type $(H)$. 
 
\begin{lemma}\label{Lem:partition for any G}
	Suppose $G$ is a compact Lie group acting as isometries on a compact Riemannian manifold $M$. 
	Let $\triangle$ be a Lipschitz triangulation of $M/G$ with $\sup_{s\in\triangle}{\rm diam}(s)<{\rm Inj}(M)/2$, which generates an equivariant triangulation $\widetilde{\triangle}:=\{\tilde{s}= \pi^{-1}(s): s\in \triangle \}$ of $M$. 
	Denote $G\cdot p_{\tilde{s}}$ to be the center orbit of $\tilde{s}\in\widetilde{\triangle}$. 
	Then $\triangle$ can be chosen with the following properties:
	\begin{itemize}
		\item[(i)] $(G_{p_{\tilde{s}}}) \subset (G_{p})$ for all $p \in \tilde{s}$, and 
		$(G_{p_{\tilde{s}}}) = (G_{p})$ for all $p \in \pi^{-1}({\rm Int}(s))$;
		\item[(ii)] the set $\{\pi(q)\in s: (G_{p_{\tilde{s}}})\subsetneq (G_q)\}$ is either empty or a face $s'$ of $s$ with $\dim(s')<\dim(s)$;
		\item[(iii)] $\widetilde{\triangle}_{(H)} = \pi^{-1}(\triangle_{(H)} )$ is an equivariant triangulation on $\Clos(M_{(H)})$; 
		\item[(iv)] for any orbit type $(H)$ and $\tilde{s}\in \widetilde{\triangle}_{(H)}$, 
			there is a fundamental domain $\Omega$ of $s$ so that $\pi\llcorner\Omega:\Omega\to s$ is a bi-Lipschitz homeomorphism, and $G_p\supset G_{(\pi\llcorner\Omega)^{-1}([p_{\tilde{s}}])}$, $\forall p\in\Omega$. 
	\end{itemize} 
\end{lemma}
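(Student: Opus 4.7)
The plan is to reduce the statement to the Verona--Illman equivariant triangulation machinery, extracting (i)--(iii) from standard properties of the barycentric subdivision, and then producing the fundamental domain in (iv) by unpacking the local model~\eqref{Eq: equivariant simplex}.

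I would start with a Verona triangulation of $M/G$ whose open simplices each sit inside a single orbit-type stratum $X_{(H)}$, and then pass to its barycentric subdivision so that orbit types decrease monotonically toward each barycenter, as in Illman's construction. Property~(i) is precisely this monotonicity together with the observation that on $\pi^{-1}(\mathrm{Int}(s))$ the orbit type is constant (since $\mathrm{Int}(s)$ sits in a single stratum). Property~(ii) then records that the jump locus is a subcomplex of $\partial s$. Property~(iii) follows because $\Clos(X_{(H)})$ is a closed subcomplex of the refined triangulation, so its preimage under $\pi$ inherits an equivariant triangulation. The small-diameter and bi-Lipschitz hypotheses on $f: K\to M/G$ survive barycentric subdivision.

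For~(iv), I would fix $\tilde{s}\in\widetilde{\triangle}_{(H)}$ of dimension $n$, with center orbit $G\cdot p_{\tilde{s}}$, and invoke the Illman model from~\eqref{Eq: equivariant triangulation},
\[
\alpha: \triangle_n(G;H_0,\ldots,H_n)\longrightarrow \tilde{s},
\]
with descending chain $H_n\subseteq\cdots\subseteq H_0$ of closed subgroups, chosen so that $H_n$ is conjugate to $H$. Setting
\[
\Omega := \alpha\bigl(P(\triangle_n\times\{e\})\bigr)\subset\tilde{s},
\]
I claim this is the desired fundamental domain. Indeed, the relation $\sim$ appearing in~\eqref{Eq: equivariant simplex} never identifies distinct pairs $(x_1,e),(x_2,e)$, so $x\mapsto\alpha(P(x,e))$ is a continuous injection $\triangle_n\to\Omega$, and composing with $\pi\llcorner\Omega$ recovers the quotient map $\triangle_n\to s$. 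By the explicit formula $G_{\langle x,g\rangle}=gH_m g^{-1}$ for $x\in\triangle_m\setminus\triangle_{m-1}$, the isotropy at $\alpha(P(x,e))$ is exactly $H_m$. The barycenter of $\triangle_n$ lies in $\mathrm{Int}(\triangle_n)$, so (choosing $p_{\tilde{s}}$ as its image under $\alpha$) the point $(\pi\llcorner\Omega)^{-1}([p_{\tilde{s}}])$ has isotropy $H_n$, while any other $p\in\Omega$ has isotropy $H_m\supseteq H_n$. This gives the subgroup containment in~(iv) literally, not merely up to conjugacy, which is the whole reason one cannot take an arbitrary fundamental domain.

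The main obstacle I anticipate is upgrading $\pi\llcorner\Omega$ from a topological bijection to a bi-Lipschitz homeomorphism. Lipschitz continuity is immediate since $\pi$ is $1$-Lipschitz and $\alpha$ is Lipschitz. The reverse Lipschitz bound requires a uniform transversality of $\Omega$ to the orbit foliation: on each orbit-type stratum, $\pi$ restricts to a Riemannian submersion and $\Omega$ intersects that stratum along a smooth section transverse to the orbits, yielding a bi-Lipschitz bound there. The delicate issue is uniformity as one approaches lower-dimensional strata, where orbit dimensions jump. I would handle this by choosing $\alpha$ compatibly with the slice theorem, so that $\Omega$ is locally built from normal geodesic slices of the orbits $G\cdot p_{\tilde{s}}$, and invoking the finiteness of orbit types on the compact manifold $M$ to extract uniform constants. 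Since there are only finitely many combinatorial types of simplices $\tilde{s}\in\widetilde{\triangle}$ modulo the $G$-action, a global bi-Lipschitz bound then follows, completing the verification of~(iv).
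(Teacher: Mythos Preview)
Your construction of $\Omega = \alpha(P(\triangle_n\times\{e\}))$ and the isotropy computation $G_{\alpha(P(x,e))} = H_m \supseteq H_n$ for $x\in\triangle_m\setminus\triangle_{m-1}$ match the paper exactly, as does the reduction of (i)--(iii) to Verona--Illman. The divergence is entirely in the bi-Lipschitz step, and there your argument has a gap.

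You write that ``$\alpha$ is Lipschitz'' and that on each stratum ``$\Omega$ intersects that stratum along a smooth section transverse to the orbits.'' But Illman's 1983 result, which is what \eqref{Eq: equivariant triangulation} records, only produces $\alpha$ as an equivariant \emph{homeomorphism}; no Lipschitz or smooth structure on $\Omega$ comes for free. Your proposed fix, ``choosing $\alpha$ compatibly with the slice theorem,'' is precisely the hard part: you would need to exhibit an equivariant triangulation whose characteristic maps are built from geodesic slices with uniform transversality constants across all strata boundaries, and nothing in the cited literature hands you this directly. The finiteness-of-orbit-types remark does not help until you know each individual $\alpha$ has some regularity to begin with.

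The paper resolves this by moving to the real-analytic category. One first equips $M$ with a compatible real-analytic structure making the $G$-action analytic (Illman 2000), then invokes Illman's \emph{subanalytic} equivariant triangulation theorem, so that $\alpha$ is a subanalytic isomorphism and $\Omega$ is a closed subanalytic subset of $M$. The subanalytic Hauptvermutung (Murayama--Shiota) then forces any subanalytic triangulation of $\Omega$ to be piecewise-linearly equivalent to $\triangle_k$, yielding that $\alpha\llcorner\triangle_k : \triangle_k \to \Omega$ is piecewise smooth, hence bi-Lipschitz. The bi-Lipschitz property of $f:K\to M/G$ is then deduced by composing with $\pi$ and a piecewise-smooth triangulation $\tau$ of $M$. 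This route avoids any direct transversality analysis at the cost of importing the subanalytic machinery; your geometric approach might be made to work, but as written it presupposes regularity of $\alpha$ that you have not established.
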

\begin{proof}
	(i), (ii) and (iii) come from \cite{verona1979triangulation}\cite{illman1983equivariant}. 
	Let $f: K\to M/G$ be the triangulation of $M/G$, which generates $\triangle=\{s=f(t):t\in K\}$ and $\widetilde{\triangle}:=\{\tilde{s}=\pi^{-1}(f(t)) : t\in K\}$ as the equivariant triangulation of $M$. 
	Given any orbit type $(H)$, $M_{(H)}:=\{p\in M: (G_p)=(H)\}$ as well as $\Clos(M_{(H)})$ is a smooth $G$-invariant submanifold of $M$. 
	By (iii), $\widetilde{\triangle}_{(H)} = \pi^{-1}(\triangle_{(H)} )$ is an equivariant triangulation of $\Clos(M_{(H)})$. 
	For any $k$-cell $s=f(t)\in \triangle_{(H)}$, it follows from the definition in (\ref{Eq: equivariant triangulation}) that there exist closed subgroups $H_k\subseteq \cdots\subseteq H_0$ of $G$ and an equivariant homeomorphism $\alpha : \triangle_k(G; H_0,\dots, H_k)\to \pi^{-1}(s)$ so that $\alpha$ induces a linear homeomorphism from $\triangle_k$ to $t$ in the orbit space level. 
	Let $P$ be defined as in (\ref{Eq: equivariant simplex}). 
	Then we define $\Omega := \alpha(P(\triangle_k\times\{e\} )) = \alpha(\{\langle x, e\rangle: x\in\triangle_k \})$. 
	
	One can easily verify that $P : \triangle_k\times G\to \triangle_k(G; H_0,\dots, H_k)$ is a closed map (\cite[Lemma 2.1]{illman2000existence}), and thus $\Omega\subset \pi^{-1}(s)$ is homeomorphic to $\triangle_k$ as well as $s=f(t)$. 
	Note $\alpha$ is $G$-equivariant, and $G_{\langle x, e\rangle} = H_m$, for any $x\in \triangle_{m}\setminus \triangle_{m-1}$, $0\leq m\leq k$. 
	Therefore, we have $G_p\supset G_{(\pi\llcorner\Omega)^{-1}([p_{\tilde{s}}])}$ for every $p\in\Omega$. 
	Finally, we show $\pi\llcorner\Omega$ is bi-Lipschitz. 
	By \cite[Page 131]{illman2000existence}, there exists a real analytic structure on $M$ compatible with its smooth structure so that the action of $G$ is real analytic. 
	Hence, by \cite[Theorem 1]{murayama2013triangulation}, $M$ admits a piecewise smooth, subanalytic triangulation $\tau: P_M\to M$, which is unique up to piecewise linear homeomorphisms. 
	Additionally, by \cite[Theorem 8.2]{illman2000existence}, the triangulation $\triangle$ can be chosen as a subanalytic equivariant triangulation in the sense of \cite[Definition 7.1, 7.2]{illman2000existence}. 
	In particular, we have $\alpha$ is a subanalytic isomorphism, $\triangle_k(G; H_0,\dots, H_k)$ and $\tilde{s}=\pi^{-1}(s)$ are subanalytic closed sets. 
	By the proof of \cite[Proposition 7.4]{illman2000existence}, we see $\Omega\subset \tilde{s}$ is a subanalytic closed subset. 
	Then, using \cite[Theorem 3.8]{illman2000existence}, $\tau$ can be compatible with $\Omega$ and $\tilde{s}$, i.e. there are sub-complexes $P_\Omega\subset P_{\tilde{s}}\subset P_M$ so that $\tau: P_\Omega \to \Omega $ and $\tau:P_{\tilde{s}}\to \tilde{s} $ are piecewise smooth subanalytic triangulations of $\Omega$ and $\tilde{s}$. 
	Meanwhile, \cite[Theorem 3.8]{illman2000existence} also gives a subanalytic triangulation $\sigma : L \to \triangle_k(G; H_0,\dots, H_k)$ compatible with $ P(\triangle_k\times\{e\} )$, i.e. there is sub-complex $L'\subset L$ so that $\sigma: L'\subset L \to P(\triangle_k\times\{e\} )$ is a subanalytic triangulation of $ P(\triangle_k\times\{e\} )$. 
	Together, we have $\alpha\circ\sigma : L\to \tilde{s}$ is a subanalytic triangulation of $\tilde{s}$. 
	The uniqueness \cite[the subanalytic Hauptvermutung]{murayama2013triangulation} (see also \cite[Theorem 3.10]{illman2000existence}) implies that $L$ is piecewise linear homeomorphic to $P_{\tilde{s}}$. 
	Note $P(\triangle_k\times\{e\} )\cong \triangle_k$ is already a simplex. 
	Hence, the uniqueness also implies $\triangle_k\cong P(\triangle_k\times\{e\} )$ is piecewise linear homeomorphic to $L'$. 
	Therefore, we have $\alpha: \triangle_k\cong P(\triangle_k\times\{e\} ) \to \Omega$ is a piecewise smooth homeomorphism, and thus a bi-Lipschitz homeomorphism. 
	\begin{displaymath}    \xymatrix{ & L' \ar@{<->}[r]^{P.L.~~~} & P_\Omega \subset P_{\tilde{s}} \ar@{<->}[d]^{P.S.}_{\tau} \\        \triangle_k  \ar@{<->}[r]^{i_e~~~~~~} \ar@{<->}[ru]^{P.L.}  \ar@/_/[rrrr]_{Linear}   &     P(\triangle_k\times \{e\} ) \ar@{<->}[u]^{\sigma} & \Omega \subset \tilde{s} \ar@{<->}[l]_{~~~~\alpha:~Lip}     &  s \ar@{<-}[l]_{~~\pi} &  t \ar@{->}[l]_{f} }\end{displaymath}
	Moreover, by \cite[Section 8.12]{eells2001harmonic}, the triangulation $f:K\to M/G$ can be bi-Lipschitz (Riemannian polyhedra). 
	Indeed, note $\pi\llcorner\Omega$ is Lipschitz, and $\alpha$ induces a linear homeomorphism from $\triangle_k$ to $t$ in the orbit space level. 
	Then we have $f:t\to s$ is Lipschitz. 
	On the other hand, \cite[Theorem 1]{murayama2013triangulation} also shows that $f^{-1}\circ \pi\circ \tau : P_M\to K$ is piecewise linear, and thus Lipschitz. 
	For any $x,y\in t\in K$, let $[x]=f(x),[y]=f(y)\in s$. 
	Then for all $\tilde{x}\in\pi^{-1}([x])$, $\tilde{y}\in\pi^{-1}([y])$, we have $d_K(x,y) \leq C_1 d_{P_M}(\tau^{-1}(\tilde{x}), \tau^{-1}(\tilde{y}))\leq C_1C_2d_M(\tilde{x}, \tilde{y})$, where $C_1={\rm Lip}(f^{-1}\circ \pi\circ \tau)$, $C_2 = {\rm Lip}(\tau^{-1})$. 
	Hence, $d_K(x,y) \leq C_1C_2d_M(\pi^{-1}([x]), \pi^{-1}([y])) = C_1C_2d_{M/G}([x],[y])$, and $f^{-1}$ is Lipschitz, which also implies $(\pi\llcorner \Omega)^{-1}:s\to \Omega$ is Lipschitz. 
	Together, we have $\pi\llcorner \Omega : \Omega\to s$ is a bi-Lipschitz homeomorphism. 
\end{proof}

\section{An Interpolation Lemma}\label{Sec:proof-of-equivalence-a.m.v}

In this appendix, we build an interpolation lemma which is parallel to \cite[Lemma 3.8]{pitts2014existence}. 
We start with some preparatory work. 

Recall $l={\rm Cohom}(G)\geq 3$. 
Suppose $G\cdot p$ is a principle orbit and $\exp_{G\cdot p}^\perp$ is a $G$-equivariant diffeomorphism on $B^G_r(p)$. 
Let $g_{_{M/G}}$ be the induced Riemannian metric on $M^{reg}/G$ so that $\pi: M^{reg}\to M^{reg}/G$ is a Riemannian submersion. 
Define $\vartheta^{G\cdot p}:~ B^G_r(p)\rightarrow \mathbb{R}^+$ by
\begin{equation*}
	\vartheta^{G\cdot p} (q) := \frac{\mathcal{H}^{n+1-l}(G\cdot q)}{\mathcal{H}^{n+1-l}(G\cdot p)}. 
\end{equation*}
By the co-area formula, we have the following equality for any $G$-invariant $n$-rectifiable set $\Sigma\subset B^G_r(p)$: 
\begin{eqnarray}\label{E:weyl-tube-2}
	\mathcal{H}^{n}(\Sigma) 
	&=& \mathcal{H}^{n+1-l}(G\cdot p)\cdot \int_{\pi(\Sigma)} \vartheta^{G\cdot p }(q) ~d\mathcal{H}^{l-1}(q). 
\end{eqnarray}

Since ${\rm Clos}(W)\subset M^{reg}$ by assumptions, then there exists a positive lower bound on the injectivity radius of orbits $\inf_{p\in {\rm Clos}(W)} {\rm Inj}(G\cdot p)>2r_0>0$, so that for any $q\in W$ the normal exponential map $\exp_{G\cdot q}^\perp$ is a diffeomorphism on $B_{2r_0}^G(q)$. 
Moreover, for any $q\in W$, $\epsilon'>0$, there exists $r_{\epsilon'}=r(\epsilon',q)\in(0,r_0)$ so that 
\begin{equation}\label{E:bound-of-theta}
	\frac{1}{1+\epsilon'} \leq \inf_{p,p'\in B^G_{r_{\epsilon'}}(q)}\frac{\mathcal{H}^{n+1-l}(G\cdot p')}{\mathcal{H}^{n+1-l}(G\cdot p)}\leq \sup_{p,p'\in B^G_{r_{\epsilon'}}(q)}\frac{\mathcal{H}^{n+1-l}(G\cdot p')}{\mathcal{H}^{n+1-l}(G\cdot p)}\leq 1+\epsilon',
\end{equation}
which implies $\vartheta^{G\cdot p}(p')\in [\frac{1}{1+\epsilon'},1+\epsilon']$, for all $ p,p'\in B^G_{r_{\epsilon'}}(q)$. 


Note the exponential map $\exp_{[p]}$ in $M^{reg}/G$ can be written as $\pi\circ\exp_{G\cdot p}^\perp\circ ((d\pi)\llcorner {\bf N}_pG\cdot p )^{-1}$. 
Thus, for any $q\in W$, and $\epsilon,\epsilon'\in (0,1)$, there exists a $G$-neighborhood $Z=B^G_\rho(q)$ of $q$ in $M$ for some $\rho<r_{\epsilon'}$, such that $\pi(Z)$ satisfies all the properties in \cite[3.4 (4)]{pitts2014existence} with $\exp_{[p]}\big \vert_{\exp_{[p]}^{-1} (\pi(Z))}$ in place of $\exp_p$. 
Hence, we have 
$$E:=\exp^\perp_{G\cdot p}\big\vert_{(\exp^\perp_{G\cdot p})^{-1}(Z)}$$
is a diffeomorphism onto $Z$ if $G\cdot p\subset Z$. 
Combining (\ref{E:weyl-tube-2}) and (\ref{E:bound-of-theta}), we can generalize the inequalities in \cite[3.4 (7)]{pitts2014existence} into $G$-invariant versions as follow: 
\begin{lemma}\label{Lem:construct cone}
	Suppose $r>0$, $\lambda\in [0,1]$, $B^G_r(p)\subset\subset Z$. 
	Let $ \bmu_{\lambda}$ be the homothety map given by $x\mapsto \lambda x$. 
	Then we have
	$$ {\bf M}((E\circ{\bm \mu}_\lambda\circ E^{-1})_\# T)\leq \lambda^{l-1}(1+\epsilon(1-\lambda))(1+\epsilon')^2{\bf M}(T), $$
	for any $T\in \mathcal{Z}^G_n(B^G_r(p),\partial B^G_r(p);\mathbb{Z}_2)$. 
	Moreover, denote 
	$$T_\lambda := E_\#\big(\delta_{\bf 0}\mathbb{X} \big[E^{-1}_\#\partial T-({\bm \mu}_\lambda\circ E^{-1})_\#\partial T \big]\big),$$
	where ${\bf 0}$ is the zero-section in ${\bf N}(G\cdot p)$, and $\delta_{\bf 0}\mathbb{X} S:= h_\#([[0,1]]\times S)$ for a map $h:\mathbb{R}\times {\bf N}(G\cdot p)\to {\bf N}(G\cdot p)$ given by $h(t,v):=tv$.
	Then 
	\begin{itemize}
		\item[$\bullet$] $\partial T_\lambda =  \partial T - \partial [(E\circ \bmu_\lambda \circ E^{-1})_\#  T ] $;
		\item[$\bullet$] $\spt(T_\lambda)\subset \an(p,\lambda r,r)$;
		\item[$\bullet$] $ {\bf M}(T_\lambda)\leq 2r(l-1)^{-1}(1-\lambda^{l-1})(1+\epsilon')^2{\bf M}(\partial T)$.
	\end{itemize}
\end{lemma}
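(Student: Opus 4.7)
The plan is to reduce both parts of the lemma to the non-equivariant analogues in Pitts \cite[3.4(7)]{pitts2014existence}, applied in the Riemannian quotient $(M^{reg}/G, g_{_{M/G}})$, using the co-area formula \eqref{E:weyl-tube-2} to translate mass of $G$-invariant currents in $B^G_r(p)$ into $(l-1)$-area of their $\pi$-projections in $\pi(Z)\subset M^{reg}/G$, and bounding the resulting multiplicative error with \eqref{E:bound-of-theta}.

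First I would observe that $E\circ\bmu_\lambda\circ E^{-1}$ is $G$-equivariant---which is immediate since $E$ is $G$-equivariant and $\bmu_\lambda$ is the fiberwise linear homothety on ${\bf N}(G\cdot p)$---and descends under $\pi$ to the radial homothety $F_\lambda:=\exp_{[p]}\circ\bmu_\lambda\circ\exp_{[p]}^{-1}$ on $\pi(Z)$. Expressing both ${\bf M}(T)$ and ${\bf M}((E\circ\bmu_\lambda\circ E^{-1})_\# T)$ via \eqref{E:weyl-tube-2}, trapping $\vartheta^{G\cdot p}$ within $[(1+\epsilon')^{-1},\,1+\epsilon']$ by \eqref{E:bound-of-theta}, and substituting Pitts' estimate
\[
\mathcal{H}^{l-1}\bigl(F_\lambda(\pi(\spt T))\bigr)\leq \lambda^{l-1}\bigl(1+\epsilon(1-\lambda)\bigr)\,\mathcal{H}^{l-1}(\pi(\spt T))
\]
in the $l$-dimensional ball $\pi(Z)$, I obtain the first inequality with constant $\lambda^{l-1}(1+\epsilon(1-\lambda))(1+\epsilon')^2$; the square $(1+\epsilon')^2$ is absorbed by using the $\vartheta$-bound once in each direction.

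For the three properties of $T_\lambda$, the boundary identity is a direct application of the cone formula: since $T$ is a relative cycle, $\partial T$ is closed, hence so is $S:=E^{-1}_\#\partial T-(\bmu_\lambda\circ E^{-1})_\#\partial T$, and the cone formula gives $\partial(\delta_{\bf 0}\mathbb{X} S)=S$ (the $\delta_{\bf 0}\mathbb{X}\partial S$ term vanishes); pushing forward by $E$ and commuting $\partial$ past $E_\#$ yields $\partial T_\lambda=\partial T-\partial[(E\circ\bmu_\lambda\circ E^{-1})_\# T]$. For the support, I would verify that under the reparametrization $t\mapsto t/\lambda$ in the definition $h_\#([[0,1]]\times\cdot)$, the two cones $\delta_{\bf 0}\mathbb{X} E^{-1}_\#\partial T$ and $\delta_{\bf 0}\mathbb{X}(\bmu_\lambda\circ E^{-1})_\#\partial T$ coincide as integer-rectifiable currents on the inner ball $\{|v|\leq\lambda r\}\subset{\bf N}(G\cdot p)$, so they cancel mod $2$ and $\spt(T_\lambda)\subset E(\{\lambda r\leq|v|\leq r\})\subset\overline{\an(p,\lambda r,r)}$. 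Finally, the mass bound follows by yet one more use of \eqref{E:weyl-tube-2}: $\pi_\# T_\lambda$ is a radial cone in $\pi(Z)$ over $\pi_\#\partial T$ between radii $\lambda r$ and $r$, for which Pitts' radial-integration estimate yields an $(l-1)$-area bound of $\tfrac{2r(1-\lambda^{l-1})}{l-1}\mathcal{H}^{l-2}(\pi(\spt\partial T))$; converting $\mathcal{H}^{l-2}(\pi(\spt\partial T))$ back to ${\bf M}(\partial T)$ via \eqref{E:weyl-tube-2} and the $\vartheta$-bound supplies the final $(1+\epsilon')^2$ factor.

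The main obstacle I expect is confirming the mod-$2$ cancellation on the inner ball not just as an identity of supports but as an identity of $G$-invariant integer-rectifiable currents; this requires a careful change-of-variables argument showing that $h_\#([[0,\lambda]]\times E^{-1}_\#\partial T)$ equals $h_\#([[0,1]]\times(\bmu_\lambda)_\# E^{-1}_\#\partial T)$ with the correct multiplicities, which is routine but must be made rigorous because $h$ degenerates to a constant at $t=0$. All of these steps are valid precisely because ${\rm Clos}(W)\subset M^{reg}$: this ensures $\pi$ is a smooth Riemannian submersion throughout $Z$, that $\vartheta^{G\cdot p}$ is continuous and pinched as in \eqref{E:bound-of-theta}, and that the co-area rewriting in \eqref{E:weyl-tube-2} is not contaminated by lower-dimensional strata of $M\setminus M^{reg}$.
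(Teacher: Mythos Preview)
Your proposal is correct and follows essentially the same route as the paper: descend the $G$-equivariant homothety to the quotient $M^{reg}/G$ via $\pi$, apply Pitts' non-equivariant estimates \cite[3.4(7)]{pitts2014existence} there with $\exp_{[p]}$ in place of $\exp_p$, and use the co-area identity \eqref{E:weyl-tube-2} together with the pinching \eqref{E:bound-of-theta} twice to pick up the $(1+\epsilon')^2$ factor in each direction. The paper simply asserts that the first two bullets ``follow directly from the definition of $T_\lambda$'' and that the mass bound ``follows from a similar procedure as above''; your more explicit cone-formula computation and the mod-$2$ cancellation via the reparametrization $t\mapsto t/\lambda$ are exactly the unwindings of those phrases.
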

\begin{proof}
	Denote $E':= \exp_{[p]}\big\vert_{(\exp_{[p]})^{-1}(\pi(Z))} $. 
	Then we have $\pi\circ E\circ{\bm \mu}_\lambda\circ E^{-1} = E'\circ{\bm \mu}_\lambda\circ E'^{-1}\circ\pi$
	By $\mZ_2$ coefficient, we have 
	\begin{eqnarray*}
		 {\bf M}((E\circ{\bm \mu}_\lambda\circ E^{-1})_\# T) &=& \mH^n ((E\circ{\bm \mu}_\lambda\circ E^{-1})(\Sigma))
		 \\
		 &\leq & (1+\epsilon') {\rm vol}(G\cdot p) \mH^{l-1}\big(((E'\circ{\bm \mu}_\lambda\circ E'^{-1})(\pi(\Sigma))) \big)
		 \\
		 &\leq & (1+\epsilon') {\rm vol}(G\cdot p)  \lambda^{l-1}(1+\epsilon(1-\lambda)) \mH^{l-1}(\pi(\Sigma))
		 \\
		 &\leq & (1+\epsilon')^2 \lambda^{l-1}(1+\epsilon(1-\lambda)) \M(T),
	\end{eqnarray*}
	where $\Sigma=\spt(T)$, the second inequality comes from \cite[3.4 (7)]{pitts2014existence} with $\exp_{[p]}$ in place of $\exp_p$, and the first as well as the third inequality comes from (\ref{E:weyl-tube-2})
	(\ref{E:bound-of-theta}). 
	Moreover, claims in the first two bullets follow directly from the definition of $T_\lambda$. 
	The last inequality in the lemma follows from a similar procedure as above. 
\end{proof}

Using the above lemma, we can adapt the proof of \cite[Lemma 3.5]{pitts2014existence} in a straightforward way and get the following lemma:
\begin{lemma}\label{Lem:pre-interpolation 1}
	Given $\epsilon,\epsilon'\in(0,1)$ and $\delta >0$, let $Z$ be defined as above. 
	Suppose $B^G_r(p)\subset\subset Z$ and $T\in \mathcal{Z}^G_n(B^G_r(p),\partial B^G_r(p);\mathbb{Z}_2)$. 
	Assume additionally: 
	\begin{itemize}
		\item[$\bullet$] $\epsilon \mathbf{M}(T)< (l-1) \delta \text { and } \frac{2 r}{l-1} \mathbf{M}\left(\partial T\right)(1+\epsilon')^2+\delta \leq \mathbf{M}(T)$;
		\item[$\bullet$] $\| T \| (\partial B^G_s(p))=0, \text { for all } 0 \leq s \leq r$.
	\end{itemize}
	Then for any $\beta>0$ there exists a sequence 
	$T=R_0,R_1,\dots,R_m\in \mathcal{Z}^G_n(B^G_r(p),\partial B^G_r(p);\mathbb{Z}_2) $ 
	such that for each $i\in \{1,\dots,m\}$ we have 
	\begin{itemize}
		\item[$\bullet$] $\partial R_i=\partial T$;
		\item[$\bullet$] ${\bf M}(R_i)\leq (1+\epsilon')^2{\bf M}(T)+\beta$;
		\item[$\bullet$] ${\bf M}(R_i-R_{i-1})\leq \beta +(2\epsilon'+\epsilon'^2){\bf M}(T)$;
		\item[$\bullet$] $R_m=(\exp^\perp_{G\cdot p})_\#(\delta_{\bf 0}\mathbb{X}(\exp^\perp_{G\cdot p})^{-1}_\#\partial T)$;
		\item[$\bullet$] ${\bf M}(R_m)\leq \frac{2 r}{l-1} \mathbf{M}\left(\partial T\right)(1+\epsilon')^2\leq {\bf M}(T)-\delta$.
	\end{itemize} 
\end{lemma}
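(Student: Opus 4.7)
My plan is to carry out the proof in direct parallel to Pitts's \cite[Lemma 3.5]{pitts2014existence}, with the crucial substitution that the point-based homothety employed by Pitts is replaced by the \emph{orbital homothety} $\phi_\lambda := E\circ\bmu_\lambda\circ E^{-1}$ supplied by Lemma \ref{Lem:construct cone}. The guiding idea is to first produce a continuous family $\{S_\lambda\}_{\lambda\in[0,1]}\subset \Z_n^G(B^G_r(p),\partial B^G_r(p);\mZ_2)$ interpolating between $S_1 = T$ and $S_0 = (\exp^\perp_{G\cdot p})_\#(\delta_{\bf 0}\ttimes (\exp^\perp_{G\cdot p})^{-1}_\#\partial T)$, to bound $\M(S_\lambda)$ uniformly using the two standing hypotheses, and finally to discretize the parameter $\lambda$ to obtain the sequence $R_0,\dots,R_m$.

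For the continuous family I set $S_\lambda := (\phi_\lambda)_\# T + T_\lambda$, where $T_\lambda$ is the cone from Lemma \ref{Lem:construct cone}. That lemma gives $\partial S_\lambda = \partial T$ for every $\lambda$ and $\spt(S_\lambda)\subset \Clos(B^G_r(p))$; moreover
\[
\M(S_\lambda) \leq (1+\epsilon')^2\Big(\lambda^{l-1}(1+\epsilon(1-\lambda))\M(T) + \tfrac{2r}{l-1}(1-\lambda^{l-1})\M(\partial T)\Big).
\]
A direct one-variable calculus check — exactly as in Pitts's original argument — shows that under the two bullet-hypotheses ($\epsilon\M(T)<(l-1)\delta$ and $\frac{2r}{l-1}\M(\partial T)(1+\epsilon')^2+\delta\leq\M(T)$) the right-hand side is bounded by $(1+\epsilon')^2\M(T)$ uniformly in $\lambda\in[0,1]$. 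The roles of the two hypotheses are transparent: the second one ensures the baseline bound at $\lambda=0$ and that the cone term stays below $\M(T)-\delta$, while the first absorbs the extra distortion term $\lambda^{l-1}\epsilon(1-\lambda)\M(T)$ coming from the non-scaling character of $E$. Thus the second and last bullets of the conclusion follow.

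To obtain the sequence I choose a partition $1=\lambda_0>\lambda_1>\dots>\lambda_m=0$ of $[0,1]$ and set $R_i := S_{\lambda_i}$. The first and fourth bullets are automatic. For the second bullet an extra $\beta$ is harmlessly absorbed. The main work — and the principal obstacle — is controlling $\M(R_i-R_{i-1})\leq \beta+(2\epsilon'+\epsilon'^2)\M(T)$. Writing
\[
R_i-R_{i-1} \;=\; \big[(\phi_{\lambda_i})_\# T-(\phi_{\lambda_{i-1}})_\# T\big] + \big[T_{\lambda_i}-T_{\lambda_{i-1}}\big],
\]
each difference is a pushforward of a fixed cycle under maps that coincide outside the annulus $\an(p,\lambda_i r,\lambda_{i-1}r)$. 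The hypothesis $\|T\|(\partial B^G_s(p))=0$ for all $s\in[0,r]$ is the key ingredient: by the co-area formula together with the definition of $T_\lambda$ via $\delta_{\bf 0}\ttimes(\cdot)$, both $\lambda\mapsto(\phi_\lambda)_\# T$ and $\lambda\mapsto T_\lambda$ are continuous in the mass norm, so refining the partition forces $\M(R_i-R_{i-1})$ arbitrarily close to the single jump $\M(T)-\M((\phi_1)_\#T)$, whose contribution is bounded by $(2\epsilon'+\epsilon'^2)\M(T)$ by the distortion estimate of Lemma \ref{Lem:construct cone} applied at $\lambda=1$.

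The hard part will be verifying this uniform mass-continuity cleanly in the $G$-equivariant setting. Unlike the classical case, the distortion factor $(1+\epsilon')^2$ (which reflects variation of orbit volumes along the exponential directions) does not vanish as $\lambda\to1$, so one cannot simply invoke the identity $\phi_1=\mathrm{id}$ to drop it; the estimate must be split into a "small annulus" contribution controlled by vanishing of $\|T\|$ on level spheres, plus a residual orbit-volume distortion. Once this is handled, the remaining bookkeeping — choosing the partition fine enough that every annular contribution is below $\beta$, and verifying that $R_m=S_0=(\exp^\perp_{G\cdot p})_\#(\delta_{\bf 0}\ttimes(\exp^\perp_{G\cdot p})^{-1}_\#\partial T)$ directly from the definition of $T_0$ (noting $(\phi_0)_\# T=0$ because it would be an $n$-cycle supported on the $(n+1-l)$-dimensional orbit $G\cdot p$ with $l\geq 3$) — is routine.
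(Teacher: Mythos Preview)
Your approach is essentially identical to the paper's: the paper simply states that Pitts's proof of \cite[Lemma 3.5]{pitts2014existence} carries over verbatim once Lemma \ref{Lem:construct cone} replaces \cite[3.4(7)]{pitts2014existence} and the point-based objects are replaced by their $G$-equivariant analogues ($G$-cycles, $G$-tubes, $\exp^\perp_{G\cdot p}$). Your outline does exactly this, and your identification of the second hypothesis $\|T\|(\partial B^G_s(p))=0$ as the source of mass-continuity for the discretization step is correct; the one place where your write-up is slightly garbled --- the ``single jump $\M(T)-\M((\phi_1)_\#T)$'' is zero since $\phi_1=\mathrm{id}$ --- you immediately diagnose correctly in the following paragraph, and the fix you describe (annular contribution $\leq\beta$ plus residual orbit-volume distortion $\leq(2\epsilon'+\epsilon'^2)\M(T)$) is the right one.
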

\begin{proof}
	The proof of \cite[Lemma 3.5]{pitts2014existence} would carry over by using Lemma \ref{Lem:construct cone} in place of \cite[3.4 (7)]{pitts2014existence} and using $G$-cycles, $G$-sets, $\exp_{G\cdot p}^\perp$ in place of relevant objects.
\end{proof}

The following lemma is a simple equivariant modification of \cite[Lemma 3.6]{pitts2014existence}. 
\begin{lemma}\label{Lem:zero boundary}
	Suppose $Z$ is an open $G$-set as above and $V\in \mathcal{V}^G_n(M)$ which is rectifiable in $Z$, then for $\mathcal{H}^n$-a.e. $p\in Z$, we have $\|V\|(\partial B^G_s(p))=0$ for all $s>0$ with $B^G_s(p)\subset\subset Z$. 
\end{lemma}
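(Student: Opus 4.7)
The plan is to imitate Pitts's argument for \cite[Lemma 3.6]{pitts2014existence}, replacing Euclidean geodesic balls $B_s(p)$ everywhere by the $G$-tubes $B^G_s(p)$. The key point that makes this substitution possible is that, because $G$ acts by isometries, the orbit-distance function $f_p(q) := \dist_M(G\cdot p, q)$ is $1$-Lipschitz on $Z$, and its sublevel and level sets are precisely $\overline{B}^G_s(p)$ and $\partial B^G_s(p)$; moreover the symmetry $q \in \partial B^G_s(p) \iff p \in \partial B^G_s(q)$ holds, since both conditions read $\dist_M(p, G\cdot q)=s$. Since $Z$ is contained in $M^{reg}$ with $\exp^{\perp}_{G\cdot p}$ a diffeomorphism on the relevant tubes, no issue arises from non-principal orbits.

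First I would decompose $\|V\| = \theta \, \mathcal{H}^n \llcorner \Sigma$ for an $n$-rectifiable $\Sigma \subset Z$ with locally finite $\mathcal{H}^n$-measure and a Borel density $\theta$. For any fixed $p \in Z$, the map $s \mapsto \|V\|(\overline{B}^G_s(p))$ is monotone nondecreasing and right-continuous, hence admits an at most countable jump set $D_p \subset \mathbb{R}^+$, and these jumps are precisely the radii for which $\|V\|(\partial B^G_s(p)) > 0$. In particular $\sum_{s \in D_p} \|V\|(\partial B^G_s(p)) \leq \|V\|(Z') < \infty$ on any $Z' \subset\subset Z$. This establishes, for each individual $p$, a countable ``bad'' set of radii.

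The remaining and main step is to upgrade this pointwise statement to a null set of exceptional $p$'s. Following Pitts, one considers the Borel set
\[
A := \bigl\{(p,q) \in Z \times Z : \|V\|\bigl(\partial B^G_{f_p(q)}(p)\bigr) > 0 \bigr\}
\]
and applies Fubini to $(\mathcal{H}^n \otimes \|V\|)(A)$, using the symmetry above to rewrite the integrand. The inner integral in the $q$-variable is bounded by $\sum_{s \in D_p}\|V\|(\partial B^G_s(p)) \leq \|V\|(Z')$; the inner integral in the $p$-variable invokes the coarea inequality for the $1$-Lipschitz functions $p \mapsto \dist_M(p, G\cdot q)$ restricted to suitable rectifiable approximations, giving that only a Lebesgue-null set of radii can produce level sets of positive $\mathcal{H}^n$-measure. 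Combining these two evaluations forces the projection of $A$ onto the first factor to have $\mathcal{H}^n$-measure zero, which is exactly the claim of the lemma.

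The main obstacle is the Fubini/quantifier-exchange step, i.e., verifying Borel measurability of $(p,s) \mapsto \|V\|(\partial B^G_s(p))$ and carrying out the slicing argument for the $G$-tube spheres rather than metric spheres. Once the orbit-distance function $f_p$ is recognized as a Lipschitz function whose level sets coincide with the tube boundaries and satisfy the $p\leftrightarrow q$ symmetry, the remaining argument is essentially a notational transcription of \cite[Lemma 3.6]{pitts2014existence}, with $B^G_s$ in place of $B_s$ throughout.
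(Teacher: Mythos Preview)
Your proposal is correct and takes essentially the same approach as the paper: the paper's proof consists of a single sentence stating that Pitts's argument for \cite[Lemma 3.6]{pitts2014existence} carries over with the normal exponential map $\exp_{G\cdot p}^\perp$ in place of $\exp_p$, and your write-up is precisely a fleshed-out version of that substitution. Your explicit observation of the symmetry $q\in\partial B^G_s(p)\iff p\in\partial B^G_s(q)$ via $\dist_M(G\cdot p,q)=\dist_M(p,G\cdot q)$ is the one ingredient that genuinely needs checking in the $G$-setting, and it is correct.
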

\begin{proof}
	Using the normal exponential map $\exp_{G\cdot p}^\perp$ in place of $\exp_p$, the original proof of \cite[Lemma 3.6]{pitts2014existence} would carry over. 
\end{proof}

Finally, we have the following $G$-invariant pre-interpolation lemma: 

\begin{lemma}\label{L:pre-interpolation}
	Suppose $L>0$, $\delta>0$, $W\subset\subset U$ is an open $G$-set with ${\rm Clos}(W)\subset M^{reg}$, and $T\in \mathcal{Z}^G_n(M;\mathbb{Z}_2)$. 
	Given $S_0\in \mathcal{Z}^G_n(M;\mathbb{Z}_2)$ with 
	$${\rm spt}(S_0-T)\subset W\quad {\rm and}\quad {\bf M}(S_0)\leq L, $$
	there exists $\epsilon=\epsilon(L,\delta,W,T,S_0)>0$ such that if $S\in \mathcal{Z}^G_n(M;\mathbb{Z}_2)$ satisfies 
	$$ {\rm spt}(S-T)\subset W, \quad {\bf M}(S)\leq L, \quad \mathcal{F}(S-S_0)\leq\epsilon,$$
	then there is a sequence $S=T_0,T_1,\dots,T_m=S_0\in \mathcal{Z}^G_n(M;\mathbb{Z}_2)$ such that for each $j\in\{1,\dots,m\}$ we have 
	\begin{itemize}
		\item[(i)] ${\rm spt}(T_j-T)\subset U$;
		\item[(ii)] ${\bf M}(T_j)\leq L+\delta$;
		\item[(iii)] ${\bf M}(T_j-T_{j-1})\leq\delta$.
	\end{itemize}
\end{lemma}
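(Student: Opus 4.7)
The plan is to follow the structure of Pitts's interpolation lemma \cite[Lemma 3.8]{pitts2014existence}, replacing every tool with its $G$-equivariant counterpart that we have already established. First, by shrinking $\epsilon$ we may assume $\epsilon < \nu_M$, so that the $G$-invariant isoperimetric Lemma \ref{Lem:isoperimetric} provides a unique $Q \in \mathbf{I}_{n+1}^G(M;\mathbb{Z}_2)$ with $\partial Q = S - S_0$ and $\mathbf{M}(Q) \leq C_M\,\mathcal F(S-S_0) \leq C_M\,\epsilon$. Since $\spt(S - S_0)\subset W$ and $Q$ is unique, one verifies (taking $\epsilon$ smaller if necessary) that $\spt(Q)$ lies in an arbitrarily prescribed neighborhood of $\Clos(W)$, so we may arrange $\spt(Q)\subset U$.

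Next, because $\Clos(W)\subset M^{reg}$, the orbit-injectivity radius admits a uniform lower bound $2r_0>0$ on $\Clos(W)$. Fix small auxiliary parameters $\epsilon'_\star,\epsilon_\star\in(0,1)$ (depending on $\delta$ and $L$), and cover $\Clos(W)$ by finitely many $G$-balls $\{B^G_{r_i}(p_i)\}_{i=1}^N$ with $B^G_{r_i}(p_i)\subset\subset Z_i\subset M^{reg}\cap U$, where each $Z_i$ is a neighborhood of the type preceding Lemma \ref{Lem:construct cone} with parameters $\epsilon_\star,\epsilon'_\star$. By Lemma \ref{Lem:slice} and Lemma \ref{Lem:zero boundary}, and by perturbing the radii $r_i$ on a full-measure set, we may further arrange
\[
\|S\|\bigl(\partial B^G_{r_i}(p_i)\bigr)=\|S_0\|\bigl(\partial B^G_{r_i}(p_i)\bigr)=\|Q\|\bigl(\partial B^G_{r_i}(p_i)\bigr)=0
\]
for every $i$, so that all cycles and chains admit clean $G$-invariant slices across each sphere-tube $\partial B^G_{r_i}(p_i)$.

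The interpolation is built inductively over $i=1,\dots,N$. At stage $i$ we have a $G$-cycle $S^{(i-1)}\in \Z_n^G(M;\mZ_2)$ that coincides with $S_0$ on $\bigcup_{j<i}B^G_{r_j}(p_j)$ and with $S$ outside, with $\mathbf{M}(S^{(i-1)})\leq L+\delta$. Slicing, we write $T^{(i)} := (S^{(i-1)} - S_0)\llcorner \Clos(B^G_{r_i}(p_i)) = \partial Q^{(i)}$ for an appropriate piece $Q^{(i)}$ of $Q$; since $\mathbf{M}(Q^{(i)})\leq \mathbf{M}(Q)\leq C_M\epsilon$, the cycle $T^{(i)}$ has small boundary mass and satisfies the hypotheses of Lemma \ref{Lem:pre-interpolation 1} for $\epsilon$ chosen sufficiently small relative to $\delta/N$. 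We then apply Lemma \ref{Lem:pre-interpolation 1} inside $B^G_{r_i}(p_i)$ with $\beta_i = \delta/(2N)$ to collapse the difference $S^{(i-1)} - S_0$ through the $G$-equivariant cone deformation to a cycle supported near $G\cdot p_i$, and reverse the procedure against $S_0$, producing $S^{(i)}$ with $\spt(S^{(i)}-S_0)\subset\bigcup_{j\leq i}B^G_{r_j}(p_j)$, $\mathbf{M}(S^{(i)})\leq L+\delta$, and consecutive mass differences bounded by $\delta$. After $N$ stages we reach $S_0$.

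The main obstacle is a purely combinatorial bookkeeping problem identical to Pitts's: one must choose the parameters $\epsilon_\star,\epsilon'_\star$ in (\ref{E:bound-of-theta}), the radii $r_i$ of the covering, and the $\beta_i$ in Lemma \ref{Lem:pre-interpolation 1} so that the multiplicative factors $(1+\epsilon'_\star)^2$ and $(1+\epsilon_\star)$ accumulated over the $N$ stages, together with the additive errors $\beta_i$, sum to less than $\delta$, and simultaneously so that the initial flat smallness $\mathcal F(S-S_0)\leq\epsilon$ implies that each $T^{(i)}$ satisfies the smallness hypothesis $\epsilon\mathbf{M}(T^{(i)})<(l-1)\beta_i$ required in Lemma \ref{Lem:pre-interpolation 1}. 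This is where the assumption $\Clos(W)\subset M^{reg}$ is essential: it yields uniform bounds on orbit-volume ratios and orbit-injectivity radii, which in turn give a \emph{uniform} choice of $Z_i$ and parameters across the finite cover; without this reduction to $M^{reg}$ the equivariant cone construction of Lemma \ref{Lem:construct cone} would degenerate near non-principal orbits, and no uniform selection of parameters would be possible.
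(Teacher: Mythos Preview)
Your approach is genuinely different from the paper's and, as written, does not close. The paper argues by contradiction: assuming failure, one obtains a sequence $S_j\to S_0$ in $\mathcal F$ with varifold limit $V$, and then splits into Case 1 ($\|V\|(G\cdot q)\le \alpha$ for all $q\in W$) and Case 2 (finitely many concentration orbits). In Case 1 one covers $\Clos(W)$ by $G$-tubes on which $\|S_j\|$ and $\|S_0\|$ are each at most $2\alpha$, so one can pass from $S_j$ to $S_0$ ball-by-ball by a \emph{direct swap}, never invoking the cone lemma. In Case 2 one uses the concentration to find, near each bad orbit $G\cdot q$, radii at which the annular mass tends to zero (items (e)--(h) in the paper), and only \emph{then} applies Lemma \ref{Lem:pre-interpolation 1} to squeeze out the concentration and reduce to Case 1. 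The compactness argument is what furnishes the crucial smallness of the annular mass needed to verify the second hypothesis of Lemma \ref{Lem:pre-interpolation 1}.

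Your direct construction cannot verify this hypothesis. You assert that ``since $\mathbf M(Q^{(i)})\le C_M\epsilon$, the cycle $T^{(i)}$ has small boundary mass,'' but $\partial T^{(i)}=\langle S^{(i-1)}-S_0,\,d_{G\cdot p_i},\,r_i\rangle$ is a slice of $S^{(i-1)}-S_0$, not of $Q$; the coarea inequality applied to $Q$ controls $\mathbf M(\langle Q,d,r_i\rangle)$, not $\mathbf M(\partial\langle Q,d,r_i\rangle)$. Even if one could force $\mathbf M(\partial T^{(i)})$ small, the second hypothesis of Lemma \ref{Lem:pre-interpolation 1} still demands $\mathbf M(T^{(i)})\ge \delta'+\text{(boundary term)}$, and your scheme gives no mechanism for this unless there is mass concentration. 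More seriously, the intermediate chains produced by Lemma \ref{Lem:pre-interpolation 1} satisfy $\mathbf M(R_i)\le (1+\epsilon')^2\mathbf M(T^{(i)})+\beta$; adding back the outside piece, the total mass at an intermediate step is bounded only by something like $\|S_0\|(B)+\|S^{(i-1)}\|(M\setminus B)+(1+\epsilon')^2\big(\|S^{(i-1)}\|(B)+\|S_0\|(B)\big)+\beta$, which can exceed $L+\delta$ by a fixed amount when $\|S_0\|(B)$ is not small --- so property (ii) fails. In short, Lemma \ref{Lem:pre-interpolation 1} is not an interpolation device between two arbitrary local pieces; it is a mass-decreasing deformation that is only applicable where mass is concentrated, and the contradiction/compactness dichotomy is precisely what isolates those locations.
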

\begin{proof}
	Suppose the lemma is false, then there exists a sequence $\{S_j\}_{j=1}^\infty\subset \mathcal{Z}^G_n(M;\mathbb{Z}_2)$ satisfying ${\rm spt}(S_j-T)\subset W$, ${\bf M}(S_j)\leq L$, $\mathcal{F}(S_j-S_0)\to 0$, 
	but none of the $S_j$ can be connected to $S_0$ by a finite sequence in $\mathcal{Z}^G_n(M;\mathbb{Z}_2)$ satisfying (i)-(iii). 
	Up to a subsequence, we can suppose $|S_j|\to V\in\mathcal{V}^G_n(M)$ by Remark \ref{Rem:compactness for G-varifold}, and hence 
	$\|S_0\|(A)\leq \|V\|(A),$ 
	for any Borel $A\subset M$ by the lower-semicontinuous of mass. 
	
	Let $\alpha := \delta/5$. 
	Then the proof of this lemma can be divided into two cases: 
	\begin{itemize}
		\item[$\bullet$] {\bf Case 1}: $\|V\|(G\cdot q)\leq \alpha$ for all $q\in W$;
		\item[$\bullet$] {\bf Case 2}: $\{q\in W\vert~ \|V\|(G\cdot q)>\alpha\}\neq\emptyset$.
	\end{itemize}
	
	For the {\bf Case 1}, using Lemma \ref{Lem:slice} and \ref{Lem:isoperimetric}, the original proof of Pitts in \cite[Page 114-118]{pitts2014existence} would carry over with geodesic $G$-tubes $\{B^G_{r_i}(p_i)\}$, the distance function to an orbit $\dist_M(G\cdot p_i,\cdot)$ in place of geodesic balls $\{B_{r_i}(p_i)\}$ and $\dist_M(p_i,\cdot)$, respectively. 
	Consequently, for $j$ large enough, we can get a finite sequence in $\mathcal{Z}^G_n(M;\mathbb{Z}_2)$ satisfying (i)-(iii) and connecting $S_j$ to $S_0$, which is a contradiction. 
	
	As for the {\bf Case 2}, we mention the set $ \{q\in W : \|V\|(G\cdot q)>\alpha\}$ only contains a finite number of orbits since $\|V\|(M)\leq L$. 
	Hence, we firstly suppose $\{q\in W : \|V\|(G\cdot q)>\alpha\}= G\cdot q$, and use an induction argument to show the general case.
	
	Take $\epsilon,\epsilon'>0$ small enough such that 
	$$ 2\epsilon \|V\|(U)\leq (l-1)\frac{\alpha}{2}, {\rm ~and }~2(2\epsilon'+\epsilon'^2)\|V\|(U)\leq \frac{\alpha}{64}. $$ 
	Let $Z$ be the open $G$-neighborhood of $q$ given as in Lemma \ref{Lem:pre-interpolation 1} corresponding to $\epsilon,\epsilon'$. 
	By Lemma \ref{Lem:zero boundary}, we have $\|V\|(\partial \an(p,s,r))=0$ for $\mathcal{H}^n$-a.e. $p\in Z$ and all $0<s<r$ with $B^G_r(p)\subset\subset Z$, which implies $$\lim_{j\to \infty}\|S_j\|\llcorner {\rm Clos}(\an(p,s,r))= \|V\|\llcorner {\rm Clos}(\an(p,s,r)).$$
	Moreover, we have $\lim_{r\to 0}\|V\|({\rm Clos}(B^G_r(q))\setminus G\cdot q)= 0$ by  \cite[2.1(14b(iii))]{pitts2014existence}. 
	By a similar argument as in \cite[Page 119]{pitts2014existence}, one verifies that there exists a positive integer $J$ satisfying:
	
	For each $j\geq J$, there exist $p_j\in Z$, $0<\frac{s_j}{2}<r_j<s_j$ such that 
	\begin{itemize}
		\item[(a)] $p_j\to q$, $s_j\to 0$;
		\item[(b)] $B^G_{r_j/4}(q)\subset B^G_{r_j/2}(p_j)\subset B^G_{2s_j}(p_j)\subset Z$,
		\item[(c)] $\|S_j\|(\partial B^G_s(p_j))=0$, for all $0<s\leq r_j$;
		\item[(d)] $\langle S_j,d_{G\cdot p_j},r_j\rangle \in {\bf I}^G_{n-1}(Z;\mathbb{Z}_2)$, where $d_{G\cdot p_j}:=\dist_M(G\cdot p_j,\cdot)$;
		\item[(e)] $\lim_{j\to\infty} \|S_j\|({\rm Clos}(\an(p_j,s_j/2,2s_j)))=0$;
		\item[(f)] $16  \|S_j\|({\rm Clos}(\an(p_j,s_j/2,2s_j))) \geq 3r_j {\bf M}(\langle S_j,d_{G\cdot p_j},r_j\rangle)$;
		\item[(g)] $\epsilon \|S_j\|(U)\leq (l-1)\frac{\alpha}{2}$, and $(2\epsilon'+\epsilon'^2)\|S_j\|(U)\leq \frac{\alpha}{64}$;
		\item[(h)] $\frac{2r_j}{l-1}{\bf M}(\langle S_j,d_{G\cdot p_j},r_j\rangle)(1+\epsilon')^2+\frac{\alpha}{2} \leq \|S_j\|({\rm Clos}(B^G_{r_j}(p_j)))$;
		\item[(i)] $\lim_{j\to\infty} |S_j|\llcorner (M\setminus {\rm Clos}(B^G_{r_j}(p_j))) = V\llcorner (M\setminus (G\cdot q) )$.
	\end{itemize}
	We point out that (c) comes from Lemma \ref{Lem:zero boundary}, (g) comes from the choice of $\epsilon$ as well as $\epsilon'$, and (h) comes from (e)(f).
	
	Define then $\widetilde{S}_j \in \Z_n^G(M;\mZ_2)$ as 
	$$\widetilde{S}_j := S_j\llcorner(M\setminus {\rm Clos}(B^G_{r_j}(p_j)) ) + (\exp^\perp_{G\cdot p_j})_\# \big( \delta_{\bf 0}\mathbb{X}(\exp^\perp_{G\cdot p_j})^{-1}_\# \langle S_j,d_{G\cdot p_j},r_j\rangle \big).$$
	Since $(2\epsilon'+\epsilon'^2)\|S_j\|(U)\leq \frac{\delta}{2}$ by (g), and 
	$\partial ( S_j\llcorner \Clos(B^G_{r_j}(p_j)) ) = \langle S_j, d_{G\cdot p_j}, r_j\rangle,$
	we can apply Lemma \ref{Lem:pre-interpolation 1} with $r:=r_j$, $\delta:=\frac{\alpha}{2}$, $p:=p_j$, $\beta:=\frac{\delta}{2}$, $T:=S_j\llcorner B^G_{r_j}(p_j)$, to obtain a finite sequence $\{R_i^j\}_{i=0}^{m_j}\subset\Z^G_n(M;\mZ_2)$ satisfying (i)-(iii) so that  $R_0^j=S_j$ and $R_{m_j}^j=\widetilde{S}_j$. 
	Furthermore, we have the following results for $\widetilde{S}_j$:
	\begin{itemize}
		\item[$\bullet$] $\M(\widetilde{S}_j)\leq \M(S_j)-\frac{\alpha}{2} $, by the last inequality in Lemma \ref{Lem:pre-interpolation 1};
		\item[$\bullet$] $\M\Big(     (\exp^\perp_{G\cdot p_j})_\# \big( \delta_{\bf 0}\mathbb{X}(\exp^\perp_{G\cdot p_j})^{-1}_\# \langle S_j,d_{G\cdot p_j},r_j\rangle \big)     \Big) \leq C\M (\langle S_j,d_{G\cdot p_j},r_j\rangle) \to 0$, by Lemma \ref{Lem:construct cone}, (f) and (e).
	\end{itemize}
	The second bullet implies 
	$$\lim_{j\to\infty} \widetilde{S}_j = \lim_{j\to\infty} S_j=S_0,\quad {\rm and~}\lim_{j\to\infty} |\widetilde{S}_j| = V\llcorner (M\setminus G\cdot q)  .$$ 
	Thus, the sequence $\{\widetilde{S}_j\}_{j=J}^\infty$ are in the {\bf Case 1}. 
	Therefore, for sufficiently large $j$, $ \widetilde{S}_j$ can be connected to $S_0$ through a finite sequence $\{\widetilde{R}_i^j\}_{i=0}^{\tilde{m}_j}$ in $\Z^G_n(M;\mZ_2)$ satisfying (i)-(iii), which gives 
	$\{S_j=R_0^j,\dots,R_{m_j}^j= \widetilde{S}_j=\widetilde{R}_0^j,\dots, \widetilde{R}_{\tilde{m}_j}^j= S_0\} \subset \Z^G_n(M;\mZ_2)$ 
	satisfying (i)-(iii) as a contradiction.

	For the case that $\{q\in W : \|V\|(G\cdot q)>\alpha\}$ contains more than one orbit. 
	Noting ${\bf M}(\widetilde{S}_j)\leq {\bf M}(S)-\frac{\alpha}{2}$ and $\lim_{j\to\infty} |\widetilde{S}_j |=V\llcorner(M\setminus{G\cdot q})$, an induction argument shows that there is a finite sequence in $\mathcal{Z}^G_n(M;\mathbb{Z}_2)$ satisfying (i)-(iii) which connects $S_j$ to some $R_j\in \mathcal{Z}^G_n(M;\mathbb{Z}_2)$ with 
	\begin{itemize}
		\item[$\bullet$] ${\bf M}(R_j)\leq {\bf M}(S_j)$;
		\item[$\bullet$] $\lim_{j\to\infty}R_j=\lim_{j\to\infty}S_j=S_0$;
		\item[$\bullet$] $\lim_{j\to\infty}|R_j|=V\llcorner (M\setminus \{q\in W : \|V\|(G\cdot q)>\alpha\} )$.
	\end{itemize}
	Finally, applying the argument in {\bf Case 1} to $\{R_j\}$ gives a contradiction. 
\end{proof}

Combining Lemma \ref{L:pre-interpolation} with a covering argument as in \cite[Page 124]{pitts2014existence}, we obtain the following interpolation lemma: 

\begin{lemma}\label{L:interpolation}
	Suppose $L>0$, $\delta>0$, $W\subset\subset U$ is an open $G$-set with ${\rm Clos}(W)\subset M^{reg}$, and $T\in \mathcal{Z}^G_n(M;\mathbb{Z}_2)$. 
	There exists $\epsilon=\epsilon(L,\delta,W,T)>0$ such that if $S_1,S_2\in \mathcal{Z}^G_n(M;\mathbb{Z}_2)$ satisfy 
	$$ {\rm spt}(S_i-T)\subset W, \quad {\bf M}(S_i)\leq L, \quad \mathcal{F}(S_1-S_2)\leq\epsilon, $$
	for $i=1,2$, 
	then there is a sequence $S_1=T_0,T_1,\dots,T_m=S_2\in \mathcal{Z}^G_n(M;\mathbb{Z}_2)$ with 
	\begin{itemize}
		\item[(i)] ${\rm spt}(T_j-T)\subset U$;
		\item[(ii)] ${\bf M}(T_j)\leq L+\delta$;
		\item[(iii)] ${\bf M}(T_j-T_{j-1})\leq\delta$.
	\end{itemize}
	for each $j\in\{1,\dots,m\}$.
\end{lemma}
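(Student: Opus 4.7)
The plan is to pass from the pointwise statement of Lemma \ref{L:pre-interpolation} to the uniform statement of Lemma \ref{L:interpolation} through a compactness/covering argument of the type carried out by Pitts on \cite[Page 124]{pitts2014existence}, after verifying that each of the tools invoked there admits the $G$-equivariant analogue already established in Section \ref{Sec-G-current}.

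First I would introduce the set
$$\mathcal{A} := \{S\in \mathcal{Z}_n^G(M;\mathbb{Z}_2) : \spt(S-T)\subset \Clos(W),~ {\bf M}(S)\leq L\}.$$
Since every $S\in\mathcal{A}$ satisfies $\partial S = 0$ and ${\bf M}(S)\leq L$, the $G$-equivariant compactness theorem (Lemma \ref{Lem:compactness for G-current}) implies that $\mathcal{A}$ is compact in the flat topology, and its $\mathcal{F}$-closure still lies inside $\mathcal{A}$ by lower semicontinuity of mass and closedness of the support condition. Next, for each $S_0\in\mathcal{A}$ Lemma \ref{L:pre-interpolation} (applied with the given $L,\delta,W,T$) provides a positive number $\epsilon(S_0)>0$ such that any $S\in\mathcal{A}$ with $\mathcal{F}(S-S_0)\leq \epsilon(S_0)$ can be connected to $S_0$ by a finite sequence in $\mathcal{Z}_n^G(M;\mathbb{Z}_2)$ fulfilling properties (i)--(iii) of that lemma.

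I would then exploit compactness: choose a finite collection $S_0^{(1)},\ldots,S_0^{(N)}\in \mathcal{A}$ so that the flat balls
$$B_\mathcal{F}\bigl(S_0^{(i)},\tfrac{\epsilon(S_0^{(i)})}{2}\bigr),\qquad i=1,\ldots,N,$$
cover $\mathcal{A}$, and set
$$\epsilon := \tfrac{1}{2}\min_{1\leq i\leq N}\epsilon(S_0^{(i)})>0.$$
Now take any $S_1,S_2\in\mathcal{A}$ with $\mathcal{F}(S_1-S_2)\leq \epsilon$. Pick an index $i$ with $S_1\in B_\mathcal{F}(S_0^{(i)},\epsilon(S_0^{(i)})/2)$; by the triangle inequality
$$\mathcal{F}(S_2-S_0^{(i)})\leq \mathcal{F}(S_2-S_1)+\mathcal{F}(S_1-S_0^{(i)})\leq \epsilon + \tfrac{\epsilon(S_0^{(i)})}{2}\leq \epsilon(S_0^{(i)}),$$
so Lemma \ref{L:pre-interpolation} furnishes two finite sequences $\{R_j^1\}$ from $S_1$ to $S_0^{(i)}$ and $\{R_j^2\}$ from $S_2$ to $S_0^{(i)}$, each satisfying (i)--(iii) with constants $L$ and $\delta$. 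Concatenating the first sequence with the reversal of the second produces the desired sequence from $S_1$ to $S_2$, whose intermediate cycles still satisfy $\spt(T_j-T)\subset U$, ${\bf M}(T_j)\leq L+\delta$, and ${\bf M}(T_j-T_{j-1})\leq \delta$.

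The only place where $G$-invariance enters nontrivially is the compactness step: one must know that the flat-closure of $\mathcal{A}$ is a subset of the $G$-invariant cycles, which is precisely the content of Lemma \ref{Lem:compactness for G-current}. The rest of the argument is the standard covering argument of Pitts, now transplanted to the equivariant setting without modification. The main potential obstacle is therefore not in the covering combinatorics but in the pre-interpolation Lemma \ref{L:pre-interpolation} itself, which is already proved; the assumption $\Clos(W)\subset M^{reg}$ is essential there because it forces the uniform positive lower bound on $\inf_{q\in\Clos(W)} {\rm Inj}(G\cdot q)$ needed to construct the equivariant cones via $\exp^\perp_{G\cdot p}$, and hence implicitly underwrites the uniform $\epsilon$ produced here.
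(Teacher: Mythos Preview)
Your proposal is correct and follows exactly the covering/compactness argument from \cite[Page 124]{pitts2014existence} that the paper invokes; the paper's own proof is a single sentence pointing to Lemma~\ref{L:pre-interpolation} and Pitts's covering argument, and you have faithfully reconstructed it. One cosmetic point: since Lemma~\ref{L:pre-interpolation} requires $\spt(S_0-T)\subset W$ (open) while flat limits in your $\mathcal{A}$ may have support in $\Clos(W)$, you should insert an intermediate open $G$-set $W'$ with $W\subset\subset W'\subset\subset U$ and $\Clos(W')\subset M^{reg}$, and apply Lemma~\ref{L:pre-interpolation} with $W'$ in place of $W$; this is the standard device in Pitts's argument and changes nothing substantive.
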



\bibliographystyle{amsbook}

\end{document}